 \DeclareFontFamily{U}{mathc}{}
\DeclareFontShape{U}{mathc}{m}{it}%
{<->s*[1.03] mathc10}{}
\DeclareMathAlphabet{\mathscr}{U}{mathc}{m}{it}
\newcommand{\e}{\varepsilon}
 \newcommand{\disk}{\mathbb{D}}
\newcommand{\fr}{\partial}
\newcommand{\set}[1]{\left\{#1\right\}}
\newcommand{\norm}[1]{{\left\Vert#1\right\Vert}}
\newcommand{\abs}[1]{\left\vert#1\right\vert}
\newcommand{\rest}[1]{ \arrowvert_{#1}}
\newcommand{\unsur}[1]{\frac{1}{#1}}
\newcommand{\cst}{\mathrm{C}^\mathrm{st}}
\newcommand{\lrpar}[1]{\left(#1\right)}
\newcommand{\hot}{\mathrm{h.o.t.}}
\newcommand{\loc}{\mathrm{loc}}
\newcommand{\inv}{^{-1}}
\renewcommand{\d}{\mathrm{d}}
\newcommand{\jstar}{J^\varstar}
\DeclareMathOperator{\supp}{Supp}
\DeclareMathOperator{\Int}{Int}
 \DeclareMathOperator{\im}{Im}
\DeclareMathOperator{\id}{id}
\DeclareMathOperator{\jac}{Jac}
\DeclareMathOperator{\diam}{Diam}
\DeclareMathOperator{\dist}{dist}
\newcommand{\C}{\mathbf{C}}
\newcommand{\R}{\mathbf{R}}
\newcommand{\Q}{\mathbf{Q}}
\newcommand{\Z}{\mathbf{Z}}
\newcommand{\N}{\mathbf{N}}
\newcommand{\cs}{{\text{\sc{j}}}}
\renewcommand{\P}{\mathbb{P}}
\newcommand{\A}{\mathbb{A}}
\def\Jac{\mathrm{Jac}}
\def\diam{{\mathrm{diam}}}
\newcommand{\Aut}{\mathsf{Aut}}
\newcommand{\GL}{{\sf{GL}}}
\newcommand{\m}{\mathscr{m}}
\theoremstyle{plain}
\newtheorem{thm}{Theorem}[section]
\newtheorem{cor}[thm]{Corollary}
\newtheorem{pro}[thm]{Proposition}
\newtheorem{lem}[thm]{Lemma}
\newtheorem{mthm}{Theorem}
\newtheorem{que}[thm]{Question}
\theoremstyle{definition}
\newtheorem{rem}[thm]{Remark}
\numberwithin{equation}{section}       
\numberwithin{equation}{section}       
\title[Rigidity for  polynomial automorphisms of $\C^2$]{Some rigidity results for 
 polynomial automorphisms of $\C^2$}
\date{November 15, 2024}
\author{Serge Cantat}
\address{Serge Cantat, IRMAR, Campus de Beaulieu,
b\^atiments 22-23
263 avenue du G\'en\'eral Leclerc, CS 74205
35042  RENNES C\'edex, France}
\email{serge.cantat@univ-rennes1.fr}
 \author{Romain Dujardin}
\address{Romain Dujardin,  Sorbonne Universit\'e, CNRS, Laboratoire de Probabilit\'es, Statistique  et Mod\'elisation  (LPSM), F-75005 Paris, France}
\email{romain.dujardin@sorbonne-universite.fr}
\thanks{{\footnotesize{The research activities of the authors  are partially funded by the European Research Council (ERC GOAT 101053021). The authors benefited from the support of the French government "Investissements d'Avenir" program integrated to France 2030 (ANR-11-LABX-0020-01).}}
}
\begin{document}

\setlength{\parskip}{.2em}
\setlength{\baselineskip}{1.26em}   

\maketitle

\begin{abstract}
We prove several new rigidity results for automorphisms of $\C^2$ with positive entropy. 
A first result is that 
a complex  slice of the (forward or backward) Julia set is never a smooth, or even rectifiable, curve.
We also show that such an automorphism cannot preserve a global holomorphic foliation, 
nor  a real-analytic foliation with complex leaves.   

These results are used to show  that under mild assumptions, 
two real-analytically conjugate automorphisms  are polynomially conjugate. 

For mappings defined over a number field, we also study the fields of definition 
 of multipliers of saddle periodic orbits.  
\end{abstract}

\setcounter{tocdepth}{1}
\tableofcontents

\section{Introduction}

\subsection{Rigidity problems} 
In our previous paper~\cite{conjugate}, answering a question of Friedland and Milnor~\cite{friedland-milnor}, 
we established the following result: 
if two polynomial automorphisms $f$ and $g$ of $\C^2$ 
of positive entropy are conjugated by a biholomorphism of $\C^2$, then they are conjugate 
in the group $\Aut(\C^2)$ of polynomial automorphisms. In~\cite{friedland-milnor}, the authors study 
  more generally the following rigidity problem: what can be said when $f$ and $g$ are conjugated by a real   diffeomorphism? 
  They prove  that if $f$ and $g$ are 
complex Hénon maps of degree 2 which are conjugated by a real  $C^1$ diffeomorphism then 
$f$ is conjugated to $g$ or $\overline g$ in $\Aut(\C^2)$, where $\overline g$ is obtained from $g$ by 
applying   complex conjugation to the coefficients.   
 Our first goal will be to solve the real-analytic version of this problem under a generic hypothesis. 
 
\begin{mthm}\label{mthm:conjugate_real}
Let $f$ and $g$ be      polynomial automorphisms of  $\C^2$ of positive entropy, which are 
conjugated by a real-analytic 
diffeomorphism $\varphi:\C^2\to \C^2$. Assume that $f$ admits a saddle periodic point $p$, of some period $n\geq 1$,  at which the two eigenvalues of $df^n_p$ are both non-real.
Then  $f$ is conjugate to $g$ or $\overline g$ in $\Aut(\C^2)$. 
\end{mthm}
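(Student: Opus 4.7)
The goal is to show that, possibly after replacing $(g,\varphi)$ by $(\overline g,\overline\varphi)$, the conjugacy $\varphi$ is in fact holomorphic, and then to invoke the main theorem of \cite{conjugate}, which handles biholomorphic conjugacies.

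\emph{Step 1 (Koenigs dichotomy on $W^u(p)$).} Let $\lambda_1,\lambda_2$ denote the eigenvalues of $df^n_p$, with $|\lambda_1|>1>|\lambda_2|$ and both non-real. The unstable manifold $W^u(p)$ for $f^n$ is holomorphically parametrised by $\C$ via a linearization $\psi_f$ with $f^n\circ\psi_f(z)=\psi_f(\lambda_1 z)$; likewise there is a linearization $\psi_g:\C\to W^u(\varphi(p))$ for the corresponding eigenvalue $\mu_1$ of $dg^n_{\varphi(p)}$. The composition $h:=\psi_g^{-1}\circ\varphi\circ\psi_f$ is a real-analytic diffeomorphism of $\C$ satisfying $h(\lambda_1 z)=\mu_1 h(z)$. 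Expanding $h(z)=\sum_{j,k\geq 0}a_{jk}z^j\bar z^k$, the functional equation forces $\lambda_1^j\,\overline{\lambda_1}^k=\mu_1$ for each nonzero coefficient; taking moduli, all non-vanishing monomials share a common total degree $j+k$, which must equal $1$ since $h$ is a local diffeomorphism at $0$. Non-reality of $\lambda_1$ prevents $z$ and $\bar z$ from coexisting, so $h(z)=az$ or $h(z)=a\bar z$, and $\varphi|_{W^u(p)}$ is holomorphic or antiholomorphic. Replacing $(g,\varphi)$ by $(\overline g,\overline\varphi)$ if needed, assume the holomorphic case.

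\emph{Step 2 (Globalization via the paper's rigidity results).} Decompose $d\varphi=A+B$ into $\C$-linear and $\C$-antilinear parts; $\varphi$ is holomorphic iff the real-analytic tensor $B$ vanishes identically. By Step 1, $T_qW^u(p)\subseteq\ker B_q$ for every $q\in W^u(p)$. Assume, for contradiction, $B\not\equiv 0$. If $B$ has rank $2$ on some nonempty open subset of $\C^2$, then the proper real-analytic subset $S:=\{\rank B\leq 1\}$ contains $W^u(p)$, hence also its closure $J^+$. For a generic complex line $L$, the slice $J^+\cap L$ is then contained in the real-analytic $1$-dimensional set $S\cap L$ and is consequently rectifiable, contradicting the non-rectifiability theorem stated in the introduction. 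Otherwise $\rank B\leq 1$ everywhere on $\C^2$, and $\ker B$ defines a real-analytic complex line distribution on the open set $\{B\neq 0\}$. The $\C$-antilinear part of the identity $d\varphi_{f(q)}\circ df_q=dg_{\varphi(q)}\circ d\varphi_q$ reads $B_{f(q)}\circ df_q=dg_{\varphi(q)}\circ B_q$, whence $df(\ker B)=\ker B$; thus $\ker B$ integrates to an $f$-invariant real-analytic foliation with complex leaves, contradicting the second rigidity result of the introduction. Both alternatives failing, $B\equiv 0$ and $\varphi$ is holomorphic.

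\emph{Step 3 (Conclusion).} With $\varphi$ holomorphic, the main theorem of \cite{conjugate} gives a polynomial conjugacy between $f$ and $g$; accounting for the Step~1 normalization yields the stated dichotomy, namely that $f$ is conjugate in $\Aut(\C^2)$ to $g$ or to $\overline g$.

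\emph{Main obstacle.} The crux is Step 2, which organises the dichotomy between the two rigidity results of the introduction via a single algebraic decomposition of $d\varphi$. The delicate technical points are (i) handling the real-analytic singular locus $\{B=0\}$ when invoking foliation rigidity, so that $\ker B$ genuinely qualifies as a foliation in the theorem's sense, and (ii) ensuring that the containment $W^u(p)\subseteq S$ indeed upgrades to rectifiability of generic complex slices of $J^+$. The non-real eigenvalue hypothesis intervenes only in Step~1, precisely to make the Koenigs dichotomy sharp enough to exclude intermediate $\R$-linear behaviour of $h$.
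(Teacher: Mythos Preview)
Your Step~1 and overall strategy match the paper's. Step~2, however, has two gaps. In your rank-$2$ case, Theorem~\ref{thm:rectifiable} does not apply: it asserts that a Julia slice is never a rectifiable \emph{curve}, but a slice contained in a real-analytic arc may be a Cantor subset, which is not a curve at all (also, $\overline{W^u(p)}=J^-$, not $J^+$). The correct tool is Theorem~\ref{thm:dim3}(1), which directly excludes $J^-\subset S$ and thereby shows that your Case~1 never occurs.

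The more serious gap is your obstacle~(i): the locus $\{B=0\}$ need not be discrete, so $\ker B$ is not a priori a global foliation as required by Theorem~\ref{mthm:no_real_global_foliation}, and you do not justify integrability either. The missing ingredient is the stable side: apply the Koenigs dichotomy on $W^s(p)$ as well (valid since $\lambda_2\notin\R$). If $\varphi|_{W^s(p)}$ is antiholomorphic, then $T_qW^s(p)\subset\ker A_q$ along $W^s(p)$, so $\rank A\leq 1$ on an $\R$-Zariski dense set (Theorem~\ref{thm:dim3}), hence everywhere; since $A+B=d\varphi$ is invertible and $\ker A\cap\ker B=0$, this forces $\rank A=\rank B=1$ at \emph{every} point, so $\ker B$ is a global nowhere-singular complex line field, and integrability follows because $W^u(p)$ is an $\R$-Zariski dense integral submanifold. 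This is exactly the paper's case~(c), which it packages via the pullback complex structure $\cs'=\varphi^*\cs$: your $\ker B$ and $\ker A$ are the $(\pm 1)$-eigenspaces of $\cs'\cs$, and a minimal-polynomial argument over the $\R$-Zariski dense family of saddle points from Proposition~\ref{pro:dense_set_of_saddle_point} gives the global splitting $T\C^2=\ker A\oplus\ker B$ at once. The remaining sub-case ($\varphi|_{W^s(p)}$ holomorphic as well) also needs its own argument and is not covered by your outline.
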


An obvious approach to this rigidity problem would be to show that the set of 
conjugacy classes of the differential $df^n$ at periodic orbits of period $n$, for all $n\geq 1$,
 characterizes  an automorphism modulo conjugacy in $\Aut(\C^2)$ (possibly up to complex conjugation if the conjugacy class is considered in the real sense). This is an instance of the classical 
 \emph{multiplier rigidity problem}, itself part of the celebrated \emph{spectral rigidity problem}. 
For instance, a Hénon map $f$ of  degree $2$ 
 is indeed characterized by the real conjugacy class of $Df$ at its fixed points, which allows Friedland and Milnor to proceed with this case in~\cite[Thm 7.5]{friedland-milnor}.  But, despite recent advances in one-dimensional dynamics~\cite{ji-xie:multipliers,ji-xie:injective}, this problem is essentially untouched in our two-dimensional context, so we take a different path. 
 
Our proof of Theorem~\ref{mthm:conjugate_real} 
relies on a number of rigidity properties  \emph{concerning  a single map}, 
which are interesting for their own sake. 
In particular, we deal with the classical  problem of 
\emph{smoothness of the stable lamination}
which has been 
extensively studied in hyperbolic  dynamics, notably in connection with the classification of 
Anosov diffeomorphisms  and flows: see for instance~\cite[Chapter 9]{fisher-hasselblatt:hyperbolic_flows} for an introduction, and~\cite{Xu-Zhang} for a recent contribution in a holomorphic context. 
Another key step of the proof is  \emph{the  non-existence
of automorphisms with smooth Julia sets}, a problem that  was previously addressed by Bedford and Kim~\cite{BK1, BK2}.   The underlying 
philosophy behind these results  
is that  there is no ``integrable" automorphism, which would play a role analogous to 
 monomial, Chebychev or Lattès  mappings in one-dimensional dynamics.  


\subsection{Invariant foliations} Let us be more specific. From now on, a polynomial automorphism of $\C^2$ of positive topological entropy will be called \emph{loxodromic}; such an automorphism $f$ is conjugate, in $\Aut(\C^2)$, to a composition of Hénon maps. We denote the  forward and backward Julia sets of $f$ by $J^+$ and $J^-$, respectively. The closure of the union of all saddle periodic orbits of $f$ is denoted by $\jstar$. It is a subset of the  Julia set $J := J^+\cap J^-$. 
The  stable (resp. unstable) manifold of any saddle periodic point is an immersed Riemann surface, biholomorphic to $\C$, which is 
dense in $J^+$ (resp. $J^-$) and endows  $J^+$ (resp.\ $J^-$) with some kind of laminar structure. When $f$ is hyperbolic (that is, when $J$ is hyperbolic  as an $f$-invariant set)  these are actual  
laminations by Riemann surfaces. Most of these facts are due to Bedford and Smillie, and we refer to the original papers~\cite{bs1, bs2, bls} for details. 
 
 We shall say that $J^+$ is subordinate to a foliation if there is a neighborhood $U$ of $J^+$ and a foliation $\mathcal F$ of $U$ by Riemann surfaces such that $J^+$ is saturated by $\mathcal F$: if $x\in J^+$, the leaf $\mathcal F(x)$ is contained in $J^+$. We allow foliations to have isolated singularities. 
 Another possible definition is 
 that every disk contained in a stable manifold is contained in a leaf of $\mathcal F$
(see~\S\ref{par:invariance} for more details). 
 
\begin{mthm}\label{mthm:no_real_global_foliation}
If $f$ is a loxodromic automorphism of $\C^2$, then $J^+$  (resp. $J^-$)
cannot be subordinate to a global real-analytic (in particular to a holomorphic) foliation.  
\end{mthm}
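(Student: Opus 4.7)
The plan is to argue by contradiction: assume $J^+$ is subordinate to a global real-analytic foliation $\mathcal F$ with complex curve leaves and isolated singularities. The overarching strategy is first to promote $\mathcal F$ to an $f$-invariant foliation using the density of saddle periodic points in $\jstar$, and then to derive a contradiction from the rigid local model that such an $\mathcal F$ must realize near a saddle.

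For any saddle periodic point $p$, the local stable manifold $W^s_{\mathrm{loc}}(p)$ is a complex disk of real dimension $2$ contained in $J^+$; by subordination it lies in the leaf of $\mathcal F$ through $p$, and equality of real dimensions forces the two to coincide on a neighborhood of $p$. Hence $T_p\mathcal F$ equals the stable Oseledec direction $E_p^s$. The same reasoning applied to $f^*\mathcal F$, which also subordinates $J^+=f^{-1}(J^+)$, shows that $\mathcal F$ and $f^*\mathcal F$ have matching tangent planes on a full neighborhood of $p$. Since two real-analytic foliations of $\C^2$ that agree on a nonempty open set coincide by real-analytic continuation (applied to their local defining $1$-forms), $\mathcal F = f^*\mathcal F$, i.e., $\mathcal F$ is globally $f$-invariant.

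Next I would extract a local normal form at a well-chosen saddle fixed point of some iterate $f^n$ with non-resonant multipliers, whose existence follows from the density of saddle periodic points and the variability of their multipliers for a loxodromic map. Sternberg--Poincar\'e linearization conjugates $f^n$ to $(x,y)\mapsto(\lambda_u x,\lambda_s y)$ on a bidisk around $0$, so the leaves of $\mathcal F$ become real-analytic complex curves tangent to $\{x=0\}$ at $0$ and invariant under this linear map. Writing a nearby leaf as a graph $x=\phi_c(y)$ with $\phi_c(0)=c$, invariance reads $\phi_{\lambda_u c}(y)=\lambda_u\phi_c(\lambda_s^{-1}y)$; comparing Taylor coefficients in $y$ and using non-resonance kills each coefficient in turn, forcing $\phi_c(y)\equiv c$. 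Thus $\mathcal F$ is the product foliation $\{x=\mathrm{const}\}$ in the linearizing chart, and in particular the unstable manifold $W^u(p)=\{y=0\}$ is transverse to $\mathcal F$ on a punctured neighborhood of $p$.

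Finally I would convert this local product structure into a global contradiction using the non-rectifiability of complex slices of the Julia set proved earlier in the paper. The real-analytic holonomy of $\mathcal F$ from $W^u(p)$ to a nearby complex line transversal $\tau$ is a real-analytic diffeomorphism identifying the complex slice $W^u(p)\cap J^+$ with the complex slice $\tau\cap J^+$, and conjugating the $f^n$-action on $W^u(p)\cap J^+$ (which, in the linearizing chart, is multiplication by $\lambda_u$ on the $x$-axis) to a real-analytic contraction on $\tau\cap J^+$ fixing the image of $p$. Iterating this conjugacy one obtains a real-analytic linearization of the accumulation of $\tau\cap J^+$ at the fixed point, which yields a rectifiable model of the germ of this slice --- in direct contradiction with the non-rectifiability theorem. \textbf{Main obstacle.} The delicate point is this final rigidification step: turning the holonomy-induced real-analytic conjugacy into a contradiction with non-rectifiability requires careful control of the geometry of the slice at all scales down to $p$, and may require supplementing the argument with $f$-equivariance of the transverse measure of the Green current $T^+$, as well as handling saddles where Sternberg linearization fails (by choosing a generic saddle via density, or by running the argument directly on a resonant normal form).
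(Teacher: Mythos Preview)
Your Step~3 has a genuine gap that I do not see how to close. In the Sternberg chart you obtain $J^+\cap W^u(p)$ as a $\lambda_u$-self-similar subset of the $x$-axis, and you want this to force rectifiability of the slice. But self-similarity under multiplication by $\lambda_u$ is a \emph{tautology}: for any loxodromic $f$ and any saddle $p$, the set $(\psi^u_p)^{-1}(J^+)$ is always $\lambda_u$-invariant, simply because $f^n$ acts linearly in the unstable parametrization. Transporting by the holonomy $h\colon W^u(p)\to\tau$ gains nothing either, since $h$ is a real-analytic diffeomorphism and such maps preserve (non-)rectifiability. So the ``real-analytic linearization of the accumulation of $\tau\cap J^+$'' you propose is just the linear model that always exists, and gives no contradiction. (Step~1 also has a gap: sharing the single leaf $W^s_\loc(p)$ does not make $\mathcal F$ and $f^*\mathcal F$ agree on an open set; one needs $\R$-Zariski density of $J^+$ in a transversal, which the paper proves separately and which \emph{fails} precisely when $f$ is unstably real --- this is why the paper splits into cases.)

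The paper's route is quite different: it reduces to a \emph{holomorphic} foliation and then to Brunella's algebraic theorem. If $f$ is not unstably real, a Ghys-type argument (Lemma~\ref{lem:ghys}) shows that the $\mathcal F$-holonomy between holomorphic transversals is holomorphic along every stable manifold; since $J^+_\tau$ is $\R$-Zariski dense in $\tau$, real-analyticity propagates this and $\mathcal F$ is holomorphic (Proposition~\ref{pro:analytic_foliation1}). If $f$ is unstably real, $J^+$ lies in a real-analytic Levi-flat hypersurface, and Cartan's theorem extends the Levi foliation to a local holomorphic one (Proposition~\ref{pro:analytic_foliation2}). In either case one then shows (Lemma~\ref{lem:foliation_G+}) that this holomorphic foliation coincides with the B\"ottcher foliation $\mathcal F^+$ on $\C^2\setminus K^+$, hence extends to an algebraic foliation of $\P^2$, contradicting Brunella. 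Theorem~\ref{mthm:no_unstably_linear} (non-existence of unstably linear maps) enters only to rule out a degenerate branch in this chain, not via a direct rectifiability argument on a slice.
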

 

We could also formulate this result  as the non-existence of
an $f$-invariant  real-analytic foliation with complex leaves
(see Remark~\ref{rem:invariant_real_foliation}).  
Brunella~\cite{brunella}   proved Theorem~\ref{mthm:no_real_global_foliation} 
when the global foliation is defined by an algebraic $1$-form, and his theorem is actually 
  a key step in  the proof. The chain of arguments runs as follows: 
\begin{center}
no invariant algebraic foliation (Brunella~\cite{brunella}) $\leadsto$ no invariant holomorphic foliation (Theorem~\ref{thm:no_invariant_global_foliation}) $\leadsto$ no invariant real-analytic foliation (Theorem~\ref{mthm:no_real_global_foliation}).
\end{center}

Theorem~\ref{mthm:no_real_global_foliation}  provides 
 a partial answer to Question~31 in~\cite{henon-problem-list}, which asks whether $J^+$ can be  \emph{locally} subordinate to a  
holomorphic foliation. 
For instance,  for a dissipative and hyperbolic map, the stable lamination always admits a local extension to a $C^{1+\e}$ foliation with complex leaves
   (see \cite[Lem. 5.3]{lyubich-peters}).  In \S~\ref{subs:local_subordination},
   we show in particular  that for an automorphism with disconnected Julia set
  (e.g. a horseshoe), such a foliation is never holomorphic, nor even real analytic,
   in a neighborhood of $\jstar$ 
   (see~Theorem~\ref{thm:no_real_global_foliation}).

\subsection{Smooth Julia slices}\label{par:intro_julia_slices}  In one variable, it was shown by Fatou~\cite{fatou:3e_memoire}
that if the Julia set is contained in a curve then it is contained 
in a circle, and that in this case 
it is either a Cantor set, an arc of the circle, or the circle itself; and these last two cases correspond to integrable maps. Our study leads to a similar problem. Indeed, along the proof  of Theorem~\ref{mthm:no_real_global_foliation}
we   have to consider a local slice of $J^+$ by some holomorphic transversal 
(e.g. by some disk in the unstable manifold of a saddle point) and to study the possibility that such a slice is contained in a smooth curve.  Such a slice  is a kind of relative to a one-dimensional Julia set.

To state an analogue of Fatou's result in our situation, recall that  
if $p$ is a saddle periodic point of period $n$, then its unstable manifold $W^u(p)$ is  
biholomorphic to $\C$. Thus, we can fix a parametrization $\psi^u_p:\C\to W^u(p)$. If $\lambda^u$ denotes the unstable eigenvalue of 
$df^n_p$, then $f^n\circ \psi^u_p(\zeta) =\psi^u_p(\lambda^u\zeta)$.  
The unstable Lyapunov exponent  of $p$
 is by definition $\chi^u(p) = \unsur{n} \log \abs{\lambda^u(p)}$.

It is not difficult to show that if a local holomorphic 
slice of $J^+$ is contained   in a $C^1$ curve, then  for any saddle periodic point $p$,
$(\psi^u_p)\inv(J^+)$ is contained in   a line (\footnote{There is a  subtle point   about 
the exact definition of  a local slice of $J^+$, so we are abusing slightly here, 
see \S~\ref{subs:transversals} for more details.});
likewise if a local slice is a $C^1$ curve, then  $(\psi^u_p)\inv(J^+)$ \textit{is}     a line. 
We say that $f$ is \emph{unstably real} in the former case, and  \emph{unstably linear} in the latter. 
Thus, in the previous analogy,
 unstably linear automorphisms correspond to integrable one variable polynomials. 

\begin{mthm}\label{mthm:no_unstably_linear} 
Unstably linear loxodromic automorphisms do not exist among loxodromic automorphisms of $\C^2$. 
\end{mthm}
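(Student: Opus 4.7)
\emph{Strategy.} We argue by contradiction and reduce to Theorem~\ref{mthm:no_real_global_foliation}. Assume that $f$ is unstably linear. Fix a saddle periodic point $p$ of period $n$ with unstable multiplier $\lambda := \lambda^u(p)$ and parametrization $\psi^u_p : \C \to W^u(p)$, and set $L := (\psi^u_p)^{-1}(J^+)$. By hypothesis, $L$ is a line in $\C$; since $p \in J^+$ and $\psi^u_p(0) = p$, this line passes through the origin. The equivariance $\psi^u_p(\lambda\zeta) = f^n(\psi^u_p(\zeta))$ together with the $f$-invariance of $J^+$ shows that $L$ is preserved by $M_\lambda(\zeta) = \lambda\zeta$. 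A line through $0$ in $\C$ is preserved by multiplication by $\lambda$ precisely when $\lambda \in \R$, so $\lambda \in \R$; after rotating the $\zeta$-parameter and passing to $f^2$ if necessary, we may assume $L = \R$ and $\lambda > 1$.

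\emph{From unstable linearity to a real-analytic foliation.} The curve $\Gamma_p := \psi^u_p(\R) = J^+ \cap W^u(p)$ is a real-analytic arc transverse to the stable lamination of $J^+$. The heart of the proof is to upgrade the stable lamination to a real-analytic foliation by complex leaves of a neighborhood of $J^+$, with $J^+$ subordinate to it. Concretely, one would like to show that the assignment
\[
\zeta \in \R \;\longmapsto\; W^s(\psi^u_p(\zeta))
\]
(the germ of a holomorphic curve at $\psi^u_p(\zeta)$) depends real-analytically on $\zeta$, and hence extends holomorphically to some open neighborhood of $\R$ in $W^u(p) \cong \C$. Complexification then yields a holomorphic foliation of a neighborhood of $W^u(p)$ in $\C^2$, with leaves through $\Gamma_p$ equal to the stable manifolds. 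Propagating via $f$-invariance and the density of $W^u(p)$ in $J^-$ (together with a $\lambda$-lemma to patch foliations near distinct unstable manifolds of other saddle orbits, for which the same linear-slice property holds), we assemble a global real-analytic foliation by complex leaves defined on a neighborhood of $J^+$ to which $J^+$ is subordinate.

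\emph{Conclusion.} Such a foliation is forbidden by Theorem~\ref{mthm:no_real_global_foliation}, which yields the desired contradiction.

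\emph{Main obstacle.} The central difficulty is establishing the real-analytic dependence of $W^s(\psi^u_p(\zeta))$ on $\zeta \in \R$. In general, hyperbolic theory guarantees only H\"older or at best $C^{1+\varepsilon}$ regularity of the stable lamination in the transverse direction, which is far weaker than what is needed. The upgrade to real-analyticity must be extracted from the specific rigidity imposed by the \emph{linear} (rather than merely smooth) structure of $L$ and by its $M_\lambda$-equivariance: the dilation $M_\lambda$ acts by a real scalar on the transversal $\R$ with a dense orbit of saddle points, and one must leverage this to force the stable holonomy along $\Gamma_p$ to be real-analytic. This is the analytic analogue of Fatou's classical rigidity for one-variable Julia sets contained in smooth curves, and is the point where the full strength of the hypothesis of linearity (and not only $C^1$ smoothness) of the slice is used.
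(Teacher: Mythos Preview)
Your proposal has two serious problems.

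\textbf{Circularity.} In this paper, Theorem~\ref{mthm:no_real_global_foliation} is proved \emph{using} Theorem~\ref{mthm:no_unstably_linear}, not the other way around. The introduction already flags this (``This is an essential tool towards Theorem~\ref{mthm:no_real_global_foliation}''), and the dependency is explicit in the proofs: Lemma~\ref{lem:foliation_G+} ends by invoking Theorem~\ref{thm:no_unstably_linear}, and Proposition~\ref{pro:analytic_foliation2} (which handles precisely the unstably real case of Theorem~\ref{mthm:no_real_global_foliation}) concludes by saying ``hence unstably linear, which is impossible by Theorem~\ref{thm:no_unstably_linear}.'' So reducing Theorem~\ref{mthm:no_unstably_linear} to Theorem~\ref{mthm:no_real_global_foliation} is circular as the paper stands.

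\textbf{The central step is not carried out.} Even ignoring circularity, your argument hinges on showing that $\zeta\in\R\mapsto W^s(\psi^u_p(\zeta))$ is real-analytic, and you yourself label this the ``main obstacle'' without resolving it. Linearity of the slice $(\psi^u_p)^{-1}(J^+_{W^u(p)})$ gives no direct mechanism for upgrading the transverse regularity of the stable lamination from H\"older to real-analytic; the $M_\lambda$-equivariance you cite is a single real dilation on $\R$, which is far too little structure to force analyticity of a family of leaves. Moreover, you implicitly assume that $J^+$ carries a stable lamination near $\Gamma_p$, but this is only known a priori under hyperbolicity, which is exactly what the paper has to \emph{establish} in the course of its proof.

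\textbf{What the paper actually does.} The route taken is completely different and does not pass through foliations. One shows (Lemma~\ref{lem:bilan_unstably_linear}) that unstable linearity forces all unstable multipliers to be $\pm d^n$ and that $f$ is unstably connected with $\vert\jac(f)\vert\leq 1$. Then one proves that $f$ is quasi-expanding with factor $d$ (Lemma~\ref{lem:expansion_factor}), and by analyzing the order function $\tau$ on $\jstar$ (Lemmas~\ref{lem:tau_bounded} and~\ref{lem:tau_bounded_1}) that $\jstar=\jstar_1$. The hyperbolicity criteria of~\cite{topological} then force $f$ to be uniformly hyperbolic. But Corollary~\ref{cor:hyperbolic_unstably_linear}, which rests on the Bedford--Kim multiplier obstruction~\cite[Prop.~5.1]{BK1} (via Proposition~\ref{pro:unstably_linear}) together with the existence of an attracting point for hyperbolic maps with connected $J$~\cite{connex}, rules out hyperbolic unstably linear maps. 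This contradiction is the proof.
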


This is an essential tool towards Theorem~\ref{mthm:no_real_global_foliation}. 
A weaker result was obtained by Bedford and Kim in~\cite{BK1} (see also~\cite{BK2}), 
who showed that  $J^+$ itself cannot 
be a smooth 3-manifold. The contradiction   in~\cite{BK1} comes from a global  topological argument together with 
 the following   multiplier rigidity statement: for 
a generalized Hénon map of degree $d$ it is not possible that  
 $d-1$ of its $d$ fixed points are saddles with the same unstable eigenvalue (for technical reasons, in \cite{BK1} it   needs to be  assumed that $f$ is a composition of at least 3 Hénon maps). 
This result also plays a key role in our proof. 
 Another    important tool in the proof of Theorem~\ref{mthm:no_unstably_linear}
   is the theory of quasi-expansion developed by Bedford and Smillie in~\cite{bs8}. More precisely, the results of~\cite{bs8} show that an unstably linear map is quasi-expanding; adapting an argument   from~\cite{BSR},
   and using the hyperbolicity criteria of~\cite{topological}, 
 we obtain  that it is actually uniformly hyperbolic.  Then, a  
  generalization  of~\cite{BK1} leads to the desired contradiction.

%
%
%
%
%

Coming back to Theorem~\ref{mthm:conjugate_real}, the  bulk of the proof  is to show that 
 $\varphi$ must be holomorphic or anti-holomorphic;  then  
we invoke our previous result~\cite{conjugate} to conclude.
Assuming, by way of contradiction, that 
$\varphi$ is neither holomorphic nor anti-holomorphic, then,  pulling back the complex structure   by $\varphi$   gives an ``exotic'' $f$-invariant real-analytic 
complex structure on $\C^2$. 
Analyzing this complex structure at saddle points 
ultimately produces an $f$-invariant real-analytic foliation with complex leaves, 
which  contradicts Theorem~\ref{mthm:no_real_global_foliation}, and we are done.

\subsection{The multiplier field}  
The proof of Theorem~\ref{mthm:conjugate_real} requires also a lemma  
 on the   multipliers associated to periodic orbits shadowing a homoclinic orbit: see Theorem~\ref{thm:homoclinic_multipliers}. This statement is  
inspired by a one dimensional result due to Eremenko-Van Strien~\cite{eremenko-vanstrien} and Ji-Xie~\cite{ji-xie:multipliers}, which was used by Huguin~\cite{huguin} to characterize rational maps on $\P^1(\C)$ whose multipliers belong to a fixed number field. 
Adapting  Huguin's argument, we obtain the following (see Theorems~\ref{thm:multipliers_huguin1} and~\ref{thm:multipliers_huguin2}):

\begin{mthm}\label{mthm:multipliers_huguin}
Let $f\in \Aut(\C^2)$ be a loxodromic automorphism. 
Assume that
\begin{itemize}
\item either $f$ is hyperbolic and admits two saddle periodic points $p$ and $p'$ with distinct Lyapunov exponents $\chi^u(p)\neq \chi^u(p')$;
\item or $f$ admits a saddle periodic point whose unstable 
Lyapunov exponent is larger than that of the maximal entropy measure.
\end{itemize}
Then the unstable (resp. stable) multipliers of $f$ cannot lie in a fixed number field. 
\end{mthm}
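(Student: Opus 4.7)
The strategy is to adapt Huguin's one-dimensional argument~\cite{huguin}, with Theorem~\ref{thm:homoclinic_multipliers} playing the role of its one-variable counterpart from~\cite{eremenko-vanstrien, ji-xie:multipliers}. Arguing by contradiction, assume that every unstable multiplier of a saddle periodic point of $f$ lies in a fixed number field $K$. The target is to show that every saddle periodic point $p$ must then satisfy
$$\chi^u(p) \;=\; \chi^u(\mu),$$
where $\mu$ is the measure of maximal entropy of $f$. Hypothesis (i) directly contradicts this, since it provides saddles with distinct unstable Lyapunov exponents, and so does hypothesis (ii), which provides a saddle with $\chi^u(p) > \chi^u(\mu)$. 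The parallel statement for stable multipliers follows by applying the unstable one to $f\inv$.

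\textbf{Main argument.} Fix a saddle periodic point $p$ of period $n$ with unstable multiplier $\lambda = \lambda^u(p) \in K$, together with a transverse homoclinic intersection $h \in W^s(p) \cap W^u(p)$. Theorem~\ref{thm:homoclinic_multipliers} produces a sequence of saddle periodic points $q_m$, of period $nm + N_0$, shadowing the homoclinic loop $m$ times, with unstable multipliers
$$\lambda^u(q_m) \;=\; C\,\lambda^{m}\bigl(1 + \varepsilon_m\bigr), \qquad \varepsilon_m \to 0,$$
for a constant $C \in \C^\ast$ depending only on the homoclinic data. In particular $\mathrm{per}(q_m)\inv \log|\lambda^u(q_m)| \to \chi^u(p)$, while every $\lambda^u(q_m)$ lies in the fixed bounded-degree extension $K/\Q$. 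Combining this with a linear-in-period height bound of the form $\widehat h(\lambda^u(q)) \leq A\,\mathrm{per}(q) + B$, which is standard over number fields since $\lambda^u(q)$ is an algebraic function of the coefficients of $f$ with well-controlled complexity, one can apply Northcott's finiteness theorem to normalized multiplier sequences in $K$ and a Bilu-type equidistribution statement, exactly as Huguin does, to force $\chi^u(p)$ to equal the canonical value $\chi^u(\mu)$ independent of the chosen saddle.

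\textbf{Main obstacle.} The delicate step is translating the analytic asymptotic $\lambda^u(q_m) \approx C\lambda^m$ into the arithmetic rigidity conclusion $\chi^u(p) = \chi^u(\mu)$. In one variable this is achieved by pairing the Ji--Xie height estimate with Bilu equidistribution on the moduli space of rational maps of fixed degree; in our two-dimensional setting, one needs the analogues of (a) a linear-in-period bound on heights of multipliers of $f$, and (b) a suitable equidistribution principle for these multipliers relative to a dynamically defined measure (morally the bifurcation current of $f$). Theorem~\ref{thm:homoclinic_multipliers} ensures the availability of sufficiently many distinct shadowing periodic orbits to feed the Diophantine machinery; verifying that both arithmetic ingredients transfer faithfully from the one-dimensional setting is where the real work lies.
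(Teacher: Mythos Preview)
Your overall strategy---deduce a rigidity of Lyapunov exponents from the multiplier hypothesis and then contradict assumption (i) or (ii)---is correct, and Theorem~\ref{thm:homoclinic_multipliers} is indeed the right substitute for the Eremenko--Van~Strien/Ji--Xie input. But the arithmetic mechanism you describe is not the one that works, and your ``Main obstacle'' paragraph is right to be worried.

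The paper does \emph{not} apply Northcott or Bilu to the multipliers $\lambda^u(q_m)$ themselves. After first reducing to the case where $f$ is defined over a number field $L$ containing all multipliers, one feeds the Zariski-dense sequence $(q_n)$ from Theorem~\ref{thm:homoclinic_multipliers} into Yuan's arithmetic equidistribution theorem for \emph{points of small canonical height on $\C^2$}: the Galois orbits ${\rm Gal}(\overline\Q/L)\cdot q_n$ equidistribute towards $\mu_f$. The key observation is then elementary: since $\lambda^u(q_n)\in L$, every Galois conjugate of $q_n$ has the \emph{same} unstable multiplier, so the average unstable exponent of the uniform measure $\mu_n$ on the Galois orbit is exactly $\chi^u(q_n)\to\chi^u(p)$. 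In the hyperbolic case, $z\mapsto\log\norm{Df_z\rest{E^u}}$ is continuous on $J$, whence $\chi^u(\mu_n)\to\chi^u(\mu_f)$ and one obtains $\chi^u(p)=\chi^u(\mu_f)$. Without hyperbolicity one has only upper semicontinuity of the top exponent, giving merely $\chi^u(p)\leq\chi^u(\mu_f)$; this is precisely why hypothesis~(ii) demands the \emph{strict} inequality $\chi^u(p)>\chi^u(\mu_f)$. Your plan targets equality in both cases, which is more than the argument actually yields.

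There is a second substantial gap: nothing in the hypotheses forces $f$ to be defined over $\overline\Q$, and the two-dimensional analogue of McMullen's theorem (which Huguin uses to reduce to that case) is not available. The paper handles this by specialization: view $f$ as the generic fibre of a family $(f_s)_{s\in S}$ over $\overline\Q$, show that algebraicity of a multiplier at the generic point forces it to be \emph{constant} on the family, and then---using hyperbolicity in case~(i), and a stability argument combined with the equidistribution of saddle exponents~\cite{bls2} in case~(ii)---check that the hypotheses persist on a neighbourhood of the base point, so one may specialize to an $\overline\Q$-parameter and invoke the first step. This reduction is where much of the work lies; your ``linear-in-period height bound'' does not replace it.
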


 To prove Theorem~\ref{mthm:multipliers_huguin}, we shall first assume that $f$ is defined over a number field and then provide specialization arguments to reduce the general case to this particular setting. These arguments, together with the ones from~\cite{DF}, are of independent interest. 
 Note that the first assumption of the theorem may also be replaced by ``$f$ is hyperbolic with a non totally disconnected Julia set'' (see Proposition~\ref{pro:examples_huguin1}).

The assumptions of Theorem~\ref{mthm:multipliers_huguin} are easily checkable 
in a perturbative setting, and it follows that its conclusion holds for 
 Hénon maps with sufficiently small Jacobian (see Theorem~\ref{thm:multipliers_henon}). 
 A delicate issue is that the one-dimensional result is \emph{not} true for Chebychev and monomial mappings, and we have to understand how this
 obstruction vanishes when going to two dimensions.


\subsection{Plan of the paper}  In Section~\ref{sec:totally_real}, 
we study unstably real and linear maps and prove Theorem~\ref{mthm:no_unstably_linear}.
Several natural questions are   discussed in \S~\ref{subs:questions_unstably_real}.
In Section~\ref{sec:rectifiable},  
we push this study one step further and  show that a 
  holomorphic slice of $J^+$ cannot  be a rectifiable curve. 
 A notable consequence of this study is that the unstable Hausdorff dimension of a dissipative  hyperbolic map with connected Julia set is always larger than 1 (see Corollary~\ref{cor:dimH}).
  
  Sections~\ref{sec:foliations} and~\ref{sec:real_analytic_foliations} are devoted to   Theorem~\ref{mthm:no_real_global_foliation}, as well as 
  some local versions of it.  
 In Section~\ref{sec:homoclinic} we prove Theorem~\ref{thm:homoclinic_multipliers} on the multipliers of   periodic orbits 
shadowing a homoclinic orbit, and Theorem~\ref{mthm:conjugate_real} is finally obtained  in Section~\ref{sec:conjugate_real}. 

Lastly, in
Section~\ref{sec:rational_multipliers} we  investigate the arithmetic properties of saddle point multipliers, and establish Theorem~\ref{mthm:multipliers_huguin}, as well as its application to perturbations of one-dimensional maps 
(Theorem~\ref{thm:multipliers_henon}). In Appendix~\ref{app:equidist}, 
we establish a stronger version of 
  the equidistribution theorem for saddle periodic points of~\cite{bls2}, which is necessary for Theorem~\ref{mthm:multipliers_huguin}.

%
%
 
\subsection{Notation} We use the standard notation and vocabulary of the field, as listed for instance in~\cite[\S 1]{bs6}  or \cite[\S 2.1]{tangencies}. For instance, given a loxodromic automorphism $f$ of
$\C^2$, we denote by  $G^+$ its dynamical Green function (or rate of escape function), and by $T^+=dd^cG^+$ the associated current (in~\cite{bs6}, the invariant currents are denoted by $\mu^\pm$ instead of $T^\pm$).  
The Jacobian determinant of an  automorphism $f$ of $\C^2$ is  constant:  $f$ is dissipative if $\vert\Jac(f)\vert < 1$, volume expanding if $\vert\Jac(f)\vert >1$, and conservative if  $\vert\Jac(f)\vert =1$.

By convention the Zariski  (resp. $\R$-Zariski) topology   
is the complex analytic (resp. real-analytic) Zariski topology. We sometimes refer to these 
simply as the (real) analytic topology.

\section{Unstably real  and unstably linear maps}\label{sec:totally_real}

\subsection{Transversals}\label{subs:transversals} 
Let $f\in\Aut(\C^2)$ be a loxodromic automorphism.  
 We say that a holomorphic disk $\Gamma$
 is a \emph{transversal to} $J^+$ if it satisfies one of the following equivalent conditions:
 \begin{enumerate}[(a)]
 \item $\Gamma\cap J^+ \neq\emptyset$ and $\Gamma \not\subset K^+$;
\item  $G^+\rest\Gamma$ is not harmonic;   
\item  $G^+\rest\Gamma$ vanishes and is not identically $0$. 
\end{enumerate}
It follows from the work of 
Bedford, Lyubich and Smillie~\cite[\S\S 8-9]{bls} that for any saddle periodic point $p$, $W^s(p)$ admits transverse intersections with $\Gamma$ 
(see~\cite[Lem. 5.1]{tangencies} and~\cite[Lem. 3.3]{DF} for more details). We set 
\begin{equation}
J^+_\Gamma = \supp(T^+\wedge [\Gamma])   = \supp(dd^c(G^+\rest\Gamma)) = 
\fr_\Gamma(K^+\cap \Gamma),
\end{equation}
where in the last equality $\fr_\Gamma$ refers to the boundary as a subset of $\Gamma$. 
Note that $J^+_\Gamma \subset   \Gamma\cap J^+$ but  this inclusion could be strict. Indeed, $\Gamma$ could a priori contain a disk 
that is entirely contained in $J^+$: such a disk would be part of $\Gamma\cap J^+$ but not of $J^+_\Gamma$.
 If in addition $\Gamma = \Delta^u_p$ is contained in  the unstable manifold of a saddle point $p$, we have (with the same caveat  for reverse inclusions)
\begin{equation}
J^+_{\Delta^u_p} \subset  \Delta^u_p\cap \jstar \subset\Delta^u_p\cap J^+.
\end{equation}
The first inclusion follows from the fact that $J^+_{\Delta^u_p}$ is the closure of the homoclinic intersections contained  in $\Delta^u_p$ (cf. \cite[Lem. 5.1]{tangencies}). Recall that 
a homoclinic intersection is a point of $W^s(p)\cap W^u(p)\setminus\set{p}$   and  that every homoclinic intersection is contained in $\jstar$ (see~\cite[Thm 9.9]{bls}).  

As for Julia sets in one complex variable, 
the sets $J^+_\Gamma$ vary  lower semicontinuously in the Hausdorff topology, in the following sense:

\begin{lem} Let $\Gamma$ be a transversal to $J^+$. 
If $(\Gamma_n)$ is a sequence of disks converging in the $C^1$ topology    
to $\Gamma$, then
\begin{enumerate}
\item  $J^+_\Gamma\subset \liminf_{n\to\infty} J^+_{\Gamma_n}$; 
\item $J^+\cap \Gamma \supset\limsup_{n\to\infty} J^+\cap {\Gamma_n}$.
\end{enumerate} \end{lem}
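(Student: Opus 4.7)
Both statements are basically instances of general semicontinuity principles; the only thing to be checked is that the $C^1$ convergence of the disks interacts correctly with the positive measures $T^+\wedge[\Gamma_n]$.

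For assertion (2) the plan is to argue directly from the closedness of $J^+$ and $\Gamma$. If $x\in \limsup_{n\to\infty} J^+\cap\Gamma_n$, there is a subsequence $x_{n_k}\in J^+\cap\Gamma_{n_k}$ with $x_{n_k}\to x$. Then $x\in J^+$ because $J^+$ is closed, and $x\in\Gamma$ because the $C^1$ convergence $\Gamma_n\to\Gamma$ forces any accumulation point of a sequence $(x_n)$, $x_n\in\Gamma_n$, to lie on $\Gamma$. This handles (2) with no analytic input beyond closedness.

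For assertion (1), the strategy is to realize $J^+_\Gamma=\supp(dd^c(G^+\rest\Gamma))$ as the support of a weak limit of the measures $dd^c(G^+\rest{\Gamma_n})$, and then invoke the standard fact that supports of positive measures are lower semicontinuous under weak convergence. Concretely, I would fix a parametrization $\psi:\disk\to\Gamma$ of a slightly smaller sub-disk and, using the $C^1$ convergence $\Gamma_n\to\Gamma$, choose $C^1$ parametrizations $\psi_n:\disk\to\Gamma_n$ with $\psi_n\to\psi$ in $C^1(\disk)$. Since $G^+$ is continuous and plurisubharmonic on $\C^2$, the pull-backs $u_n:=G^+\rond\psi_n$ are continuous subharmonic functions on $\disk$ converging uniformly on compact subsets to $u:=G^+\rond\psi$. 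Uniform (hence $L^1_\loc$) convergence of subharmonic functions gives weak convergence of the Laplacians, so $dd^c u_n \cvf dd^c u$ as positive measures on $\disk$.

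The last step is the general topological fact: if positive Radon measures $\mu_n$ converge weakly to $\mu$ on an open set $U$, then $\supp(\mu)\subset \liminf_n \supp(\mu_n)$. Indeed, if $x\in\supp(\mu)$, every neighborhood $V$ of $x$ has $\mu(V)>0$, so $\liminf_n \mu_n(V)\geq \mu(V)>0$, which forces $\supp(\mu_n)\cap V\neq\emptyset$ for all large $n$; hence $x$ is an accumulation point of a sequence of points of $\supp(\mu_n)$. Transporting this back to $\Gamma_n$ via $\psi_n$ and using once more that $\psi_n\to\psi$ uniformly, one obtains $J^+_\Gamma\subset\liminf_n J^+_{\Gamma_n}$ inside $\C^2$.

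The main (mild) obstacle is the step linking $C^1$ convergence of the disks to $L^1_\loc$ convergence of $G^+\rond\psi_n$: this requires only the continuity of $G^+$, so there is nothing deep here, but one has to choose the parametrizations $\psi_n$ carefully enough that they converge in $C^1$ on a common domain, which is exactly what is built into the hypothesis of $C^1$ convergence of embedded disks.
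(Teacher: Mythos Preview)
Your proof is correct and follows essentially the same idea as the paper's. For (2) the arguments are identical. For (1), the paper argues slightly more directly: if $z\in J^+_\Gamma$, then $G^+\rest{U}$ is not harmonic on any small disk $U\subset\Gamma$ around $z$; lifting $U$ to $U_n\subset\Gamma_n$ and using the continuity of $G^+$, the uniform limit of harmonic functions being harmonic forces $G^+\rest{U_n}$ to be non-harmonic for large $n$, so $J^+_{\Gamma_n}\cap U_n\neq\emptyset$. Your route via weak convergence of the Laplacians $dd^c u_n\cvf dd^c u$ and lower semicontinuity of supports is an equivalent, slightly more measure-theoretic packaging of the same fact; the only extra care needed (which you acknowledge) is that the parametrizations $\psi_n$ can be taken holomorphic so that $u_n$ is genuinely subharmonic, but this is automatic once one writes the $\Gamma_n$ locally as graphs over a common coordinate.
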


\begin{proof}Property~(1) follows from the continuity of $G^+$: for any $z\in J^+_\Gamma$, 
pick a small disk $U\subset \Gamma$ around $z$, then by definition $G^+\rest{U}$ is not harmonic; so 
if we lift $U$ to some disk $U_n\subset \Gamma_n$ then for large $n$, $G^+\rest{U_n}$ 
cannot be harmonic and we are done. Note that we may extend this argument to the case where 
$(\Gamma_n)$  converges to $\Gamma$ with some finite multiplicity (i.e. in the sense of analytic sets). 
Property~(2)  follows directly from $J^+$ being closed.   
\end{proof}

\subsection{Unstably real automorphisms}
 Let $p$ be a saddle periodic 
point of $f$, of exact period $n$. As in Section~\ref{par:intro_julia_slices}, we denote by $\psi_p^u:\C\to W^u(p)$ a parametrization  of its unstable manifold 
by an injective entire curve that maps $0$ to $p$. If a unit unstable vector $e^u_p$ is given, one may normalize $\psi_p^u:\C\to W^u(p)$ by fixing some $\alpha\in \C^\times$ and imposing $(\psi_p^u)'(0) = \alpha e^u_p$. The parametrization $\psi_p^u$ semi-conjugates $f^n$ to a linear map, that is, $f^n\circ \psi^u_p(\zeta) = \psi^u_p(\lambda^u\zeta)$, where $\lambda^u\in \C^\times$ is the unstable multiplier.

\begin{pro}\label{pro:contained_curve}
Let $f\in\Aut(\C^2)$ be a loxodromic automorphism. Assume that for some 
transversal $\Gamma$ to $J^+$, 
$ J^+_\Gamma$ is   contained in a $C^1$ smooth curve. 
Then for every saddle periodic point $p$, 
$(\psi^u_p)\inv(J^+_{W^u(p)})$   is contained in 
a line through the origin. In particular $\lambda^u(p)$ is real and $J^+_{W^u(p)}$ 
is  contained in a real-analytic curve. 
In addition 
\begin{itemize}
\item either  $J^+_\Gamma$ is not a Cantor set, and 
$(\psi^u_p)\inv(J^+_{W^u(p)})$  is a line through the origin for every saddle periodic point $p$; 
\item  or $J^+_{W^u(p)}$ is a Cantor set and
 $J^+_{W^u(p)} = J^+\cap W^u(p)$ for every saddle periodic point $p$. 
\end{itemize}
\end{pro}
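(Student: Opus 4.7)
My plan is to transfer the $C^1$ containment hypothesis from $\Gamma$ to a small unstable disk through each saddle, and then to exploit the dilation action of $f^n$ on $W^u(p)$ to force $\lambda^u \in \R$ and to refine the structure.

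\textit{Step 1 (Transfer via the inclination lemma).} Fix a saddle $p$ of period $n$. By Bedford--Lyubich--Smillie, the stable manifold $W^s(p)$ admits transverse intersections with $\Gamma$ that lie in $J^+_\Gamma$ (cf.\ \cite[Lem.~5.1]{tangencies}); pick such a point $q$, a small disk $\Gamma_q \subset \Gamma$ around $q$, and let $\gamma \supset J^+_\Gamma$ be the assumed $C^1$ smooth curve. Applying the $\lambda$-lemma to $f^n$ at $p$, the iterates $f^{nk}(\Gamma_q)$ converge (possibly with finite multiplicity) in the $C^1$ topology to a disk $\Delta^u_p \subset W^u(p)$ through $p$; correspondingly $f^{nk}(\gamma \cap \Gamma_q)$ converges $C^1$ to a $C^1$ real curve $\gamma' \subset \Delta^u_p$. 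The $f$-invariance $J^+_{f^{nk}(\Gamma_q)} = f^{nk}(J^+_{\Gamma_q}) \subset f^{nk}(\gamma)$ combined with the lower semicontinuity of $J^+_{\bullet}$ from the preceding lemma then gives $J^+_{\Delta^u_p} \subset \gamma'$: locally around $p$, the slice $J^+_{W^u(p)}$ is contained in a $C^1$ real curve.

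\textit{Step 2 (Dilation argument).} Set $E := (\psi^u_p)^{-1}(J^+_{W^u(p)}) \subset \C$; by the semi-conjugacy $f^n \circ \psi^u_p(\zeta) = \psi^u_p(\lambda^u \zeta)$, the set $E$ is invariant under $\zeta \mapsto \lambda^u \zeta$, and near $0$ it is contained in a $C^1$ curve $\tilde\gamma$ tangent at the origin to a real line $L \subset \C$. For $z \in E$ of small modulus, $z/|z|$ lies in any prescribed $\e$-cone around $L$; the invariance sends it to $\lambda^u z \in E$, whose direction lies in an $\e$-cone around $\lambda^u L$. Letting $\e \to 0$ forces $\lambda^u L = L$, whence $\lambda^u \in \R$. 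Once $\lambda^u$ is real, iterating $\zeta \mapsto \zeta/\lambda^u$ brings any $z \in E$ arbitrarily close to $0$ while leaving $z/|z|$ fixed up to sign, so the same cone argument yields $z \in L$; hence $E \subset L$ globally. The image $\psi^u_p(L)$ is then a real-analytic curve containing $J^+_{W^u(p)}$, proving the first three assertions of the proposition.

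\textit{Step 3 (Dichotomy).} The set $E \subset L \cong \R$ is closed, perfect, and $\lambda^u$-invariant with $|\lambda^u| > 1$, so by scaling it is either all of $L$ (``arc case'') or has empty interior in $L$ and is therefore totally disconnected (``Cantor case''). To propagate this dichotomy uniformly to every saddle and to $\Gamma$, one uses the \emph{upper} semicontinuity statement $J^+ \cap \Delta^u_p \supset \limsup J^+ \cap f^{nk}(\Gamma_q)$ together with Step~1's $C^1$-convergence: if $J^+_\Gamma$ contains an arc in $\gamma$, this arc is transported to an arc inside $J^+_{W^u(p)}$ near $p$ (after verifying that the surviving arc is not absorbed into the interior of $K^+$, using that $\gamma'$ sits in the unstable disk $\Delta^u_p$ transverse to the stable lamination), which forces $E_p = L_p$ for every saddle $p$. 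Conversely, if for some $p$ the set $E_p$ is a proper subset of $L_p$, then it is Cantor, and a symmetric transfer argument (via a transverse homoclinic intersection and the $\lambda$-lemma applied to $f^{-n}$) propagates total disconnectedness to $J^+_\Gamma$. Finally, in the Cantor case, the equality $J^+_{W^u(p)} = J^+ \cap W^u(p)$ reduces to showing that $K^+ \cap W^u(p)$ has empty interior in $W^u(p)$: an open disk $D \subset W^u(p) \cap K^+$ would make $u := G^+ \circ \psi^u_p$ vanish on the open set $(\psi^u_p)^{-1}(D)$, and combined with the functional equation $u(\lambda^u \zeta) = d^n u(\zeta)$ (where $d$ is the dynamical degree) together with the total disconnectedness of $\partial\{u = 0\} = E \subset L$, a subharmonic maximum-principle and $\lambda^u$-scaling argument forces $u \equiv 0$, contradicting the transversality of $\Gamma$ to $J^+$.

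\textit{Main obstacle.} The single-saddle analysis in Steps~1--2 is routine modulo the $\lambda$-lemma; the substance of the proof lies in Step~3, namely in propagating the arc-versus-Cantor dichotomy coherently between $\Gamma$ and every $W^u(p)$ (jointly using upper and lower semicontinuity of $J^+_{\bullet}$), and in ruling out open Fatou-like components of $K^+ \cap W^u(p)$ via the scaling equivariance and subharmonicity of $u$.
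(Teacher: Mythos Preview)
Your overall architecture matches the paper's (transfer to an unstable disk, then exploit the linear action of $f^n$), but Step~1 contains a genuine gap that short-circuits the argument.

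The inclination lemma gives $C^1$-convergence of the \emph{disks} $f^{nk}(\Gamma_q)$ to $\Delta^u_p$, but it does not give convergence of the image curves $f^{nk}(\gamma\cap\Gamma_q)$ to a fixed curve $\gamma'$. In the normal-form coordinates where $f^n$ acts on $W^u_{\loc}(p)$ as $\zeta\mapsto\lambda^u\zeta$, the piece of $f^{nk}(\gamma)$ visible in a fixed window of $\Delta^u_p$ is, to first order, the line through $0$ in the direction $(\lambda^u/|\lambda^u|)^{nk}\,t'(0)$, where $t'(0)$ is the tangent direction of $\gamma$ at the transverse intersection. If $\lambda^u\notin\R$---which is precisely what you are trying to rule out---these lines \emph{rotate} with $k$ and do not converge. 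So your assertion ``correspondingly $f^{nk}(\gamma\cap\Gamma_q)$ converges $C^1$ to a $C^1$ real curve $\gamma'$'' is unjustified and in fact false a priori. The paper handles this by working in the explicit saddle normal form and \emph{choosing} a subsequence $n_j$ along which $(\lambda^u)^{n_j}/|\lambda^u|^{n_j}\to 1$; lower semicontinuity then gives $J^+_{\{y=0\}}\subset t'(0)\R$ directly. The reality of $\lambda^u$ is obtained in the same breath: any other subsequential limit $e^{i\theta}$ would give $J^+_{\{y=0\}}\subset e^{i\theta}t'(0)\R$, and since $J^+_{\{y=0\}}$ is not reduced to a point, $e^{i\theta}=\pm 1$.

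Once Step~1 is fixed (say by passing to one subsequence and obtaining $E\cap D\subset$ some line), your Step~2 dilation argument is essentially correct and gives an alternative route to $\lambda^u\in\R$ and $E\subset L$; the only imprecision is that you should take $z$ small enough that $\lambda^u z$ also lies in the disk where the cone condition holds, rather than comparing $z$ with the larger $\lambda^u z$ directly. For Step~3, the Cantor-case conclusion $J^+_{W^u(p)}=J^+\cap W^u(p)$ is simpler than your sketch suggests: since $E$ is a Cantor subset of a line, $\C\setminus E$ is connected; $G^+\circ\psi^u_p$ is harmonic and nonnegative there and positive somewhere, hence positive everywhere by the minimum principle, so $K^+\cap W^u(p)\subset\psi^u_p(E)$.
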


If the assumption of the proposition holds, 
we say that $f$ is \emph{unstably real}.  Thus, for such a  map, 
all unstable multipliers are real.
By symmetry we have a similar result for transversals to $J^-$, yielding the notion of \emph{stably real} automorphism.


\begin{proof} The argument  goes back to Fatou~\cite[\S 46]{fatou:3e_memoire}. 
Pick a saddle periodic point $p$, and replace $f$ by a positive iterate to assume that $p$ is fixed. As already explained, $W^s(p)$ admits transversal intersections with $\Gamma$, and more precisely with $J^+_\Gamma$. As in~\cite[Lem. 1.12]{DF}, we can find holomorphic 
coordinates $(x,y)\in \disk^2$ near $p$ in which 
$p = (0,0)$, $W^{s}_\loc(p) = \set{x=0}$, $W^u_\loc(p) = \set{y=0}$ and 
 \begin{equation}\label{eq:saddle_normal_form}
 f(x,y) =\left( \lambda^u x(1+xy g_1(x,y)), \lambda^s y(1+xy g_2(x,y)) \right)~,
 \end{equation}
 with $\abs{\lambda^s}<1<\abs{\lambda^u}$, and $\norm{g_1}$, $\norm{g_2}$ as small as we wish. 
 In these coordinates, $f\rest{\set{y=0}}$ is linear, so $\zeta\mapsto (\zeta, 0)$ is an unstable 
 parametrization near the origin. 
 Changing $\Gamma$ in $f^m(\Gamma)$ for a large positive integer $m$, and taking the connected component of $f^m(\Gamma)\cap \disk^2$ containing $p$, we may assume that $\Gamma$ is a graph over the first coordinate, 
 of the form $y = \gamma(x)$. Let $\sigma\mapsto t(\sigma)$ be a germ of $C^1$ curve at $0\in \C$, 
 with $t'(0)\neq 0$, such that $J^+_\Gamma$ is contained in the image of 
 $\sigma\mapsto (t(\sigma), \gamma(t(\sigma)))$. With our choice of coordinates, 
 Lemma 4.2 in~\cite{DF} asserts that for some $\delta>0$, if   $\abs{x}\leq \delta$,  then  
$f^n\lrpar{ {x}/{(\lambda^u)^n}, y}\to (x, 0) $ as $n\to\infty$. Actually the proof says a little more:   
if $\abs{x_n} \leq \delta$ and $\abs{y_n}<1$, then 
\begin{equation}\label{eq:renorm}
f^n\lrpar{\frac{x_n}{(\lambda^u)^n}, y_n} = (x_n, 0)+ o(1).
\end{equation}
Fix a subsequence $n_j$ such that $\frac{(\lambda^u)^{n_j}}{\abs{\lambda^u}^{n_j}} \to 1$. 
Since $t(\abs{\lambda^u}^{-n} \sigma) = \abs{\lambda^u}^{-n} (t'(0)\sigma +o(1))$, Equation~\eqref{eq:renorm} for sufficiently 
small $\sigma\in \R$ implies
\begin{equation}
f^{n_j}\lrpar{ t \lrpar{\abs{\lambda^u}^{-{n_j}} \sigma}, \gamma\lrpar{t\lrpar{\abs{\lambda^u}^{-{n_j}} \sigma}}}
\underset{j\to\infty}\longrightarrow \lrpar{ t'(0)\sigma, 0 }.
\end{equation}
Now we use the lower semi-continuity of $J^+_\Gamma$: if $z$ belongs to $J^+_{\set{y=0}}$, then it must be accumulated by $J^+_{f^{n_j}(\Gamma)}$, hence  $z\in t'(0)\R$, and we conclude 
that $J^+_{\set{y=0}}$ is contained in a line, as asserted.
 We also get 
that if $(n'_j)$ is any other subsequence such that 
$ {(\lambda^u)^{n'_j}}/{\abs{\lambda^u}^{n'_j}} $ converges to $ e^{i\theta}$, then $z\in e^{i\theta}t'(0)\R$. Since 
$J^+_{\set{y=0}}$ is not reduced to $\set{0}$, this argument shows that $e^{i\theta} = \pm 1$ so 
$ {(\lambda^u)^{n}}/{\abs{\lambda^u}^{n}}$ converges to $\set{\pm 1}$, and 
therefore $\lambda^u$ is real.  

To get the last assertion of the proposition, we observe that since $G^+\rest{\Gamma}$ is continuous, 
$J^+_\Gamma = \supp(dd^c (G^+\rest\Gamma))$ has no isolated points, so if it is contained in a smooth curve, it is either a Cantor set or it contains a non-trivial arc. 
In the latter case,  by reducing $\Gamma$, we may assume 
that  $J^+_\Gamma$ is a $C^1$ curve, and in this case 
the argument shows that $J^+_{\set{y=0}}$ is  a line through the origin. 
And if $J^+_{W^u(p)}$ is a Cantor set, then
 $J^+_{W^u(p)} = J^+\cap W^u(p)$, since in this case the unique connected component of 
 $W^u(p)\setminus  J^+_{W^u(p)}$ must be contained in $\C^2\setminus K^+$. 
\end{proof}

\begin{rem}\label{rem:pesin}
The proof shows more generally that if  for some saddle periodic point $q$, $J^+_\Gamma$ admits a tangent at 
some transverse intersection $W^s(q)\cap \Gamma$ (see the first lines of \S~\ref{subs:extension_curve} for a formal definition of 
having a tangent at some point),
hence in particular if $J^+_{W^u(q)}$ admits a tangent at $q$,
then the conclusion of the proposition holds at every saddle periodic point $p$.
The same is true for general hyperbolic measures and 
Pesin stable manifolds,  with a slightly different argument (see Lemma~\ref{lem:pesin_tangent}). 
\end{rem}

\subsection{Unstably linear automorphisms: multipliers}\label{subs:unstably_linear}

In this subsection we revisit and extend   some results of Bedford and Kim~\cite{BK1, BK2}. These will be used in the next subsection to prove the non-existence of unstably linear automorphisms.

Let $p$ be a saddle periodic point of period $n$ such that $(\psi^u_p)\inv\big(J^+_{W^u(p)}\big)$ is a line through the origin. Then, by Proposition~\ref{pro:contained_curve}, the same property holds at any other saddle point; in this situation, we say that $f$ is \emph{unstably linear}. 
Since $W^u(p)$ is not contained in $K^+$, at least one side of $J^+_{W^u(p)}$ in 
$W^u(p)\simeq\C$  must be contained in
 $\C^2\setminus K^+$. In particular $f$ is unstably connected in the sense of~\cite{bs6}. 
 Corollary~7.4 in~\cite[Cor. 7.4]{bs6}) 
 shows that a volume expanding map cannot be unstably connected,
 hence necessarily $\abs{\jac(f)}\leq 1$. 
Lemma 4.2 in~\cite{BK1} shows that the unstable multiplier equals $\pm d^n$.
For convenience we recall 
the argument: $G^+\circ \psi^u_p$ is harmonic and positive in some half plane (of slope, say, $\tan(\theta)$) and zero on its boundary, hence in this half plane   $G^+\circ \psi^u_p(\zeta)$ is proportional to $\im(e^{-i\theta} \zeta)$; in particular it is $\R$-linear. Then 
the invariance relation $G^+\circ \psi^u_p(\lambda^u \zeta)   = 
d^n G^+\circ \psi^u_p(\zeta)$  forces 
$\lambda^u  = \pm d^n$. Note that $\lambda^u  = d^n$ (resp. $\lambda^u  = -d^n$) iff $f^n$ preserves (resp. exchanges) the two half planes. 
 Let us summarize this discussion in a lemma.
 
\begin{lem}\label{lem:bilan_unstably_linear}
Assume that $f$ is unstably linear. Then for any saddle periodic point $p$, of period $n$,  
$(\psi^u_p)\inv(J^+_{W^u(p)})$ is a line through the origin, which cuts $\C$ in two half-planes. 
Moreover:
\begin{enumerate}[\rm(1)]
\item each component of $(\psi^u_p)\inv(\C^2\setminus K^+)$ is a half plane in which 
  $G^+\circ \psi^u_p(\zeta)$ is proportional to $\im(e^{-i\theta} \zeta)$,  
    where $\tan(\theta)$ is the slope of the line $(\psi^u_p)\inv(J^+_{W^u(p)})$;
\item the unstable multiplier at $p$ is equal to $\pm d^n$, where $d$ is the degree of $f$;
it is equal to $d^n$ iff $f^n$ preserves the two half planes;
\item $f$ is unstably connected and  $\abs{\jac(f)}\leq 1$;
\end{enumerate}
 \end{lem}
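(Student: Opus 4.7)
The starting point is Proposition~\ref{pro:contained_curve}: by the very definition of \emph{unstably linear}, the preimage $(\psi^u_p)\inv(J^+_{W^u(p)})$ is a line $\ell$ through the origin at some saddle, and the proposition propagates this to every saddle periodic point $p$. The complement $\C\setminus\ell$ is then a disjoint union of two open half-planes, which is the first assertion of the lemma.

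For part~(1), I would analyze the function $u := G^+\circ\psi^u_p$. It is continuous, nonnegative, subharmonic on $\C$, harmonic off $\supp(dd^c u) = (\psi^u_p)\inv(J^+_{W^u(p)}) = \ell$, and vanishes on $\ell$. Unique continuation for harmonic functions forces $u$ on each open half-plane to be either identically zero or strictly positive; consequently, every connected component of $(\psi^u_p)\inv(\C^2\setminus K^+)$ is a full open half-plane, and at least one such component exists because $W^u(p)\not\subset K^+$. On any such half-plane, $u$ is a positive harmonic function extending continuously to the boundary line $\ell$ with zero boundary values. The Herglotz--Poisson representation of positive harmonic functions on a half-plane then forces $u(\zeta) = c\,\im(e^{-i\theta}\zeta)$ with $c>0$, where $e^{i\theta}$ is a unit vector directing $\ell$ (so $\tan\theta$ is its slope); the Poisson integral contribution vanishes because of the continuous zero boundary values, and the ``$cy$'' term of the Herglotz formula is precisely the linear form claimed.

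Part~(2) then reduces to a direct substitution. Iterating $G^+\circ f = d\,G^+$ and combining with the semi-conjugacy $f^n\circ\psi^u_p(\zeta) = \psi^u_p(\lambda^u\zeta)$ gives
\begin{equation*}
u(\lambda^u\zeta) = d^n\,u(\zeta).
\end{equation*}
Plugging in the linear formula from~(1) on each half-plane, and remembering that $\lambda^u\in\R^\times$ by Proposition~\ref{pro:contained_curve}, one immediately obtains $\lambda^u = \pm d^n$: the $+$ sign arises when $\lambda^u > 0$, i.e.\ when multiplication by $\lambda^u$ preserves each of the two half-planes, and the $-$ sign when it exchanges them.

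For part~(3), part~(1) shows that $\psi^u_p(\C)\setminus K^+$ is a disjoint union of at most two simply connected open half-planes, which is precisely the definition of \emph{unstable connectivity} used in~\cite{bs6}. The bound $\abs{\jac(f)}\leq 1$ is then Corollary~7.4 of~\cite{bs6}, invoked directly. I do not foresee any serious difficulty in executing this plan; the only delicate point is justifying the Herglotz--Poisson representation in~(1) with vanishing singular part, but this is a purely one-variable matter that follows from the continuous vanishing of $u$ on $\ell$ together with the positivity and harmonicity on the half-plane.
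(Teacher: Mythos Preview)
Your proof is correct and follows essentially the same route as the paper's own argument, which is presented as the discussion immediately preceding the lemma (the lemma is explicitly a summary of that discussion). The paper invokes the same linearity argument for $G^+\circ\psi^u_p$ on a half-plane, the same invariance relation to pin down $\lambda^u=\pm d^n$, and the same appeal to~\cite{bs6} for unstable connectedness and the Jacobian bound; your version simply names the Herglotz--Poisson representation explicitly where the paper states the conclusion directly. One small terminological point: in part~(1), what you call ``unique continuation'' is really the strong minimum principle for nonnegative harmonic functions, but the conclusion you draw from it is correct.
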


 
Our primary focus in the next proposition is on unstably linear maps; nevertheless, 
it may be useful to remark that it holds in the unstably real case as well so we state it in this generality.  
 
\begin{pro}\label{pro:unstably_linear}
Let $f$ be a loxodromic automorphism which is unstably real and dissipative. 
Then  every periodic point $p$ of $f$ is a saddle.
In addition, in the unstably linear case, 
 the two sides of $W^u(p)\setminus J_{W^u(p)}$ are contained in $\C^2\setminus K^+$. 
\end{pro}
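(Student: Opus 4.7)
I prove the two assertions together, establishing (B) --- the claim that in the unstably linear case both sides of $W^u(p)\setminus J_{W^u(p)}$ lie in $\C^2\setminus K^+$ --- first, and using it to derive (A): every periodic point of $f$ is a saddle. The starting point is a structural remark. By Proposition~\ref{pro:contained_curve}, $(\psi^u_p)^{-1}(J^+_{W^u(p)})$ is contained in a line $\ell$ through the origin, and since $J^+_{W^u(p)}=\partial_{W^u(p)}(K^+\cap W^u(p))$, the interior of $K^+\cap W^u(p)$ in $W^u(p)$ pulls back via $\psi^u_p$ to an open subset of $\C$ whose boundary lies in $\ell$. Such an open set is necessarily one of $\emptyset$, $H_+$, $H_-$, or $H_+\cup H_-$. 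Transversality ($W^u(p)\not\subset K^+$) excludes the last option, so the interior is empty or a single half-plane. In the unstably linear case, an empty interior is precisely assertion (B), while a half-plane interior is its negation.

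\textbf{Proof of (B).} Assume for contradiction $H_-\subset K^+$. If $\lambda^u=-d^n$, then $f^n$ swaps $H_\pm$, and by forward invariance of $K^+$ one has $H_+=f^n(H_-)\subset K^+$, so $W^u(p)\subset K^+$, contradicting transversality. The main difficulty is the subcase $\lambda^u=d^n$, where $f^n$ preserves each half-plane individually. Using $G^+\circ\psi^u_p(\zeta)=c\,\im\zeta$ on $H_+$ from Lemma~\ref{lem:bilan_unstably_linear}(1), together with $G^+=\log|\varphi^+|$ on the escape set, a harmonic-conjugate computation yields $\varphi^+\circ\psi^u_p(\zeta)=A e^{-ic\zeta}$ on $H_+$ for some $|A|=1$. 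On $H_-$, the image of $\psi^u_p$ lies in the compact filled Julia set $K=K^+\cap K^-$, so $\psi^u_p|_{H_-}$ is bounded. The plan is to combine the semi-conjugacy $\psi^u_p(d^n\zeta)=f^n(\psi^u_p(\zeta))$ with the local biholomorphy of $\varphi^+$ near the real-analytic arc $\psi^u_p(\ell)\subset J^+$ --- which is justified by the local smoothness of $J^+$ transverse to the stable lamination of $p$, enabling a Schwarz reflection of $\varphi^+$ across $J^+_{W^u(p)}$ --- to argue that the $(2\pi/c)$-periodicity of $A e^{-ic\zeta}$ forces a corresponding periodicity of $\psi^u_p$, contradicting its injectivity. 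This is where I expect the main obstacle to lie; a possible alternative route is to invoke the Bedford--Kim multiplier rigidity (\cite[Lemma 4.2]{BK1}) applied to a suitable iterate of $f$, exploiting the constraint that every saddle multiplier equals $\pm d^n$.

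\textbf{Proof of (A).} With (B) established, $K^+\cap W^u(p)$ has empty interior in $W^u(p)$. Assume for contradiction $q$ is a non-saddle periodic point of period $m$. Dissipativity rules out sources, so $q$ is a sink, a semi-parabolic, or a semi-Siegel periodic point, and is associated with an open Fatou component $U\subset K^+$ (its immediate basin, a parabolic basin, or a Siegel basin). I first check that $q\in J^-$: at a sink, all eigenvalues of $df^{-m}_q$ have modulus greater than one, so $\{q\}$ is isolated in $K^-$, whence $q\in\partial K^-=J^-$; at a semi-parabolic or semi-Siegel periodic point, $K^-$ in a neighborhood of $q$ is contained in the one-complex-dimensional backward parabolic petal or Siegel disk, which has empty interior in $\C^2$, so $q\notin\Int(K^-)$. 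Since $W^u(p)$ is dense in $J^-$ and $U$ is an open set of $\C^2$ accumulating at $q$, a density argument in local coordinates at $q$ yields a point of $W^u(p)$ in $U$. The intersection $W^u(p)\cap U$ is then a non-empty open subset of $W^u(p)$ contained in $K^+\cap W^u(p)$, contradicting (B). Hence every periodic point of $f$ is a saddle.
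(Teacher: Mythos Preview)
Your proof has two genuine gaps.

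\textbf{The argument for (B) when $\lambda^u(p)=d^n$ does not go through.} The identity $\varphi^+\circ\psi^u_p(\zeta)=Ae^{-ic\zeta}$ on $H_+$ is correct, but nothing follows from it. The B\"ottcher function $\varphi^+$ is defined only on $\C^2\setminus K^+$, so $\varphi^+\circ\psi^u_p$ exists only on $H_+$; the right-hand side of course extends entire, but this extension bears no relation to $\psi^u_p$ on $H_-$, where $\varphi^+$ is undefined. There is no ``local biholomorphy of $\varphi^+$ near $\psi^u_p(\ell)$'' to invoke, and periodicity of $\varphi^+\circ\psi^u_p$ on $H_+$ cannot force periodicity of $\psi^u_p$ since $\varphi^+$ is not injective. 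Nor is boundedness of $\psi^u_p\rest{H_-}$ a contradiction: for irrational $\alpha$, the map $\zeta\mapsto(e^{-i\zeta},e^{-i\alpha\zeta})$ is an injective entire curve bounded on the closed lower half-plane. The paper proceeds in the opposite order. It first establishes a weak form of (A): any non-saddle periodic point $q$ produces an entire curve contained in $\Int(K^+)$ (its strong stable manifold, or an entire curve in a parabolic basin), which by an Ahlfors-current argument \cite[Lem.~2.2]{closing} meets $W^u(p)$ for \emph{every} saddle $p$; hence one half of each $W^u(p)$ is a Fatou disk lying in the Fatou component of $q$, so there is at most one non-saddle fixed point. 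If there is one, the other $d-1$ fixed points are saddles with a half-plane in $K^+$ preserved by $f$, hence $\lambda^u=d$ at each; Bedford--Kim \cite[Prop.~5.1]{BK1} (applied after replacing $f$ by $f^3$) forbids this, yielding (A). Then, for (B), if some saddle had a half in $K^+$, an inclination-lemma step propagates this to all $d$ saddle fixed points, forcing $\lambda^u=d$ everywhere and again contradicting Bedford--Kim. So the Bedford--Kim input you mention as an ``alternative'' is in fact the heart of the proof, and it only applies once (weak~A) is in hand.

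\textbf{The semi-Cremer case is missing from (A).} A semi-Cremer periodic point has no associated Fatou component, so your basin argument does not apply. The paper handles this via the hedgehog theory of \cite{FLRT,LRT}: the hedgehog $\mathcal H\subset\jstar$ is not locally connected, and its strong-stable lamination carries a relatively open piece of $\mathcal H$ into $J^+_{W^u(p)}$ by holonomy, which is incompatible with $J^+_{W^u(p)}$ lying in a smooth curve (unstably linear) or being a Cantor set (unstably real, non-linear). You should also make the unstably real non-linear case of (A) explicit: there $(\psi^u_p)^{-1}(J^+_{W^u(p)})$ is a Cantor subset of a line, so $W^u(p)\setminus J^+_{W^u(p)}$ is connected and therefore entirely contained in $\C^2\setminus K^+$, after which the same Fatou-disk and hedgehog contradictions apply.
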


\begin{proof}

The assumptions and conclusions of the proposition are not affected if we replace $f$ by some iterate.  
For technical reasons, we replace $f$ by $f^3$  (while still denoting its degree by $d$).

In a first stage, assume that $f$ is unstably linear. 

\smallskip

{\bf{Step 1.--}} Assume that for some saddle point $p$, the two sides of $W^u(p)\setminus J_{W^u(p)}$ 
are contained in $\C^2\setminus K^+$. We claim that the same property holds for every other 
saddle periodic point $q$. Indeed, $W^s(p)$ intersects transversally $W^u(q)$ at some point 
$\tau\in J_{W^u(q)}$. By the inclination lemma,  
there is a sequence of neighborhoods $U_n$ of $\tau$ in $W^u(q)$ such that 
$f^n(U_n)$ is a sequence of disks converging in the $C^1$ sense to $W^u_\loc(p)$.  We may assume that $U_n$ is a topological disk such that $U_n\setminus J_{W^u(q)}$ has two components. Since 
$\C^2\setminus K^+$ is open, for large $n$, the two sides of 
$f^n\big(U_n\setminus J_{W^u(q)}\big)$ intersect $\C^2\setminus K^+$, therefore they are contained in 
$\C^2\setminus K^+$; pulling back by $f^n$, we conclude that the same holds for $W^u(q)\setminus J_{W^u(q)}$. 

This discussion also shows that if for some saddle point $p$, one side of $W^u(p)\setminus J_{W^u(p)}$ is contained in $K^+$, then the same holds for every other saddle point.

\smallskip

{\bf{Step 2.--}} We claim that $f$ admits at most one non-saddle fixed point.

The argument is similar to that of~\cite[\S 2.2]{closing}. 
Consider a non-saddle periodic point $q$. 
Without loss of generality, assume that $q$ is fixed. Since $f$ is dissipative, 
$q$ is  a sink or $q$ is semi-neutral. In the latter case, the neutral eigenvalue is either a root of unity and  $q$ is said to be semi-parabolic, or it is not and $q$ is either semi-Siegel or semi-Cremer according to the existence of an invariant holomorphic disk containing $q$. 
In all these cases, 
there exists an invariant manifold through $q$ which is biholomorphic to $\C$ 
and contracted by the dynamics; for instance, when $q$ has eigenvalues of distinct moduli we can take the so-called  strong stable manifold $W^{ss}(q)$, associated to the most contracting eigenvalue.  The theory of Ahlfors currents associated to entire curves shows that 
this invariant manifold 
must intersect $W^u(p)$ (see \cite[Lem. 2.2]{closing} for this very statement and  
 \cite[Lem. 5.4]{tangencies} for the details of the proof).
If $q$ is a sink or if it is semi-Siegel, this stable manifold is contained in $\Int(K^+)$, so it must intersect 
$W^u(p)$ in
 a component of $\Int(K^+)\cap W^u(p)$. It then follows that 
 $G^+\equiv 0$   on the corresponding component of  
 $W^u(p)\setminus J^+_{W^u(p)}$,  which is then a Fatou disk entirely contained in the Fatou component of $q$. 
Since this is true for every such $q$, it follows that there can be at most one  
 sink or semi-neutral point.

If $q$ is semi-parabolic, the argument is the same 
except that instead of $W^{ss}(q)$, we consider any entire curve contained in the semi-parabolic basin of $q$ (such curves exist since the basin is known to be biholomorphic to $\C^2$). 

The last possibility is that $q$ is semi-Cremer. Then by~\cite{FLRT, LRT},  to 
any local center manifold $W^c_\loc(q)$ 
of $q$ we can associate a hedgehog $\mathcal H$:
\begin{itemize} 
\item  $\mathcal H$ is not locally connected since it is homeomorphic to the hedgehog of a non-linearizable one-dimensional germ. 
\item By~\cite[Thm E]{LRT}, $\mathcal H\subset \jstar$, and 
every point in $\mathcal H$ admits a strong stable manifold which is biholomorphic to $\C$, and 
has uniform geometry near $\mathcal H$ (because $f\rest{\mathcal H}$ admits 
 a dominated splitting, see~\cite[Thm A]{FLRT}). 
 \end{itemize}
Therefore, as above,  this strong stable manifold admits transverse intersections with $W^u(p)$ and 
we can transport by holonomy a non-trivial relatively open subset of $\mathcal H$ to 
$W^u(p)$. 
The resulting piece is contained in $J^+_{W^u(p)}$,
as follows from  the proof of~\cite[Thm E]{LRT}(\footnote{For the reader's convenience we explain the argument of~\cite[Thm E]{LRT}: if for some $z\in \mathcal H$, some transverse intersection 
point of $W^{ss}(z)\cap W^u(p)$ were not contained  in $J^+_{W^u(p)}$, then 
one side of $W^u\setminus J^+_{W^u(p)}$ 
would be a Fatou disk $\Omega$.
 It can be shown that if $(n_j)$ is a subsequence such that 
$f^{n_j}(\Omega)$ converges to a holomorphic disk $\Gamma$ (possibly reduced to a point), then $\Gamma$ should be contained in every local center manifold of $q$, hence in $\mathcal H$ (by definition of the hedgehog).  So by~\cite[Thm C]{LRT} the whole of $\Omega$ would be  contained in $W^{ss}(\mathcal H)$, which contradicts the fact that $\mathcal H$ has relative zero interior in any center manifold of $q$.
 }).
This is a 
contradiction because $J^+_{W^u(p)}$ is locally a smooth curve  while $\mathcal H$ is not locally connected, which shows that Semi-Cremer points cannot exist in this context.

\smallskip

{\bf{Step 3.--}}We can now complete the proof of the proposition in the unstably linear case. 
Since  $f$ is conjugated to a composition of  Hénon maps  of 
total degree $d$, it admits $d$ fixed points. 
By the second step, at most one of them is not a saddle, 
and if such a non saddle point exists, for every saddle point $p$, one of the components of 
$W^u(p)\setminus J_{W^u(p)}$ is contained in $K^+$. Therefore by 
Lemma~\ref{lem:bilan_unstably_linear}.(3), $\lambda^u(p)=d$. 
 In particular 
$f$ has $d$ distinct fixed points and at least $d-1$ of them are saddles   
with unstable multiplier equal to $d$. Since $f$ is conjugated to a product of at least 3 Hénon maps and $f$ is dissipative,  Proposition~5.1 in~\cite{BK1} shows that this is impossible. 
Thus,  all periodic points are saddles.

Then, by Step 1, if there is such a saddle point $p$ 
for which a component of $W^u(p)\setminus J_{W^u(p)}$ is contained in $K^+$, then the same is true for each of the fixed points of $f$, so that the unstable multiplier at each of the fixed points is equal to $d$. Again, \cite[Prop. 6.1]{BK1}  provides a contradiction. Thus, 
the two components of $W^u(p)\setminus J_{W^u(p)}$ are contained in $\C^2\setminus K^+$ for every periodic point, and the proof for the unstably linear case is complete.

\smallskip

{\bf{Step 4.--}} If $f$ is unstably real but not unstably linear, then by Proposition \ref{pro:contained_curve},
for every saddle point $p$, $J^+_{W^u(p)}$ is a Cantor set contained in a line. Thus 
$W^u(p)\setminus J^+_{W^u(p)}$ is connected, hence contained in $\C^2\setminus K^+$. Applying the arguments of Step 2 above shows that $f$ cannot possess any sink, semi-Siegel or semi-parabolic point for it would give rise to a Fatou disk contained in $W^u(p)\cap K^+$; in the case of a 
semi-Cremer point we would obtain a non-trivial (connected) continuum contained in $J^+_{W^u(p)}$, which again is a contradiction. 
\end{proof}

%
%
%
 
\begin{cor}\label{cor:hyperbolic_unstably_linear}
 A hyperbolic  loxodromic automorphism is never unstably linear.
\end{cor}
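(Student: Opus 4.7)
The plan is to argue by contradiction, combining the structural results developed so far with a Bedford--Kim style multiplier rigidity statement. Suppose $f$ is hyperbolic, loxodromic, and unstably linear. By Lemma~\ref{lem:bilan_unstably_linear}, $\abs{\jac(f)}\le 1$, and the unstable multiplier at every saddle of period $n$ equals $\pm d^n$. Hyperbolicity ensures that every periodic point is a saddle and that the non-wandering set coincides with $J$, so $\mathrm{int}(K^+)=\emptyset$ and $K^+=J^+$. Replacing $f$ by a sufficiently high iterate, I may further assume $f$ is a composition of at least three H\'enon maps, to fit the technical hypotheses of~\cite{BK1}.

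I next claim that both connected components of $W^u(p)\setminus J^+_{W^u(p)}$ lie in $\C^2\setminus K^+$ for every saddle $p$. Under unstable linearity, $J^+_{W^u(p)}$ is a real-analytic arc; by the hyperbolic lamination structure of $J^+$ near $J$ (whose stable leaves are transverse to $W^u(p)$), $J^+_{W^u(p)}$ coincides with $W^u(p)\cap J^+$, so the two complementary half-planes of $W^u(p)\setminus J^+_{W^u(p)}$ are disjoint from $J^+=K^+$. Combined with Lemma~\ref{lem:bilan_unstably_linear}(2), this shows that each of the $d$ fixed points of $f$ is a saddle with $\lambda^u\in\set{+d,-d}$.

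The contradiction will come from a refinement of \cite[Prop.~5.1]{BK1}, ruling out this configuration of $d$ saddle fixed points with $\abs{\lambda^u}=d$ in a composition of at least three H\'enon maps of degree $d$. This is the ``generalization of \cite{BK1}'' mentioned in the introduction and at the end of \S\ref{subs:unstably_linear}; I envision it as following from the Atiyah--Bott--Lefschetz fixed-point formula on a suitable compactification, exploiting the identity $\lambda^s=\jac(f)/\lambda^u$ at each fixed point to write the Lefschetz sum explicitly, and then ruling out the possible sign distributions of the $\lambda^u$'s. The principal technical obstacle is precisely this extension from ``$\lambda^u=+d$ at $d-1$ fixed points'' (as in~\cite{BK1}) to ``$\abs{\lambda^u}=d$ at all $d$ fixed points'': the cancellations introduced by the signs in the fixed-point sum must be resolved by exploiting the dynamical structure at infinity specific to compositions of H\'enon maps, uniformly in the dissipative and conservative cases.
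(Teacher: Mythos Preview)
Your argument has a genuine gap at its foundation. You assert that hyperbolicity of $J$ forces every periodic point to be a saddle and $\mathrm{Int}(K^+)=\emptyset$. This is false: a dissipative hyperbolic automorphism may well have attracting periodic points sitting in $\mathrm{Int}(K^+)$; hyperbolicity of $J$ says nothing about periodic orbits lying outside $J$. In fact, the paper's proof turns precisely on this phenomenon. Since $f$ is unstably linear it is unstably connected (Lemma~\ref{lem:bilan_unstably_linear}(3)), so $J$ is connected; a conservative hyperbolic map cannot have connected $J$ (\cite[Cor.~A.3]{bs7}), hence $f$ is strictly dissipative; then Proposition~\ref{pro:unstably_linear} yields that all periodic points are saddles. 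But Theorem~3.1 of~\cite{connex} asserts that a hyperbolic, dissipative automorphism with connected $J$ must possess an attracting periodic point. This contradiction finishes the proof in a few lines, with no need to revisit~\cite{BK1}.

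There is a second gap: your proposed ``refinement of \cite[Prop.~5.1]{BK1}''---ruling out $d$ saddle fixed points with $\abs{\lambda^u}=d$ and arbitrary signs, in both the dissipative and conservative regimes---is not established; you yourself flag it as a hoped-for Lefschetz computation. Even granting your (incorrect) first step, the argument would remain incomplete here. Note also that your claim $J^+_{W^u(p)}=W^u(p)\cap J^+$ is derived from $K^+=J^+$, so it inherits the same flaw.
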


 
\begin{proof}
Assume that $f$ is unstably linear and hyperbolic. 
By Lemma~\ref{lem:bilan_unstably_linear}.(4), $f$ is unstably connected, so $J$ is connected, 
and also $\abs{\jac(f)}\leq 1$. 
If $f$ is conservative and hyperbolic, then $J$ cannot be connected (see~\cite[Cor. A.3]{bs7} 
or~\cite[Cor. 3.2]{connex}), so $f$ is dissipative. 
By Proposition~\ref{pro:unstably_linear} all periodic 
points are saddles. On the other hand, Theorem 3.1 in~\cite{connex}, asserts that 
 $f$ has  an attracting point. This  contradicts Proposition~\ref{pro:unstably_linear}. 
\end{proof}

\subsection{Unstably linear automorphisms: non-existence}  We are now ready to prove Theorem~\ref{mthm:no_unstably_linear}.

\begin{thm}\label{thm:no_unstably_linear}
A loxodromic automorphism of $\C^2$ is never unstably linear.
\end{thm}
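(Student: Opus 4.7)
The plan is to argue by contradiction, assuming that $f$ is a loxodromic unstably linear automorphism of $\C^2$; the goal is to show that $f$ must then be uniformly hyperbolic on $J$, at which point Corollary~\ref{cor:hyperbolic_unstably_linear} provides the sought contradiction. First I would establish \emph{quasi-expansion} in the sense of Bedford-Smillie~\cite{bs8}. The rigidity of the hypothesis is strong: on each component of $(\psi^u_p)^{-1}(\C^2\setminus K^+)$ the Green function $G^+\circ\psi^u_p(\zeta)$ is an explicit $\R$-linear form $c_p\,\im(e^{-i\theta(p)}\zeta)$ (Lemma~\ref{lem:bilan_unstably_linear}.(1)), with multiplier $\lambda^u(p)=\pm d^n$ (Lemma~\ref{lem:bilan_unstably_linear}.(2)). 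The functional equation $G^+\circ f^n = d^n\,G^+$ then pins down the parametrization $\psi^u_p$ up to bounded distortion once $c_p$ is normalized to a universal constant, and I expect this to translate into uniform upper and lower bounds on $|(\psi^u_p)'(0)|$ over all saddle periodic points $p$, which is the quantitative content of quasi-expansion.

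The second step is to promote quasi-expansion to uniform hyperbolicity of $J$. By Lemma~\ref{lem:bilan_unstably_linear}.(3), $|\jac(f)|\leq 1$ and $f$ is unstably connected, so in particular $J$ is connected. In the dissipative regime, Proposition~\ref{pro:unstably_linear} applies and yields that every periodic point of $f$ is a saddle and that both sides of $W^u(p)\setminus J_{W^u(p)}$ lie outside $K^+$. I would then adapt the argument of~\cite{BSR} to exclude homoclinic and heteroclinic tangencies along the saddle set, and apply the topological hyperbolicity criterion of~\cite{topological} to conclude that $J$ is a uniformly hyperbolic set; Corollary~\ref{cor:hyperbolic_unstably_linear} then closes the contradiction. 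The conservative case $|\jac(f)|=1$ is more delicate because Proposition~\ref{pro:unstably_linear} does not apply as stated. One option is to rerun a parallel version of the previous scheme, establishing directly that there are no non-saddle periodic points in the area-preserving unstably linear situation before invoking quasi-expansion and~\cite{topological}; another is to exploit the combination of conservation and unstable connectedness, using the fact that a conservative loxodromic hyperbolic map must have a disconnected Julia set (\cite[Cor.~A.3]{bs7}, \cite[Cor.~3.2]{connex}).

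The main difficulty I foresee lies in the upgrade from quasi-expansion to uniform hyperbolicity. Quasi-expansion is a one-sided property: it controls the geometry of the unstable manifolds but says nothing \emph{a priori} about stable contraction rates or about the transversality of the two laminations along $J$. The adaptation of~\cite{BSR} presumably leverages the all-saddle property of the periodic set to rule out tangencies, after which the criterion of~\cite{topological} converts this into a dominated splitting; verifying that both hypotheses are genuinely met in the unstably linear context, and correctly handling the conservative subcase, is where I expect the technical heart of the argument to lie.
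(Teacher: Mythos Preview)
Your overall architecture is correct and matches the paper: assume $f$ is unstably linear, establish quasi-expansion (with factor $\kappa=d$, exactly as you outline via the explicit form of $G^+\circ\psi^u_p$), upgrade to uniform hyperbolicity, then invoke Corollary~\ref{cor:hyperbolic_unstably_linear} for the contradiction.

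The substantive difference lies in the upgrade step. You propose to go through Proposition~\ref{pro:unstably_linear} (all periodic points are saddles, in the dissipative case) and then ``exclude tangencies'' via an adaptation of~\cite{BSR}. The paper takes a different and more direct path, which in particular avoids the dissipative/conservative case split you worry about. The key object is the \emph{order function} $\tau(x)$ from~\cite{bs8}: for $x\in\jstar$, $\tau(x)$ is the maximal vanishing order at $0$ of any normal limit $\widehat\psi\in\widehat\Psi$ with $\widehat\psi(0)=x$. The paper shows, in two short lemmas, that $\tau\equiv 1$ on $\jstar$. First, since $G^+\circ\widehat\psi(\zeta)=\im(\zeta)$ on the upper half-plane (inherited from the limit of saddle parametrizations), while near a point with $\tau(x)=k$ the zero set $\{G^+\circ\widehat\psi=0\}$ has $k$-fold rotational symmetry, one gets $\tau\leq 2$. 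Second, if $\jstar_2=\{\tau=2\}$ were nonempty it would carry an ergodic hyperbolic measure; either a Pesin-theoretic argument (the $\jstar_2$ points would have $J^+$-slice contained in a \emph{half}-line, contradicting the line structure forced by recurrence and Lemma~\ref{lem:pesin_tangent}) or a Lyapunov exponent count (on $\jstar_2$ the unstable exponent is $\geq 2\log d$ by~\cite[Thm~6.2]{bs8}, but all periodic multipliers are $\pm d^n$) rules this out. With $\jstar=\jstar_1$, \cite[Prop.~5.3]{bs8} gives a genuine unstable lamination near $\jstar$, Lemma~\ref{lem:vanish} ensures $G^+$ is nontrivial on each leaf, and~\cite[Prop.~2.16 or Thm~2.18]{topological} (using only $\lvert\jac(f)\rvert\leq 1$, already given by Lemma~\ref{lem:bilan_unstably_linear}) yields hyperbolicity.

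Two comments on your route. First, the hyperbolicity criteria in~\cite{topological} need $u$-regularity of every point of $\jstar$ (a lamination structure), not merely the absence of periodic tangencies; the $\tau$-analysis delivers exactly this, whereas ``no tangencies among saddles'' does not obviously suffice. Second, your conservative-case concern disappears in the paper's approach: nothing before the invocation of~\cite{topological} uses dissipativity, and the hypothesis $\lvert\jac(f)\rvert\leq 1$ (which is all that is needed there) comes for free from unstable connectedness. The dissipative/conservative dichotomy only resurfaces \emph{inside} the proof of Corollary~\ref{cor:hyperbolic_unstably_linear}, where hyperbolicity plus connectedness of $J$ force dissipativity via~\cite[Cor.~A.3]{bs7}.
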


\begin{cor}
If $\Gamma$ is a transversal to $J^+$, then $J^+_\Gamma$ is never a $C^1$ curve. 
More generally,  
 a non trivial component of $J^+_\Gamma$ does
not admit a tangent line at any transverse intersection of $\Gamma$ with $W^s(p)$, 
for any saddle point $p$.  
\end{cor}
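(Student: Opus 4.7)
The plan is to deduce both assertions of the corollary from Theorem~\ref{thm:no_unstably_linear}, using Proposition~\ref{pro:contained_curve} and Remark~\ref{rem:pesin} as the bridge from local regularity of $J^+_\Gamma$ to unstable linearity of $f$. In each case the argument is by contradiction, and the bulk of the work has already been carried out in those earlier results, so the corollary is essentially a rephrasing of the non-existence of unstably linear maps.

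For the first assertion, I would assume $J^+_\Gamma$ is a $C^1$ curve. Then it is in particular contained in a $C^1$ smooth curve (itself), and, being a non-degenerate curve, it is not a Cantor set. Proposition~\ref{pro:contained_curve} then places us in its first alternative: for every saddle periodic point $p$, the set $(\psi^u_p)^{-1}(J^+_{W^u(p)})$ is an entire line through the origin, i.e.\ $f$ is unstably linear. This contradicts Theorem~\ref{thm:no_unstably_linear}.

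For the more general statement, suppose that a non-trivial component $C$ of $J^+_\Gamma$ admits a tangent line at some transverse intersection point $z \in W^s(p)\cap\Gamma$, where $p$ is a saddle periodic point. By Remark~\ref{rem:pesin}, the conclusion of Proposition~\ref{pro:contained_curve} then holds at every saddle periodic point. Since $C$ is non-trivial, $J^+_\Gamma$ contains a non-degenerate connected set, hence cannot be a Cantor set, and the first branch of the dichotomy must hold: $f$ is unstably linear, contradicting Theorem~\ref{thm:no_unstably_linear}.

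The one point I expect to require care is verifying that Remark~\ref{rem:pesin} is applicable under the component-wise hypothesis: the remark is stated in terms of $J^+_\Gamma$ itself admitting a tangent at $z$, whereas here we only know this for the subset $C\subset J^+_\Gamma$. However, the rescaling argument at $z$ in the proof of Proposition~\ref{pro:contained_curve} only uses a smooth arc inside $J^+_\Gamma$ through $z$ with a well-defined limiting direction; the non-trivial component $C$ supplies exactly such an arc. Reconciling this with the formal notion of ``tangent'' from \S~\ref{subs:extension_curve} is the only technical verification needed, after which the rescaling by powers of $\lambda^u$ forces $\lambda^u\in \R$ and the remainder of the argument goes through without modification.
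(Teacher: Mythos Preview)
Your argument for the first assertion is correct and is exactly the intended one.

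For the second assertion there are two genuine gaps. First, Remark~\ref{rem:pesin} only delivers the \emph{main} conclusion of Proposition~\ref{pro:contained_curve} at every saddle point (namely $(\psi^u_p)^{-1}(J^+_{W^u(p)})\subset$ line, i.e.\ $f$ is unstably real). The dichotomy ``$J^+_\Gamma$ not Cantor $\Rightarrow$ unstably linear'' in that proposition is proved under the stronger hypothesis that $J^+_\Gamma$ is globally contained in a $C^1$ curve, which you do not have here; so you cannot invoke the dichotomy for $\Gamma$. The repair is to apply Proposition~\ref{pro:contained_curve} instead to the transversal $W^u_{\loc}(p)$ (for which $J^+_{W^u(p)}$ \emph{is} contained in a line once $f$ is unstably real), and then rule out the Cantor alternative by producing a non-trivial continuum in $J^+\cap W^u_{\loc}(p)$. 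The latter comes from $C$: since $C$ is connected and contains $z$, the component of $C\cap\overline{B(z,r)}$ through $z$ reaches $\partial B(z,r)$ for all small $r$; under the renormalization $f^{n_j}$ these connected pieces have diameter of order one, and a Hausdorff limit yields a non-trivial continuum in $J^+\cap W^u_{\loc}(p)$. In the Cantor alternative one has $J^+_{W^u(p)}=J^+\cap W^u(p)$ totally disconnected, a contradiction.

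Second, your hand-wave about the $C$ versus $J^+_\Gamma$ tangent issue misreads the rescaling argument. In the proof of Proposition~\ref{pro:contained_curve}, the inclusion $J^+_{W^u_{\loc}(p)}\subset t'(0)\R$ comes from lower semi-continuity: every point of $J^+_{W^u_{\loc}(p)}$ is approximated by points of $f^{n_j}(J^+_\Gamma)$, so one needs \emph{all} of $J^+_\Gamma$ near $z$---not just an arc inside $C$---to lie in a thin sector. If other components of $J^+_\Gamma$ accumulate on $z$ from other directions, the conclusion fails. Thus either the statement must be read as ``$J^+_\Gamma$ admits a tangent at $z$'' (with $z$ in a non-trivial component to make the condition non-vacuous), or a separate argument is needed to pass from a tangent of $C$ to a tangent of $J^+_\Gamma$.
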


The remainder of this subsection is devoted to the proof of the theorem. Henceforth we assume that 
$f$ is an unstably linear loxodromic automorphism. Throughout the proof,  the unstable parametrizations 
$\psi^u_p: \C\to W^u(p)$ are normalized  by (see~\cite{bs8}): 
\begin{equation}\label{eq:G+u}
 \psi^u_p(0)= p, \quad \max_{\abs{\zeta}\leq 1}  G^+\circ 
\psi^u_p (\zeta)=1, \quad \text{ and }  \quad (\psi^u_p)\inv(J^+_{W^u(p)}) = \R.
\end{equation}

\begin{lem}\label{lem:G+u}
Let  $p$ be a periodic point of  $f$, and $\psi^u_p$ be  normalized as above. 
Then up to replacing $\psi^u_p(\zeta)$ by $\psi^u_p(-\zeta)$, we have 
$$G^+\circ \psi^u_p (\zeta) = 
\begin{cases}  {\im(\zeta)} \text{ for }\im(\zeta) \geq 0 \\ 
c \abs {\im(\zeta)} \text{ for }\im(\zeta) < 0 \end{cases} 
$$ for some $0\leq c\leq 1$.
In particular 
$\set{G^+\circ \psi^u_p \leq 1}$ is the strip $\set{  - 1/c \leq \im(\zeta)\leq 1}$ (it is a half plane when $c=0$). 
In addition, when $f$ is dissipative, $c$ is non-zero.
\end{lem}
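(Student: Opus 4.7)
The plan is to combine the description of $G^+\circ\psi^u_p$ on the escape side provided by Lemma~\ref{lem:bilan_unstably_linear}.(1) with the normalizations fixed in~\eqref{eq:G+u}, and then to use Proposition~\ref{pro:unstably_linear} to handle the dissipative case. Note that $G^+\circ\psi^u_p$ is a non-negative continuous subharmonic function on $\C$ which vanishes exactly on $(\psi^u_p)\inv(K^+)\supset \R$ and is harmonic off this set.

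First I would observe that the third condition in~\eqref{eq:G+u} means that the line through the origin appearing in Lemma~\ref{lem:bilan_unstably_linear}.(1) is the real axis, so the angle $\theta$ there equals $0$. Hence each connected component of $(\psi^u_p)\inv(\C^2\setminus K^+)$ is one of the two open half planes $\set{\im\zeta>0}$ or $\set{\im\zeta<0}$, and on such a component $G^+\circ\psi^u_p(\zeta)$ is a positive multiple of $\abs{\im\zeta}$; on any component of $(\psi^u_p)\inv(K^+)$ the function vanishes identically. Since $G^+\circ\psi^u_p\equiv 0$ on $\R$, gluing these pieces together gives
$$G^+\circ\psi^u_p(\zeta)=\begin{cases} a\,\im(\zeta) & \text{if }\im\zeta\geq 0,\\ b\,\abs{\im(\zeta)} & \text{if }\im\zeta<0,\end{cases}$$
for some constants $a,b\geq 0$, with the convention that $a=0$ (resp.\ $b=0$) means that the corresponding closed half plane lies in $(\psi^u_p)\inv(K^+)$.

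Next I would use the second normalization in~\eqref{eq:G+u}: the maximum of this explicit function on the closed disk $\set{\abs{\zeta}\leq 1}$ is $\max(a,b)$, attained at $\zeta=i$ or $\zeta=-i$, so $\max(a,b)=1$. Replacing $\psi^u_p(\zeta)$ by $\psi^u_p(-\zeta)$ if needed -- a change of parametrization that preserves all three normalizations of~\eqref{eq:G+u} -- one can assume $a\geq b$, so $a=1$ and $c:=b\in[0,1]$. The announced formula follows. The sub-level set $\set{G^+\circ\psi^u_p\leq 1}$ is then the intersection $\set{\im\zeta\leq 1}\cap\set{c\abs{\im\zeta}\leq 1\text{ if }\im\zeta<0}$, which is precisely the strip $\set{-1/c\leq\im\zeta\leq 1}$ when $c>0$ and the half plane $\set{\im\zeta\leq 1}$ when $c=0$.

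Finally, to rule out $c=0$ in the dissipative case, I would argue by contradiction. If $c=0$ then the entire lower half plane is contained in $(\psi^u_p)\inv(K^+)$, so one of the two components of $W^u(p)\setminus J^+_{W^u(p)}$ is contained in $K^+$. But Proposition~\ref{pro:unstably_linear} asserts exactly the opposite: when $f$ is dissipative and unstably linear, \emph{both} components of $W^u(p)\setminus J^+_{W^u(p)}$ lie in $\C^2\setminus K^+$. This contradiction forces $c>0$. There is no single "hard step" here; the only point requiring some care is the bookkeeping of which normalization is preserved by the involution $\zeta\mapsto -\zeta$, and the appeal to Proposition~\ref{pro:unstably_linear} for the last assertion.
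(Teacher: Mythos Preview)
Your proof is correct and follows essentially the same approach as the paper's: express $G^+\circ\psi^u_p$ as $c_+\abs{\im\zeta}$ and $c_-\abs{\im\zeta}$ on the two half planes via Lemma~\ref{lem:bilan_unstably_linear}, use the normalization $\max_{\overline\disk}=1$ to get $\max(c_+,c_-)=1$, swap the half planes if needed, and invoke Proposition~\ref{pro:unstably_linear} for the dissipative claim. You simply spell out a few more details (the sublevel set, the invariance of the normalizations under $\zeta\mapsto-\zeta$) than the paper does.
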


\begin{proof}
We already know from Lemma~\ref{lem:bilan_unstably_linear} that 
$G^+\circ \psi^u_p (\zeta)$ is proportional to $\abs{\im(\zeta)}$ on both sides of the real axis, say 
$G^+\circ \psi^u_p (\zeta) = c_+ \abs{\im(\zeta)}$ on the upper half plane and 
$G^+\circ \psi^u_p (\zeta) = c_- \abs{ \im(\zeta)}$  on  the lower half plane. After replacing $\zeta$ by $-\zeta$, we may assume $c_+\geq c_-\geq 0$, and then $c_+>0$ because $G^+$ can not vanish identically on $W^u(p)$. Since $\max_{\overline {\mathbb D}}  G^+\circ  \psi^u_p (\zeta)=1$, we 
conclude  that $c_+=1$. 
The last assertion follows from
Proposition~\ref{pro:unstably_linear}. 
\end{proof}

\begin{rem}
If $p$ is of period $n$ and $\lambda^u = -d^n$, then the relation 
$G^+ \circ \psi^u (\lambda^u \zeta) = d^n G^+ \circ \psi^u(\zeta)$ forces $c=1$. 
\end{rem}

The following result is essentially (but not exactly) contained in~\cite[Thm 4.8]{bs8}. Before stating it, let us introduce the concept of quasi-expansion. Let $\mathcal S$ be the set of saddle periodic points of $f$. This is a dense, $f$-invariant subset of $\jstar$. Let $\Psi_{\mathcal S}$ be the set of parametrizations $\psi ^u_p$, $p\in \mathcal S$, normalized as in Equation~\eqref{eq:G+u}.  
For $p$ in $\mathcal S$, there is a linear map of the form $L\colon \zeta\mapsto \lambda \zeta$ such that    $ f \circ \psi^u_p = \psi_{f(p)}^u \circ L$. One of the equivalent definitions of 
 {\emph{quasi-expansion}} is to require the existence of a constant $\kappa>1$ such that $\vert \lambda \vert \geq \kappa$ uniformly for all $p\in \mathcal S$, and then $\kappa$ is  a {\emph{quasi-expansion factor}}.
We refer to  \cite{bs8}  for this notion, in particular to Theorem~1.2 there.

\begin{lem}\label{lem:expansion_factor}
An unstably linear loxodromic automorphism is quasi-expanding, with quasi-expansion factor $\kappa=d$.
\end{lem}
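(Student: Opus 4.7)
The plan is to read the quasi-expansion factor $\kappa=d$ directly off the explicit form of $G^+\circ\psi^u_p$ provided by Lemma~\ref{lem:G+u}, combined with the functional equation $G^+\circ f = d\cdot G^+$.

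First, I would fix a saddle periodic point $p\in\mathcal S$. Since $W^u(p)$ is biholomorphic to $\C$ and the automorphisms of $\C$ fixing the origin are the linear maps $\zeta\mapsto\alpha\zeta$, there is a unique $\lambda_p\in\C^\times$ such that $f\circ\psi^u_p = \psi^u_{f(p)}\circ L_p$ with $L_p(\zeta)=\lambda_p\zeta$. Moreover $f$ sends $J^+_{W^u(p)}$ onto $J^+_{W^u(f(p))}$, so the normalization $(\psi^u_q)\inv(J^+_{W^u(q)})=\R$ in~\eqref{eq:G+u} applied at $q=p$ and $q=f(p)$ forces $L_p(\R)\subset\R$, and hence $\lambda_p\in\R^\times$.

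The equivariance $G^+\circ f = d\cdot G^+$ then becomes
\[
G^+\circ\psi^u_{f(p)}(\lambda_p\zeta) = d\cdot G^+\circ\psi^u_p(\zeta),\qquad \zeta\in\C.
\]
By Lemma~\ref{lem:G+u}, for every $q\in\mathcal S$ the function $g_q:=G^+\circ\psi^u_q$ is positively homogeneous of degree $1$: it equals $\im(\zeta)$ on one closed half plane and $c_q|\im(\zeta)|$, with $c_q\in[0,1]$, on the other. Combined with the normalization $\max_{|\zeta|\le 1}g_q(\zeta)=1$, this yields $\max_{|\zeta|\le r}g_q(\zeta)=r$ for all $r>0$. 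Taking the supremum over $|\zeta|\le 1$ on both sides of the displayed relation, and writing $\eta=\lambda_p\zeta$ on the left, gives
\[
|\lambda_p| \;=\; \max_{|\eta|\le|\lambda_p|}g_{f(p)}(\eta) \;=\; \max_{|\zeta|\le 1}\bigl(d\cdot g_p(\zeta)\bigr) \;=\; d.
\]
Since $p\in\mathcal S$ was arbitrary, $f$ is quasi-expanding with quasi-expansion factor $\kappa=d>1$.

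I do not expect any serious obstacle: the argument is a short explicit computation. The one point deserving attention is the reality of $\lambda_p$, which is precisely what allows the linear dilation $L_p$ to respect the half-plane decomposition underlying the shape of $g_p$; this is where the unstably linear hypothesis — rather than mere quasi-expansion — enters.
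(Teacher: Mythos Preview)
Your proof is correct and follows essentially the same route as the paper's: both use the normalization in~\eqref{eq:G+u}, Lemma~\ref{lem:G+u}, and the invariance $G^+\circ f=d\,G^+$ to read off $|\lambda_p|=d$ directly. Your observation that $\lambda_p\in\R$ is true but superfluous here---the positive homogeneity of $g_q$ alone yields $\max_{|\eta|\le r}g_q(\eta)=r$ regardless of any half-plane decomposition, so the unstably linear hypothesis enters only through Lemma~\ref{lem:G+u}, not through the reality of $\lambda_p$.
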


\begin{proof}
Fix $p\in \mathcal S$ and consider the map $L\colon \zeta\mapsto \lambda \zeta$ introduced above. The relation $G^+\circ f = d G^+$ implies that 
$L$ maps the closed unit disk to a disk $\overline D(0, r)$ such that 
$\max_{\overline D(0, r)} G^+ \circ \psi_{f(p)}^u = d$. By Lemma~\ref{lem:G+u}, $r = d$, so 
$\vert \lambda \vert= d$ is uniformly bounded from below.  \end{proof}

In the next two paragraphs we assume  that $f$ is  quasi-expanding and state some general facts. 
Quasi-expansion implies that   $\Psi_{\mathcal S}$ is a normal family  (and vice versa). 
We denote by $\widehat \Psi$ the set of all its normal limits (i.e.\ $\widehat \Psi$ is the closure of $\Psi_{\mathcal S}$ with respect to  local uniform convergence). 
For any $x\in \jstar$, and any sequence $(p_n)\in \mathcal S^\N$ converging to 
$x$, one can extract a subsequence such that $(\psi^u_{p_n})$ converges towards an element $\widehat \psi \in  \widehat \Psi$ with $\widehat \psi(0) = x$. 
It is   a  non-constant entire curve  because 
$\max_{\overline{\disk}} G^+\circ \widehat \psi =1$, but it is not necessarily injective.  For 
$x\in \jstar$, we let 
\begin{equation}
\tau(x) = \max_{\widehat \psi\in  \widehat \Psi,  \ \widehat\psi(0 )= x}   \mathrm{ord}_0(\widehat \psi),
\end{equation}
where $\mathrm{ord}_0(\widehat \psi) = \min \set{k\geq 0\, ;\,  \psi^{(k)}(0) \neq 0}<\infty$ 
is the vanishing order of $\widehat \psi$ at 0. By~\cite[Lem 3.1]{guerini-peters}, $\tau$ is uniformly bounded (in our setting, we give a direct proof of this fact in Lemma~\ref{lem:tau_bounded} below). 

For every $x\in \jstar$ we set $\mathcal W^u(x)=\hat\psi(\C)$.  It does not depend on the choice of normal limit $\hat\psi\in \widehat\Psi$ with $\hat\psi(0)=x$ in the above construction; it is contained in $K^-$, and, in fact, it is the image of an injective entire curve (see~\cite[\S 1]{BGS} \footnote{This is essentially contained in~\cite{bs8} but a missing ingredient there was that $\mathcal W^u(x)$ is smooth at $x$.}). If $x$ is a saddle point, then $\mathcal W^u(x)=  W^u(x)$.
 As usual we denote by  $\psi^u_x:\C\to \mathcal W^u(x)$   an injective parametrization of $\mathcal W^u(x)$
such that $\psi^u_x(0)  = x$. If we consider a sequence 
$p_n\to x$ as above, then the the limit $\widehat \psi$ satisfies $\widehat \psi =\psi^u_x\circ h$ for some polynomial function $h$ (see~\cite[Lem. 6.5]{bs8},  and~\cite[\S 3]{guerini-peters}) such that $h(\zeta) = c\zeta^k+ \hot$, $k= \mathrm{ord}_0  \widehat \psi$. In particular, $G^+$ does not vanish identically on $ \mathcal W^u(x)$.

We now resume the proof of Theorem~\ref{thm:no_unstably_linear}.

\begin{lem}\label{lem:vanish}
If $f$ is quasi-expanding, then  for every $x\in \jstar$, 
 $G^+\circ\psi^u_x$ does not vanish identically in a neighborhood of the origin. 
\end{lem}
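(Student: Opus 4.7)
The plan is to argue by contradiction, using a normal limit of saddle parametrizations landing at $x$ to transfer the explicit formula of Lemma~\ref{lem:G+u} from saddles to an arbitrary $x\in\jstar$.

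For the approximation step, I would pick a sequence of saddles $p_n\to x$ and use quasi-expansion (normality of $\Psi_{\mathcal S}$) to extract a subsequence along which $\psi^u_{p_n}\to\widehat\psi$ locally uniformly, for some $\widehat\psi\in\widehat\Psi$ with $\widehat\psi(0)=x$. By the discussion preceding the lemma, $\widehat\psi$ factors as $\widehat\psi=\psi^u_x\circ h$ for a polynomial $h$ with $h(0)=0$, and continuity of $G^+$ gives $G^+\circ\psi^u_{p_n}\to G^+\circ\psi^u_x\circ h$ locally uniformly on $\C$.

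The next step is to apply Lemma~\ref{lem:G+u} to each $\psi^u_{p_n}$ (normalized by \eqref{eq:G+u}), producing constants $a_n,b_n\in[0,1]$ with $\max(a_n,b_n)=1$ such that
\[
G^+\circ\psi^u_{p_n}(\zeta)=\begin{cases} a_n\,\im(\zeta),& \im(\zeta)\geq 0,\\ b_n\,|\im(\zeta)|,& \im(\zeta)<0.\end{cases}
\]
Passing to a further subsequence with $a_n\to a$ and $b_n\to b$, local uniform convergence preserves the normalization $\max_{\overline{\disk}}G^+\circ\psi^u_{p_n}=1$, so $\max(a,b)=1$, and
\[
G^+\circ\widehat\psi(\zeta)=\begin{cases} a\,\im(\zeta),& \im(\zeta)\geq 0,\\ b\,|\im(\zeta)|,& \im(\zeta)<0,\end{cases}
\]
which vanishes \emph{exactly} on $\R$; in particular it is not identically zero on any neighborhood of $0$.

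The contradiction is then immediate: if $G^+\circ\psi^u_x\equiv 0$ on some neighborhood $V$ of $0$, then since $h(0)=0$ and $h$ is continuous, some neighborhood $U$ of $0$ satisfies $h(U)\subset V$, forcing $G^+\circ\widehat\psi=G^+\circ\psi^u_x\circ h\equiv 0$ on $U$ and contradicting the previous paragraph. The only delicate point is ensuring $\max(a,b)=1$ in the limit — otherwise the argument would collapse, since a priori $a_n$ and $b_n$ could both tend to zero — and this is exactly where preserving the normalization \eqref{eq:G+u} under local uniform convergence is used.
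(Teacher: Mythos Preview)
Your argument is correct and coincides with the paper's \emph{second} proof, explicitly labeled there as ``specific to unstably linear maps''. Note, however, that your invocation of Lemma~\ref{lem:G+u} and the normalization~\eqref{eq:G+u} presupposes that $f$ is unstably linear, whereas the lemma as stated only assumes quasi-expansion; so strictly speaking you have proven a weaker statement than the one claimed (which is, as it happens, sufficient for the application in the paper).

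The paper also gives a first proof valid under the sole hypothesis of quasi-expansion, and it is shorter: by Property~(3) of Theorem~1.2 in~\cite{bs8}, for each fixed $r\in(0,1)$ there is a constant $\alpha(r)>0$ such that $\max_{\overline{\disk}_r} G^+\circ\psi^u_p\geq\alpha(r)$ uniformly over all saddle points $p$. Since the family $(\psi^u_p)_{p\in\mathcal S}$ is normal and $G^+$ is continuous, this lower bound passes to every $\widehat\psi\in\widehat\Psi$, and the conclusion follows immediately without any appeal to the explicit linear formula for $G^+\circ\psi^u_p$.
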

 
 \begin{proof}  
 If $p$ is a saddle periodic point, the maximum of $G^+\circ \psi^u_p$ on $\disk_1$ is equal to $1$. 
 Fix a radius $r\in ]0,1[$. Then, by Property (3) in Theorem 1.2 of \cite{bs8}, there is a constant $\alpha(r) >0$ 
 that does not depend on $p$ such that the maximum of $G^+\circ \psi^u_p$ on $\disk_r$ is  at least
 $ \alpha(r)$. 
 Since the family $(\psi^u_p)_{p\in \mathcal S}$ is normal and $G^+$ is continuous, this property is satisfied by all $\hat\psi_x$ in $\hat\Psi$, and the lemma follows. 
 \end{proof}
 
\begin{proof}[Second proof, specific to unstably linear maps]
Pick a sequence  $(p_n)$ in $\mathcal S^\N$ converging to $x$. 
On the upper half plane $\mathbb H$
we have that 
$G^+\circ \psi^u_{p_n}(\zeta) = \im(\zeta)$. After possible extraction, with notation as above,
$\psi^u_{p_n}$ converges uniformly 
to $\widehat \psi$, so by continuity of $G^+$ we get that 
\begin{equation}\label{eq:g+_rond_psi}
G^+\circ \widehat \psi (\zeta)  = G^+\circ \psi^u_x\circ h= \im(\zeta) \; \text{ on } \; \mathbb H,
\end{equation}
 so  $G^+\circ \psi^u_x$ takes positive values arbitrary close to 0. 
\end{proof}

The following result is reminiscent from~\cite[Prop. 2.2]{BSR}. 

\begin{lem}\label{lem:tau_bounded}
If $f$ is unstably linear, then $\tau(x)\leq 2$ for every $x\in \jstar$. 
\end{lem}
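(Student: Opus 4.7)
The plan is to start from an arbitrary $\widehat{\psi}\in\widehat{\Psi}$ with $\widehat{\psi}(0)=x$, which by the symmetry $\zeta\mapsto -\zeta$ may be assumed to come from a sequence of saddle parametrizations $\psi^u_{p_n}\to\widehat{\psi}$ normalized as in Lemma~\ref{lem:G+u} with constants $c_n\in[0,1]$; after a further extraction, $c_n\to c\in[0,1]$. Writing $\widehat{\psi}=\psi^u_x\circ h$ with $h(\zeta)=a\zeta^k+\hot$ a polynomial of lowest order $k=\mathrm{ord}_0\widehat{\psi}$, passing to the limit in Lemma~\ref{lem:G+u} yields
\[
u(h(\zeta)) = \im(\zeta) \text{ on } \mathbb{H}, \qquad u(h(\zeta)) = c\,\abs{\im(\zeta)} \text{ on } \mathbb{H}^-,
\]
where $u := G^+\circ\psi^u_x$. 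Since $\mathrm{ord}_0\widehat{\psi}=\mathrm{ord}_0 h\leq\deg h$, it suffices to show that $\deg h\leq 2$.

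The key step is a holomorphic rigidity argument bounding the number of preimages of $h$ in each open half-plane. For generic $w\in\C$ (outside $h(\R)$ and the critical values of $h$), let $m_+(w)$ and $m_-(w)$ denote the number of preimages of $w$ under $h$ lying in $\mathbb{H}$ and $\mathbb{H}^-$, respectively. I would show $m_+\leq 1$ as follows: if two distinct holomorphic sections $\zeta_1(w),\zeta_2(w)\in\mathbb{H}$ of $h$ existed over some open set $U$, the functional equation above would force $\im\zeta_1(w)=u(w)=\im\zeta_2(w)$, so $\zeta_1-\zeta_2$ would be a real-valued holomorphic function on $U$, hence a real constant $t$; then $h(\zeta+t)=h(\zeta)$ would hold on the open set $\zeta_2(U)$, hence identically on $\C$ by analytic continuation, forcing $t=0$ since a non-constant polynomial cannot be periodic---contradicting $\zeta_1\neq\zeta_2$.

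To conclude, I split into two cases. If $c>0$, the symmetric argument applied on $\mathbb{H}^-$ gives $m_-\leq 1$; since $\deg h=m_+(w)+m_-(w)$ for generic $w$, we obtain $\deg h\leq 2$. If $c=0$, then $u\equiv 0$ on $h(\mathbb{H}^-)$, and combined with $u(h(\zeta))=\im\zeta>0$ for $\zeta\in\mathbb{H}$, this forces $h(\mathbb{H})\cap h(\mathbb{H}^-)=\emptyset$. However, for $\deg h=k\geq 2$, the $k$ preimages of any $w$ of large modulus are asymptotic to the $k$-th roots of $w/a$ and occupy $k$ equally spaced points on a large circle, which cannot all lie in a single closed half-plane; hence $h(\mathbb{H})$ and $h(\mathbb{H}^-)$ must overlap near infinity, a contradiction. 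Thus $\deg h\leq 1$ when $c=0$, and in all cases $\tau(x)\leq 2$. The main obstacle I anticipate is precisely the degenerate case $c=0$, where the symmetric rigidity on $\mathbb{H}^-$ collapses and a separate geometric argument based on the asymptotic distribution of preimages at infinity is required.
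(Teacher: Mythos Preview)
Your argument is correct, and it takes a genuinely different route from the paper's proof. The paper argues locally at the origin: since $h(\zeta)=a\zeta^k+\hot$, the zero set $\{G^+\circ\widehat\psi=0\}$ has (asymptotic) $k$-fold rotational symmetry near $0$; but $G^+\circ\widehat\psi(\zeta)=\im(\zeta)$ on $\mathbb H$ forces the open upper half-plane to lie in the complement of this zero set, and for $k\geq 3$ the $k$ rotates of a half-plane already cover a punctured neighborhood of $0$, contradicting that the zero set contains $\R$. This is a two-line angular-measure argument that only uses the leading term of $h$.

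Your approach instead bounds $\deg h$ globally, via the rigidity step ``two $\mathbb H$-valued sections of $h$ with equal imaginary part differ by a real constant, hence $h$ would be periodic''. This is a bit longer but yields a strictly stronger conclusion: $\deg h\leq 2$ (and even $\deg h\leq 1$ when $c=0$), not merely $\mathrm{ord}_0 h\leq 2$. That extra information is in fact consonant with what the paper uses immediately afterwards (where, for $x\in\jstar_2$, it is recorded that $h(\zeta)=c\zeta^2$ exactly). Your proof also treats the degenerate case $c=0$ explicitly, whereas the paper's symmetry argument covers both cases at once without naming them. Either method is fine here; the paper's is quicker, yours is more informative.
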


\begin{proof}
Assume that $x\in \jstar$ is such that $\tau(x) = k$. Then we have a sequence 
$\psi^u_{p_n}$ converging to $\widehat \psi =\psi^u_x\circ h$ with 
$h(\zeta) = c\zeta^k+ \hot$ at the origin.  
From Equation~\eqref{eq:g+_rond_psi}, we have   
$G^+\circ \widehat \psi (\zeta)  =  \im(\zeta)$.
On the other hand,  locally near the 
origin, $\set{G^+\circ \widehat\psi  = 0}$ has $k$-fold rotational symmetry. 
 This is possible only if $k\leq 2$. 
\end{proof}

For $i=1, 2$, set $\jstar_i = \set{x\in \jstar\, ;\,  \tau(x)= i}$, which is an invariant set (denoted by $\mathcal J_i$ in \cite{bs8}).  Theorem 6.7 in~\cite{bs8} shows that  $\jstar_1$ is open and dense in $\jstar$, so $\jstar_2$ is closed. From the comments preceding Lemma 6.4 in~\cite{bs8}, we see that  (in their notation) $\mathcal J'_2 = \mathcal J_2$, so it follows from Lemma 6.5 there that 
for every $x\in \jstar_2$, if $\widehat \psi$ is a non-injective parametrization of 
$\mathcal W^u(x)$, then $\widehat \psi$ is of the form $\zeta\mapsto \psi^u_x(c\zeta^2)$ with $\psi^u_x$ injective (see also~\cite[Prop. 2.6]{BSR} for a related argument). 

\begin{lem}\label{lem:tau_bounded_1}
If $f$ is unstably linear, then $\jstar = \jstar_1$; in other words
$\tau(x)\leq 1$ for every $x\in \jstar$. 
\end{lem}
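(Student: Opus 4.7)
The plan is to argue by contradiction: assume some $x\in\jstar_2$ exists, and extract a forbidden multiplier identity by combining the branched parametrization available at $\jstar_2$-points with the unstable-linearity rigidity encoded in Lemma~\ref{lem:G+u}. By the discussion preceding the lemma (invoking \cite[Lem.~6.5]{bs8}), a non-injective $\widehat\psi\in\widehat\Psi$ with $\widehat\psi(0)=x$ exists, of the form $\widehat\psi(\zeta)=\psi^u_x(c_x\,\zeta^2)$ with $\psi^u_x\colon\C\to\mathcal{W}^u(x)$ injective; fix also $p_n\in\mathcal{S}$ with $\psi^u_{p_n}\to\widehat\psi$ locally uniformly. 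The second proof of Lemma~\ref{lem:vanish} (specifically Equation~\eqref{eq:g+_rond_psi}) gives $G^+\circ\widehat\psi(\zeta)=\im\zeta$ on $\mathbb{H}$, and the $2$-to-$1$ symmetry $\widehat\psi(\zeta)=\widehat\psi(-\zeta)$ upgrades this at once to $G^+\circ\widehat\psi(\zeta)=|\im\zeta|$ on all of $\C$, since for $\zeta\in-\mathbb{H}$ one has $G^+\circ\widehat\psi(\zeta)=G^+\circ\widehat\psi(-\zeta)=\im(-\zeta)=|\im\zeta|$. Equivalently, in the notation of Lemma~\ref{lem:G+u} applied to $\widehat\psi$, the lower-half-plane slope constant equals $1$.

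Next I would pass the relation $f\circ\psi^u_{p_n}=\psi^u_{f(p_n)}\circ L_n$ with $L_n(\zeta)=\pm d\zeta$ (Lemma~\ref{lem:bilan_unstably_linear}.(2)) to the limit, obtaining $f\circ\widehat\psi_x=\widehat\psi_{f(x)}\circ L_\infty$ with $L_\infty(\zeta)=\pm d\zeta$. Since $f(x)\in\jstar_2$ as well, $\widehat\psi_{f(x)}=\psi^u_{f(x)}\circ(c_{f(x)}\zeta^2)$; equating both sides and substituting $w=c_x\zeta^2$ yields
\[
f\bigl(\psi^u_x(w)\bigr)\;=\;\psi^u_{f(x)}\!\Bigl(\tfrac{c_{f(x)}}{c_x}\,d^2\,w\Bigr),
\]
so in these injective parametrizations $f$ acts linearly with multiplier $(c_{f(x)}/c_x)\,d^2$. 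Iterating along a forward orbit in $\jstar_2$, the $c$-ratios telescope and
\[
f^n\bigl(\psi^u_x(w)\bigr)\;=\;\psi^u_{f^n(x)}\!\bigl(\alpha_n\,w\bigr),\qquad \alpha_n\;=\;\tfrac{c_{f^n(x)}}{c_x}\,d^{2n}.
\]

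The endgame is to apply this identity at a \emph{periodic} saddle $q\in\jstar_2$ of period $n$: Lemma~\ref{lem:bilan_unstably_linear}.(2) forces $\lambda^u(q)=\pm d^n$, while the calculation above, applied at $q$ (where $f^n(q)=q$, hence $c_{f^n(q)}=c_q$), forces $\lambda^u(q)=\alpha_n=d^{2n}$; the equality $\pm d^n=d^{2n}$ forces $d^n=\pm 1$, contradicting $d\ge 2$ and $n\ge 1$. The hard part is producing such a periodic orbit: $\jstar_2$ is closed but nowhere dense in $\jstar$ (since $\jstar_1$ is open and dense), so the density of $\mathcal{S}$ in $\jstar$ does not directly supply a saddle lying inside $\jstar_2$. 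The route I would take, in the spirit of \cite[Prop.~2.6]{BSR}, combines the quasi-expansion factor $\kappa=d$ of Lemma~\ref{lem:expansion_factor} with a shadowing/closing argument inside a minimal $f$-invariant compact subset of $\jstar_2$, producing the required saddle periodic orbit in $\jstar_2$ at which the multiplier identity above closes the proof.
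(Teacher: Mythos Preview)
Your computation nicely isolates the core tension: at points of $\jstar_2$ the action of $f$ on $\mathcal{W}^u(x)$, read through the branched cover $\zeta\mapsto c_x\zeta^2$, has expansion factor $d^2$ rather than $d$, and this should conflict with Lemma~\ref{lem:bilan_unstably_linear}(2). This is precisely the mechanism behind the paper's second proof. The genuine gap is in your endgame: you need a saddle periodic orbit lying \emph{inside} $\jstar_2$, and the closing/shadowing route you sketch does not supply one. Shadowing a recurrent orbit segment in $\jstar_2$ yields a periodic orbit \emph{close to} $\jstar_2$, not in it; without uniform hyperbolicity on $\jstar_2$ there is no Anosov-type closing available, and since $\jstar_1$ is open and dense the shadowed orbit will typically land in $\jstar_1$, where your multiplier identity collapses. (A secondary loose end: the constants $c_x$ depend on the particular $\widehat\psi\in\widehat\Psi$ chosen at each step along the orbit, so the telescoping identity $c_{f^n(q)}=c_q$ needs an extra normalization argument even if such a $q$ existed.)

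The paper's second proof implements exactly your $d^{2}$ versus $d$ idea while bypassing the need for a periodic point in $\jstar_2$. One takes an ergodic invariant probability $\nu$ supported on the closed invariant set $\jstar_2$ and invokes \cite[Thm~6.2]{bs8}: since $\tau\geq 2$ on $\supp\nu$, the positive Lyapunov exponent of $\nu$ is at least $2\log\kappa=2\log d$ (Lemma~\ref{lem:expansion_factor}). But Lyapunov exponents of hyperbolic measures are approximated by those of saddle periodic orbits (\cite{wang-sun}, or adapt \cite[Thm~S.5.5]{KH}), and those all equal $\log d$ by Lemma~\ref{lem:bilan_unstably_linear}(2): contradiction. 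The paper's first proof is more geometric and closer to your opening paragraph: it shows $(\psi^u_x)^{-1}(J^+)$ lies in a \emph{half}-line for $x\in\jstar_2$, then uses Pesin theory on $\nu$ together with Lemma~\ref{lem:pesin_tangent} to force it to be a full line, again a contradiction.
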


\begin{proof}[First proof of the lemma]
Assume that $\jstar_2$ is non-empty.
We claim that for every $x\in \jstar_2$, $(\psi^u_x)\inv(J)$ is contained in a half line.  
Indeed,  if $\widehat \psi\in \widehat \Psi$ is  a non-injective parametrization of $\mathcal W^u(x)$ with $\widehat \psi(0) = x$, then as in Equation~\eqref{eq:g+_rond_psi}
we have $G^+\circ \widehat \psi(\zeta)   = \im(\zeta)$ in 
$\overline {\mathbb H}$, and by the previous comments 
$G^+\circ \widehat \psi(\zeta) = G^+\circ   \psi^u_x(c\zeta^2)$. If we rotate 
 $\psi^u_x$ so that $c=1$, we see that $\set{G^+\circ   \psi^u_x = 0}$ 
 is the positive real axis, as asserted. 

 Since $\jstar_2$ is a closed invariant set, it supports an ergodic  invariant measure $\nu$. 
By~\cite[Thm 6.2]{bs8}, $\nu$ has a positive Lyapunov exponent, 
hence also a negative one since $\abs{\jac(f)}\leq 1$. If $y$ is a $\nu$-generic point, its stable manifold intersects transversally  the unstable manifold of some saddle point $p$, 
and at such a point $J^+_{W^u(y)}$ has a tangent line. Since $y$ is recurrent,  from Lemma~\ref{lem:pesin_tangent} below, 
$(\psi^u_y)\inv\big(J^+_{W^u(y)})$ is a line, which is a contradiction. \end{proof}

\begin{proof}[Second proof of the lemma]
As in the first proof,  if $\jstar_2$ is non-empty, then it  supports  
a hyperbolic,  ergodic,  invariant probability measure $\nu$. Theorem 6.2 in~\cite{bs8} asserts that the positive exponent of 
$\nu$ is  not smaller that $2\log \kappa = 2 \log d$ (see Lemma~\ref{lem:expansion_factor}). It is a general fact from Pesin theory 
that the Lyapunov exponents of hyperbolic measures are approximated by those of periodic orbits 
(see~\cite{wang-sun}; in our setting we can directly adapt~\cite[Thm S.5.5]{KH}).
But all  Lyapunov exponents of periodic points are equal to $\log d$: this contradiction finishes the proof. 
\end{proof}

We are now ready to complete the proof of Theorem~\ref{thm:no_unstably_linear}. The previous lemma shows that if $f$ is unstably linear, then $\jstar_1 = \jstar$. Therefore the local   manifolds 
$\mathcal W^u_\loc(x)$  
form a lamination in a neighborhood of $\jstar$ (see~\cite[Prop. 5.3]{bs8}). 
In the vocabulary of~\cite{topological}, every point in $\jstar$ is $u$-regular. In addition by Lemma~\ref{lem:vanish}, 
$G^+$ does not vanish identically along any of the $\mathcal W^u_\loc(x)$, $x\in \jstar$,   and Lemma~\ref{lem:bilan_unstably_linear} shows that  $\abs{\jac(f)}\leq 1$. 
Therefore,   \cite[Prop 2.16]{topological} (or~\cite[Thm 2.18]{topological}) 
shows that $f$ is hyperbolic. 
This contradicts  Corollary~\ref{cor:hyperbolic_unstably_linear}, and the proof is complete. 
\qed

\subsection{Questions on unstably real automorphisms} \label{subs:questions_unstably_real}
Up to conjugacy, the only examples of unstably real mappings that we know of 
are  real automorphisms with maximal entropy $h_{\rm top}(f\rest{\R^2}) = h_{\rm top}(f)$. In particular they are stably real as well.

\begin{que}~\label{question:real}
\begin{enumerate}
\item Is every unstably real automorphism conjugate to a real automorphism with maximal entropy? \item   If 
$f$ is unstably real, then is $\jac(f)$ real?  
\item Assume that all unstable multipliers of $f$ are real, then is $f$ unstably real? 
\end{enumerate}
\end{que}

The second question is a weak form of the first one. We refer to~\cite{eremenko-vanstrien} for a positive answer to the third question in dimension $1$.

\subsection{Comments on other affine surfaces} Consider the affine surface $S=\C^\times \times \C^\times$, i.e. the complex multiplicative group of dimension $2$. The group $\GL_2(\Z)$ acts by automorphisms on $S$: if $M=\lrpar{ \begin{smallmatrix} a&b \\ c&d\end{smallmatrix}} \in \GL_2(\Z)$, 
the corresponding automorphism is $f_M\colon (x,y)\mapsto (x^a y^b, x^c y^d)$. Then, $f_M$ is loxodromic if and only if $M$ has an eigenvalue $\lambda$ with $\vert \lambda \vert > 1$ (the second eigenvalue is $\det(M)/\lambda$, and $\det(M)=\pm 1$). Such a loxodromic monomial automorphism has positive entropy, preserves a pair of holomorphic foliations, and has a Julia set which is a real analytic surface, namely $\set{(x,y)\; ; \; \vert x \vert =1= \vert y\vert}$. A similar example is obtained by looking at the Cayley cubic surface $S/\eta$, where $\eta(x,y)=(1/x,1/y)$ 
(see~\cite{cantat:BHPS, cantat-loray}). Thus, Theorems~\ref{mthm:no_real_global_foliation} and~\ref{mthm:no_unstably_linear}  do not extend directly to arbitrary affine surface. Following the arguments presented in this article in view of such a generalization, the main missing input  to characterize unstably linear automorphims of affine surfaces 
would be a version of the  Bedford-Kim theorem on multipliers of fixed points (Proposition~6.1 in~\cite{BK1}). 

\begin{que}~\label{question:affine_surfaces} Let $f$ be an automorphism of a complex affine surface $X$, 
with first dynamical degree $\lambda_1(f)>1$. Assume that for every sadlle periodic point, except finitely many of them, the unstable multiplier $\lambda^u_p$ is equal to $\pm \lambda_1(f)^n$, where  $n$ is the period of $p$. Is $X$ isomorphic to $\C^\times \times \C^\times$ or its quotient by $(x,y)\mapsto (1/x, 1/y)$?
\end{que}


\section{Rectifiable Julia sets}\label{sec:rectifiable}

Examples of automorphisms such that $J^+_\Gamma$ is locally  a curve include perturbations of $1$-dimensional hyperbolic 
polynomials with quasi-circle Julia sets.  Here we  push 
the techniques of the previous section one step further to show the following result (see Section~\ref{par:rectifiable_sets} for an introduction to rectifiability).

\begin{thm}\label{thm:rectifiable}
Let $f\in \Aut(\C^2)$ be a loxodromic automorphism. 
If  $\Gamma$ is a transversal to    $J^+$,  then  $J^+_\Gamma$ is  not  a rectifiable curve. 
\end{thm}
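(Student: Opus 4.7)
The plan is to show that rectifiability of $J^+_\Gamma$ would force $f$ to be unstably linear, thereby contradicting Theorem~\ref{thm:no_unstably_linear}. The key is to combine the $\mathcal{H}^1$-almost-everywhere existence of tangent lines on rectifiable curves with Pesin theory for the maximal entropy measure $\mu = T^+\wedge T^-$, so as to produce a point where the renormalization of Proposition~\ref{pro:contained_curve}, in the Pesin variant announced in Remark~\ref{rem:pesin}, can be applied.

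Assume for contradiction that $J^+_\Gamma$ is a rectifiable curve. By standard geometric measure theory, it admits an approximate tangent line at $\mathcal{H}^1$-almost every point. Set $\nu := T^+\wedge[\Gamma]$; it is non-atomic (because $G^+$ is continuous), positive, and supported on all of $J^+_\Gamma$. The union $\bigcup_{p \text{ saddle}} \Gamma\cap W^s(p)$ is countable and hence $\mathcal{H}^1$-negligible, so one cannot directly pair a tangent point with a saddle stable manifold and invoke the corollary to Theorem~\ref{thm:no_unstably_linear}. Instead, I would pass to the Pesin stable manifolds of $\mu$: by a disintegration of $\mu$ along its unstable Pesin lamination, combined with the product structure $\mu = T^+\wedge T^-$ and the fact that $\nu$ arises as the transverse slice of $T^+$ by $\Gamma$, one obtains that $\nu$-almost every point of $J^+_\Gamma$ is a transverse intersection of $\Gamma$ with $W^s_{\mathrm{Pesin}}(y)$ for some $\mu$-generic, recurrent $y$.

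The central step is then to exhibit a point $z_0 \in J^+_\Gamma$ that is simultaneously an approximate tangent point of the rectifiable curve and such a $\nu$-generic Pesin intersection. Granted such a $z_0$, the Pesin analogue of Remark~\ref{rem:pesin} yields that $(\psi^u_y)^{-1}\bigl(J^+_{\mathcal W^u(y)}\bigr)$ is contained in a real line through the origin. Because $J^+_\Gamma$ is a \emph{connected} rectifiable curve and therefore contains a non-trivial arc through $z_0$, a symmetric argument applied to this arc upgrades the inclusion to an equality. Then the density of saddles in $\supp \mu = \jstar$ together with the lower semicontinuity of slice Julia sets from \S\ref{subs:transversals} forces $(\psi^u_p)^{-1}(J^+_{W^u(p)}) = \R$ at every saddle periodic point $p$, so $f$ is unstably linear. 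Theorem~\ref{thm:no_unstably_linear} then delivers the desired contradiction.

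The main obstacle is the production of $z_0$: the tangent set is of full $\mathcal{H}^1$-measure and the Pesin-generic set is of full $\nu$-measure, but \emph{a priori} $\nu$ could be entirely supported on an $\mathcal{H}^1$-null subset of $J^+_\Gamma$, and then these two full-measure sets need not meet. One must therefore control the singular part of $\nu$ with respect to $\mathcal{H}^1$ on the rectifiable curve: a natural route is to derive a Frostman-type lower bound on $\nu$ from the dynamical self-similarity of $T^+$ under $f^{-n}$ and the H\"older continuity of $G^+$, combined with the finite length hypothesis, so as to force the absolutely continuous part of $\nu$ to be non-zero.
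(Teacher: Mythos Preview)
Your overall architecture is exactly the paper's: show that rectifiability forces $f$ to be unstably linear and invoke Theorem~\ref{thm:no_unstably_linear}. You also correctly isolate the single real difficulty, namely that the tangent set is of full $\mathcal H^1$-measure while the Pesin-generic set is of full $\nu = T^+\wedge[\Gamma]$-measure, and these two measures could a priori be mutually singular. Everything downstream of that point (the Pesin renormalization of Lemma~\ref{lem:pesin_tangent}, the upgrade from ``contained in a line'' to ``equals a line'', the propagation to all saddles) is fine and matches the paper.

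The gap is in your proposed resolution of that obstacle. A Frostman-type bound derived from H\"older continuity of $G^+$ will only give $\nu(B(x,r))\leq Cr^\alpha$ for the H\"older exponent $\alpha$ of $G^+$, and for a general loxodromic automorphism $\alpha<1$; this says nothing about absolute continuity of $\nu$ with respect to $\mathcal H^1$. Dynamical self-similarity under $f^{-n}$ rescales $\nu$ by $d^{-n}$ but rescales lengths along unstable manifolds by roughly $e^{-n\chi^u}$, and the ratio $\log d/\chi^u$ is again typically below $1$. So neither ingredient you name produces the needed control, and you have not actually closed the argument.

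The paper's key idea here is different and worth knowing. One proves a comparison lemma (Lemma~\ref{lem:harmonic_measure}): in a suitable one-sided domain $\Omega\subset\Gamma$ bounded in part by an arc of $J^+_\Gamma$, the harmonic measure $\omega(x_0,\cdot,\Omega)$ on that arc is \emph{dominated} by $\Delta(G^+\rest\Gamma)=\nu$. This is obtained by uniformizing $\Omega$ to a half-plane, using Schwarz reflection of $G^+$ across the boundary arc, and comparing both quantities to arclength on nearby level curves $\{G^+=\varepsilon\}$. Since $J^+_\Gamma$ is a rectifiable Jordan arc, the F.\ and M.\ Riesz theorem makes harmonic measure and $\mathcal H^1$ mutually absolutely continuous on it. Chaining the two facts, $\nu$ dominates a measure equivalent to $\mathcal H^1$ on a nontrivial piece of $J^+_\Gamma$, so the $\mathcal H^1$-full tangent set has positive $\nu$-measure, and Proposition~\ref{pro:tangent} applies. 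This harmonic-measure bridge is the missing step in your proposal.

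One minor point: for the renormalization argument you need \emph{geometric} tangents, not merely approximate ones. For a rectifiable \emph{curve} (as opposed to a general rectifiable set) Besicovitch's theorem does give geometric tangents $\mathcal H^1$-a.e., so this is fine, but you should state it that way.
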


 Since a Jordan arc has finite $1$-dimensional measure if and only if it is rectifiable (see e.g.~\cite[Lem. 3.2]{falconer}), we obtain the following corollary.

\begin{cor}\label{cor:Jordan_Hausdorff}
 If  $\Gamma$ is a transversal to    $J^+$ such that 
  $J^+_\Gamma$ is a Jordan arc,
  then its $1$-dimensional Hausdorff measure is infinite.  
\end{cor}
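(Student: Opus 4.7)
The plan is to deduce the corollary directly from Theorem~\ref{thm:rectifiable}, using the classical characterization of rectifiable curves in terms of their one-dimensional Hausdorff measure. The argument is by contradiction: I would assume that $J^+_\Gamma$ is a Jordan arc whose $1$-dimensional Hausdorff measure $\mathcal H^1(J^+_\Gamma)$ is finite, and then derive that $J^+_\Gamma$ is rectifiable, which contradicts Theorem~\ref{thm:rectifiable}.

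More concretely, I would first recall the cited fact from geometric measure theory (e.g.\ \cite[Lem.~3.2]{falconer}): a Jordan arc $C$ in $\C$ satisfies $\mathcal H^1(C)<\infty$ if and only if $C$ admits a Lipschitz parametrization by a compact interval of $\R$, equivalently, $C$ is rectifiable. The non-trivial direction (finite $\mathcal H^1$-measure implies rectifiability) is a classical consequence of the fact that a Jordan arc of finite length is the image of an arc-length parametrization. Once this is granted, assuming $J^+_\Gamma$ is a Jordan arc with $\mathcal H^1(J^+_\Gamma)<\infty$ yields a rectifiable curve, and Theorem~\ref{thm:rectifiable} gives the contradiction.

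Since the heavy lifting is done by Theorem~\ref{thm:rectifiable}, there is essentially no obstacle at the level of this corollary; the content lies entirely in the preceding theorem and in the classical measure-theoretic lemma being invoked. The only thing worth double-checking is that the notion of ``rectifiable curve'' used in the statement of Theorem~\ref{thm:rectifiable} agrees with the notion characterized by finite one-dimensional Hausdorff measure for Jordan arcs in the reference; this is standard, and both definitions are discussed in~\S\ref{par:rectifiable_sets} of the paper.
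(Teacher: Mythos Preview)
Your proof is correct and matches the paper's own argument exactly: the corollary is deduced from Theorem~\ref{thm:rectifiable} via the classical fact (\cite[Lem.~3.2]{falconer}) that a Jordan arc has finite $\mathcal H^1$-measure if and only if it is rectifiable. There is nothing to add.
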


This generalizes some results of  Hamilton~\cite{hamilton} in $1$-dimensional dynamics to plane polynomial automorphisms.  
In dimension $1$,  a deeper result states that if a polynomial Julia set is not totally disconnected, 
then either it is smooth (a circle or an interval) or 
its Hausdorff dimension is greater than $1$. 
In the connected case, this follows from Zdunik's results in~\cite{zdunik:dimension} 
together with Makarov's celebrated 
theorem~\cite{makarov}
on the dimension of plane harmonic measure;  the general case is established  
 by Przytycki and Zdunik in~\cite{przytycki-zdunik}. 
In our setting, one may expect that such an alternative holds for unstable slices (except that, as we saw,  
smooth examples   do not exist). 

\subsection{An extension of Proposition~\ref{pro:contained_curve}} \label{subs:extension_curve}
Recall that a subset $F\subset \R^2$ has a (geometric) tangent at $x$ if for every 
$\theta>0$ there exists $r>0$ such
that $F\cap B(x,r)$ is contained in a (two-sided) angular sector of width $\theta$. This notion is invariant under diffeomorphisms so it makes sense on any real surface. The first step is a generalization of Proposition~\ref{pro:contained_curve}  (see also Remark~\ref{rem:pesin}). 

 \begin{pro}\label{pro:tangent}
 Let $f\in \Aut(\C^2)$ be a loxodromic automorphism. Let $\Gamma$ be a transversal to~$J^+$.   
 If $J^+_\Gamma$ admits a tangent 
 on a set of positive $T^+\wedge [\Gamma]$-measure, then 
 $f$ is unstably real. 
 \end{pro}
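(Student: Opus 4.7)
The plan is to reduce to Lemma~\ref{lem:pesin_tangent}, the Pesin-regular analogue of the inclination argument of Proposition~\ref{pro:contained_curve} promised in Remark~\ref{rem:pesin}. That lemma should provide the following implication: if $y$ is a $\mu$-generic Pesin regular recurrent point whose stable Pesin manifold $W^s(y)$ meets the transversal $\Gamma$ at a point at which $J^+_\Gamma$ admits a tangent, then $(\psi^u_p)^{-1}(J^+_{W^u(p)})$ is contained in a line through the origin for every saddle periodic point $p$. The argument mirrors Proposition~\ref{pro:contained_curve}, with the $\lambda$-lemma at $p$ replaced by the analogous inclination result along the Pesin stable manifold of a recurrent point $y$; this substitution is standard in Pesin theory. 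Granting Lemma~\ref{lem:pesin_tangent}, proving that $f$ is unstably real reduces to producing such a point $y$.

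To produce $y$, I would use the disintegration of the maximal entropy measure $\mu = T^+\wedge T^-$ along Pesin stable manifolds. By Bedford--Lyubich--Smillie, $\mu$ is hyperbolic and ergodic, and for $\mu$-almost every $y$ the stable (and unstable) Pesin manifold is biholomorphic to $\C$. On a Pesin box $B$ of positive $\mu$-measure these stable manifolds form a measurable lamination. Replacing $\Gamma$ by a large iterate $f^N(\Gamma)$ and shrinking $B$ accordingly, we may assume that a sub-disk $T\subset\Gamma$ serves as a transversal. The key pluripotential input is then that the transverse measure on $T$ induced by disintegrating $\mu$ along the Pesin stable lamination agrees with $T^+\wedge[T]$ up to a strictly positive density; this is the standard product structure $\mu\simeq T^+\wedge T^-$ read inside a Pesin box, in which $T^+$ plays the role of transverse measure to the stable lamination.

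Let $\Sigma\subset J^+_\Gamma$ denote the set of tangent points. By hypothesis $T^+\wedge[\Gamma](\Sigma)>0$, and after the above reductions $T^+\wedge[T](\Sigma\cap T)>0$. By the disintegration, the set of $y\in B$ whose stable Pesin plaque meets $\Sigma\cap T$ has positive $\mu$-measure. Using Poincar\'e recurrence together with Pesin's theorem, such a $y$ may be chosen recurrent and Pesin regular, and Lemma~\ref{lem:pesin_tangent} then yields the desired conclusion.

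The one delicate step I anticipate is the identification of the transverse measure on $T$ with $T^+\wedge[T]$ up to a strictly positive density. This amounts to an absolute continuity statement for the Pesin stable holonomy in the H\'enon-like holomorphic setting, and underlies much of the laminar theory of $T^\pm$ in the works of Bedford--Lyubich--Smillie and Dujardin; it should be quotable or easily verified in the present framework, after which the rest of the argument is routine.
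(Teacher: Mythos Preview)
Your plan is correct and follows essentially the same route as the paper: both reduce to Lemma~\ref{lem:pesin_tangent}, and both need to show that $T^+\wedge[\Gamma]$-a.e.\ point of $\Gamma$ lies on the Pesin stable manifold of a $\mu$-generic point. The paper packages precisely your ``delicate step'' as a separate statement, Lemma~\ref{lem:laminar_transverse}, and proves it (following~\cite[Lem.~3.3]{DF}) via the equidistribution $d^{-n}(f^n)_\varstar[\Gamma]\to cT^-$ together with the laminar structure of $T^-$ subordinate to Pesin unstable manifolds; this is the same pluripotential input you invoke through the product structure of $\mu$ in a Pesin box, just read from the other side.
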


Before starting the proof, let us fix some vocabulary. Since the canonical measure  $\mu = \mu_f=T^+\wedge T^-$ is hyperbolic, by Pesin's theory, $\mu$-almost every point admits a stable and an unstable manifold, biholomorphic to $\C$ (see e.g.~\cite{bls} for details).  We choose a 
measurable family of    unstable parametrizations $\psi^u_x:\C\to W^u(x)$, normalized by 
\begin{equation}\label{par:normalisation_norme_1}
\psi^u_x(0)=x \; {\text{  and  }} \;
\norm{(\psi^u_x)'(0)}= 1.
\end{equation}
 In these parametrizations, $f$ acts linearly. 
We say that \emph{$W^u(x)$ has size $r$ at $x$} if it
is a graph of slope at most 1 over  the disk of radius $r$ in $E^u(x)$ relative to the orthogonal projection $\pi$ from $\C^2$ to $E^u(x)$.
Then   
the Koebe distortion theorem gives universal bounds on the distortion of $\pi\circ \psi^u_x$ on any disk of radius $<r/4$ (see~\cite[Lem. 3.7]{berger-dujardin}).  We denote by $W^u_r(x)$ the connected
component of $W^u(x)\cap B(x, r)$ containing $x$, and by $e^u(x)$ a unit vector tangent to $W^u(x)$ at~$x$.

\begin{proof}  \hfill \\
{\bf{Step 1: an intermediate case}}.-- 
In a first stage, we assume that for some $\mu$-generic $x$ (that is, 
satisfying a finite number of full measure conditions to be made clear below), 
$(\psi^u_x)\inv\big(J^+_{W^u(x)}\big)$ 
has a tangent at $x$. We claim that $f$ is unstably real.

 Indeed, set
$I(x) = (\psi^u_x)\inv\big(J^+_{W^u(x)}\big)$, fix some $\rho>0$, and let $A(x)\subset \N$ be the set 
of integers $n$ such that $W^u(f^n(x))$ has size  at least $\rho$ at $f^n(x)$. For generic $x$ and sufficiently small $\rho$, the lower density of $A(x)$ in $\N$ is arbitrary close to 1. 
Fix such a $\rho$, 
and assume  furthermore  that 
$\norm{df^n_x(e^u(x))}$ tends to infinity, which   again is a generic property. 

Viewed in the unstable parametrizations, $f^n$ acts linearly; so, by the normalization~\eqref{par:normalisation_norme_1}, 
it  maps a disk of radius $r$ in $(\psi^u_x)\inv(W^u(x))$  to a disk of radius 
$ \norm{df^n_x(e^u(x))} r$ in $(\psi^u_{f^n(x)})\inv (W^u_{f^n(x)})$. Thus, if $I(x)$ has a tangent at $x$, it follows that for   
every   $\theta>0$ and large enough $n$, 
$I(f^{n}(x))\cap \disk(0;\rho)$ is contained in an angular  sector  $S_n$ 
of width $\theta$.  

Since $A(x)$ has positive density, we may fix a subsequence 
$n_j$ in $A(x)$ such that $f^{n_j}(x)$ converges to a Pesin generic point $x_0$, whose unstable manifold has size $4\rho'$, with $\rho'<\rho/4$  
and 
$W^u_{\rho'}(f^{n_j}(x))$ converges in the $C^1$ sense to $W^u_{\rho'}(x_0)$.  
 Choose local coordinates $(\xi,\eta)\in \disk(0, \rho')^2$  
  such that  $W^u_{2\rho'}(x_0)\cap \disk(0, \rho')^2 
 = \set{\eta=0}$ and $W^u_{2\rho'}(f^{n_j}(x))\cap \disk(0, \rho')^2$ is a graph over the 
 first coordinate, and denote by $\pi\colon (\xi,\eta)\mapsto \xi$  the  first projection. As explained above, 
 since $n_j\in A(x)$, by 
  the Koebe distortion theorem,  $(\pi\circ \psi^u_{f^{n_j}(x)})$ is a sequence of univalent mappings 
   in some fixed disk, say  $\disk(0, \rho'/10)$. 
 Upon extraction we may assume that it converges to some univalent 
 holomorphic map $\gamma:\disk(0, \rho'/10) \to \disk(0, \rho')$, and extracting further if necessary, 
the images of sectors $\pi\circ \psi^u_{f^{n_j}(x)}(S_{n_j})$ 
converge to the analytic curve $\gamma(e^{i\theta}\R)$. 
Exactly as in 
Proposition~\ref{pro:contained_curve},  the 
lower semicontinuity of $\Gamma\mapsto J^+_\Gamma$ 
  implies  that this limit curve is unique:  if not, 
 we would get at point of $J^+_{\set{y=0}}$ outside $\gamma(e^{i\theta}\R)$, which would prevent 
 $J^+_{W^u_{\rho'/10}(f^{n_j}(x))}$ to be contained in an arbitrary small angular sector for large $j$. 
  Hence,  $J^+_{W^u_{\rho'/10}(x_0)}$ is contained in a smooth curve, and since 
  $W^u_{\rho'/10}(x_0)$ is a transversal to $J^+$
  we conclude from Proposition~\ref{pro:contained_curve} that $f$ is unstably real. 
 
For further reference, let us record a mild generalization of what we just proved, where we incorporate an additional inclination lemma argument. 

\begin{lem}\label{lem:pesin_tangent}
Let $\nu$ be a hyperbolic, ergodic, $f$-invariant measure. There exists a set $E$ of full measure 
made of Pesin regular points, with the following property. Let $\Gamma$  be a  transversal  to $J^+$,
let $x$ be an element of $E$ ,  and assume that for some sequence $(n_j)$, $f^{n_j}(x)$ converges to a point $y\in E$. If $J^+_\Gamma$ admits a tangent 
  at a transverse intersection point $t\in  W^s(x) \cap \Gamma$, then   $(\psi^u_y)\inv\big(J^+_{W^u(y)})$ is a line. 
\end{lem}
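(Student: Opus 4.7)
Plan: The strategy is to extend the argument of Step 1 of the proof of Proposition~\ref{pro:tangent}, using the Pesin inclination lemma to transfer the tangent property of $J^+_\Gamma$ at the transverse intersection point $t\in W^s(x)\cap \Gamma$ into an analogous tangent property of $J^+_{W^u(y)}$ at the limit point $y$. Once this is achieved, $W^u_\rho(y)$ plays the role of the transversal carrying a smooth-curve slice, and the conclusion follows from Proposition~\ref{pro:contained_curve} supplemented by a self-similarity argument to upgrade from ``contained in'' to equality.

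Let $E$ be a full $\nu$-measure set of Pesin regular points contained in a Pesin block, so that local stable and unstable manifolds of uniform size $\rho>0$ and uniform Pesin constants are defined along $E$. After extracting a subsequence, we may assume that $n_j\in A(x)$, where $A(x)\subset \N$ denotes the positive-density set of times at which $f^n(x)$ returns to the block, and that $f^{n_j}(x)\to y\in E$. Since $t\in W^s(x)\cap \Gamma$ is a transverse intersection of a holomorphic disk with the Pesin stable manifold, the Pesin inclination lemma provides a shrinking family of disks $D_j\subset \Gamma$ centered at $t$ whose images $f^{n_j}(D_j)$, viewed as graphs over $E^u(y)$, converge in the $C^1$ topology to $W^u_\rho(y)$. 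Any tangent direction to $J^+_\Gamma$ at $t$ is stretched by $df^{n_j}$ along $E^u$ and contracted along $E^s$, hence it becomes asymptotically aligned with $E^u(y)$. Combined with the tangent hypothesis (containment of $J^+_\Gamma\cap B(t,r)$ in narrowing angular sectors as $r\to 0$) and the lower semicontinuity of $\Gamma\mapsto J^+_\Gamma$, we deduce that $J^+_{W^u_\rho(y)}$ is contained in a smooth curve tangent to $E^u(y)$ at $y$. In particular, $J^+_{W^u(y)}$ admits a tangent at $y$.

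At this point, $W^u_\rho(y)$ is a transversal to $J^+$ whose slice is contained in a smooth curve near $y$, so by Proposition~\ref{pro:contained_curve} (in the form of Remark~\ref{rem:pesin}, extended from saddle points to the Pesin regular point $y$), $f$ is unstably real and $(\psi^u_y)\inv(J^+_{W^u(y)})$ is contained in a line $L$ through the origin in $\C$. To promote this to equality with the full line, I would exploit the recurrence of $y$ under $\nu$: for a sequence $n_k\to\infty$, $f^{n_k}(y)\to y$, and the $f$-equivariance of the unstable parametrizations gives $\lambda_{n_k}\cdot (\psi^u_y)\inv(J^+_{W^u(y)})=(\psi^u_{f^{n_k}(y)})\inv(J^+_{W^u(f^{n_k}(y))})$ with $\abs{\lambda_{n_k}}\to \infty$. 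Because $f$ is unstably real the multipliers $\lambda_{n_k}$ are real, so the dilation $\zeta\mapsto \lambda_{n_k}\zeta$ preserves $L$; combined with the positive transverse density of $T^+\wedge [W^u(y)]$ at $y$ (inherited from the positive mass of $T^+\wedge [\Gamma]$ at $t\in J^+_\Gamma$ via the inclination transfer), the asymptotic self-similarity $\lambda_{n_k}\cdot (\psi^u_y)\inv(J^+_{W^u(y)})\to (\psi^u_y)\inv(J^+_{W^u(y)})$ (by Hausdorff continuity of Pesin slices on $E$) rules out the Cantor alternative and forces $(\psi^u_y)\inv(J^+_{W^u(y)})=L$.

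The main obstacle is the last step, namely the upgrade from ``contained in a line'' to ``equal to a full line''. The argument of the inclination transfer naturally yields only the directional (tangent) information; the extra ingredient needed is the preservation of a positive transverse density, which one reads off from $t\in J^+_\Gamma$ and propagates along the orbit. Executing this carefully---in particular verifying that $\lambda_{n_k}\cdot (\psi^u_y)\inv(J^+_{W^u(y)})$ really converges to $(\psi^u_y)\inv(J^+_{W^u(y)})$ on compact subsets of $\C$, using the $C^1$-uniform control of $y\mapsto \psi^u_y$ on the Pesin block and the reality of multipliers forced by the unstably real regime---is the most delicate point of the proof.
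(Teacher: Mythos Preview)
Your overall strategy---Pesin inclination lemma to pass from $t\in W^s(x)\cap\Gamma$ to $W^u_\loc(y)$, followed by the renormalization and lower-semicontinuity argument of Step~1 of Proposition~\ref{pro:tangent}---is exactly what the paper intends; the lemma is presented there as a ``mild generalization'' of Step~1 with the inclination lemma added, and no separate proof is given. One simplification: the detour through Proposition~\ref{pro:contained_curve} and Remark~\ref{rem:pesin} is unnecessary, since the Step~1 mechanism (Koebe control, $C^1$-convergence of parametrizations on the Pesin block, lower semicontinuity of $\Gamma\mapsto J^+_\Gamma$) already places $(\psi^u_y)^{-1}\big(J^+_{W^u_\loc(y)}\big)$ inside a line through~$0$ directly, without routing through saddle periodic points.

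Your upgrade-to-equality step, however, is where the argument breaks. First, the justification ``$f$ unstably real $\Rightarrow \lambda_{n_k}\in\R$'' is incorrect: unstably real constrains only multipliers at \emph{periodic} orbits, not cocycle values along the orbit of a non-periodic Pesin point, and with the normalization $\|(\psi^u_\cdot)'(0)\|=1$ only $|\lambda_{n_k}|$ is intrinsic (the phase depends on the free $S^1$-choice in each $\psi^u_{f^{n_k}(y)}$). One can recover \emph{asymptotic} reality from the containment in converging lines, but not for the reason you give. More importantly, even granting real dilations, asymptotic self-similarity under $\zeta\mapsto\lambda_{n_k}\zeta$ cannot ``rule out the Cantor alternative'': a self-similar Cantor set on $\R$ is invariant under suitable dilations, and ``positive transverse density'' at~$0$ does not distinguish a full line from such a Cantor set. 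You were right to flag this as the crux, but the sketch does not close it. In the paper's two uses of the lemma the issue is in fact sidestepped: in Step~2 of Proposition~\ref{pro:tangent} only ``has a tangent at $y$'' is actually invoked, while in the first proof of Lemma~\ref{lem:tau_bounded_1} the transversal is $\Gamma=W^u(p)$ with $f$ unstably linear, so $J^+_\Gamma$ is a genuine curve; there the inclination transfer carries a full arc---together with the explicit piecewise-linear profile of $G^+$ from Lemma~\ref{lem:G+u}---to $W^u_\loc(y)$, which does force equality.
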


 \medskip
 
\noindent {\bf{Step 2: conclusion}}.-- We first state a  lemma which will be  proven afterwards.

 \begin{lem}\label{lem:laminar_transverse} 
Let $\Gamma$ be any transversal to $J^+$. 
For every subset 
$A$ of full $\mu_f$-measure there exists  
$E \subset \Gamma$ of full   $(T^+\wedge [\Gamma])$-measure such that if   $t\in E$, there exists 
$x\in A$ such that 
\begin{itemize}
\item $t\in W^s(x)$,
\item $W^s(x)$ intersects $\Gamma$ transversally at $t$. 
\end{itemize}
\end{lem}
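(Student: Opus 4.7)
The plan is to exploit the laminar structure of the current $T^+$ near $\Gamma$, following Bedford-Lyubich-Smillie. By local laminarity of $T^+$ (see~\cite{bls, bs6}), in a sufficiently small flow box $U$ around any point of $J^+\cap \Gamma$, the current $T^+\rest{U}$ admits a representation
\begin{equation*}
T^+\rest{U} = \int [\Delta_\alpha]\, d\nu(\alpha),
\end{equation*}
where each $\Delta_\alpha$ is a holomorphic disk belonging to the stable lamination, i.e.\ contained in a Pesin stable manifold of some point of $J$. Upon shrinking $U$ and refining the parameter space, each $\Delta_\alpha$ may be taken to be a graph in $U$ transverse to $\Gamma$, meeting it in exactly one point $t(\alpha)$, so that the slicing formula gives $T^+\wedge [\Gamma]\rest{U} = \int \delta_{t(\alpha)}\, d\nu(\alpha)$.

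It thus suffices to show that for $\nu$-a.e.\ $\alpha$ the disk $\Delta_\alpha$ meets $A$. For this, I apply the same decomposition to $\mu_f = T^+\wedge T^-$, obtaining
\begin{equation*}
\mu_f\rest{U} = \int \bigl(T^-\wedge [\Delta_\alpha]\bigr)\, d\nu(\alpha),
\end{equation*}
where for $\nu$-a.e.\ $\alpha$ the slice $T^-\wedge [\Delta_\alpha]$ is a non-zero positive measure carried by $\Delta_\alpha\cap J^-$. Since $\mu_f(\C^2\setminus A)=0$, Fubini yields $(T^-\wedge [\Delta_\alpha])(\C^2\setminus A)=0$ for $\nu$-a.e.\ $\alpha$, and in particular $\Delta_\alpha\cap A\neq\emptyset$ for such $\alpha$.

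To conclude, I enlarge $A$ (preserving its full $\mu_f$-measure) so that every $x\in A$ is Pesin regular with a global stable manifold $W^s(x)$ biholomorphic to $\C$, meeting $\Gamma$ transversally at every point of $W^s(x)\cap \Gamma$ (generic transversality is a $\mu_f$-full-measure property). For $\nu$-generic $\alpha$, pick $x_\alpha\in A\cap \Delta_\alpha$; since the Pesin stable manifold is maximal among local stable sets through $x_\alpha$, one has $\Delta_\alpha\subset W^s(x_\alpha)$, so $t(\alpha)\in W^s(x_\alpha)\cap \Gamma$ is a transverse intersection point. Covering $\Gamma$ (outside a $T^+\wedge [\Gamma]$-null set) by countably many such flow boxes and taking the union of the corresponding sets $E_U = \{t(\alpha):\alpha \text{ generic}\}$ yields the desired $E$.

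The main technical point is the laminar decomposition and the identification of its leaves $\Delta_\alpha$ with pieces of genuine Pesin stable manifolds; this rests on~\cite{bls}, but requires care in selecting the flow boxes and in verifying generic transversality of the $\Delta_\alpha$ with both $\Gamma$ and the Pesin stable manifolds of $\mu_f$-generic points, so that the one-point intersection and transversality statements above hold off of a $\nu$-null set.
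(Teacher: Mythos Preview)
Your approach is genuinely different from the paper's, and it has a real gap that is more than the ``care'' you flag at the end. The paper does not attempt a static laminar decomposition of $T^+$ near $\Gamma$; it simply refers to \cite[Lem.~3.3]{DF}, the only new ingredient being the equidistribution $d^{-n}(f^n)_\varstar[\Gamma]\to c\,T^-$ from \cite[Thm~3]{bs2}. The underlying argument is dynamical: one pushes $T^+\wedge[\Gamma]$ forward by $f^n$ towards $c\,\mu_f$ and then works inside Pesin boxes for $\mu_f$, where the uniform laminar structure of $T^+$ (with plaques equal to Pesin local stable manifolds) is actually established in~\cite{bls}.

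The problem with your argument lies at the first two steps. First, the flow-box representation $T^+\rest{U}=\int[\Delta_\alpha]\,d\nu(\alpha)$ with disjoint graph-plaques is proved in~\cite{bls} only in Pesin boxes, hence near $\supp(\mu_f)=J$; there is no such statement around an arbitrary point of $J^+\cap\Gamma$. Second, and more decisively, even if one grants a laminar decomposition, your Fubini step collapses at any $t\in J^+_\Gamma\setminus J^-$: in a small neighborhood $U$ of such a point one has $T^-\rest{U}=0$, hence $\mu_f\rest{U}=0$, and the identity $\mu_f\rest{U}=\int(T^-\wedge[\Delta_\alpha])\,d\nu(\alpha)$ yields $T^-\wedge[\Delta_\alpha]=0$ in $U$ for every $\alpha$, giving no information whatsoever about $\Delta_\alpha\cap A$. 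Since a general transversal $\Gamma$ need not meet $J^-$ at all, this is the typical situation, not an exceptional one. To connect the local disk through $t$ to a point of $A\subset J$ one must follow it (equivalently, iterate $t$ forward) until it reaches the region where $\mu_f$ lives and the Pesin machinery applies; that is exactly what the equidistribution argument accomplishes.
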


Thus, if $J^+_\Gamma$ 
has a tangent at $t$ for $t$ in a set of positive $(T^+\wedge [\Gamma])$-measure, 
 applying Birkhoff's theorem together with  Lemma~\ref{lem:pesin_tangent}, 
 we obtain  that $(\psi^u_y)\inv\big(J^+_{W^u(y)}\big)$ 
has a tangent at $y$ for $y$ in a set of total $\mu_f$-measure. The conclusion then 
follows   from the first step.
\end{proof}

\begin{proof}[Proof of Lemma~\ref{lem:laminar_transverse}]
This is identical to~\cite[Lem. 3.3]{DF}, except that the algebraic curve $C$ in that lemma needs to be  
  replaced by the transversal $\Gamma$. To adapt the proof we make use of \cite[Thm 3]{bs2}: 
since $\int T^+\wedge [\Gamma]>0$ and, reducing 
$\Gamma$ slightly if necessary,   the trace measure $\sigma_{T^+}$ gives no mass to $\fr \Gamma$,
the sequence of currents $d^{-n} f^n_\varstar [\Gamma]$ converges to some positive multiple of $T^-$. 
\end{proof}

\subsection{Proof of Theorem~\ref{thm:rectifiable}}
Assume that for some transversal disk $\Gamma$, $J^+_\Gamma$ is a rectifiable curve. We will show that $f$ is unstably linear, which is impossible by Theorem~\ref{thm:no_unstably_linear}. 

Let $\Omega\subset \C$ be a domain.  In what follows, we denote by $\omega(x, A, \Omega)$ the harmonic 
measure   of a subset $A\subset \fr \Omega$ viewed from $x$ in $\Omega$. 
In all the cases considered below, $\Omega$ will be  a Jordan domain, and $\omega(x, A, \Omega)$ is 
  the value at $x$ of the solution of the Dirichlet problem in $\Omega$ with boundary value $\mathbf 1_A$. 
In other words,
  if $\varphi$ is a continuous function on $\partial \Omega$ and $u_\varphi$ 
  is its harmonic extension to $\Omega$, then $\int \varphi(y) \omega(x, dy,\Omega) = u_\varphi(x)$ 
   (see
 ~\cite{garnett-marshall} for a detailed  account). If $\phi:\Omega\to \phi(\Omega)$ is a biholomorphism that extends to a homeomorphism $\overline \Omega \to 
  \overline{\phi(\Omega)}$, then 
  $\omega(x, A, \Omega) = \omega(\phi(x), \phi(A), \phi(\Omega))$ (conformal invariance).

\begin{lem}\label{lem:harmonic_measure}
For any $x_0\in \Gamma$ such that $G^+(x_0)>0$,
there exists an open subset $\Omega$ of $\Gamma$ containing $x_0$ whose boundary contains a relatively open 
subset of  the Jordan curve $J^+_\Gamma$, an open subset $U$ of $\overline \Omega$ 
 intersecting $J^+_\Gamma$  
 and a positive constant $c$ such that 
 $\omega(x_0, \cdot , \Omega)\rest{U\cap J^+_\Gamma}\leq c \Delta G^+ \rest{U}$.
 \end{lem}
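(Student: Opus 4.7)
The plan is to choose a Jordan subdomain $\Omega \subset \Gamma$ whose boundary contains an arc of $J^+_\Gamma$, to apply the maximum principle to compare the Green's function $g_\Omega(x_0,\cdot)$ of $\Omega$ with $G^+$ near that arc, and to transfer the resulting pointwise comparison into the announced measure inequality via Riesz decomposition. For the setup, since $G^+\rest{\Gamma}$ is subharmonic with $G^+\rest{J^+_\Gamma}=0$ and $J^+_\Gamma$ is a closed Jordan curve, applying the maximum principle on the bounded component $B$ of $\Gamma\setminus J^+_\Gamma$ forces $G^+\equiv 0$ on $B$, so $K^+\cap \Gamma=\bar B$ and $x_0$ lies in the outer region $\{G^+>0\}\cap \Gamma$. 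Pick $p_0\in J^+_\Gamma$, a closed subarc $\alpha\subset J^+_\Gamma$ containing $p_0$ in its relative interior, and a smooth Jordan arc $\beta\subset \{G^+>0\}$ joining the endpoints of $\alpha$ so that $\alpha\cup \beta$ bounds a Jordan domain $\Omega\ni x_0$. Then $G^+$ is positive harmonic on $\Omega$, vanishes on $\alpha$, and is strictly positive on $\beta$. Let $U\subset \bar \Omega$ be a neighborhood of $p_0$ small enough that $\bar U\cap \partial \Omega$ is contained in the relative interior of $\alpha$ and $x_0\notin \bar U$.

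Next, I would establish a local boundary Harnack inequality via the maximum principle. On $U\cap \Omega$, both $g_\Omega(x_0,\cdot)$ and $G^+$ are positive harmonic and vanish continuously on $U\cap \alpha$. The set $\partial U\cap \Omega$ is a compact subset of $\Omega$ on which $G^+$ is bounded below by a positive constant, so
\[
C:=\sup_{\partial U\cap \Omega}\frac{g_\Omega(x_0,\cdot)}{G^+}<+\infty.
\]
The harmonic function $CG^+-g_\Omega(x_0,\cdot)$ is nonnegative on $\partial(U\cap \Omega)$ (on $\partial U\cap \Omega$ by the choice of $C$, and on $U\cap \alpha$ because both functions vanish there), so the maximum principle yields $g_\Omega(x_0,y)\le C\,G^+(y)$ for every $y\in U\cap \Omega$.

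To conclude, I extend $g_\Omega(x_0,\cdot)$ by zero across $\alpha$ into $B$, obtaining a nonnegative subharmonic function $\tilde g$ on $U$ (with no pole, since $x_0\notin U$) whose Riesz measure is supported on $U\cap J^+_\Gamma$ and equals $2\pi\cdot \omega(x_0,\cdot ,\Omega)\rest{U\cap J^+_\Gamma}$ in the distributional sense. Analogously, $G^+$ is nonnegative subharmonic on $U$, vanishing on $U\cap (B\cup \alpha)$, with Riesz measure $\Delta G^+$ supported on $U\cap J^+_\Gamma$. The inequality $\tilde g\le C\,G^+$ on $U$, together with the fact that both functions have the same vanishing set $U\cap (B\cup \alpha)$, transfers to the inequality of Riesz measures $\Delta \tilde g\le C\, \Delta G^+$ on $U\cap J^+_\Gamma$; setting $c=C/(2\pi)$ yields the conclusion. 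The main technical point lies precisely in this final transfer: passing rigorously from a function inequality to an inequality of Riesz measures on a merely rectifiable arc. Both Riesz measures admit interpretations as jumps of non-tangential normal derivatives of $\tilde g$ and $G^+$ across $\alpha$, well-defined $ds$-a.e., thanks to the F.\&M.\ Riesz theorem applied to the rectifiable Jordan curve $J^+_\Gamma$, which guarantees that harmonic measure is absolutely continuous with respect to arc length and that a parallel representation holds for $\Delta G^+\rest{\alpha}$. The pointwise bound $\tilde g\le C\,G^+$ then forces the corresponding densities to satisfy the same bound a.e., hence the measure inequality.
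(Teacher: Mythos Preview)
Your overall strategy---compare the Green function $g_\Omega(x_0,\cdot)$ with $G^+$ via a boundary Harnack inequality, then transfer the pointwise bound to the Riesz measures---is sound, and the last step (passing from $\tilde g\le C\,G^+$ to $\Delta\tilde g\le C\,\Delta G^+$ on a rectifiable arc) is handled adequately by your normal-derivative argument. The genuine gap is earlier, in what you call the ``local boundary Harnack inequality via the maximum principle''.

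You assert that $\partial U\cap\Omega$ is a compact subset of $\Omega$. This is false for any neighborhood $U$ of a boundary point $p_0\in\alpha$: whatever shape you give $U$, the relative boundary $\partial(U\cap\Omega)$ must meet $\alpha$, so the closure of $\partial U\cap\Omega$ contains points of $\alpha$, where $G^+$ vanishes. Concretely, if $\Omega=\mathbb{H}$, $p_0=0$, and $U$ is a half-disk, then $\partial U\cap\Omega$ is an open semicircle whose endpoints lie on $\R$; approaching them, both $g_\Omega(x_0,\cdot)$ and $G^+$ tend to $0$, and nothing in your argument rules out that the ratio blows up. What you need is precisely the boundary Harnack principle: two positive harmonic functions on $\Omega$ that vanish on the same boundary arc $\alpha$ have locally bounded ratio up to $\alpha$. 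This is a genuine theorem, not a one-line consequence of the maximum principle.

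The paper's proof is, in effect, a direct proof of this boundary Harnack estimate tailored to the situation. It uniformizes $\Omega$ by a conformal map $\phi\colon\mathbb{H}\to\Omega$ sending a real interval $I$ onto $\alpha$, then uses Schwarz reflection: since $\tilde G=G^+\circ\phi$ vanishes on $I$, it extends harmonically across $I$, and near a point where $d\tilde G\neq 0$ the level sets $\{\tilde G=\varepsilon\}$ are smooth graphs converging to $I$. On these smooth curves both the harmonic measure from $\phi^{-1}(x_0)$ and $\Delta(\max(\tilde G,\varepsilon))$ are comparable to arclength with uniform constants, and letting $\varepsilon\to 0$ yields the desired comparison. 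So the paper replaces your abstract Harnack step by an explicit reflection argument; once you have the pointwise bound $g_\Omega(x_0,\cdot)\le C\,G^+$ near $\alpha$ (which the reflection also gives, since both $\tilde G$ and $g_{\mathbb H}(\phi^{-1}(x_0),\cdot)$ extend across $I$ and have nonvanishing gradient there), your concluding Riesz-measure argument would finish the job more cleanly than the paper's level-set approximation. But you must either invoke the boundary Harnack principle for Jordan domains as a black box, or supply the conformal-map-and-reflection argument yourself.
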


We provide below a proof of this lemma, which is certainly known to some experts.

Let us  complete the proof of the theorem. Since $J^+_\Gamma$ is a rectifiable curve, a classical theorem of F. and M. Riesz 
asserts that  the harmonic measure is mutually absolutely continuous with the $1$-dimensional Hausdorff measure $\mathcal H^1$
on $J^+_\Gamma$ 
(see \cite[Thm VI.1.2]{garnett-marshall}). In addition, $J^+_\Gamma$ admits a tangent at  $\mathcal H^1$-almost every point 
(this is due to Besicovitch, see~\cite[Thm 3.8]{falconer} (\footnote{In~\cite{falconer}, tangents are generally
 understood in a measurable sense (see \S~\ref{subs:rectifiable_rectifiable} below, but it is clear that in Theorem~3.8 a geometric tangent is obtained.})).
Thus, from Lemma~\ref{lem:harmonic_measure} and 
Proposition~\ref{pro:tangent}, we infer that for every saddle or Pesin generic point $p$, $(\psi^u_p)\inv(J^+_{W^u(p)})$ is contained in a line. 
Furthermore, $ (\psi^u_p)\inv\big(J^+_{W^u(p	)}\big)$ 
has no compact component, for 
such a compact component would give rise to small components of $J^+_\Gamma$ under stable holonomy (see~\cite[Prop. 1.8]{connex}). Therefore we conclude that 
$ (\psi^u_p)\inv\big(J^+_{W^u(p	)}\big)$ is a line, as asserted.

\subsection{Proof of Lemma~\ref{lem:harmonic_measure}} To explain the idea, suppose for a moment that $\Gamma$ is a piece of unstable manifold of some saddle periodic point $p$.
Since  $J^+_\Gamma$ is a rectifiable curve,  $ (\psi^u_p)\inv\big(J^+_{W^u(p)}\big)$ 
is a line that cuts $\C$ in two half-planes and it follows from 
 Lemma~\ref{lem:bilan_unstably_linear}
 that $\Delta (G^+\circ \psi^u_p)$ is proportional to the Lebesgue measure on the line 
 $(\psi^u_p)\inv\big(J^+_{W^u(p)}\big)$. 
Now, if $\Omega$ is a domain with piecewise smooth boundary intersecting  $(\psi^u_p)\inv\big(J^+_{W^u(p)}\big)$ along a segment, then the harmonic measure has a smooth density along this segment (see~\cite[\S II.4]{garnett-marshall}, and the result follows.  
  
%

For a general transversal $\Gamma$ we need a local and rectifiable 
 version of this picture. Let 
$\Omega_0\subset \Gamma$ be a smoothly bounded simply connected domain containing $x_0$ such that 
$\Omega_0\cap J^+_\Gamma$ is an arc separating $\Omega_0$ into two components; such a set exists because $J^+_\Gamma$ is rectifiable. Let $\Omega$ be the component containing $x_0$. 
Consider a Riemann uniformization 
$\phi: \mathbb{H}\to \Omega$ that maps $\infty$ outside $J^+_\Gamma$. Since 
$J^+_\Gamma$ is rectifiable, 
$\phi$ extends continuously to the boundary, and $\phi\inv (J^+_\Gamma\cap \fr\Omega)=:I$ is  
a segment of the real axis. Set $\tilde G = G^+\circ \phi$. Since 
$\tilde G=0 $ on $I$,  by Schwarz reflexion, $\tilde G$ extends to a harmonic function  
across the interior $\mathrm{Int}(I)$ 
 of $I$; the extension is positive on one side and negative on the other side. Let $\zeta\in \mathrm{Int}(I)$ be a point at which the differential of $\tilde G$ does not vanish. In some 
  neighborhood $\tilde U$
of  $\zeta$, the level sets   $\big\{\tilde G = \e\big\}\cap \tilde U$, with $\epsilon >0$, are graphs over the real axis, converging in the $C^1$ topology to $\tilde U\cap I$ as  $\e\to 0$.  
We then shrink $\Omega$ as in Figure~\ref{fig:omega} to insure that it coincides with $\phi(\tilde U)$ near $\phi(\zeta)$.

\begin{figure}[h]
\includegraphics[width=13cm]{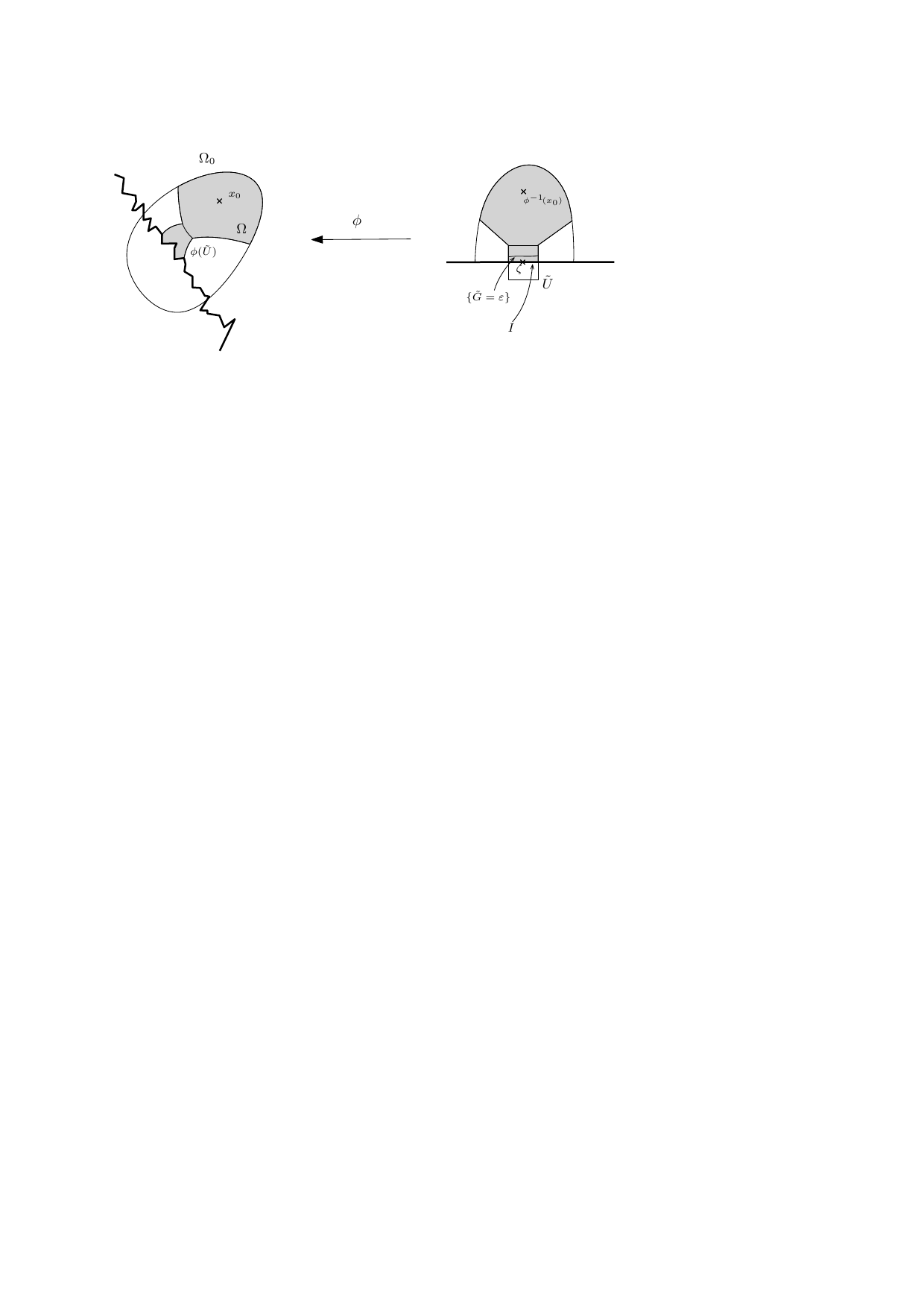}
\caption{{\small Sets appearing in the proof of Lemma~\ref{lem:harmonic_measure}  ($\Omega$ is the shaded region).  }}\label{fig:omega}
\end{figure}

Then, for small $\e$,  the   harmonic measure of $\big\{\tilde G = \e\big\}\cap\tilde U$ 
 viewed from $\phi\inv(x_0)$ 
  is  equivalent to the arclength measure on $\big\{\tilde G = \e\big\}\cap\tilde U$, and the implied constants are uniform. 
  Likewise,  $\Delta (\max(\tilde G, \e))$  is equivalent to the
  arclength measure on $\big\{\tilde G = \e\big\}\cap\tilde U$, with uniform constants: indeed since $\tilde G$ is locally the imaginary part of a univalent holomorphic function, there is a holomorphic coordinate 
  $z= x+ i y$ in which $\max (\tilde G, \e)$ becomes $\max(y,\e)$; thus, its Laplacian is a multiple of  the Lebesgue measure on $\set{y=\e}$ and when $\e$ goes to $0$, these measures converge to a measure supported on $\tilde U\cap I$, 
  which is equivalent to   Lebesgue measure, with uniform constants (see~\cite[Cor. 4.7 of Chapter II]{garnett-marshall} for similar computations).

 Now we transport this to $\Gamma$ by $\phi$.  The conformal  invariance of  the Laplacian and the harmonic measure shows
 that $\Delta(\max(G^+, \e))\rest{\phi(\tilde U\cap \mathbb{H})}$ 
  converges to a measure equivalent to the harmonic measure $\omega(x_0, \cdot , \Omega)$
on  a piece of $J^+_\Gamma$ (namely  $J^+_\Gamma\cap \overline{\phi(\tilde U)}$). On the other hand 
   this limit is dominated by $\Delta G^+$ 
 (the domination may be strict because we are considering only one side of the curve and $G^+$ could be positive on the other side as well), and the result follows.  \qed

\subsection{Rectifiable curves vs rectifiable sets and measures}\label{subs:rectifiable_rectifiable} (\footnote{This section can be skipped on a first reading.})

\subsubsection{Rectifiable sets (see~\cite{falconer} for an introduction)}\label{par:rectifiable_sets}

A curve $\gamma\colon [0,1]\to X$ drawn in a metric space is rectifiable if one of the following equivalent properties is satisfied: (a) there is a homeomorphism $\varphi\colon [0,1]\to [0,1]$ such that $\gamma\circ \varphi$ is Lipsccitz;  (b) there is a uniform bound $L$ such that $\sum_{i=0}^{n-1} \dist_X(\gamma(t_i), \gamma(t_{i+1}))\leq L$ for any finite sequence $t_0=0<t_1< \cdots < t_n=1$. The supremum of these sums is then equal to the length of $\gamma$. When $X$ is a real vector space, this means that the length of $\gamma$ is the supremum of the lengths of the polygonal approximations $([\gamma(t_i),\gamma(t_{i+1})])_i$. It is also equal to the $1$-dimensional Hausdorff measure $\mathcal H^1(\gamma([0,1]))$.

Now, let $E$ be a subset of the place $\R^2=\C$. One says that $E$ is \emph{rectifiable} 
(\footnote{Rectifable sets are called regular $1$-sets in~\cite{falconer}. Here, we use the fact that regular $1$-sets are $Y$-sets up to subsets of $\mathcal H^1$-measure $0$ (see~\cite{falconer}, p. 33). And approximate tangents are just called tangents in \cite{falconer}.}) 
if $0<\mathcal H^1(E)<+\infty$
and there exists an at most countable collection of Lipschitz curves $(C_i)_{i\in I}$ such that
$\mathcal H^1\lrpar{E\setminus \bigcup_{i\in I} C_i}  = 0$. The following properties are \cite[Thm 3.14]{falconer} and \cite[Cor 3.10]{falconer}:
\begin{enumerate}
\item any compact connected subset $E\subset \R^2$ with $\mathcal H^1(E)<\infty$ is rectifiable;
\item if $E$ is rectifiable, then  at $\mathcal{H}^1$-almost every $x\in E$ there is an 
\emph{approximate tangent} to $E$ at $x$. That is, there exists a line $L$ such that  
\begin{equation}
\lim_{r \to  0} \frac{1}{r} \mathcal H^1( (E\cap B(x,r)) \setminus  S(L, \theta)) = 0
\end{equation} 
for every $\theta>0$, where $S(L, \theta)$ is the symetric angular sector of angle $\theta$ about 
$L$. 
\end{enumerate}
Note that since $E$ can be dense, we cannot expect to have a tangent in the geometric sense. 

\subsubsection{Rectifiable measures (see e.g.~\cite[\S4]{delellis:lectures_on_rectifiability}). }Closely related is the notion of \emph{rectifiable measure}. A positive measure $\m$ is rectifiable (or $1$-rectifiable) if it is of the form 
\begin{equation}
\m = \varphi \mathcal H^1\rest{E},
\end{equation} 
where $E$ is rectifiable and $\varphi\colon E\to \R_+$ is a Borel function. 

To look at the local properties of such a measure, define $\tau_{x, r}(\xi) = (\xi-x)/r$. Then, 
for $\m$-almost every $x$ there is a line $L_x\subset \R^2$  and 
a constant $c_x>0$ such that 
\begin{equation}
 \frac{1}{r} (\tau_{x, r})_\varstar \m \longrightarrow  c_x\mathcal H^1\rest{L_x}
 \end{equation}
  weakly as $r\to 0$. 

\begin{rem}\label{rem:normalization}
Observe  that if $c_r>0$ is any normalizing factor, chosen so that 
$c_r(\tau_{x, r})_\varstar \m$  is a probability measure in some fixed neighborhood of 
$0$, then $ c_r (\tau_{x, r})_\varstar \m \to  c'_x\mathcal H^1\rest{L_x}$ for some positive $c'_x$. 
\end{rem}

\subsubsection{Dynamical applications} Let $f$ be a loxodromic automorphism of the plane. 
By \cite[Prop 3.2]{bls}, for 
 $\mu_f$-almost every $x$, the unstable conditional $\mu_f(\cdot\vert W^u_\loc(x))$ is  induced by $T^+$. That is, 
\begin{equation}\label{eq:conditionals}
\mu_f(\cdot\vert W^u_\loc(x)) = \frac{T^+\wedge W^u_\loc(x)}{\mathbf {M}(T^+\wedge W^u_\loc(x))}.
\end{equation}
The Hausdorff dimension of this conditional 
measure is $\mu_f$-almost everywhere equal to a constant denoted by 
$\dim^u(\mu_f)$. 
We also denote by $\chi^u(\mu_f)$ the positive Lyapunov exponent of $\mu_f$, and recall 
Young's  formula: 
\begin{equation}\label{eq:Young_Formula}
\dim^u(\mu_f) =  \frac{\log d}{\chi^u(\mu_f)},
\end{equation}
where moreover  $\log(d)=h_\mu(f)=h_{top}(f)$. In particular, $\dim^u(\mu_f)=1$ if and only if $h_\mu(f)=\int  \log \norm{Df\rest{E^u(x)}} \, \d\mu_f$.

\begin{thm}\label{thm:rectifiable_measures}
If $f$ is a loxodromic automorphism of $\C^2$, then  for $\mu_f$-almost every $x$, the unstable conditional 
$\mu_f(\cdot\vert W^u_\loc(x)) $ is not a rectifiable measure. 
\end{thm}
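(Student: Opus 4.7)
The plan is to argue by contradiction via a rescaling argument that transfers the infinitesimal linear structure of a rectifiable measure to an entire unstable slice. Assume that $\mu_f(\cdot\vert W^u_\loc(x))$ is rectifiable on a set of positive $\mu_f$-measure; by $f$-invariance and ergodicity of $\mu_f$, this forces rectifiability for $\mu_f$-a.e.\ $x$. Set $\tilde\nu_x := (\psi^u_x)\inv_\varstar\bigl(T^+\wedge[W^u_\loc(x)]\bigr)$; since $\psi^u_x$ is locally bi-Lipschitz, $\tilde\nu_x$ is a rectifiable measure on $\C$. The first step is to invoke the structure of rectifiable measures (Remark~\ref{rem:normalization}), combined with a Fubini argument based on the disintegration of $\mu_f$ along unstable plaques, to fix a Pesin regular $x$ for which $\tilde\nu_x$ admits an approximate tangent line $L_x$ through the origin: for suitable positive normalizing factors $(c_r)$,
\begin{equation}
c_r\,(\tau_{0,r})_\varstar \tilde\nu_x \ \longrightarrow\ c'_x\,\mathcal H^1\rest{L_x} \quad (r\to 0)
\end{equation}
weakly, with $c'_x>0$.

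Next, we exploit the fact that $\psi^u_x$ linearizes $f$: from the identity $G^+\circ \psi^u_{f(x)}(\eta) = d\cdot (G^+\circ \psi^u_x)(\eta/\lambda^u(x))$, iterating and applying $dd^c$ yields
\begin{equation}
\tilde\nu_{f^n(x)} \;=\; d^n\,(T_{\lambda^n})_\varstar \tilde\nu_x,\qquad \lambda^n := \prod_{k=0}^{n-1}\lambda^u(f^k(x)),\ T_\alpha(\zeta):=\alpha\zeta.
\end{equation}
Writing $\lambda^n = \vert\lambda^n\vert\theta_n$ with $\vert\theta_n\vert=1$ and choosing $r_n := 1/\vert\lambda^n\vert \to 0$, this rearranges as
\begin{equation}
(\tau_{0,r_n})_\varstar \tilde\nu_x \;=\; d^{-n}(T_{\bar\theta_n})_\varstar \tilde\nu_{f^n(x)}.
\end{equation}
We then fix a Pesin block $\Lambda$ containing $x$ and use Poincar\'e recurrence to extract a subsequence $(n_k)$ along which $f^{n_k}(x)\to y\in\Lambda$ and $\theta_{n_k}\to\theta_\infty$. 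On $\Lambda$ the normalized unstable parametrizations form a normal family (standard Pesin theory combined with Koebe distortion in the holomorphic setting), so $\psi^u_{f^{n_k}(x)}\to \psi^u_y$ locally uniformly on $\C$, whence $\tilde\nu_{f^{n_k}(x)}\to \tilde\nu_y$ weakly. The displayed left-hand side, multiplied by $c_{r_{n_k}}$, converges to the nonzero locally finite measure $c'_x\mathcal H^1\rest{L_x}$, which forces $c_{r_{n_k}}d^{-n_k}\to \alpha\in(0,\infty)$; passing to the limit,
\begin{equation}
\tilde\nu_y \;=\; \frac{c'_x}{\alpha}\,\mathcal H^1\rest{\theta_\infty L_x}.
\end{equation}

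In particular $(\psi^u_y)\inv(J^+_{W^u(y)}) = \supp\tilde\nu_y$ is a line $L':=\theta_\infty L_x$ through $0$. For any small disk $\Delta\subset W^u(y)$ about $y$, $\Delta$ is a transversal to $J^+$ and $J^+_\Delta$ is an arc of the real-analytic curve $\psi^u_y(L')$, hence not a Cantor set. Proposition~\ref{pro:contained_curve} then forces $f$ to be unstably linear, contradicting Theorem~\ref{thm:no_unstably_linear}. The main obstacle is expected to be the weak convergence $\tilde\nu_{f^{n_k}(x)}\to \tilde\nu_y$ along the recurrence into the Pesin block, which requires uniform control on the holomorphic unstable parametrizations there (a bound on the normal family on compacts of $\C$, followed by a diagonal extraction), as well as the verification that the Fubini argument in the first step indeed yields a tangent at $0$ for $\tilde\nu_y$ at $\mu_f$-a.e.\ base point.
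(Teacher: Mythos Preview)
Your proposal is correct and follows essentially the same route as the paper: both arguments pull back the unstable conditionals to $\C$ via $\psi^u_x$, use the rectifiability assumption to obtain a linear tangent measure at a $\mu_f$-generic base point, exploit the relation $\tilde\nu_{f^n(x)}=d^n(T_{\lambda^n})_\varstar\tilde\nu_x$ (equivalently, the invariance of conditionals together with $G^+\circ f=d\,G^+$) to transfer this infinitesimal linear structure to a Pesin-recurrent limit point $y$, and conclude that $(\psi^u_y)^{-1}(J^+_{W^u(y)})$ is a line segment, forcing $f$ to be unstably linear in contradiction with Theorem~\ref{thm:no_unstably_linear}. The only cosmetic difference is that the paper phrases the convergence $\tilde\nu_{f^{n_k}(x)}\to\tilde\nu_y$ in ambient coordinates (projecting to the first coordinate and invoking Koebe distortion and continuity of the potential of $T^+$), whereas you work directly with the normalized parametrizations; note that for the latter you only need, and only get, normal family control on a fixed disk $\disk(0,\rho)$ determined by the Pesin block (not on all of $\C$), which is already sufficient for the conclusion.
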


The following consequence is  in the spirit of the results of Przytycki and Zdunik mentioned after Corollary~\ref{cor:Jordan_Hausdorff}. 

\begin{cor}\label{cor:dimH}
Let $f$ be a loxodromic automorphism of $\C^2$.
If the dynamics of $f$ is hyperbolic, $\abs{\jac(f)}\leq 1$, and the Julia set $J$ is connected, then for every $x\in J$, 
$$\dim_H(J\cap W^u_\loc(x)) = \dim_H(J^+\cap W^u_\loc(x)) >1.$$ 
\end{cor}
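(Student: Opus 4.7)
The plan is to reduce the corollary, via Young's formula~\eqref{eq:Young_Formula}, to showing $\dim^u(\mu_f)>1$, and then to reach a contradiction with Theorem~\ref{thm:rectifiable_measures} in the borderline case $\dim^u(\mu_f)=1$. For any $x\in J$ the unstable manifold $W^u(x)$ lies in $J^-$ (since $G^-$ vanishes there), so $J\cap W^u_\loc(x)=J^+\cap W^u_\loc(x)$, which gives the first equality. Moreover, by~\eqref{eq:conditionals} the conditional $\mu_f(\cdot\vert W^u_\loc(x))$ is supported on this slice, whence $\dim_H(J^+\cap W^u_\loc(x))\geq \dim^u(\mu_f)$, and it suffices to prove $\dim^u(\mu_f)>1$.

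Next, I would exploit the hypotheses to describe the slice. Since $f$ is hyperbolic with $\abs{\jac(f)}\leq 1$ and $J$ is connected, $f$ must be dissipative (a hyperbolic conservative automorphism has a disconnected Julia set, see~\cite[Cor.~A.3]{bs7} or~\cite[Cor.~3.2]{connex}), and by~\cite[Thm~3.1]{connex} $f$ admits an attracting periodic orbit. The boundary of its basin lies in $J^+$ and, combined with the laminar picture of $J^+$ established in~\cite{bls}, forces $J^+\cap W^u_\loc(x)$ to be a non-trivial continuum, so in particular of Hausdorff dimension at least $1$. Standard Bowen-type dimension theory for hyperbolic holomorphic systems then identifies $\dim_H(J^+\cap W^u_\loc(x))$ with $\dim^u(\mu_f)$, and in particular $\dim^u(\mu_f)\geq 1$.

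Suppose by contradiction that $\dim^u(\mu_f)=1$. Then $\mu_f(\cdot\vert W^u_\loc(x))$, being an Ahlfors $1$-regular Gibbs measure, is comparable to $\mathcal H^1$ on its support with bounded density, and the support therefore has finite positive $\mathcal H^1$-measure. Being a continuum of finite $\mathcal H^1$-mass, the slice $J^+\cap W^u_\loc(x)$ is a rectifiable set by Falconer's theorem~\cite[Thm~3.14]{falconer}. Consequently $\mu_f(\cdot\vert W^u_\loc(x))$ is a rectifiable measure in the sense of \S\ref{subs:rectifiable_rectifiable}, contradicting Theorem~\ref{thm:rectifiable_measures}. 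Hence $\dim^u(\mu_f)>1$ and the corollary follows.

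The main obstacle I anticipate is justifying the Ahlfors $1$-regularity of the unstable conditional together with the Bowen-type identification of Hausdorff and measure-theoretic dimensions on the slice in the borderline case: both rely on the two-dimensional thermodynamic formalism for hyperbolic holomorphic automorphisms, closer in spirit to the one-dimensional theory of conformal measures than to anything developed explicitly in this paper. Once these ingredients are granted, the rest of the proof is a clean combination of Falconer's rectifiability theorem with the non-rectifiability statement of Theorem~\ref{thm:rectifiable_measures}.
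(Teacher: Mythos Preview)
Your overall strategy mirrors the paper's: establish $\dim_H(J\cap W^u_\loc(x))\ge 1$ from the slice being a nontrivial continuum, then in the borderline case show the unstable conditional of $\mu_f$ is rectifiable and invoke Theorem~\ref{thm:rectifiable_measures}. The flaw is in the quantity you target.

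You reduce to proving $\dim^u(\mu_f)>1$, but under the corollary's hypotheses this is \emph{false}. Since $J$ is connected and $\abs{\jac(f)}\le 1$, $f$ is unstably connected, and then $\chi^u(\mu_f)=\log d$ by~\cite[Thm~7.3]{bs6}; Young's formula~\eqref{eq:Young_Formula} gives $\dim^u(\mu_f)=1$ exactly. So your contradiction hypothesis ``$\dim^u(\mu_f)=1$'' is not a hypothesis at all---it always holds here, and if your argument from that point were correct it would show the corollary's assumptions are vacuous. Relatedly, the assertion that Bowen-type theory identifies $\dim_H(J^+\cap W^u_\loc(x))$ with $\dim^u(\mu_f)$ is wrong: it identifies the slice dimension with the Bowen root $t^u$, equivalently with $\dim^u(\kappa^u)$ for the geometric equilibrium state $\kappa^u$, which in general differs from $\mu_f$. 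And $\dim^u(\mu_f)=1$ alone does not yield Ahlfors $1$-regularity of the conditional (think of harmonic measure on a quasicircle of dimension~$>1$).

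The paper works directly with $\delta^u:=\dim_H(J\cap W^u_\loc(x))$. After $\delta^u\ge 1$ (via unstable connectedness and~\cite[Thm~3.5]{bs7}), one assumes $\delta^u=1$. The key extra step you are missing is that then the pressure of $\varphi=-\log\norm{Df\rest{E^u}}$ vanishes and $\mu_f$ realizes it (precisely because $h_{\mu_f}=\log d=\chi^u(\mu_f)$), so by uniqueness of equilibrium states $\mu_f=\kappa^u$. Only now does~\cite[Thm~22.1]{pesin_book} give that the unstable conditionals of $\mu_f$ have positive locally integrable density with respect to $\mathcal H^1$ on the slice, hence are rectifiable measures, and Theorem~\ref{thm:rectifiable_measures} provides the contradiction. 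The pivot is the identification $\mu_f=\kappa^u$ in the borderline case, not an a priori equality of the two dimensions.
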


Note that  under these assumptions, by~\cite[Cor. A.3]{bs7}, $f$  must actually be dissipative. 

\begin{proof}[Proof of Corollary~\ref{cor:dimH}] 
By hyperbolicity, we know that  
$J\cap W^u_\loc(x) = J^+\cap W^u_\loc(x)$.

Since $\abs{\jac(f)}\leq 1$, $f$ is unstably connected. Hence, 
by \cite[Thm 3.5]{bs7},  $J\cap W^u_\loc(x)$ has 
finitely many connected components, which are not points.  Thus 
\begin{equation}
\delta^u := \dim_H(J\cap W^u_\loc(x))   \geq 1.
\end{equation}

 Assume by contradiction that this unstable dimension $\delta^u$ equals $1$. We now rely on a few notions from thermodynamics formalism,   for which  we refer to~\cite{pesin_book} and Chapter 20 of~\cite{KH}. 
 Following~\cite{pesin_book} (Appendix II and Section 22), we denote by $\kappa^u$ the equilibrium state associated to the 
 geometric potential $\varphi=- \log \norm{Df\rest{E^u(x)}}$. By definition, 
 $\kappa^u$ maximises the pressure 
 \begin{equation}
 P(\m , \varphi )=h_\m(f)+\int_J \varphi\, \d\m 
 \end{equation}
 among all $f$-invariant probability measures $\m$ supported on $J$. By Ruelle's inequality, and Young's Formula~\eqref{eq:Young_Formula}, the maximum is $0$, and $\mu_f$ realizes this maximum. Thus, by uniqueness of $\kappa^u$ (see~\cite{KH}, Corollary 20.3.8), we have $\mu_f=\kappa^u$, and the root of Bowen equation is $t^u=1$ (see~\cite{pesin_book}, Appendix II, p. 106). Now, Theorem~22.1 of~\cite{pesin_book} implies that the unstable conditionals   $\mu_f(\cdot \vert J\cap W^u_\loc(x))$ have positive and locally integrable  density with respect to 
 $\mathcal H^1\rest{J\cap W^u_\loc(x)}$, thus they 
 are rectifiable measures, and Theorem~\ref{thm:rectifiable_measures} yields a 
contradiction.
\end{proof}

 \begin{proof}[Proof of Theorem~\ref{thm:rectifiable_measures}]
Assume that $\mu_f(\cdot\vert W^u_\loc(x)) $ is  rectifiable for a set of positive $\mu_f$-measure.  Since this property is $f$-invariant, by ergodicity this holds for $\mu_f$-almost every $x$. We will show that 
$f$ is unstably linear, which is impossible by Theorem~\ref{thm:no_unstably_linear}. For this, we 
  reproduce Proposition~\ref{pro:tangent} by keeping track of the unstable conditionals.  

Let $x$ be Pesin-generic. Since  $f$ acts linearly in unstable parameterizations, $f^n$
 maps a disk of radius $r$ in $(\psi^u_x)\inv(W^u(x))$  to a disk of radius 
$ \norm{df^n_x(e^u(x))} r$ in $(\psi^u_{f^n(x)})\inv (W^u_{f^n(x)})$. Pick a subsequence 
$(n_j)$ such that $W^u_{\rho}(f^{n_j}(x))$ converges in the $C^1$ sense to 
$W^u_{\rho}(x_0)$ for some generic $x_0$. The invariance of $\mu_f$ implies that 
$(f^n)_\varstar \mu_f(\cdot\vert W^u_\loc(x))\rest{W^u_\loc(f^n(x))}$ is proportional to  
$\mu_f(\cdot\vert W^u_\loc(f^n(x)))$.   
From Equation~\eqref{eq:conditionals}, the $C^1$ convergence of unstable manifolds, 
  and the continuity of the potential of $T^+$,  we get 
\begin{equation}
\mu_f(\cdot\vert W^u_\loc(f^{n_j}(x))) \to \mu_f(\cdot\vert W^u_\loc( x_0))
\end{equation}
as $j$ goes to $+\infty$ 
 (for this, we need to choose the radii 
 of local unstable manifolds so that no mass is carried on the boundary). 
 
 By the rectifiability assumption, viewed in the 
 unstable parameterizations 
 we have that 
 \begin{equation}
c_n (\psi^u_{f^{n_j}(x)})\inv_\varstar \lrpar{(f^{n_j})_\varstar \mu_f(\cdot\vert W^u_\loc(x))\rest {W^u_\loc(f^{n_j}(x))}} \underset{j\to\infty}\longrightarrow c\mathcal H^1\rest{L},
 \end{equation}
where $c>0$, $L$ is a line through $0\in \C$, and $c_n$ is a normalization constant (which 
satisfies $c_n\asymp d^n$ due to the invariance property of $T^+$, 
see Remark~\ref{rem:normalization}). 

Fix local coordinates $(\xi, \eta)$
such that 
$W^u_\loc(x_0) = \set{\eta=0}$. For large $n_j$,
 $W^u_\loc(f^{n_j}(x))$ is a graph over the first coordinate.
Then   the Koebe distortion theorem and the 
 previous analysis show that $\supp(T^+\wedge [W^u_\loc(x_0)])$ 
 is a smooth curve through the origin, and $f$ is unstably linear, as asserted. 
 \end{proof}

\section{Invariant holomorphic foliations}\label{sec:foliations}
 Let $\mathcal F$ be a foliation with complex leaves in some open set $U\subset \C^2$. We allow $\mathcal F$ to have isolated singularities (see below). By definition, the leaf $\mathcal F(q)$ of $\mathcal F$ through a point $q\in U$ is   reduced to $\{q\}$ if $q$ is a singularity, and otherwise it is the leaf of the regular folitation defined by $\mathcal F$ in the complement of its singularities.
We say that a set $S\subset \C^2$ is {\emph{subordinate to $\mathcal F$}} in $U$ if $S\cap U$ is not empty and is {\emph{$\mathcal F$-saturated}}, that is, for every $q\in S$ the leaf $\mathcal F(q)$ of $\mathcal F$ through $q$ is also contained in $S$. 

In this section we address the following question: is it possible that $J^+$ be locally (resp. globally) subordinate to a holomorphic foliation? The case of real analytic foliations will be considered in the next section.

\subsection{Preliminary observations}\label{par:invariance} 
Let us first gather a few general facts.
  
\begin{pro}\label{pro:subordination}
Let $\mathcal F$ be a continuous, regular foliation with complex leaves in some connected open set $U$. The following properties are equivalent:
\begin{enumerate}[\rm (a)]
\item the forward Julia set $J^+$ is subordinate to $\mathcal F$ in $U$;
\item for some saddle periodic point $p$, every holomorphic 
disk   contained in $W^s(p)\cap U$ is contained in a leaf of $\mathcal F$. 
\end{enumerate}
If these properties hold, then:
\begin{enumerate}[\rm (1)]
\item in $U$, each of the three sets in the partition $\mathrm{Int}(K^+)\cup J^+ \cup \C^2\setminus K^+$ is  $\mathcal F$-saturated;
\item  $\mathcal F$ is $f$-invariant in the following sense:
 if $x\in J^+\cap U$ and some iterate $f^m$ of $f$ 
maps  $x$  to another point $x'=f^m(x)$ of $U$, then 
$f^m$ maps the leaf $\mathcal F(x)$ to the leaf $\mathcal F(x')$ (as germs of Riemann surfaces). 
\end{enumerate}
\end{pro}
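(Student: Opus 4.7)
The plan is to prove the equivalence (a)$\iff$(b) and then to deduce the numbered consequences. Two structural facts will be used throughout: $J^+=\partial K^+$ has empty interior in $\C^2$, and $W^s(p)$ is dense in $J^+$ by the Bedford--Lyubich--Smillie theory recalled in the introduction.

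For (a)$\Rightarrow$(b), the argument I would use is short. Any holomorphic disk $D\subset W^s(p)\cap U$ lies in $J^+$. If $D$ were not contained in a single leaf of $\mathcal F$, then at some point $z\in D$ the disk would be transverse to the foliation; saturating $D$ by the plaques of a flow-box chart at $z$ would then produce an open subset of $U$, and by hypothesis (a) each of these plaques is contained in $J^+$, yielding an open subset of $J^+$ and contradicting the emptiness of its interior.

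The main step is the converse (b)$\Rightarrow$(a). Fix $x\in J^+\cap U$ and a small foliation chart $V\cong T\times\Delta$ around $x$, with holonomy projection $\pi\colon V\to T$. Hypothesis (b) says that for $y\in W^s(p)\cap V$ the connected component of $W^s(p)\cap V$ containing $y$ is contained in the plaque $P_{\pi(y)}$; hence $\pi$ is locally constant along $W^s(p)\cap V$ and, combined with the density of $W^s(p)$ in $J^+$, the set $\pi(W^s(p)\cap V)$ is dense in $\pi(J^+\cap V)$. The heart of the argument would then be to show that each plaque $P_t$ with $t\in\pi(W^s(p)\cap V)$ is entirely contained in $J^+$: here (b) implies that $f$ permutes germs of leaves through $W^s(p)$ (since $f$ permutes holomorphic disks in $W^s(p)$), and iterating $f^n$ towards $p$ together with the subharmonicity of $G^+$ along leaves forces the whole plaque to lie in $K^+$, and then in $J^+$ upon inspecting its boundary behavior. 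Finally, continuity of the foliation and closedness of $J^+$ extend the inclusion from the dense subset $\pi(W^s(p)\cap V)$ to its closure $\pi(J^+\cap V)$.

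Once the equivalence is established, (1) follows quickly: if $x\in(\Int(K^+)\cup(\C^2\setminus K^+))\cap U$, the leaf $\mathcal F(x)\cap U$ cannot meet $J^+$, because by (a) this would force the entire leaf into $J^+$, contradicting $x\notin J^+$; hence $\mathcal F(x)\cap U$ is contained in $\Int(K^+)\cup(\C^2\setminus K^+)$, and connectedness of the leaf in $U$ places it in the component containing $x$. For (2), take a small disk $D$ in the germ of $\mathcal F(x)$ at $x$ with $f^m(D)\subset U$; by (1), $D\subset J^+$, and $f$-invariance of $J^+$ gives $f^m(D)\subset J^+\cap U$. Applying the empty-interior argument of (a)$\Rightarrow$(b) to the holomorphic disk $f^m(D)\subset J^+$ shows it lies in a single leaf of $\mathcal F$, which must be $\mathcal F(x')$ since it contains $x'=f^m(x)$. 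The main obstacle is the step (b)$\Rightarrow$(a), specifically the upgrade from "$W^s(p)$ sits in leaves" to "the corresponding plaques lie entirely in $J^+$", which requires the $f$-invariance-plus-subharmonicity argument to prevent leaves from escaping $J^+$ past their coincidence with $W^s(p)$.
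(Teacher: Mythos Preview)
Your treatment of (a)$\Rightarrow$(b), and of the consequences (1) and (2), is correct and essentially coincides with the paper's.

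The gap is in (b)$\Rightarrow$(a), precisely at the step you yourself flag as the main obstacle. Your proposed mechanism---deriving leaf-invariance from (b), iterating $f^n$ towards $p$, and invoking subharmonicity of $G^+$---does not close the argument. From (b) you only obtain that $f$ sends the \emph{germ} of $\mathcal F$ at a point of $W^s(p)$ to the germ of $\mathcal F$ at the image point; nothing says the full plaque $P_t$ is carried into a leaf under $f^n$, so you cannot iterate the plaque. And subharmonicity of $G^+|_{P_t}$ together with vanishing on an open subset (the germ of $W^s(p)$) does not force global vanishing: subharmonic functions obey no identity principle. So the plaque might \emph{a priori} leave $K^+$ away from its coincidence with $W^s(p)$, and your sketch does not rule this out. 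The subsequent passage ``from $K^+$ to $J^+$ upon inspecting its boundary behavior'' is likewise unjustified.

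The paper's route is more direct and avoids iteration and potential theory altogether. Hypothesis~(b) says every germ of $W^s(p)$ in $U$ lies in a leaf, i.e.\ $W^s(p)$ is everywhere tangent to $\mathcal F$. In a flow box $U'$ around a given $q\in J^+\cap U$, take $q_n\in W^s(p)$ with $q_n\to q$ (density). The plaques through the $q_n$ converge in the $C^1$ topology to the plaque $L$ through $q$ (continuity of $\mathcal F$), and these approximating plaques are contained in $W^s(p)\subset J^+$. Since $J^+$ is closed, $L\subset J^+$. No dynamics beyond the density of $W^s(p)$ in $J^+$ is used.
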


\begin{proof}
Assume that $J^+$ is subordinate to $\mathcal F$ in $U$. 
Since $W^s(p)$ is dense in $J^+$,  it intersects $U$. 
Let $L\subset W^s(p)\cap U$ be a holomorphic disk. Then   $L$ is   contained in a leaf of $\mathcal F$, since otherwise
 its $\mathcal F$-saturation 
 \begin{equation}\label{eq:saturation}
\mathcal F(L)=\bigcup_{q\in L} \mathcal F (q)
\end{equation}
would have non-empty interior and be contained in $J^+$, a contradiction.
Conversely, assume that   $W^s(p)\cap U$ is  subordinate to $\mathcal F$. Let $q$ be a point of $J^+\cap U$ and let $U'$ be a flow box for $\mathcal F$ around $q$.  By density of $W^s(p)$,  the plaque $L$ through $q$ is approximated in the $C^1$ topology by plaques contained in $W^s(p)$. Since $J^+$ is closed, $L$ is contained in $J^+$, as asserted. 
 
For  Assertion (1), we observe that if a leaf $F$ in $U$ intersects $\mathrm{Int}(K^+)$ then it  cannot meet 
$\C^2\setminus K^+$, otherwise it would intersect $J^+$, and be entirely contained in it. For Assertion~(2), note that,  if  $f^m(\mathcal F(x))$ were not contained in a leaf, then its saturation, as in Equation~\eqref{eq:saturation}, would produce an open subset of $J^+$.   \end{proof}

%
 
 
 
%

 \begin{pro}  \label{pro:laminar_subordinate}
Assume that, in the connected open set $U$, $J^+$ is subordinate to a continuous foliation $\mathcal F$ by complex leaves. Then, 
 
\begin{enumerate}[\rm (1)]
\item  the current $T^+$ is uniformly laminar in $U$, the laminar structure of $T^+$ 
is subordinate to  $\mathcal F$, and $T^+{\rest U}$ is induced by a transverse measure on $\mathcal F$;
\item   if $\Gamma\subset U$ is a disk that is transverse to $\mathcal F$,  the transverse measure is given by 
$$T^+\wedge [\Gamma]=\Delta (G^+\rest{\Gamma});$$
\item if  $\Gamma'$ is another disk transverse to $\mathcal F$, {\emph{the $\mathcal F$-holonomy $h\colon \Gamma\to \Gamma'$ maps (on its domain of definition) the measure $T^+\wedge [\Gamma]$ to the measure $T^+\wedge [\Gamma']$}}. 
\end{enumerate}
\end{pro}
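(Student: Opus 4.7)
The plan is to work in flow boxes of $\mathcal F$ and to build everything on the single crucial observation that $G^+$ is harmonic along every leaf of $\mathcal F$ in $U$. This harmonicity is a direct consequence of Proposition~\ref{pro:subordination}.(1): each plaque $L$ of $\mathcal F$ in $U$ lies entirely in one of the three $\mathcal F$-saturated pieces $\mathrm{Int}(K^+)$, $J^+$, or $\C^2\setminus K^+$. In the first two cases $G^+\rest L\equiv 0$, and in the third $G^+\rest L$ is positive and pluriharmonic, because $G^+$ is pluriharmonic on $\C^2\setminus K^+$. In every case $G^+\rest L$ is harmonic.

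Granted this, Assertion~(2) is immediate from the Bedford-Taylor theory of slicing. For any holomorphic disk $\Gamma\subset U$ transverse to $\mathcal F$, the function $G^+\rest\Gamma$ is continuous and subharmonic, so $\Delta(G^+\rest\Gamma)$ is a well-defined positive Borel measure which coincides with the slice $T^+\wedge[\Gamma]$.

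For Assertion~(1), I fix a flow box $V\subset U$ with holomorphic transversal $\Sigma$, together with a continuous local trivialization $V\simeq \disk\times\Sigma$ in which plaques correspond to $\disk\times\{\sigma\}$. Setting $\mu:=\Delta(G^+\rest\Sigma)=T^+\wedge[\Sigma]$, I will establish
\begin{equation}
T^+\rest V=\int_\Sigma [L_\sigma]\, d\mu(\sigma).
\end{equation}
The key computation is the following: in local coordinates $(z,\sigma)$ straightening $\mathcal F$, harmonicity of $G^+$ along plaques yields $\partial_z\bar\partial_z G^+=0$; positivity of the $(1,1)$-current $T^+$ then forces the mixed components $\partial_z\bar\partial_\sigma G^+$ and $\partial_\sigma\bar\partial_z G^+$ to vanish as well, by the Cauchy-Schwarz inequality for positive Hermitian forms; and $d$-closedness of the remaining transverse term $\gamma(z,\sigma)\,d\sigma\wedge d\bar\sigma$ forces $\gamma$ to depend only on $\sigma$. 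The transverse measure $\gamma(\sigma)\,d\sigma\wedge d\bar\sigma$ is then identified with $\mu$ by slicing against $\Sigma$ and invoking~(2). The main technical point is that the trivialization is only continuous; this is addressed via the standard theory of uniformly laminar currents with continuous potential (cf.~\cite{bs6}), by verifying the decomposition directly through its pairings with test $(1,1)$-forms and a Fubini-type argument on each plaque.

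Assertion~(3) then follows at once from (1). The decomposition identifies $T^+\wedge[\Gamma]$ and $T^+\wedge[\Gamma']$ with, respectively, the images of $\mu$ under the $\mathcal F$-holonomies $\Sigma\to\Gamma$ and $\Sigma\to\Gamma'$; composing these two holonomies gives the holonomy $h\colon\Gamma\to\Gamma'$, which therefore maps $T^+\wedge[\Gamma]$ to $T^+\wedge[\Gamma']$. The global statements in (1) and (3) follow by covering $U$ with flow boxes. I expect the main obstacle to be the rigorous treatment of the laminar decomposition in (1) when $\mathcal F$ is merely continuous; once harmonicity of $G^+$ along leaves is in hand, the remainder is classical Bedford-Taylor potential theory for positive closed currents with continuous potentials.
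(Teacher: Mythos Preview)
Your approach is genuinely different from the paper's, and the core observation --- that $G^+$ is harmonic along every plaque of $\mathcal F$ by Proposition~\ref{pro:subordination}(1) --- is correct and elegant. However, the argument for Assertion~(1) has a real gap precisely where you anticipate it.

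The paper does not use leafwise harmonicity at all. Instead, it invokes the laminar structure of $T^+$ already built in~\cite{bls}: the disks constituting that structure are limits of pieces of stable manifolds of saddle points, and by Proposition~\ref{pro:subordination}(b) these stable pieces are leaves of $\mathcal F$; hence so are their limits (plaques of a continuous foliation form a closed family in a flow box). This immediately yields the uniform laminarity of $T^+$ subordinate to $\mathcal F$, after which (2) and (3) follow.

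Your potential-theoretic route --- harmonicity along leaves kills the leafwise component of $dd^c G^+$, Cauchy--Schwarz kills the mixed components, and closedness forces the transverse piece to depend only on the transverse variable --- is the standard argument when $\mathcal F$ is \emph{holomorphic} (or at least $C^1$), because then the straightening coordinates $(z,\sigma)$ are holomorphic (or smooth) and $dd^c$ decomposes as you write. When $\mathcal F$ is merely continuous, no such coordinates exist: neither the decomposition of $dd^c G^+$ nor the ``Fubini-type'' comparison with $\int [L_\sigma]\,d\mu(\sigma)$ is directly available, since the volume form does not disintegrate along a continuous foliation and the holonomy, being only a homeomorphism, cannot be pushed through the Laplacian. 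Your appeal to~\cite{bs6} does not settle this. What actually closes the gap is exactly the dynamical information you are bypassing --- the BLS description of the disks supporting $T^+$ as limits of stable manifolds.

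In summary: your (2) is fine (it is just Bedford--Taylor slicing, independent of $\mathcal F$); (3) follows from (1) as you say; but your argument for (1) is complete only for smooth $\mathcal F$, whereas the paper's dynamical shortcut handles the continuous $\mathcal F$ assumed in the statement.
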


\begin{proof}  This is a version of~\cite[\S 8-9]{bls} in a simpler context.
Shrinking $U$, we may assume that $U$ is biholomorphic to $\disk^2$, with coordinates 
$(z,w)$, and  for  $U'\subset U$ intersecting $J^+$, every leaf of 
  $\mathcal F$ intersecting $U'$ is a  vertical disk in $U$. 
We already know that the current $T^+$ is  laminar (in a weak sense) and its support equals  $J^+$. Moreover, by construction (see~\cite{bls}), the  disks constituting its laminar structure  are limits of (pieces of) stable manifolds. Since these stable pieces are leaves of $\mathcal F$,  the asserted properties follow. 
\end{proof}



 \subsection{No invariant global holomorphic foliation} \label{subs:no_invariant_global_foliation}
In this subsection, we prove the holomorphic version of Theorem~\ref{mthm:no_real_global_foliation}, which is an extension of 
 Brunella's theorem~\cite[Corollary]{brunella} to arbitrary (transcendental) foliations.
 
 \begin{thm}\label{thm:no_invariant_global_foliation} 
A loxodromic automorphism $f\in\Aut(\C^2)$   does not preserve any global singular
 holomorphic  foliation. 
\end{thm}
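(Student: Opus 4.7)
The strategy is to reduce to Brunella's theorem on invariant algebraic foliations by showing that any hypothetical $f$-invariant holomorphic foliation $\mathcal F$ on $\C^2$ must actually be algebraic, i.e.\ defined by a polynomial $1$-form. Argue by contradiction and assume that such an $\mathcal F$ exists.

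First, analyze $\mathcal F$ at saddle periodic points. Since $\Sing(\mathcal F)$ is a discrete analytic subset of $\C^2$, it avoids all but finitely many saddles in any bounded region. If $p$ is a saddle of period $n$ with $p\notin \Sing(\mathcal F)$, the tangent line $T_p\mathcal F$ is stabilized by $df^n_p$; since the eigenvalues $\lambda^s,\lambda^u$ satisfy $\abs{\lambda^s}<1<\abs{\lambda^u}$, they are distinct, so $T_p\mathcal F$ coincides with either the stable or the unstable eigenline. By analyticity and $f$-invariance of both $\mathcal F(p)$ and $W^{s/u}(p)$, the leaf equals $W^s(p)$ or $W^u(p)$. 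A density argument based on homoclinic intersections (two saddles close in the $\R$-Zariski topology are connected via transverse homoclinic orbits and the inclination lemma, forcing the same ``label'' $s$ or $u$) shows that, up to exchanging $f$ with $f\inv$, we may assume $\mathcal F(p) = W^s(p)$ for every saddle $p\notin \Sing(\mathcal F)$.

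Then Proposition~\ref{pro:subordination} gives that $J^+$ is subordinate to $\mathcal F$ on $\C^2\setminus \Sing(\mathcal F)$, and Proposition~\ref{pro:laminar_subordinate} shows that $T^+$ is uniformly laminar near $J^+$ with laminar structure subordinate to $\mathcal F$, the transverse measure on any holomorphic transversal $\Gamma$ to $\mathcal F$ being $T^+\wedge [\Gamma]=\Delta (G^+\rest\Gamma)$. The main technical step is now to upgrade $\mathcal F$ to a singular algebraic foliation on $\P^2$. Using that $\Pic(\C^2)=0$, write $\mathcal F=\ker \omega$ for a global holomorphic $1$-form $\omega$ on $\C^2$, normalized so that its zero set equals $\Sing(\mathcal F)$. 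The $f$-invariance reads $f^\varstar\omega = h\,\omega$ for a nowhere-vanishing holomorphic function $h=e^u$ on $\C^2$. The goal is to show that $\omega$ is polynomial and that $h$ is a nonzero constant. Here we combine: (i) the polynomial growth of $f^n$ (degree $d^n$) together with the cocycle relation $u_n = \sum_{k<n} u\rond f^k$; (ii) the uniform boundedness of the transverse mass of $T^+$ on small $\mathcal F$-transversals, coming from the logarithmic growth of the continuous plurisubharmonic function $G^+$; and (iii) the extension of $T^+$ as a closed positive current of mass one on $\P^2$. These constraints rule out transcendental behavior of $\omega$: any exponential-type growth in $u$ would contradict the uniform bound on transverse mass and the polynomial character of $f^n$ in a neighborhood of $J^+$, propagated globally by the density of backward iterates of stable manifolds.

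Once $\omega$ is polynomial, $\mathcal F$ extends to a singular algebraic foliation on $\P^2$ that is invariant under the birational extension of $f$ to $\P^2$. Brunella's theorem~\cite{brunella} forbids the existence of such a foliation, giving the desired contradiction. The main obstacle is the algebraicity step: $f$-invariant holomorphic foliations on $\C^2$ are not obviously polynomial, and ruling out transcendental coefficients of $\omega$ requires combining the polynomial nature of $f$ with the global pluripotential structure provided by $T^+$ and $G^+$. An alternative route would be to build, from entire-curve leaves $\mathcal F(p)=W^s(p)$, an Ahlfors-type positive closed current proportional to $T^+$ on $\P^2$ and argue directly along Brunella--Dinh--Sibony lines that the associated tangent distribution extends algebraically.
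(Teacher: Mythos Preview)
Your overall strategy---reduce to Brunella by showing that an $f$-invariant holomorphic foliation on $\C^2$ extends algebraically to $\P^2$---matches the paper's. The opening analysis at saddle points is fine and essentially coincides with the paper's Lemma~\ref{lem:invariant_global_foliation}. The problem is the algebraicity step: you acknowledge it as ``the main obstacle'' and then sketch an argument that does not actually go through. Writing $\mathcal F=\ker\omega$ with $f^\varstar\omega=e^u\omega$ is fine, but nothing in your items (i)--(iii) gives control on the growth of $\omega$ itself. The cocycle relation for $u$, polynomial growth of $f^n$, and mass bounds on $T^+$ constrain how the \emph{transverse measure} behaves, not the coefficients of a $1$-form defining the leaf direction; many transcendental $\omega$'s have perfectly tame transverse measures on small disks. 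In short, no mechanism is produced that forces $\omega$ to be polynomial, and the alternative Ahlfors-current route is only named, not executed.

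The paper avoids this difficulty entirely by a different bridge to algebraicity: it does \emph{not} try to analyze $\omega$. Instead it proves (Lemma~\ref{lem:foliation_G+}) that on $\C^2\setminus K^+$ the foliation $\mathcal F$ must coincide with the canonical B\"ottcher foliation $\mathcal F^+$, whose leaves are the level sets of the B\"ottcher coordinate $\varphi^+$. The argument is pluripotential: for two $\mathcal F$-transversals $\Gamma,\Gamma'$ the holonomy preserves the transverse measure $\Delta(G^+\rest\Gamma)$, so $G^+\rest\Gamma-G^+\rest{\Gamma'}\circ h$ is harmonic and vanishes on $J^+_\Gamma$; hence either $G^+$ is holonomy-invariant (forcing $\mathcal F=\mathcal F^+$), or $J^+_\Gamma$ sits in a real-analytic curve, which means $f$ is unstably linear. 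The latter is ruled out by Theorem~\ref{thm:no_unstably_linear} (Theorem~\ref{mthm:no_unstably_linear} in the Introduction), and this is where the real work of the paper enters. Once $\mathcal F=\mathcal F^+$ outside $K^+$, one invokes the known fact that $\mathcal F^+$ extends to a singular holomorphic foliation on $\P^2$ with the line at infinity as a leaf; hence $\mathcal F$ extends to $\P^2$, is automatically algebraic, and Brunella applies. So the missing ingredient in your plan is precisely this comparison with $\mathcal F^+$ via $G^+$, powered by the nontrivial input that unstably linear automorphisms do not exist.
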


We start with a lemma which is a variation on Proposition~\ref{pro:subordination}, Assertion~(2), when $\mathcal F$ is a  global holomorphic foliation.

\begin{lem}\label{lem:invariant_global_foliation}
 Let $\mathcal F$ be a global, possibly singular, holomorphic foliation of $\C^2$. If $J^+$ 
is  subordinate to $\mathcal F$ in some  open subset, then $\mathcal F$ is $f$-invariant. Conversely, if $\mathcal F$ is $f$-invariant, then $J^+$ or $J^-$ is subordinate to $\mathcal F$. \end{lem}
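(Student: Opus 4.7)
I would treat the two implications independently.

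\textbf{Direction 1 (local subordination implies global invariance).} Represent the global singular foliation $\mathcal F$ by a holomorphic $1$-form $\omega$ on $\C^2$ with isolated zeros; $f^\varstar\mathcal F$ is then defined by $f^\varstar\omega$, and the identity $f^\varstar\mathcal F=\mathcal F$ amounts to the vanishing of the global holomorphic function $h$ determined by
\begin{equation}
\omega\wedge f^\varstar\omega \;=\; h\,dz\wedge dw.
\end{equation}

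I would pick a saddle fixed point $p\in U\cap\jstar$ (available up to replacing $f$ by a suitable iterate, and up to slightly enlarging $U$ if necessary) and a flow box $V\subset U$ around $p$ biholomorphic to $\disk^2$, with coordinates $(z,w)$ in which $\mathcal F$ is the vertical fibration and (after shrinking $V$) $f(V)\subset U$. For every $x=(z_0,w_0)\in V\cap J^+$, both $x$ and $f(x)$ lie in $U$, and Proposition~\ref{pro:subordination}(2) gives that the vertical plaque $\{z=z_0\}\cap V$ is also a local leaf of $f^\varstar\mathcal F$. By analytic continuation within the plaque, $h(z_0,w)=0$ for every $w\in\disk$. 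Now Proposition~\ref{pro:laminar_subordinate} ensures that the projection of $V\cap J^+$ to the $z$-disk is the support of a non-trivial transverse measure, hence a non-polar set with accumulation points; for each fixed $w$ the holomorphic function $z\mapsto h(z,w)$ therefore vanishes identically, so $h\equiv 0$ on $V$ and, by the identity principle on $\C^2$, everywhere. The a priori possibility that the initial iterate gives only $f^{\varstar k}$-invariance rather than $f^\varstar$-invariance is ruled out by the empty-interior property of $J^+=\partial K^+$: if the $f^\varstar$-orbit of $\mathcal F$ contained several distinct foliations, any two of them would subordinate $J^+$ locally with generically transverse leaves, forcing $J^+$ to contain an open set.

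\textbf{Direction 2 (invariance implies subordination).} Pick a saddle periodic point $p$ of $f$ which is a regular point of $\mathcal F$; such a $p$ exists because saddle periodic points are dense in $\jstar$ whereas $\mathcal F$ has only isolated singularities. Up to iteration, assume $p$ is fixed. The leaf $\mathcal F(p)$ is $f$-invariant, so its tangent $T_p\mathcal F(p)$ is a $df_p$-invariant line in $T_p\C^2$; since the eigenvalues of $df_p$ have distinct moduli, this tangent is either $E^s(p)$ or $E^u(p)$.

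Assume $T_p\mathcal F(p)=E^s(p)$. By uniqueness of the stable manifold, the germs of $\mathcal F(p)$ and $W^s(p)$ at $p$ coincide. Using $W^s(p)=\bigcup_n f^{-n}(W^s_\loc(p))$ and the $f$-invariance of $\mathcal F$, I obtain $W^s(p)\subset \mathcal F(p)$ as an open subset of the leaf, so for every $q\in W^s(p)$ the local leaf $\mathcal F_\loc(q)$ is contained in $W^s(p)\subset J^+$. The set
\begin{equation}
\Sigma=\{q\in\C^2\,:\,\mathcal F_\loc(q)\subset J^+\}
\end{equation}
is closed by $C^1$-continuity of $\mathcal F$ and closedness of $J^+$, and contains the dense subset $W^s(p)$ of $J^+$, hence contains all of $J^+$. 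A standard clopen argument along each leaf then promotes this local saturation to global saturation, so $J^+$ is subordinate to $\mathcal F$. The case $T_p\mathcal F(p)=E^u(p)$ is symmetric and yields that $J^-$ is subordinate to $\mathcal F$.

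\textbf{Main obstacle.} The harder direction is the first. Its heart is the combination of the local $f$-equivariance of leaves through $J^+$-points provided by Proposition~\ref{pro:subordination}(2) with the transverse non-polarity of $J^+$ provided by Proposition~\ref{pro:laminar_subordinate}, which together force global vanishing of the discriminant $\omega\wedge f^\varstar\omega$. The subtle passage from $f^k$-invariance to full $f$-invariance is then a consequence of the empty-interior property of $J^+$.
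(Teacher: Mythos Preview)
Your Direction~2 is essentially the paper's argument written out in more detail. Direction~1 is where the approaches diverge. The paper argues as follows: for any saddle periodic point $p$ that is not a singularity of $\mathcal F$, a piece of $W^s(p)\cap U$ is contained in a leaf of $\mathcal F$ (Proposition~\ref{pro:subordination}); since $\mathcal F$ is globally holomorphic, analytic continuation shows the \emph{entire} stable manifold $W^s(p)$ is a leaf. The same applies to $W^s(f(p))=f(W^s(p))$, so $W^s(p)$ is a common leaf of $\mathcal F$ and $f^\varstar\mathcal F$. The paper then invokes the complex analytic Zariski density of $W^s(p)$ in $\C^2$ (a consequence of the Forn{\ae}ss--Sibony uniqueness of $T^+$ on $J^+$), which forces $\mathcal F=f^\varstar\mathcal F$ directly for $f$ itself --- no iterate, no separate argument for the passage $f^k\leadsto f$. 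Your route instead writes down an explicit discriminant $h$ and kills it via the transverse non-triviality of $J^+$; this is a legitimate and arguably more elementary alternative, trading the Forn{\ae}ss--Sibony input for a local computation.

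There is however one genuine imprecision. You assume a saddle fixed point $p\in U\cap\jstar$, but the hypothesis only gives $U\cap J^+\neq\emptyset$; since $J^+$ is unbounded while $\jstar$ is compact, $U$ need not meet $\jstar$ at all, and ``slightly enlarging $U$'' is not available since the subordination is only assumed on $U$. The fix is exactly the analytic-continuation step above: once a single disk of $W^s(p)\cap U$ lies in a leaf, all of $W^s(p)$ does, and then Proposition~\ref{pro:subordination} (read backwards) gives subordination of $J^+$ to $\mathcal F$ in \emph{any} open set avoiding the singularities, in particular near a saddle fixed point; your flow-box argument then goes through. Two minor points: the assertion that the global $1$-form can be taken with isolated zeros is delicate for transcendental foliations, but you never actually use it (any global defining $1$-form yields a global $h$); and ``non-polar'' is more than you need --- the accumulation-point property you also mention is what makes the identity principle fire.
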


\begin{proof} 
Since there are infinitely many saddle periodic points in the compact set $\jstar$ and the singularities of $\mathcal F$ are isolated, 
there is  a saddle periodic point $p$ that is not a singularity of $\mathcal F$. Let $n$ be its period.
 As observed in Proposition~\ref{pro:subordination}, the stable manifold $W^s(p)$ is a leaf of $\mathcal F$, and so is the stable manifold $W^s({f(p)})$. Thus,  $W^s(p)$ is a common leaf of the foliations $f^*\mathcal F$ and $\mathcal F$. Now, observe that  $W^s(p)$ is  dense for the complex analytic Zariski topology:
 this follows   from the fact that $J^+ = \overline{W^s(p)}$ carries a unique closed positive current which gives no mass to curves (see~\cite{fornaess-sibony}). 
 So we deduce that 
 $f^*\mathcal F=\mathcal F$. 
  
  For the converse, since $\mathcal F$ is $f$-invariant, $\mathcal F(p)$ is $f^n$-invariant. But then, $\mathcal F(p)$ must coincide with $W^s(p)$ or $W^u(p)$ near $p$, and by density of $W^s(p)$ 
in $J^+$ (resp.\ of $W^u(p)$ in $J^-$), we conclude that $J^+$ or  $J^-$ is subordinate to $\mathcal F$.
\end{proof}
 
\begin{rem}\label{rem:invariant_real_foliation}
In Theorem~\ref{thm:dim3} below, we prove that $J^+$ is   dense for the real-analytic topology. It 
follows that the previous result holds for real-analytic foliations with complex leaves (and possibly  
a locally finite singular set).
\end{rem}

The key step in the proof of Theorem~\ref{thm:no_invariant_global_foliation} 
 is to compare the alleged invariant foliation with the  canonical 
$f$-invariant holomorphic foliation $\mathcal F^+$ 
 of $\C^2\setminus K^+$, given near infinity by the level sets of the Böttcher function 
 $\varphi^+$ (see~\cite[\S 5]{HOV1}). Recall that $\varphi^+$ 
 is defined near infinity in $\C^2\setminus K^+$
 by 
 \begin{equation}
 \varphi^+  = \lim_{n\to\infty} \lrpar{f^n}^{1/d^n}
 \end{equation}
 for some appropriate branch of the $(d^n)^{\rm th}$ root; then, by construction, 
 $G^+ = \log\abs{\varphi^+}$ there.

%

\begin{lem}\label{lem:foliation_G+}
Let $f\in\Aut(\C^2)$ be a loxodromic automorphism.  
If $J^+$ is subordinate to a holomorphic foliation $\mathcal F$ 
in some   connected 
open set $U$ then  
$\mathcal F$ coincides with $\mathcal F^+$ in $U\setminus K^+$.
\end{lem}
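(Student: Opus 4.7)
The plan is to work in a flow box $V\subset U$ for $\mathcal F$ that crosses $J^+$ and to establish, by a boundary-uniqueness argument for bounded holomorphic functions, that $G^+$ is constant on every leaf of $\mathcal F$ contained in $V\setminus K^+$. Since $G^+$ is pluriharmonic on $\C^2\setminus K^+$, constancy of $G^+$ along a complex curve forces its tangent direction to lie in $\ker\partial G^+$, which is precisely the tangent direction of $\mathcal F^+$. Thus $\mathcal F=\mathcal F^+$ on an open subset of $U\setminus K^+$, and the identity principle for holomorphic foliations propagates the equality to every connected component of $U\setminus K^+$, each such component being forced to accumulate on $U\cap J^+$ since it is disjoint from the open set $\Int(K^+)$.

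Pick coordinates $(z,w)\in \disk_z\times \disk_w$ on $V$ in which $\mathcal F=\set{z=\mathrm{const}}$, avoiding the isolated singularities of $\mathcal F$. Proposition~\ref{pro:subordination}(1) says that the partition $\Int(K^+)\sqcup J^+\sqcup(\C^2\setminus K^+)$ is $\mathcal F$-saturated in $U$, so $\disk_z$ splits as $K_0\sqcup K_1\sqcup K_2$ according to which of these sets contains the plaque $\set{z=c}\times\disk_w$; here $K_0, K_2$ are open and $V\cap J^+ = K_1\times \disk_w$. Proposition~\ref{pro:laminar_subordinate} shows that the transverse measure $T^+\wedge[\disk_z\times\set{0}] = \D(G^+\rest{\disk_z\times\set{0}})$ is nonzero, supported in $K_1$, and admits $G^+\rest{\disk_z\times\set{0}}$ as a continuous potential, so $K_1$ is non-polar.

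Set $g(z,w):=\partial_w G^+(z,w)$. Pluriharmonicity of $G^+$ on $\C^2\setminus K^+$ makes $g$ holomorphic on $K_2\times\disk_w$, and $g\equiv 0$ on $K_0\times\disk_w$ since $G^+$ vanishes there. As $z\to \zeta\in K_1$, the positive harmonic function $w\mapsto G^+(z,w)$ tends uniformly to $0$ on compact subsets of $\disk_w$ by continuity of $G^+$, so Cauchy estimates give $g(z,w_0)\to 0$ for each fixed $w_0$. Hence, for every $w_0\in \disk_w$, the function $z\mapsto g(z,w_0)$ is bounded holomorphic on $\disk_z\setminus K_1$ and extends continuously by $0$ across $K_1$; non-polarity of $K_1$ and a classical boundary-uniqueness theorem (a consequence of the two-constants theorem applied to $\log|g|$) then force $g(\cdot,w_0)\equiv 0$. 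Thus $G^+$ is independent of $w$ on $K_2\times\disk_w$ and $\mathcal F=\mathcal F^+$ on this open set.

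The main obstacle is exactly this boundary-uniqueness step: one has to leverage the continuous-potential property of $T^+\wedge[\Gamma]$ to guarantee that $K_1$ is thick enough that bounded holomorphic functions on $\disk_z\setminus K_1$ with continuous boundary value $0$ on $K_1$ must vanish identically. Once this is granted, the remainder — propagation by the identity principle, and the topological check that every connected component of $U\setminus K^+$ is reached by a flow box of the above type — is routine.
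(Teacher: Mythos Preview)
Your approach is genuinely different from the paper's and, once the key step is cleanly justified, it is correct. The paper compares $G^+$ on two transversals via the holonomy: the difference $H_{\Gamma'}=G^+\rest\Gamma-G^+\rest{\Gamma'}\circ h$ is harmonic and vanishes on $J^+\cap\Gamma$, which leads to a dichotomy—either $G^+$ is holonomy-invariant (and one is done), or $J^+_\Gamma$ lies in a real-analytic curve, which is ruled out by Theorem~\ref{mthm:no_unstably_linear}. Your argument bypasses this dichotomy entirely by showing directly that $\partial_w G^+\equiv 0$, so you never need the non-existence of unstably linear automorphisms; this makes Lemma~\ref{lem:foliation_G+} independent of the harder input of Section~\ref{sec:totally_real}.

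The one point that needs tightening is the ``boundary-uniqueness'' step. Your two-constants argument would require, for \emph{each} connected component $\Omega$ of $\disk_z\setminus K_1$, that $\partial\Omega\cap K_1$ has positive harmonic measure in $\Omega$; non-polarity of $K_1$ alone does not obviously give this component by component. A cleaner route is Rad\'o's theorem: you have already shown that $z\mapsto g(z,w_0)$ is continuous on $\disk_z$, holomorphic on $\disk_z\setminus K_1$, and vanishes on $K_1$, hence is holomorphic on $\disk_z\setminus g(\cdot,w_0)^{-1}(0)$; Rad\'o then makes it holomorphic on all of $\disk_z$. Since $K_1$ supports $T^+\wedge[\Gamma]$, a nonzero measure with continuous potential, $K_1$ is non-polar and in particular has an accumulation point, so the identity theorem gives $g(\cdot,w_0)\equiv 0$. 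This replaces your handwaved step with a one-line argument. The Harnack gradient estimate you use (for a positive harmonic function $u$ on a disk of radius $r$, $\vert\nabla u(0)\vert\le C r^{-1}u(0)$) is exactly what gives the continuous extension of $g$ by $0$ across $K_1$, and the final propagation to all of $U\setminus K^+$ is as you say: every component of $U\setminus K^+$ has $U\cap J^+$ in its relative boundary, so is reached by such a flow box, and the identity principle for holomorphic foliations does the rest.
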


\begin{proof} 
In $\C^2\setminus K^+$, $G^+$ is positive and 
pluriharmonic, and each  level set $\set{G^+ = c}$, $c>0$, admits a unique foliation by Riemann surfaces, 
given by $\mathcal F^+$; more precisely, if $x$ belongs to 
 $\set{G^+ = c}$ then the local leaf of $\mathcal F^+$ through $x$ is the 
unique local Riemann surface in  $\set{G^+ = c}$ containing~$x$. 

By analytic continuation 
it is enough to prove the result locally near $J^+$. 
Thus, shrinking $U$ as in Proposition~\ref{pro:laminar_subordinate}
we may assume  that $U\simeq \disk^2$, with coordinates $(z,w)$,  
the leaves of $\mathcal F$ are the disks $\set{z=\cst}$, and $\Gamma$ is a horizontal disk. 
 The current $T^+$ is uniformly laminar in $U$, and its transverse measure is given by $\Delta(G^+\rest{\Gamma'})$ for any horizontal disk $\Gamma'$ of $\disk^2$. Fix 
such a disk $\Gamma'$ and let 
$h:\Gamma\to \Gamma'$ be the holonomy along $\mathcal F$. Then
$h_\varstar (T^+\wedge [\Gamma]) = T^+\wedge [\Gamma']$ and $h$ is holomorphic, thus 
$H_{\Gamma'}:=G^+\rest{\Gamma} - G^+\rest{\Gamma'}\circ h$ is a harmonic function on $\Gamma$. Since $J^+$ is subordinate to $\mathcal F$,  
$H_{\Gamma'}$ vanishes on the infinite set $J^+\cap \Gamma$, so either $H_{\Gamma'}$ vanishes identically on the disk $\Gamma$ or  $\{H_{\Gamma'}=0\}$ is a real analytic curve  containing $J^+\cap \Gamma$. 

 If  $H_{\Gamma'}$ vanishes identically for all $\Gamma'$ (or equivalently, for a dense set of horizontals $\Gamma'$), then $G^+$ is invariant under holonomy,
    $\mathcal F$ coincides with $\mathcal F^+$, and   we are done. 
    
Thus, we can assume that $\{H_{\Gamma'}=0\}$  is a curve  for a non-empty and open set of horizontal disks $\Gamma'$. 
The intersection of these curves as $\Gamma'$ varies determines a real analytic curve $C$ in $\Gamma$ (it can not be reduced to a finite set because $J^+\cap \Gamma$ is infinite). 
If this curve contains a point $x$ where $G^+(x)>0$, then by definition of $C$, 
the level set $\{ G^+=G^+(x)\}$ contains the leaf of $\mathcal F$ through $x$, so
 this leaf must coincide with the leaf of $\mathcal F^+$. 
 Changing $x$ in nearby points $x'\in C$ we see that $\mathcal F$ and $\mathcal F^+$ have infinitely many leaves in common. Thus, 
  $\mathcal F$ coincides locally with $\mathcal F^+$ in this case. 
Otherwise, $G^+\equiv 0$ on the curve $C$
 and we must have  $J^+_\Gamma= 
J^+\cap \Gamma = K^+\cap \Gamma = C$.
Indeed since $\mathcal F$ preserves $\mathrm{Int}(K^+)$ (Proposition~\ref{pro:subordination}),     
$\Gamma\cap \mathrm{Int}(K^+)=\emptyset$, for otherwise $C$ would contain an open set. So $J^+\cap \Gamma = K^+\cap \Gamma$. Then, since $J^+$ is locally foliated, $J^+\cap \Gamma$ 
must have empty interior, hence $J^+_\Gamma= 
J^+\cap \Gamma$  (see \S~\ref{subs:transversals}). 
Thus, $f$ is unstably linear, which is impossible by Theorem~\ref{thm:no_unstably_linear}. 
 \end{proof}


\begin{proof}[Proof of Theorem~\ref{thm:no_invariant_global_foliation}]
Denote by $\mathcal F$  such a hypothetical invariant foliation. 
By Lemma~\ref{lem:invariant_global_foliation}, replacing $f$ by $f\inv$ if needed, 
$J^+$   is subordinate to $\mathcal F$. Hence by 
Lemma~\ref{lem:foliation_G+}, 
$\mathcal F$ coincides with $\mathcal F^+$ outside $K^+$. 
On the other hand, by~\cite[Lem. 3.5]{connex}, $\mathcal F^+$   extends to a singular foliation of $\P^2$ by adding the line at infinity as a leaf. Thus, $\mathcal F$ extends to a global singular
holomorphic   foliation of $\P^2$. Such a foliation is automatically defined by a global algebraic $1$-form, hence  this contradicts Brunella's theorem and we are done.
 \end{proof}

\subsection{Local subordination}\label{subs:local_subordination}
 For the question of {\emph{ local subordination}}  of $J^\pm$ to a holomorphic
foliation, we only have partial answers. 

\begin{pro}\label{pro:local_subordination} 
Let $f$ be a loxodromic automorphism  such  that $J^+$ is subordinate 
to a holomorphic foliation in some open set $U$. 
Then $f$ is unstably connected. In particular $J$   is connected and $\abs{\jac(f)}\leq 1$. 
\end{pro}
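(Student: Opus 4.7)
It suffices to prove that $f$ is unstably connected, since $\abs{\jac(f)}\leq 1$ then follows from~\cite[Cor.~7.4]{bs6}, and the connectedness of $J$ is a classical consequence of unstable connectivity (see~\cite{bs6}, as well as the discussion after Lemma~\ref{lem:bilan_unstably_linear}).

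The setup is inherited from the global case. By Lemma~\ref{lem:foliation_G+}, $\mathcal F$ coincides with the Böttcher foliation $\mathcal F^+$ on $U\setminus K^+$, so $\mathcal F$ provides a local holomorphic extension of $\mathcal F^+$ across the portion $U\cap J^+$ of the boundary $\partial K^+$. Proposition~\ref{pro:laminar_subordinate} then shows that $T^+$ is uniformly laminar in $U$, with laminar structure subordinate to $\mathcal F$. Moreover, since $J^+$ is totally invariant, pulling back by $f^n$ shows that $J^+$ is subordinate to the foliation $(f^n)^\varstar\mathcal F$ in $f^{-n}(U)$, so the uniform laminarity of $T^+$ propagates along backward orbits. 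Picking a $\mu_f$-generic point of $U\cap J^+$, whose backward orbit is dense in $\jstar\subset J$, and replacing $U$ by a suitable $f^{-n}(U)$, I may assume $U\cap J\neq\emptyset$. Fixing a saddle periodic point $p$, the density of $W^u(p)$ in $J^-\supset J$ then yields a disk $\Delta\subset W^u(p)\cap U$ that is transverse to $\mathcal F$ (up to shrinking).

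The heart of the proof is then to deduce unstable connectivity of $f$ from this local picture. I would argue by contradiction: assume that $W^u(p)\setminus K^+$ has at least two connected components, so that $(\psi^u_p)\inv(K^+)$ produces a genuine bounded piece in $W^u(p)\simeq\C$. Using the linear $\lambda^u$-dynamics on $W^u(p)$ together with the inclination lemma, a would-be bounded Fatou component of $W^u(p)\setminus K^+$ can be transported into the laminar structure of $T^+|_U$, producing a plaque contained in $K^+$ and closed inside $U$. But each plaque of the laminar decomposition sits inside a leaf of $\mathcal F$, and by Proposition~\ref{pro:subordination}(1) a single leaf cannot straddle $\mathrm{Int}(K^+)$ and $J^+$; combined with the identification $\mathcal F=\mathcal F^+$ on $U\setminus K^+$ from Lemma~\ref{lem:foliation_G+}, this forces a configuration of leaves that ultimately contradicts either Theorem~\ref{thm:no_unstably_linear} (no unstably linear loxodromic maps) or Theorem~\ref{thm:no_invariant_global_foliation} (no invariant global holomorphic foliation).

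The main obstacle is the last step, namely making this contradiction precise. What must be controlled is how a bounded component of $W^u(p)\setminus K^+$ is carried by the dynamics into the laminar decomposition of $T^+|_U$: one must show that the limit object is a genuine plaque of $\mathcal F$, and that its position (inside $K^+$, yet accumulating on $J^+$ through holonomy along $\mathcal F=\mathcal F^+$) is indeed forbidden by the rigidity theorems established in the previous sections.
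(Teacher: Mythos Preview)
Your proposal has a genuine gap, which you yourself acknowledge: the final step---transporting a hypothetical bounded component of $W^u(p)\setminus K^+$ into the laminar structure of $T^+\rest{U}$ and deriving a contradiction from Theorem~\ref{thm:no_unstably_linear} or Theorem~\ref{thm:no_invariant_global_foliation}---is not made precise, and it is not clear that either of these rigidity theorems is the right tool. The issue is that a bounded component of $W^u(p)\setminus K^+$ (or of $K^+\cap W^u(p)$) does not by itself force $J^+_\Gamma$ to be a smooth curve, nor does it produce a global foliation; so neither theorem applies directly. Your appeal to Proposition~\ref{pro:subordination}(1) is also not decisive: the fact that leaves cannot straddle $\mathrm{Int}(K^+)$ and $J^+$ does not rule out bounded components of $K^+$ in a transversal.

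The paper's argument avoids this obstacle entirely by extracting a much more concrete consequence of the identity $\mathcal F=\mathcal F^+$ on $U\setminus K^+$. Take any holomorphic disk $\Gamma$ transverse to $\mathcal F$ inside a flow box. For $x\in\Gamma\setminus K^+$, some forward iterate $f^n(x)$ lies in the region where the B\"ottcher function $\varphi^+$ is defined and gives a non-singular fibration; since $\mathcal F^+$ is transverse to $\Gamma$, $\varphi^+$ is univalent on $f^n$ of a neighborhood of $x$ in $\Gamma$, hence $G^+=\log\abs{\varphi^+}$ has no critical point there, and by invariance $G^+\rest{\Gamma}$ has no critical points in $\Gamma\setminus K^+$. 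This is the crucial input. The contradiction with unstable disconnectedness then comes from the theory in~\cite{bs6}: if $f$ were unstably disconnected, one can construct (via the inclination lemma, as in~\cite[Prop.~1.8]{connex}) arbitrarily many non-isolated compact components of $K^+\cap\Gamma$, and this clashes with~\cite[Lem.~7.2]{bs6}, which forbids such accumulation when $G^+\rest{\Gamma}$ is critical-point free. So the missing idea in your sketch is not a laminar/holonomy argument but rather the observation that transversality to $\mathcal F=\mathcal F^+$ forces $G^+$ to be submersive along the transversal, together with the \cite{bs6} structure theory of $G^+$ on transversals.
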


 
\begin{cor}
If $\abs{\jac(f)}< 1$, $J^-$ is not locally subordinate to a holomorphic foliation. 
\end{cor}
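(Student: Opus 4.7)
The plan is to deduce this corollary from Proposition~\ref{pro:local_subordination} applied to the inverse automorphism $f^{-1}$. The key observations are that (i) $f$ is loxodromic if and only if $f^{-1}$ is loxodromic, since they have the same topological entropy; (ii) the forward and backward Julia sets are exchanged under taking inverses, namely $J^+(f^{-1}) = J^-(f)$; and (iii) the Jacobian determinants satisfy $\jac(f^{-1}) = 1/\jac(f)$.

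With this in hand, I would argue by contradiction. Suppose $\abs{\jac(f)} < 1$ and that $J^-$ is subordinate to a holomorphic foliation $\mathcal F$ in some open set $U$. Applying Proposition~\ref{pro:local_subordination} to $f^{-1}$, whose forward Julia set is precisely $J^-(f)$, we would conclude that $\abs{\jac(f^{-1})} \leq 1$. But $\abs{\jac(f^{-1})} = 1/\abs{\jac(f)} > 1$ by our hypothesis, a contradiction.

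There is essentially no obstacle: the only thing to verify is that the notion of ``subordinate to a holomorphic foliation in an open set $U$'' is symmetric under inversion, which is immediate from the definition since the property involves only the set $J^-$ and a holomorphic object in $U$, with no reference to the orientation of the dynamics. The proof is thus a one-line consequence of the preceding proposition, illustrating the systematic $f \leftrightarrow f^{-1}$ duality between the forward and backward Julia sets.
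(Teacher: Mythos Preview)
Your proof is correct and takes essentially the same approach as the paper: both apply Proposition~\ref{pro:local_subordination} to $f^{-1}$ and derive a contradiction from the Jacobian hypothesis. The paper phrases the contradiction as ``a dissipative automorphism is not stably connected'' (citing~\cite{bs6}), which is the same statement as $\abs{\jac(f^{-1})}\leq 1$ being violated, just expressed via the unstable/stable connectedness conclusion of the proposition rather than the Jacobian inequality.
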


\begin{proof}[Proof of the corollary]
Indeed, by~\cite{bs6}, a dissipative automorphism is not stably connected. 
\end{proof}

\begin{proof}[Proof of Proposition~\ref{pro:local_subordination}] 
By   Lemma~\ref{lem:foliation_G+}, 
 $\mathcal F$ coincides with $\mathcal F^+$ outside $K^+$. 
Consider a flow box of $\mathcal F$ intersecting $J^+$, together with a 
holomorphic   transversal disk $\Gamma$ to this flow box. 

We claim that $G^+\rest{\Gamma}$  has no critical points in 
$\Gamma \cap (\C^2\setminus K^+)$. Indeed, 
for every $x\in\Gamma  \setminus K^+$, some iterate $f^n(x)$ belongs to the region $V^+$ where $\varphi^+$ 
is well-defined and defines a non-singular fibration. Since 
$\mathcal F^+$ is transverse to $\Gamma$, there is a neighborhood $N$ of $x$ in $\Gamma$ 
such that $\varphi^+\rest{f^n(N)}$ is univalent, hence $G^+\rest{f^n(N)}$ has no critical points, and our claim follows by invariance.

We now rely on the results of~\cite{bs6}.  Lemma~\ref{lem:unstably_connected} below implies that  $f$ is unstably connected. 
The remaining conclusions respectively  follow from~\cite[Thm 5.1]{bs6} and the fact that 
a volume expanding automorphism is never unstably connected, as follows from Theorem 7.3 and Corollary 7.4 in~\cite{bs6}.
 \end{proof}

\begin{lem}\label{lem:unstably_connected}
If $\Gamma$ is a transversal to $J^+$ 
such that $G^+\rest{\Gamma}$ has no critical points, then $f$ is 
unstably connected. 
\end{lem}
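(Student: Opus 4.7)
The plan is to propagate the absence of critical points of $G^+\rest{\Gamma}$ to an unstable manifold, and then invoke the Bedford-Smillie characterization of unstable connectivity in terms of critical points of $G^+$ on unstable slices. The main technical issue is passing to the limit along a sequence of disks, which I propose to address via Hurwitz's theorem applied to a holomorphic derivative of $G^+$.

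First, fix a saddle periodic point $p$ of $f$, of period $k$. Since $\Gamma$ is a transversal to $J^+$, the stable manifold $W^s(p)$ meets $\Gamma$ transversely at some point $q$ (see~\S\ref{subs:transversals}). The inclination lemma then produces a sequence of topological disks $\Delta_n \subset \Gamma$ containing $q$, such that the image disks $\Omega_n := f^{nk}(\Delta_n)$ converge in the $C^\infty$ topology, as $n\to\infty$, to some fixed disk $D$ in $W^u_\loc(p)$ centred at $p$. Because $f$ is a biholomorphism and $G^+\circ f^{nk} = d^{nk} G^+$, the critical points of $G^+\rest{\Omega_n}$ correspond bijectively, under $f^{nk}$, to the critical points of $G^+\rest{\Delta_n}$. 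By assumption there are none, so $G^+\rest{\Omega_n}$ has no critical points.

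Next, I would use a local holomorphic chart on $D$ (say, a parametrization by a disk in $\C$ coming from $\psi^u_p$), and express each $\Omega_n$ as a graph over $D$ for $n$ large enough. On the open set $D \cap (\C^2 \setminus K^+)$, the function $G^+$ is pluriharmonic, so in this chart its $\partial$-derivative is holomorphic. The $C^\infty$ convergence $\Omega_n \to D$ yields locally uniform convergence of the corresponding $\partial$-derivatives of $G^+\rest{\Omega_n}$ to the $\partial$-derivative of $G^+\rest{D}$. Since each $\Omega_n$-derivative is nowhere vanishing, Hurwitz's theorem says that on each connected component of $D\cap (\C^2\setminus K^+)$ the limit is either identically zero or nowhere zero. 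The former is excluded because $G^+\rest{W^u(p)}$ is non-constant: indeed, $W^u(p)\not\subset K^+$ since $f$ is loxodromic, so $G^+\rest{W^u(p)}$ takes positive values, and being harmonic on $W^u(p)\cap(\C^2\setminus K^+)$, its derivative is not identically zero on any component. Hence $G^+\rest{W^u_\loc(p)}$ has no critical points outside $K^+$.

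Finally, I would propagate this from $W^u_\loc(p)$ to all of $W^u(p)$ by dynamics: for any $z\in W^u(p)\setminus K^+$, a sufficiently large backward iterate $f^{-mk}(z)$ lies in $W^u_\loc(p)$, and a critical point of $G^+\rest{W^u(p)}$ at $z$ would (since $f$ is a local biholomorphism on $W^u(p)$ and $G^+ = d^{-mk} G^+\circ f^{mk}$) produce a critical point of $G^+\rest{W^u_\loc(p)}$ at $f^{-mk}(z)$, a contradiction. Therefore $G^+\rest{W^u(p)}$ has no critical points on $W^u(p)\setminus K^+$; equivalently, $(\psi^u_p)^{-1}(K^+)$ is connected. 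By the characterization of unstable connectivity from~\cite{bs6}, this means $f$ is unstably connected, which finishes the proof.

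The main delicate point is the Hurwitz argument in the second paragraph: one must work with a holomorphic proxy (the $\partial$-derivative of $G^+$ in an appropriate chart) rather than the real derivative of the harmonic function, to ensure that critical points are isolated zeros of a holomorphic function and thus persist under locally uniform limits. Everything else is routine manipulation with the dynamical invariance $G^+\circ f = d\, G^+$ and with the inclination lemma.
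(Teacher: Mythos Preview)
Your proof is correct and takes a genuinely different route from the paper's. You transport the ``no critical points'' hypothesis from $\Gamma$ to the unstable manifold $W^u(p)$ via the inclination lemma and a Hurwitz argument, thereby reducing to the known unstable-manifold case from~\cite{bs6}. The paper instead argues by contradiction directly on $\Gamma$: assuming $f$ is unstably disconnected, it uses the inclination lemma in the opposite direction, transporting compact components of $K^+\cap W^u(p)$ back into $\Gamma$, shows none of them can be isolated (via the transverse connectivity criterion of~\cite{connex}), and then invokes Lemma~7.2 of~\cite{bs6}---observing that its proof works verbatim for an arbitrary transversal---to produce a critical point of $G^+\rest{\Gamma}$. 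Your approach has the conceptual advantage of reducing cleanly to an already-stated result in~\cite{bs6}, at the cost of the Hurwitz step; the paper's approach is shorter but requires checking that the proof of~\cite[Lemma~7.2]{bs6} transfers from unstable manifolds to transversals. One minor point: when you exclude the ``identically zero'' alternative in Hurwitz, your justification is a bit terse; the clean argument is that every component $U$ of $D\setminus K^+$ satisfies $\partial_D U\cap K^+\neq\emptyset$ (since $p\in D\cap K^+$ and $D$ is connected, $U$ cannot be clopen in $D$), so $G^+$ cannot be a positive constant on $U$ by continuity.
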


\begin{proof} 
This is a mild generalization of some results of~\cite{bs6}. 
Assume by way of contradiction that $f$ is unstably disconnected. 
Fix a transverse intersection point of 
$W^s(p)$ and $\Gamma$. Iterating $\Gamma$ forward and applying the inclination lemma and the continuity of $G^+$, as  in the proof of~\cite[Prop. 1.8]{connex}, we  
construct arbitrary many compact components of 
$K^+\cap \Gamma$. None of these components is isolated. Indeed, otherwise $f$ 
would be  transversely connected in the sense of~\cite[Def. 1.5]{connex} hence unstably connected, by~\cite[Prop. 1.8]{connex}. But then, we obtain a contradiction with 
Lemma~7.2 in~\cite{bs6} 
(\footnote{This lemma is stated for an unstable manifold, but the proof evidently works in any transversal.}). 
\end{proof}

\begin{pro}\label{pro:local_subordination_sink}
Let $f$ be a    loxodromic automorphism such that $\abs{\jac(f)}<1$ and 
$\mathrm{Int}(K^+)$ is either empty or a union of sink and parabolic basins. 
Then  $J^+$ cannot be  subordinate  to a holomorphic foliation in a neighborhood of $\jstar$. 
\end{pro}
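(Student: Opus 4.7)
My plan is to argue by contradiction and build a global $f$-invariant singular holomorphic foliation on $\C^2$, which is forbidden by Theorem~\ref{thm:no_invariant_global_foliation}.

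Suppose $\mathcal F$ is a singular holomorphic foliation on a neighborhood $U$ of $\jstar$ to which $J^+$ is subordinate. Proposition~\ref{pro:local_subordination} gives that $f$ is unstably connected and $J$ is connected, and Lemma~\ref{lem:foliation_G+} shows that $\mathcal F$ coincides with the canonical B\"ottcher foliation $\mathcal F^+$ on $U\setminus K^+$; by analytic continuation, $\mathcal F$ extends to $U\cup(\C^2\setminus K^+)$. In addition, Proposition~\ref{pro:subordination}(2), combined with the identity principle applied on any flow box of $\mathcal F$ meeting $J^+$ (where $J^+\cap U$ is a non-trivial union of plaques indexed by the transversal slice $J^+_\Gamma$), implies that $\mathcal F$ is $f$-invariant: $f_\ast\mathcal F=\mathcal F$ wherever both sides are defined.

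The heart of the proof is to extend $\mathcal F$ across the ``holes'' inside $\mathrm{Int}(K^+)$. By hypothesis, $\mathrm{Int}(K^+)$ is a disjoint union of periodic basins $B$ attached either to a sink $q\in B$ or to a semi-parabolic periodic point $q\in\partial B$. Replacing $f$ by an iterate, each such $B$ is $f$-invariant and, by classical results (Hubbard--Oberste-Vorth in the sink case, Ueda in the semi-parabolic case), biholomorphic to $\C^2$ via a map $\Phi_B\colon \C^2\to B$ which conjugates $f\rest{B}$ to an attracting linear or semi-parabolic normal form. In this model, any orbit $(\Phi_B^{-1}(f^{-k}(x)))_{k\geq 0}$ of a point $x\in B$ eventually escapes every compact subset of $\C^2$, so $f^{-k}(x)$ accumulates on $\partial B\subset J^+$. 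Under our hypotheses, the equidistribution theorem of Bedford--Lyubich--Smillie (recalled in Appendix~\ref{app:equidist}) forces $\jstar=J$, so $U$ is a neighborhood of $\partial B$; hence $f^{-k}(x)\in U\cap B$ for $k$ large, giving $\bigcup_{k\geq 0} f^k(U\cap B)=B\setminus\{q\}$ (the last point being missing only in the sink case since $q\notin U$). By the $f$-invariance of $\mathcal F$, the pushforwards $f^k_\ast(\mathcal F\rest{U\cap B})$ glue consistently and extend $\mathcal F$ to a regular holomorphic foliation of $B\setminus\{q\}$; this extends across $q$ as an isolated singularity by the standard extension of holomorphic line fields across an isolated point (Hartogs-type).

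Gluing these basinwise extensions with $\mathcal F$ on $U\cup(\C^2\setminus K^+)$ yields a singular holomorphic foliation on all of $\C^2$ which is $f$-invariant by construction. Theorem~\ref{thm:no_invariant_global_foliation} then produces the desired contradiction.

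The main obstacle I foresee is the dynamical assertion that backward iteration inside each basin drives arbitrary points to $\partial B$: this requires care with the asymptotic behavior of $\Phi_B$ at infinity in $\C^2$, particularly in the semi-parabolic case. A related subtlety is the use of $\jstar=J$, which is needed to guarantee that $U$ covers a full neighborhood of each $\partial B$ and not only of the set of saddle accumulation points on it; this is where the hypothesis on $\mathrm{Int}(K^+)$ enters in an essential way, via the equidistribution of saddles.
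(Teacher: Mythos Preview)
Your overall strategy coincides with the paper's: extend $\mathcal F$ to a global $f$-invariant foliation and invoke Theorem~\ref{thm:no_invariant_global_foliation}. The genuine gap lies in the extension across each basin $B$, where you rely on two unjustified claims. First, you assert that equidistribution of saddle orbits forces $\jstar=J$. This is incorrect: equidistribution only gives $d^{-n}\sum_{p\in\mathrm{SPer}_n}\delta_p\to\mu_f$ with $\supp(\mu_f)=\jstar$, and says nothing about points of $J\setminus\jstar$; whether $\jstar=J$ is a well-known open problem, not resolved by the hypothesis on $\mathrm{Int}(K^+)$. Second, even granting $\jstar=J$, this would only make $U$ a neighborhood of the \emph{compact} set $J=J^+\cap J^-$, whereas $\partial B\subset J^+$ is unbounded; so $U$ can never be a neighborhood of $\partial B$, and your conclusion that backward orbits in $B$ eventually land in $U$ remains unsupported. (A smaller issue you flag yourself: escaping compacta of $B$ via $\Phi_B$ does not by itself force accumulation on $\partial B$, since the backward orbit could go to infinity in~$\C^2$.)

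The paper fills this gap with one extra ingredient: the closing lemma of~\cite{closing} (Corollary~1.2 there), which shows that the orbit of \emph{every} point of $J^+$ accumulates on $\jstar$. Pulling $\mathcal F$ back along these orbits extends it to a neighborhood of all of $J^+$, hence of each $\partial B$; then the basin extension proceeds as you outline. For the semi-parabolic case the paper takes a shorter route: the semi-parabolic point $q$ lies in $\jstar$, so $\mathcal F$ is already defined near $q$, and since every point of $B$ converges forward to $q$, one simply pulls back from a neighborhood of $q$---no backward-orbit analysis is needed.
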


\begin{proof}
The closing lemma of~\cite{closing} (see Corollary 1.2 there) shows that 
 the orbit of every point of $J^+$ accumulates $\jstar$.  So, by pulling back, $\mathcal F$ extends to a holomorphic foliation on a neighborhood of $J^+$. (Note that if $x\in J^+$ and $n_1, n_2$ are such that $f^{n_i}(x)$ are close 
 to $\jstar$, then the local foliations obtained by pulling back under $f^{n_1}$ and $f^{n_2}$ coincide, because they coincide on $\jstar$.)
Thus by Lemma~\ref{lem:foliation_G+}, one can extend 
 $\mathcal F$ holomorphically by $\mathcal{F}^+$ to $\C^2\setminus K^+$.
  
 Let now $\Omega$ be a component of $\mathrm{Int}(K^+)$. A first possibility is that $\Omega$ is  
  the basin of some attracting periodic point $a$. 
  Since the domain of definition of $\mathcal F$ contains a  
  neighborhood of $\fr\Omega$, by pulling back we see that 
$\mathcal F$ extends to an invariant holomorphic foliation of $\Omega\setminus\set{a}$, 
hence by  the Hartogs' extension theorem $\mathcal F$ extends to a singular holomorphic foliation of $\Omega$.(\footnote{More precisely, on a small neighborhood of $a$, the foliation is defined by a line field $z\mapsto L(z)$. Denoting by $\ell(z)$ the slope of $L(z)$, we obtain a meromorphic function $\ell$ on a small neighborhood of $a$. By Levi's extension theorem~\cite[p. 133]{Narasimhan}, this function extends to a meromorphic function on some neighborhood of $a$. }) If $\Omega$ is the basin of a semi-parabolic periodic point $q$, since $p\in \jstar$, $\mathcal F$ is well defined in some neighborhood of $q$, and again pulling back extends $\mathcal F$ to $\Omega$. Altogether, we have constructed an extension of $\mathcal F$ to $\C^2$, and we conclude by Theorem~\ref{thm:no_invariant_global_foliation}. 
%
\end{proof}
  
\begin{rem}\label{rem:hyperbolicity}
The only known examples for which $J^+$ is locally laminated 
by stable manifolds near $\jstar$  are hyperbolic maps or maps with a dominated splitting on $\jstar$. For hyperbolic maps the assumption on $\mathrm{Int}(K^+)$
 holds by~\cite{bs1}. For maps with a dominated splitting on $\jstar$ (with the additional hypothesis  that $\abs{\jac(f)}< 1/(\deg(f))^2$), 
it follows from the structure theorem of  Lyubich-Peters~\cite{lyubich-peters}. 
See~\cite[\S 2]{topological} for sufficient conditions for hyperbolicity based on the laminarity properties of $J^\pm$.  Thus it is very plausible that the   assumption that 
$\mathrm{Int}(K^+)$ is a union of sink and parabolic basins is superfluous  in Proposition~\ref{pro:local_subordination_sink}. \end{rem}
  
%
%
%

\section{Real-analytic foliations}\label{sec:real_analytic_foliations}

In this  section we study the situation where $J^+$ is  locally or globally 
subordinate to a real-analytic foliation with complex leaves. 
 
\subsection{Proof of Theorem~\ref{mthm:no_real_global_foliation}} 
Recall the statement of Theorem~\ref{mthm:no_real_global_foliation}:

\begin{thm}\label{thm:no_real_global_foliation}
Let $f$ be a  loxodromic automorphism of $\C^2$.
Then $J^+$ is not subordinate to a global real-analytic foliation (with a possibly locally finite singular set).  
\end{thm}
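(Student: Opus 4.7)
The plan is to reduce to Theorem~\ref{thm:no_invariant_global_foliation} by showing that an $f$-invariant real-analytic foliation $\mathcal F$ with complex leaves must in fact be holomorphic. I argue by contradiction, assuming $J^+$ is subordinate to such an $\mathcal F$.

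The first step will be to check that $\mathcal F$ is $f$-invariant: this is a real-analytic analog of Lemma~\ref{lem:invariant_global_foliation}. For any saddle periodic point $p$ avoiding the singular locus of $\mathcal F$, the stable manifold $W^s(p)$ is a leaf of both $\mathcal F$ and $f^*\mathcal F$, and the density of $W^s(p)$ in $\C^2$ for the real-analytic topology (Theorem~\ref{thm:dim3}, cf.\ Remark~\ref{rem:invariant_real_foliation}) forces the two tangent line fields to coincide globally. The heart of the argument is then local, at a well-chosen saddle periodic point $p$. Replacing $f$ by an iterate so that $p$ is fixed, I choose holomorphic coordinates $(x,y)$ in which $p = 0$, $W^s_\loc(p) = \set{x = 0}$, $W^u_\loc(p) = \set{y = 0}$ and
\begin{equation*}
f(x,y) = \bigl(\lambda^u x + xy P(x,y),\ \lambda^s y + xy Q(x,y)\bigr)
\end{equation*}
for some holomorphic germs $P, Q$, which is possible because the one-dimensional dynamics on each invariant axis is linearizable. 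The leaves of $\mathcal F$ near $p$ can then be written as graphs $\set{x = \gamma_\zeta(y)}$ with $\gamma_\zeta(0) = \zeta$, where $\gamma_\zeta(y)$ is holomorphic in $y$ but only real-analytic in the transverse parameter $\zeta$. Expanding $\gamma_\zeta(y) = \zeta + \sum_{n \geq 1} a_n(\zeta,\bar\zeta) y^n$ with $a_n(\zeta,\bar\zeta) = \sum_{i,j} a_{n,ij}\zeta^i \bar\zeta^j$, the invariance relation $f(\mathcal F_\zeta) = \mathcal F_{\lambda^u \zeta}$, expanded order by order in $y$, yields an inductive system of equations forcing every $a_{n,ij}$ with $j \geq 1$ to vanish unless the multiplicative resonance
\begin{equation*}
(\lambda^u)^{i-1} (\overline{\lambda^u})^j = (\lambda^s)^{-n}
\end{equation*}
holds. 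When no such resonance occurs, each $a_n$ is holomorphic in $\zeta$, hence $\gamma_\zeta$ is holomorphic in both variables and $\mathcal F$ is a holomorphic foliation near $p$. By real-analyticity of the tangent line field on the connected manifold $\C^2$, $\mathcal F$ would then be holomorphic globally, and Theorem~\ref{thm:no_invariant_global_foliation} would provide the sought contradiction.

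The main obstacle is thus to produce a single saddle $p$ whose eigenvalue pair $(\lambda^u,\lambda^s)$ avoids the countable list of resonances above. Taking moduli, the resonance requires $(i - 1 + j)\log|\lambda^u| + n\log|\lambda^s| = 0$, which for $j \geq 1$ is ruled out as soon as $-\log|\lambda^s|/\log|\lambda^u|$ is irrational. In the conservative case where this ratio is always $1$, one must instead invoke the phase condition $(\overline{\lambda^u}/\lambda^u)^j = 1$, which fails for every $j \geq 1$ whenever $\arg(\lambda^u)$ is not a rational multiple of $\pi$. I expect the existence of such a saddle to follow from the variability of eigenvalues across the infinite family of saddle periodic orbits, in the spirit of the arguments of~\S\ref{subs:unstably_linear}: if every saddle had resonant eigenvalues, the eigenvalue distribution would be confined to a very meager locus, which ought to be incompatible with $f$ being loxodromic. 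Making this precise, including in all degenerate cases, will be the main technical task of the proof.
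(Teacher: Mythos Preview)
Your reduction to the holomorphic case via a Poincaré--Dulac resonance analysis at a saddle is a natural idea, and the invariance step using Theorem~\ref{thm:dim3} is fine. But the gap you flag is genuine and, in fact, not fillable along the lines you suggest. Consider a real conservative loxodromic automorphism with maximal real entropy (for instance $f(z,w)=(-w+z^2+c,\,z)$ for $c\in\R$ very negative, or the square of any conservative real horseshoe). Then $\jstar\subset\R^2$, so \emph{every} saddle periodic point has real eigenvalues; with $\jac f=1$ one has $\lambda^s=1/\lambda^u$, and your resonance relation $(\lambda^u)^{i-1}(\overline{\lambda^u})^j=(\lambda^s)^{-n}$ becomes $(\lambda^u)^{i-1+j}=(\lambda^u)^n$, which holds for every triple with $n=i-1+j$ and $j\geq 1$. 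Thus no saddle is non-resonant, the phase escape you propose is unavailable (since $\overline{\lambda^u}/\lambda^u=1$), and your inductive scheme gives no information on the anti-holomorphic coefficients $a_{n,ij}$ with $j\geq1$. The ``variability'' heuristics from~\S\ref{subs:unstably_linear} only prevent all fixed points from sharing the \emph{same} unstable eigenvalue; they do not produce a non-resonant saddle.

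The paper's proof avoids this obstruction by splitting into two cases. If $f$ is not unstably real, a Ghys-type holonomy argument (Lemma~\ref{lem:ghys}) shows directly that the $\mathcal F$-holonomy between holomorphic transversals is conformal along $J^+$, hence holomorphic by $\R$-Zariski density; this forces $\mathcal F$ to be holomorphic (Proposition~\ref{pro:analytic_foliation1}) and Theorem~\ref{thm:no_invariant_global_foliation} concludes. If $f$ \emph{is} unstably real, one instead saturates the real curve $J^+_{W^u(p)}$ by $\mathcal F$ to obtain a Levi-flat real-analytic hypersurface, invokes Cartan's theorem to extend its Levi foliation holomorphically, and then reaches a contradiction via Proposition~\ref{pro:local_subordination} and Theorem~\ref{thm:no_unstably_linear} (Proposition~\ref{pro:analytic_foliation2}). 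The unstably real case is precisely the one where your resonance method breaks down, and it requires this separate, more geometric argument.
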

 
%

The proofs goes by showing that   $J^+$ is locally subordinate to 
a holomorphic foliation near $\jstar$, 
which  requires separate arguments  in the unstably real and 
non unstably real cases. 
Altogether, the theorem is a consequence of  Theorem~\ref{thm:no_invariant_global_foliation}, Corollary~\ref{cor:global_real_analytic_foliation1} and Proposition~\ref{pro:analytic_foliation2}. The argument also implies that    Proposition~\ref{pro:local_subordination_sink} holds in the real-analytic case.

\begin{pro}\label{pro:analytic_foliation1}
Let $f$ be a loxodromic automorphism of $\C^2$. Assume that
\begin{enumerate}[\rm(i)] 
\item $f$ is not unstably real;
\item in some open set $U$ intersecting $\jstar$, 
$J^+$ is subordinate to a real analytic foliation $\mathcal F$ 
with complex leaves. 
\end{enumerate}
Then $\mathcal F$ is holomorphic and
 $f$ is unstably connected.
\end{pro}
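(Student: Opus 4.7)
Plan. The plan is to prove that $\mathcal F$ is in fact holomorphic on $U$; once this is achieved, Proposition~\ref{pro:local_subordination} immediately yields the unstable connectedness of $f$. The essential content is local: one shows that near a well-chosen saddle periodic point, the real-analytic transverse structure of $\mathcal F$ is forced to be holomorphic by the dynamics of $f$.

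First I would locate a saddle periodic point $p\in U\cap\jstar$ whose unstable multiplier $\lambda^u$ is non-real. This should follow from the hypothesis that $f$ is not unstably real: if every saddle in $U$ had real $\lambda^u$, a limiting argument based on Proposition~\ref{pro:contained_curve} and the lower semicontinuity of $\Gamma\mapsto J^+_\Gamma$ would produce a transversal slice contained in a smooth curve, contradicting the hypothesis. After shrinking $U$ to a small neighborhood of $p$ on which $f^n$-iterates remain controlled (with $n$ the period of $p$), Proposition~\ref{pro:subordination} identifies the leaf $\mathcal F(p)$ with $W^s_\loc(p)$ and gives $f^n$-invariance of $\mathcal F$ in $U$.

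I would then work in local coordinates $(x,y)$ adapted to the saddle, with $W^s_\loc(p)=\{x=0\}$ and $W^u_\loc(p)=\{y=0\}$, in which $f^n$ has the normal form~\eqref{eq:saddle_normal_form}. Leaves of $\mathcal F$ near $W^s_\loc(p)$ are then parametrized by their intersection with $W^u_\loc(p)$, as graphs $\mathcal F(x_0)=\{(\psi(y,x_0),y):y\in\disk_r\}$, where $\psi$ is holomorphic in $y$, real-analytic in $x_0$, and satisfies $\psi(y,0)\equiv 0$, $\psi(0,x_0)=x_0$. Holomorphy of $\mathcal F$ is equivalent to holomorphy of $\psi$ in $x_0$. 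The $f^n$-invariance $f^n(\mathcal F(x_0))=\mathcal F(\lambda^u x_0)$ translates into
\[
\psi(\lambda^s y,\lambda^u x_0)=\lambda^u\,\psi(y,x_0)+\hot
\]
Expanding $\psi(y,x_0)=\sum_{k\ge 0} y^k\psi_k(x_0)$ and matching the coefficient of $y^k$ yields $\psi_k(\lambda^u x_0)=\lambda^u(\lambda^s)^{-k}\psi_k(x_0)$ modulo corrections. Writing the real-analytic expansion $\psi_k(x_0)=\sum_{i,j}c^k_{ij}x_0^i\bar x_0^j$, the nonvanishing of a coefficient $c^k_{ij}$ with $j\ge 1$ requires simultaneously a modulus relation $|\lambda^u|^{i+j-1}|\lambda^s|^k=1$ and a phase relation $(i-1-j)\arg\lambda^u+k\arg\lambda^s\equiv 0\pmod{2\pi}$. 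Because $\lambda^u$ is non-real and $|\lambda^u|>1>|\lambda^s|$, these constraints admit at most isolated ``resonant'' solutions, which can be ruled out by varying $p$ over the dense set of saddles in $U$ (with varying multipliers) and invoking real-analytic continuation. Hence $\psi$ is holomorphic in $x_0$, $\mathcal F$ is holomorphic in a neighborhood of $p$, and real-analytic continuation extends holomorphy of $\mathcal F$ to all of $U$. Proposition~\ref{pro:local_subordination} then delivers the unstable connectedness conclusion.

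The main obstacle I anticipate is twofold. First, controlling the higher-order corrections in the functional equation requires a careful normal-form reduction of $f^n$ at $p$ (or an iteration scheme exploiting the contraction/expansion to wash out the correction terms as $N\to\infty$). Second, eliminating resonant monomials for special eigenvalue pairs needs either a generic-saddle argument using density of saddles in $\jstar$ combined with real-analytic continuation of holomorphy, or a more robust invariant-theoretic argument that does not single out any particular eigenvalue. Note that once $\mathcal F$ is shown to be holomorphic, Proposition~\ref{pro:local_subordination} subsumes the second conclusion, so the whole statement reduces to the local holomorphy question.
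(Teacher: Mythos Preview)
Your approach is genuinely different from the paper's, but it has a gap at the very first step. You want a saddle in $U$ with non-real unstable multiplier, and you argue that if all saddles had real $\lambda^u$ then $f$ would be unstably real. But this implication is exactly Question~\ref{question:real}(3), which the paper leaves open; Proposition~\ref{pro:contained_curve} only gives the converse direction. So you cannot locate your base saddle. Even granting one, the resonance elimination is incomplete: for a fixed pair $(\lambda^u,\lambda^s)$ the modulus-and-phase constraints you write down can admit solutions with $j\ge 1$, and ``varying $p$ over saddles'' does not obviously produce a resonance-free point, since you have no control on the joint distribution of multipliers. The higher-order corrections from the $g_1,g_2$ terms in~\eqref{eq:saddle_normal_form} are a further (acknowledged) difficulty, essentially a normal-form problem that you do not resolve.

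The paper bypasses all of this via a Ghys-type quasiconformal argument (Lemma~\ref{lem:ghys}): the $\mathcal F$-holonomy $h_{\tau,\tau'}$ between two holomorphic transversals, taken along a leaf contained in $W^s(p)$, is forced to be holomorphic at its base point, because iterating forward by $f^n$ and applying the inclination lemma brings both transversals $C^1$-close to $W^u_\loc(p)$, so the holonomy between the images has quasiconformal distortion $\e_n\to 0$; since $f^n$ is holomorphic this forces the distortion of $h_{\tau,\tau'}$ itself to vanish at the base point. This gives holomorphy of $h_{\tau,\tau'}$ on the dense set $W^s(p)\cap\tau\subset J^+_\tau$. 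The hypothesis ``not unstably real'' is then used once, and differently from how you use it: it guarantees that $J^+_\tau$ is $\R$-Zariski dense in $\tau$ (otherwise $J^+_\tau$ would sit in a real-analytic curve), so the real-analytic map $h_{\tau,\tau'}$ is holomorphic everywhere. No eigenvalue arithmetic enters. Your final reduction to Proposition~\ref{pro:local_subordination} once holomorphy is established is correct.
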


\begin{cor}\label{cor:global_real_analytic_foliation1}
If $f$ is a loxodromic automorphism which is not unstably real, then $J^+$ cannot be 
subordinate to a global real analytic foliation with complex leaves. 
\end{cor}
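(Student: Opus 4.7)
\textbf{Proof proposal for Corollary~\ref{cor:global_real_analytic_foliation1}.} The plan is to upgrade the hypothetical global real-analytic foliation into a global singular holomorphic foliation and then apply the already-established Theorem~\ref{thm:no_invariant_global_foliation}. Suppose for contradiction that $J^+$ is subordinate to a global real-analytic foliation $\mathcal F$ with complex leaves and a locally finite singular set $\Sigma$. Since $\Sigma$ is discrete and saddle periodic points are dense in $\jstar$, I can pick a saddle periodic point $p \in \jstar \setminus \Sigma$ and let $U$ be a small neighborhood of $p$ contained in $\C^2 \setminus \Sigma$. On $U$ the foliation $\mathcal F$ is regular and $U$ meets $\jstar$, so hypotheses (i) and (ii) of Proposition~\ref{pro:analytic_foliation1} are satisfied (the non-unstably-real assumption is exactly the hypothesis of the corollary); the proposition yields that $\mathcal F\rest{U}$ is holomorphic.

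The second step is to propagate this local holomorphicity globally by real-analytic continuation. Away from $\Sigma$ the tangent distribution of $\mathcal F$ is a real-analytic section $\Phi$ of the $\P^1(\C)$-bundle $\P(T\C^2)$, and $\mathcal F$ is holomorphic at a point exactly when $\bar\partial \Phi$ vanishes there. Because $\Sigma$ is discrete in the complex surface $\C^2$ (so has real codimension $4$), the complement $\C^2 \setminus \Sigma$ is connected, and the real-analytic identity principle applied to $\bar\partial \Phi$, which is real-analytic and vanishes on $U$, forces $\bar\partial \Phi \equiv 0$ throughout. Thus $\mathcal F$ extends to a global singular holomorphic foliation on $\C^2$, to which $J^+$ remains subordinate.

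To conclude, Lemma~\ref{lem:invariant_global_foliation} guarantees that such a foliation is automatically $f$-invariant, which contradicts Theorem~\ref{thm:no_invariant_global_foliation}. The main analytic content is thus concentrated in Proposition~\ref{pro:analytic_foliation1}; once that is granted, the remaining work is soft and consists of the identity principle together with invocations of the previously proved rigidity results. The only minor verification is the connectedness of $\C^2 \setminus \Sigma$, which is immediate from the local finiteness of $\Sigma$ in a complex surface, so there is no genuine obstacle beyond the proposition itself.
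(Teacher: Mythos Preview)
Your argument is correct and follows essentially the same route as the paper: apply Proposition~\ref{pro:analytic_foliation1} locally, propagate holomorphicity by real-analytic continuation, then invoke Theorem~\ref{thm:no_invariant_global_foliation}. You are simply more explicit than the paper about the propagation mechanism (writing out the $\bar\partial$ of the tangent line field and the identity principle on the connected set $\C^2\setminus\Sigma$) and about the passage through Lemma~\ref{lem:invariant_global_foliation} to obtain $f$-invariance, which the paper leaves implicit.
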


%

\begin{proof}[Proof of the corollary]
By Proposition~\ref{pro:analytic_foliation1}, $\mathcal F$ is holomorphic in a 
neighborhood of $\jstar$. This property propagates to $\C^2$ by real analyticity, and 
Theorem~\ref{thm:no_invariant_global_foliation} finishes the proof. 
\end{proof}

The proof of Proposition~\ref{pro:analytic_foliation1}
 relies on a variation on a well-known result of Ghys~\cite{Ghys:holomorphic_anosov}. 
The essence of Ghys' argument is contained in the following lemma.  

\begin{lem}\label{lem:ghys}
Let  $f$ be a loxodromic automorphism. Assume 
 that in some open set $U$ intersecting $\jstar$, $J^+$ is subordinate to an $f$-invariant
 $C^1$ foliation $\mathcal F$
 with complex leaves. Let $p$ be a saddle periodic point in $U$ 
 and $L$ be a leaf of $\mathcal F$ contained in 
 $W^s(p)$. Let $x$, $x'$ be two points of $L$, $\tau$ and $\tau'$ be local holomorphic transversals to 
 $L$, and $h_{\tau, \tau'}: (\tau, x)\to (\tau', x')$ the  germ of holonomy along~$\mathcal F$. Then 
 $h_{\tau, \tau'}$ is holomorphic at $x$. 
\end{lem}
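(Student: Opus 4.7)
I would prove Lemma~\ref{lem:ghys} by a Ghys-style renormalization argument. After replacing $f$ by an iterate, I may assume $p$ is a fixed saddle of $f$. The $f$-invariance of $\mathcal{F}$ lets me realize $h := h_{\tau, \tau'}$ as one member of a family of conjugates
\[
h_k := f^k \circ h \circ f^{-k} \qquad (k \geq 0),
\]
each of which is the $\mathcal{F}$-holonomy along $L$ between the iterated transversals $f^k(\tau), f^k(\tau')$, at base points $f^k(x), f^k(x')$ that both converge to $p$ because $x, x' \in L \subset W^s(p)$. The plan is that this convergence forces $dh_k|_{f^k(x)} \to \mathrm{id}$, while the $\mathbb{C}$-linearity of $df^k_p$ confines any antiholomorphic part of $dh|_x$ into an oscillating term which must then vanish in the limit.

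Concretely, I would use holomorphic coordinates $(u,s)$ near $p$ in which $W^s_\loc(p) = \{u=0\}$, $W^u_\loc(p) = \{s=0\}$ and $f$ is in Poincaré--Dulac normal form with linear part $(u,s) \mapsto (\lambda^u u, \lambda^s s)$, where $|\lambda^s| < 1 < |\lambda^u|$. For $k$ large the tangent directions of $f^k(\tau), f^k(\tau')$ at $f^k(x), f^k(x')$ align projectively with $E^u(p) = \mathbb{C}\partial_u$, so the first-coordinate projection $\pi(u,s) = u$ provides canonical germ charts $f^k(\tau), f^k(\tau') \to (\mathbb{C}, 0)$ in which $df^k$ acts on transversal vectors by the $\mathbb{C}$-linear multiplication by $(\lambda^u)^k$ (up to subleading resonant corrections). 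Decomposing the $\mathbb{R}$-linear map $dh|_x : (\mathbb{C}, 0) \to (\mathbb{C}, 0)$ in its Cauchy--Riemann form $dh(z) = A z + B \bar z$ with $A, B \in \mathbb{C}$, the conjugation relation $dh_k = df^k|_{x'} \circ dh|_x \circ (df^k|_x)^{-1}$ becomes
\[
dh_k(z) = A z + \bigl(\lambda^u / \overline{\lambda^u}\bigr)^k B \bar z.
\]

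The central technical step is to show $dh_k|_{f^k(x)} \to \mathrm{id}$ as $k \to \infty$. This I would obtain from the $C^1$ regularity of $\mathcal{F}$: in a $C^1$ straightening of $\mathcal{F}$ near $p$ the holonomy along $L$ is trivial, and by continuity the induced holonomy read in the $\pi$-charts, between transversals whose base points both tend to $p$ along $L$ and whose tangents both align with $E^u(p)$, tends to the identity on $(\mathbb{C}, 0)$. Granting this, and extracting a subsequence along which $(\lambda^u/\overline{\lambda^u})^{k_j} \to \zeta \in S^1$, the displayed formula forces $A = 1$ and $B\zeta = 0$; hence $B = 0$ (regardless of whether $\lambda^u$ is real, since $\zeta \neq 0$). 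Thus $dh|_x$ is $\mathbb{C}$-linear, and applying the same argument at every point of the holonomy germ shows that $h$ is $C^1$ with $\mathbb{C}$-linear differential everywhere, hence holomorphic by Cauchy--Riemann.

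The main obstacle I anticipate is the rigorous $C^1$ convergence $dh_k \to \mathrm{id}$: the iterated transversals degenerate projectively toward $E^u(p)$, so the identifications of tangent spaces through $\pi$ must be tracked uniformly in $k$, and the $C^1$ flow-box straightening of $\mathcal F$ near $p$ must be compared carefully with the $\pi$-charts. A secondary technicality is the case where $f$ is not holomorphically linearizable at $p$: the Poincaré--Dulac normal form then contributes monomial corrections to $f^k$, but these are subleading compared to the dominant scaling by $(\lambda^u)^k$ and do not disturb the $\mathbb{C}$-linearity of the dominant part of $df^k$ acting on $\pi$-projected transversal vectors.
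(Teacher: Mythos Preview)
Your approach is correct and is essentially the same Ghys-style renormalization as the paper's proof: push the holonomy forward by $f^k$ so that both transversals converge (by the inclination lemma) to $W^u_{\loc}(p)$, use the $C^1$ regularity of $\mathcal F$ to see that the pushed-forward holonomy becomes conformal in the limit, and use the holomorphy of $f$ to pull this conformality back to $x$. The paper phrases the last step in terms of quasiconformal distortion (which is invariant under pre- and post-composition by holomorphic maps), whereas you decompose $dh|_x=A\cdot+B\,\overline{\cdot}$; these are equivalent, since the Beltrami coefficient is $B/A$. One remark on precision: your displayed formula $dh_k(z)=Az+(\lambda^u/\overline{\lambda^u})^k B\bar z$ presupposes that $df^k$ acts as exact multiplication by $(\lambda^u)^k$ in the $\pi$-charts, which is only approximately true; but the robust fact you actually need is that conjugation by the $\C$-linear maps $df^k|_x$, $df^k|_{x'}$ preserves the ratio $|B|/|A|$, and this is independent of any normal-form considerations. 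Finally, the lemma (as used in the paper) only asserts that $dh|_x$ is $\C$-linear; your closing sentence attempting to propagate this to every point of the germ is both unnecessary and unjustified, since the argument applies only at points of $L\subset W^s(p)$, not at nearby points off the stable leaf.
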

 
Recall that the notion of invariance for a local foliation was explained in 
Proposition~\ref{pro:subordination}, Assertion~(2). 
  
\begin{proof}[Proof of the lemma] 
Let $k$ be the period of $p$. 
Fix a path $\gamma$  in $L\cap U$ joining $x$ and $x'$. 
  For large enough $n\in k\Z$, $f^n(\gamma)$ is contained in $U$ and close to $p$. By the inclination lemma, there are two disks $V_n\subset \tau$ and $V'_n\subset \tau'$, containing respectively $x$ and $x'$,
such that $f^n(V_n)$ and $f^n(V_n')$ are close to $W^u_{\loc}(p)$. Then the holonomy between $f^n(V_n)$ and $f^n(V_n')$ is a $C^1$ quasiconformal map whose distortion is 
bounded by $\e_n$, with $\e_n\to 0$. Since $f$, $\tau$ and $\tau'$ are   holomorphic, it follows that the quasiconformal distortion of $h_{\tau, \tau'}$  in $V_n$    is bounded by $\e_n$, so  by letting $n$ tend to infinity we conclude that the distortion at $x$ vanishes, 
that is, $h_{\tau, \tau'}$ is holomorphic  at $x$, as 
was to be shown. 
\end{proof}

\begin{proof}[Proof of Proposition~\ref{pro:analytic_foliation1}]
By Proposition~\ref{pro:subordination},
if $x$ and $x'$ belong to 
$J^+\cap U$ and 
$f^n$ maps $x$ to $x'$, then it must locally send $\mathcal {F}(x)$ to $\mathcal {F}(x')$. Let $\tau$ be some transversal to $\mathcal F$ at $x'$.
Since 
$f$ is not unstably real, Proposition~\ref{pro:contained_curve}, 
 shows that the local $\mathcal F$  saturation of $J^+_\tau$ is $\R$-Zariski dense.  Therefore we infer that 
$\mathcal F$ is invariant (this is a real analytic version of Lemma~\ref{lem:invariant_global_foliation}). 

To prove that $\mathcal F$ is holomorphic, we fix two holomorphic 
 transversals $\tau$, $\tau'$ in  some flow box of $\mathcal F$; we have to 
 show that the holonomy $h_{\tau, \tau'}$ between these transversals is holomorphic. 
Fix a saddle point $p\in \jstar\cap U$. Then $W^s(p)\cap U$ is dense in $J^+\cap U$, 
so $W^s(p)\cap \tau$ is dense in $J^+_\tau$.
By Lemma~\ref{lem:ghys}, $h_{\tau, \tau'}$ is holomorphic at every such point. Since in addition 
$J^+_\tau$ is $\R$-Zariski dense in $\tau$, and $h_{\tau, \tau'}$ is real analytic, this entails that $h_{\tau, \tau'}$ is holomorphic everywhere. Thus, $\mathcal F$ is holomorphic, and the unstable connectedness follows from Proposition~\ref{pro:local_subordination}. 
\end{proof}

In the unstably real case we readily get a contradiction. 

\begin{pro}\label{pro:analytic_foliation2}
If $f$ be a loxodromic automorphism which is  unstably real, 
then $J^+$ cannot be  subordinate to a real analytic foliation $\mathcal F$ 
with complex leaves in an open set $U$ intersecting $\jstar$. 
\end{pro}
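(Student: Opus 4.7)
The plan is to reduce to a direct application of Lemma~\ref{lem:pesin_tangent}. Assume, for a contradiction, that such a real-analytic foliation $\mathcal F$ exists. Since $f$ is unstably real, Theorem~\ref{thm:no_unstably_linear} rules out the unstably linear case, so Proposition~\ref{pro:contained_curve} gives, for every saddle periodic point $q$, that $C_q := J^+\cap W^u(q)$ is a perfect Cantor set contained in the real-analytic curve $\psi^u_q(\R)\subset W^u(q)$, and in particular $(\psi^u_q)\inv(J^+_{W^u(q)})$ is \emph{not} a line.

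Pick a saddle periodic point $p\in \jstar\cap U$ (replacing $f$ by an iterate if needed). Near $p$, $J^+$ has the structure of a local lamination by stable manifolds of points of $C_p$. Since $\mathcal F$ has complex one-dimensional (hence holomorphic) leaves subordinate to $J^+$, and two distinct holomorphic curves through $x\in J^+$ contained in $J^+$ cannot coexist, the leaves of $\mathcal F$ meeting $J^+$ near $p$ must coincide with these stable manifolds; in particular $\mathcal F(p) = W^s_\loc(p)$, so $W^u_\loc(p)$ is a holomorphic transversal to $\mathcal F$ in a neighborhood of $p$. Choose another holomorphic transversal $\Gamma$ to $\mathcal F$ inside a common flow-box, and let $h\colon W^u_\loc(p)\to \Gamma$ denote the $\mathcal F$-holonomy, which is real-analytic. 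The identification just made yields $J^+_\Gamma = h(C_p)$ locally, so $J^+_\Gamma$ is contained in the real-analytic curve $h(\psi^u_p(\R))\subset \Gamma$, and hence admits a geometric tangent line at all but a discrete set of its points.

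To close, I apply Lemma~\ref{lem:pesin_tangent} with $\nu = \mu_f$. Let $E$ be the full $\mu_f$-measure set of Pesin regular points supplied by that lemma. By Poincaré recurrence we can choose $x\in E$ with $f^{n_j}(x)\to y\in E$ (concretely $y=x$). Since $W^s(x)\subset J^+$ is a Pesin stable manifold, it is a leaf of $\mathcal F$ by subordination, so we may arrange $\Gamma$ to meet $W^s(x)$ transversally at a point $t$ lying at a regular value of the parametrization $h\circ \psi^u_p$. Then $J^+_\Gamma$ admits a tangent at $t$, and Lemma~\ref{lem:pesin_tangent} forces $(\psi^u_y)\inv(J^+_{W^u(y)})$ to be a line — a conclusion which, by the propagation argument at the end of the proof of Proposition~\ref{pro:contained_curve}, implies that $(\psi^u_q)\inv(J^+_{W^u(q)})$ is a line at every saddle $q$, i.e.\ $f$ is unstably linear. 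This contradicts Theorem~\ref{thm:no_unstably_linear} and finishes the proof.

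The only non-routine step is the local identification $J^+_\Gamma = h(C_p)$: it rests on the fact that every leaf of $\mathcal F$ contained in $J^+$ near $p$ is a local stable manifold through a point of $C_p$, which follows from the local product structure of $J^+$ at a saddle together with the holomorphy of the leaves of $\mathcal F$. I do not foresee any serious obstacle beyond this bookkeeping.
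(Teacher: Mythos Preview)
Your argument has a genuine gap at the crucial step: the invocation of Lemma~\ref{lem:pesin_tangent}. You read its conclusion literally as ``$(\psi^u_y)^{-1}(J^+_{W^u(y)})$ \emph{is} a line'', but the proof of that lemma (Step~1 of Proposition~\ref{pro:tangent}) only shows that the limiting unstable slice is \emph{contained} in a smooth curve, and then invokes Proposition~\ref{pro:contained_curve} to conclude that $f$ is unstably real. In your setting this is vacuous: you already start from an unstably real $f$. The stronger reading (``is a full line'') is only justified when $J^+_\Gamma$ is locally an arc near $t$, as in the application inside Lemma~\ref{lem:tau_bounded_1}; in the Cantor case you are in, $J^+_\Gamma$ merely has a tangent, and the zoom--in argument gives nothing beyond containment. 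In fact, if your use of the lemma were valid, it would apply verbatim with $\Gamma=W^u_\loc(p)$ for any saddle $p$ of \emph{any} unstably real automorphism (no foliation needed), proving that every unstably real map is unstably linear --- which is false, as real H\'enon horseshoes show. This is a clear sign that the foliation $\mathcal F$ has not been used in an essential way.

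The paper's proof proceeds quite differently and does exploit $\mathcal F$. Since $J^+_{W^u(p)}$ lies in a real-analytic curve and $J^+$ is $\mathcal F$-saturated, the $\mathcal F$-saturation of that curve is a real-analytic Levi-flat hypersurface $\Sigma$ containing $J^+$ near $p$. Cartan's theorem then extends the Levi foliation of $\Sigma$ to a \emph{holomorphic} foliation $\mathcal F'$ in a neighborhood, so $J^+$ is locally subordinate to a holomorphic foliation. Proposition~\ref{pro:local_subordination} now forces $f$ to be unstably connected; combined with unstably real, this rules out the Cantor alternative of Proposition~\ref{pro:contained_curve}, so $f$ is unstably linear, contradicting Theorem~\ref{thm:no_unstably_linear}. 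The passage through Cartan's theorem and Proposition~\ref{pro:local_subordination} is precisely what bridges the gap your argument leaves open.
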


\begin{proof}
Let $\mathcal F$ be the alleged foliation and 
$p$ be a saddle periodic point in $U$.  Since $f$ is unstably real, $J^+_{W^u(p)}$ 
is  contained in a real-analytic curve, therefore,   saturating by $\mathcal F$, 
we deduce that $J^+$ is contained in a real-analytic hypersurface $\Sigma$, which must be  Levi-flat. A classical theorem of 
Cartan~\cite{cartan} asserts that the Levi foliation of a real analytic Levi flat 
hypersurface locally extends to a holomorphic foliation. 
Thus, in some neighborhood $U$ of $p$, there is a holomorphic foliation 
$\mathcal F'$ extending  $\mathcal F\rest{\Sigma}$ (which may not coincide with $\mathcal F$), hence $J^+$ is subordinate to 
$\mathcal F'$ in $U$. 
Then,  Proposition~\ref{pro:local_subordination} implies that 
$f$ is unstably connected, hence unstably linear, which is impossible by Theorem~\ref{thm:no_unstably_linear}. 
\end{proof}

\subsection{An application}  
 The idea underlying Proposition~\ref{pro:analytic_foliation2} leads to
 the following result in the spirit of \cite{BK1}. 
 
 \begin{thm}\label{thm:dim3}
 Let $f\in \Aut(\C^2)$ be loxodromic. Then 
 \begin{enumerate}[\rm (1)]
 \item $J^+$ (resp. $J^-$) is not contained in a proper real-analytic  subvariety; 
 \item  the dimension of the $\R$-Zariski closure of $\jstar$ cannot be equal to $0$, $1$, or $3$.
 \end{enumerate} 
 \end{thm}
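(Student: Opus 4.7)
The plan is to reduce each excluded case to a contradiction with either Theorem~\ref{thm:no_invariant_global_foliation} or with an analysis of the complex tangent of the $\R$-Zariski closure at a saddle point, relying throughout on Cartan's theorem on real-analytic Levi-flat hypersurfaces.

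For part~(1), I would suppose $J^+\subset V$ with $V$ a proper real-analytic subvariety. Since $J^+$ has topological dimension $3$ (being the boundary of the open set $\C^2\setminus K^+$), necessarily $\dim_\R V\geq 3$; the case $\dim_\R V=2$ is excluded because $V$ would then agree with $W^s(p)$ at generic points and hence be a complex-analytic curve, contradicting the fact that $T^+$ gives no mass to curves. When $\dim_\R V=3$, the hypersurface $V$ is Levi-flat at every smooth point of $J^+$ since it contains the complex curves $W^s(p)$ that saturate $J^+$; Cartan's theorem then extends the Levi foliation to a local holomorphic foliation $\mathcal G$ to which $J^+$ is subordinate. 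Lemma~\ref{lem:foliation_G+} identifies $\mathcal G$ with $\mathcal F^+$ off $K^+$, and the extension of $\mathcal F^+$ to $\P^2$ produces a global $f$-invariant singular holomorphic foliation, contradicting Theorem~\ref{thm:no_invariant_global_foliation}.

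For part~(2), the case $\dim_\R Z=0$ is immediate since $\jstar$ contains infinitely many saddles. When $\dim_\R Z=1$, at every saddle $p\in\jstar$ the intersections $W^u(p)\cap Z$ and $W^s(p)\cap Z$ must each be one-dimensional real-analytic curves, since they contain $J^+_{W^u(p)}$ and $J^-_{W^s(p)}$, which have no isolated points. Their tangents at $p$ lie respectively in the transverse complex lines $T_pW^u(p)$ and $T_pW^s(p)$, so $p$ is a singular point of the one-dimensional variety $Z$; but the singular locus of a one-dimensional real-analytic variety is discrete, whereas $\jstar$ is compact and contains infinitely many saddles, a contradiction. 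The delicate case is $\dim_\R Z=3$. First I would pick a saddle $p\in\jstar$ that is a smooth point of $Z$ (such $p$ exists, for otherwise the singular locus of $Z$, a real-analytic subvariety of dimension at most $2$, would contain $\jstar$ and force the $\R$-Zariski closure to have dimension at most $2$). If $Z$ is Levi-flat at $p$, Cartan extends the Levi foliation to a local holomorphic foliation whose leaf through $p$ is an $f^n$-invariant complex curve ($n$ being the period of $p$), hence by the eigenvalue argument locally coincides with $W^s(p)$ or $W^u(p)$; analytic continuation of a defining function of $Z$ along the stable (or unstable) parametrization upgrades this to a global inclusion $W^s(p)\subset Z$ (or $W^u(p)\subset Z$), so that $J^+\subset Z$ (or $J^-\subset Z$), contradicting part~(1). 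Otherwise $Z$ is strongly pseudoconvex at $p$: no complex curve through $p$ sits in $Z$, so each of $W^u(p)\cap Z$ and $W^s(p)\cap Z$ is a one-dimensional real-analytic curve, and Proposition~\ref{pro:contained_curve} forces $f$ to be both unstably and stably real; in particular every multiplier $\lambda^s(p), \lambda^u(p)$ is real. The complex tangent line $L_p$ of $Z$ at $p$ is preserved by $df^n_p$, and since $\lambda^s(p)\neq\lambda^u(p)$ it must coincide with the stable or unstable eigenspace of $df^n_p$. Writing $T_pZ=L_p\oplus V$ with $V$ the unique real line in the other eigenspace preserved by $df^n_p$ (such $V$ exists precisely because the other multiplier is real), the identity $\det_\R df^n_p=\det_\R(df^n_p|_{T_pZ})\cdot\mu(p)$ then computes the normal scalar $\mu(p)$ in the transformation rule $(f^n)^{\ast}\rho=\mu\rho+O(\rho^2)$ as $\mu(p)=\lambda^u$ (resp. $\lambda^s$), while the Levi form transformation identity under $f^n$ yields $\mu(p)=|\lambda^s|^2$ (resp. $|\lambda^u|^2$). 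Equating the two gives $|\lambda^s|^2=\lambda^u$ or $|\lambda^u|^2=\lambda^s$, which is absurd since $|\lambda^s|<1<|\lambda^u|$.

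The main obstacle is the strongly pseudoconvex sub-case of $\dim_\R Z=3$: it hinges on combining the dynamical reality of all saddle multipliers (deduced from unstable and stable realness via Proposition~\ref{pro:contained_curve}) with the CR-geometric constraint that $L_p$ be a $df^n_p$-invariant complex line, and then extracting the numerical contradiction from a careful bookkeeping of the $\R$-linear action of $df^n_p$ on the decomposition $T_pZ=L_p\oplus V$ together with the Levi form transformation rule.
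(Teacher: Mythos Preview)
Your argument contains two genuine gaps that the paper's approach avoids.

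\textbf{Part~(1), the global foliation step.} After Cartan's theorem gives you a local holomorphic foliation $\mathcal G$ near $J^+$ and Lemma~\ref{lem:foliation_G+} identifies $\mathcal G$ with $\mathcal F^+$ on $U\setminus K^+$, you only have a foliation on $(\C^2\setminus K^+)\cup U$ for some neighborhood $U$ of $J^+$. There is no mechanism here to extend across $\Int(K^+)$, so you cannot invoke Theorem~\ref{thm:no_invariant_global_foliation}. (The extension of $\mathcal F^+$ to $\P^2$ in \cite{connex} only adds the line at infinity; it does not fill in the interior of $K^+$.) Incidentally, the claim that $J^+$ has topological dimension~$3$ is not true in general: for a horseshoe, $J^+$ is locally a product of a Cantor set with a disk and has topological dimension~$2$.

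\textbf{Part~(2), $\dim Z=3$, the dichotomy.} ``Levi-flat at $p$'' versus ``strongly pseudoconvex at $p$'' is not exhaustive: the Levi form can vanish at $p$ while $Z$ fails to be Levi-flat on any neighborhood of $p$, and Cartan's theorem needs Levi-flatness on an open set. This is fixable --- since saddles are $\R$-Zariski dense in $Z$, if $Z$ is not Levi-flat then some saddle lies at a smooth strongly pseudoconvex point, and your multiplier computation there (which is correct and rather elegant) yields the contradiction $\lambda^u=|\lambda^s|^2$. But as written the case split is incomplete.

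\textbf{How the paper proceeds instead.} For part~(1), the paper picks a component $V_1$ of positive $\mu_f$-measure, observes that the Pesin stable disks inside it are $\R$-Zariski dense (forcing $V_1$ Levi-flat), and notes that $V_1\cap W^u_{\loc}(x)$ is then a real-analytic curve, so $f$ is unstably real; Proposition~\ref{pro:analytic_foliation2} then gives the contradiction directly (via unstable connectedness $\Rightarrow$ unstably linear $\Rightarrow$ Theorem~\ref{thm:no_unstably_linear}), without ever needing a global foliation. For part~(2) with $\dim Z=3$, the paper bypasses any pseudoconvexity analysis: using local $f^n$-invariance of $V_1$ and the Inclination Lemma, one shows that at every smooth saddle point $p$ either $W^s_{\loc}(p)$ or $W^u_{\loc}(p)$ is contained in $V_1$ (according to whether $T_pV_1$ fails to contain $E^u(p)$ or $E^s(p)$). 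This immediately produces an $\R$-Zariski dense family of holomorphic disks in $V_1$, reducing to the part~(1) argument. Your strongly pseudoconvex case simply never occurs, and no CR-geometric bookkeeping is needed.
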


On the other hand, the $\R$-Zariski closure of $\jstar$ can be of dimension 2: this happens for  
 mappings with $\jstar \subset \R^2$ (or equivalently 
 $h_{\rm top}(f\rest{R^2})= h_{\rm top}(f)$) and their conjugates. Therefore the $\R$-Zariski closure of $\jstar$ has dimension 2 or 4; it is easy to show that if one saddle point  has a non-real stable or unstable eigenvalue, then it cannot be of dimension 2 
 (see below Propositions~\ref{pro:dense_set_of_saddle_point} and~\ref{pro:dense_set_of_saddle_point_2} for stronger results in this spirit). 
Thus, the second statement of the theorem is an argument in favor of the implication ``unstably real implies stably real'' (see \S~\ref{subs:questions_unstably_real}). 

\begin{proof}
Assume that $J^+$ is contained in a proper real-analytic subvariety $V$. Only finitely many components of $V$ intersect $\jstar$, so these components are periodic and one of them has positive (not necessarily full) $\mu_f$-measure\footnote{At this stage it could be that these components are strictly periodic, and intersect along a  2-dimensional subset 
  which must be fixed because $\mu_f$ is mixing.}. Let $V_1$ be such a component. Since $\mu_f$ is hyperbolic, 
  by Pesin theory, $V_1$ contains a flow box of
   positive measure $\mathcal L^s$ made of Pesin local stable manifolds. Since $\mathcal L^s$ contains infinitely  many holomorphic disks, it is $\R$-Zariski dense in $V_1$. At every point   $x\in \mathcal L^s$, since $V_1$ contains a holomorphic disk 
   through $x$, the Levi form of $V_1$ vanishes. 
 By Zariski density, we conclude that $V_1$ is Levi flat. In addition, 
 since $T_xW^s(x)\oplus T_xW^u(x) = T_x\C^2$ and $V_1$ is smooth at $x$
  the intersection of $V_1$ with a local unstable manifold $W^u_\loc(x)$ 
 is contained in an analytic curve, therefore 
$f$ must be unstably real.  
Thus,  Proposition~\ref{pro:analytic_foliation2} yields the desired contradiction. 
 
 Now assume that the $\R$-Zariski closure $V$ of $\jstar$ has dimension 3, and fix a component $V_1$ of $V$ of dimension 3.
 By Zariski density, there is a saddle point $p$ together with an open subset $U\ni p$ such that 
 $\jstar \cap U\subset \mathrm{Reg}(V_1)$. By the local 
  $f^n$-invariance of $V_1$ at $p$, we see that either $W^s_\loc(p)$ or $W^u_\loc(p)$ is contained in $V_1$: indeed, if $T_pV_1$ does not contain the stable direction $E^s(p)$ (resp. the unstable direction $E^u(p)$), the Inclination Lemma shows that $V_1$ contains $W^u_{\loc}(p)$ (resp. $W^s_{\loc}(p)$). In this way we can find an $\R$-Zariski dense subset of $V_1$ made of local unstable (resp. stable) manifolds $W^u_\loc(p) \subset V_1$, and we complete the argument  similarly as in the previous case.  
  
To conclude, note that $\jstar$ is infinite, so the dimension of its $\R$-Zariski closure is $\geq 1$. If its Zariski closure were a curve, a component of it would be contained in the stable (or unstable) manifold of a saddle periodic point. But then, this stable manifold would contain more than one periodic point, a contradiction.
\end{proof}

\section{Homoclinic orbits and the arguments of periodic point multipliers}\label{sec:homoclinic}

In this section we extend to two dimensions a technical result which has recently proven useful in certain multiplier rigidity results in dimension 1 (see~\cite{eremenko-vanstrien, ji-xie:multipliers, huguin}).

\begin{thm}\label{thm:homoclinic_multipliers}
Let $f$ be a holomorphic diffeomorphism on a  complex surface. 
Assume that  $p$ is a saddle fixed   point  
  admitting
a homoclinic intersection. 
Then   there exists 
a sequence of saddle periodic points $p_n$  
of period $n$ accumulating $p$, whose  stable and unstable multipliers satisfy 
$\lambda^s(p_n)\sim c  (\lambda^s(p))^{n}$ and 
$\lambda^u(p_n)\sim c'  (\lambda^u(p))^{n}$ for some $c$, $c'\in \C^*$. 
Moreover, the sequence $(p_n)$ can be chosen to be Zariski dense for the complex analytic topology
\end{thm}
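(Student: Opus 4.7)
My approach is the classical homoclinic shadowing construction near the transverse homoclinic orbit of $p$. For each large $n$, a periodic point $\tilde p_n$ of period $n$ will be produced whose orbit makes $n-N$ iterations in a small neighborhood of $p$ (where $f$ is close to linear) and $N$ iterations following the fixed excursion $q,f(q),\ldots,q_N$, where $q$ is the homoclinic point and $N\geq 1$ is the least integer with $q_N:=f^N(q)\in W^s_\loc(p)$. A matrix factorization of $Df^n_{\tilde p_n}$ will yield the multiplier asymptotics, and Zariski density will then be obtained by choosing carefully the representative of each orbit $\mathcal{O}_n=\{f^k(\tilde p_n):0\leq k<n\}$.

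Concretely, I would work in the normal form~\eqref{eq:saddle_normal_form} in a bidisk $U\ni p$, with $W^u_\loc(p)=\{y=0\}$, $W^s_\loc(p)=\{x=0\}$, $q=(a,0)$, and $q_N=(0,b)$. Seek $\tilde p_n=(u_n,b+v_n)\in U$ near $q_N$, with $(u_n,v_n)$ small, satisfying $f^n(\tilde p_n)=\tilde p_n$. The estimate~\eqref{eq:renorm} applied to $f^{-(n-N)}$ gives
\begin{equation*}
f^{-(n-N)}(\tilde p_n)=q+O\bigl((\lambda^u)^{-(n-N)}\bigr)+O\bigl((\lambda^s)^{n-N}v_n\bigr),
\end{equation*}
so $f^N\circ f^{-(n-N)}(\tilde p_n)=q_N+A\cdot(\text{small})+o(\text{small})$ with $A:=Df^N_q$. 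The fixed-point equation $\tilde p_n=f^N(f^{-(n-N)}(\tilde p_n))$, rewritten in the variables $(u_n,v_n)$, is a small perturbation (as $n\to\infty$) of an invertible linear system --- invertibility follows from the transversality at $q_N$ --- so the implicit function theorem produces a unique solution $\tilde p_n\to q_N$, of period exactly $n$ by minimality of $N$. For the multipliers, factor $Df^n_{\tilde p_n}=Df^N_{q+\varepsilon_n}\circ Df^{n-N}_{f^{-(n-N)}(\tilde p_n)}$: the first factor tends to $A$ and the second equals $\mathrm{diag}\bigl((\lambda^u)^{n-N},(\lambda^s)^{n-N}\bigr)(I+o(1))$ in the normal-form basis, so, writing $A=(a_{ij})$,
\begin{equation*}
Df^n_{\tilde p_n}=\begin{pmatrix}a_{11}(\lambda^u)^{n-N}&a_{12}(\lambda^s)^{n-N}\\ a_{21}(\lambda^u)^{n-N}&a_{22}(\lambda^s)^{n-N}\end{pmatrix}(I+o(1)),
\end{equation*}
with eigenvalues $\lambda^u(\tilde p_n)\sim a_{11}(\lambda^u)^{n-N}$ and $\lambda^s(\tilde p_n)\sim\det(A)\,a_{11}^{-1}(\lambda^s)^{n-N}$, provided $a_{11}\neq 0$. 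The non-vanishing of $a_{11}$ is exactly the transversality at $q_N$: $Df^N_q(\partial_x)$ spans $T_{q_N}W^u(p)$, which is not contained in $T_{q_N}W^s(p)=\C\partial_y$, and hence has non-zero $\partial_x$-component. Setting $c:=\det(A)\,a_{11}^{-1}(\lambda^s)^{-N}$ and $c':=a_{11}(\lambda^u)^{-N}$ then gives the stated asymptotics.

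The main obstacle will be Zariski density, since the $\tilde p_n$ just constructed all converge to $q_N$, not to $p$. To address this I would use the freedom to replace $\tilde p_n$ by another representative $p_n:=f^{-j(n)}(\tilde p_n)$ of its orbit, which preserves the period, the multipliers, and their asymptotics. Iterating backward past the excursion (taking $j(n)\geq N$) lands $p_n$ in the ``near $p$'' segment, where $p_n\approx\bigl(a(\lambda^u)^{-(j(n)-N)},0\bigr)+O\bigl((\lambda^s)^{n-j(n)}\bigr)$, so letting $j(n)\to\infty$ slowly as $n\to\infty$ gives $p_n\to p$ along $W^u_\loc(p)$; iterating forward instead ($j(n)<0$) yields representatives approaching $p$ along $W^s_\loc(p)$ at rate $(\lambda^s)^{|j(n)|}$. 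Alternating these two choices as $n$ varies produces a sequence accumulating $p$ from two transverse tangent directions. Any proper analytic germ at $p$ containing all such $p_n$ would need a tangent cone at $p$ containing both the $W^u(p)$ and $W^s(p)$ directions, and a quantitative analysis of the perturbative terms $O((\lambda^s)^{n-j(n)})$ coming from the implicit function step should prevent any such germ from actually containing the $p_n$. Making this last point rigorous is the delicate part; I expect it to follow from careful tracking of the remainder terms in the normal form, showing that the $p_n$ lie at controlled, non-algebraic distances from $W^u(p)\cup W^s(p)$.
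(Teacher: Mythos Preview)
Your approach to the multiplier asymptotics is essentially the same as the paper's: shadow the homoclinic orbit by a periodic orbit of period $n$ and factor $Df^n$ as (excursion derivative $A$) $\times$ (long product of derivatives near $p$). The difference is only in presentation --- you package the second factor as the matrix claim $Df^{n-N}_{\tilde p_n}=\mathrm{diag}\bigl((\lambda^u)^{n-N},(\lambda^s)^{n-N}\bigr)(I+o(1))$ and read off eigenvalues, while the paper tracks the action on the unstable direction directly. However, you assert this matrix claim without proof, and it is precisely the technical heart of the argument. The difficulty is that the coordinate vectors $(\partial_x,\partial_y)$ do \emph{not} coincide with the stable/unstable splitting along the shadowing orbit (only along $W^{s/u}_\loc(p)$ itself), so the off-diagonal entries of the product do not obviously vanish at the required rate. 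The paper controls this by invoking the H\"older continuity of the stable and unstable distributions on the horseshoe generated by the transverse homoclinic orbit: this yields $|m_j|=O(|y_j|^\alpha)$ for the slope of $E^u$ at the $j$-th shadowing point (and symmetrically for $E^s$), which is exactly what makes the product estimate go through. Without this input your $(I+o(1))$ is unjustified. Note also that you assume the homoclinic intersection is transverse; the paper first reduces to this case via \cite[Prop.~3.2]{degenerate}.

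Your approach to Zariski density is genuinely different from the paper's, and --- as you yourself flag --- incomplete. The tangent-cone and ``non-algebraic distance'' argument you sketch would be delicate to make rigorous. The paper takes a much simpler route for which you already have the ingredients: using the freedom to replace $\tilde p_n$ by another point of its orbit (your $f^{-j(n)}(\tilde p_n)$), arrange that the Euclidean closure of $(p_n)$ contains the \emph{entire} backward orbit $\{f^{-j}(q):j\geq 0\}\subset W^u(p)$, not just the single point $p$. Then if all $p_n$ lay in a proper analytic subvariety $V=V_1\cup\cdots\cup V_k$ (necessarily a curve, since we are on a surface), some component $V_i$ would contain infinitely many points of this backward orbit and hence locally coincide with $W^u_\loc(p)$; but a periodic point cannot lie on an unstable manifold, contradiction. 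This avoids tangent cones entirely.
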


Recall that for a  loxodromic automorphism of $\C^2$, every saddle periodic point generates (transverse) 
homoclinic intersections (see~\cite[Thm. 9.6]{bls}).
 

\begin{proof}~

{\bf{Step 1.-- Horseshoes.}} By~\cite[Prop. 3.2]{degenerate}, there is a  point $\tau$ of transverse intersection between $W^u(p)$ and $W^s(p)$. If one 
adds $\set{p}$ to the orbit of such a point one gets a compact hyperbolic set $\mathcal O(\tau)\cup\set{p}$; and for any small enough 
neighborhood $V'$ of 
$\mathcal O(\tau)\cup\set{p}$, there exists  a smaller neighborhood 
$V$ such that  the invariant set 
\begin{equation}
\Lambda = \bigcap_{n\in \Z} f^{-n}(V)
\end{equation} 
is a 
hyperbolic set with positive entropy (see e.g.~\cite[\S 7.4.2]{robinson}), 
which is locally maximal by construction. We shall refer to it as {\emph{a horseshoe $\Lambda$ (adapted to $\tau$)}}.
Then for sufficiently large positive $M$, $M'$,
$\big\{f^{-M}(\tau), \ldots , f^{M'}(\tau)\big\}$ is an $\e$-pseudo-orbit, with $\e = O(\abs{\lambda^u}^{-M}+ \abs{\lambda^s}^{M'})$,  
so by the Closing Lemma it 
is shadowed by a unique periodic orbit of length $M+M'+1$ contained in $\Lambda$.

{\bf{Step 2.-- Shadowing.}} 
As in the proof of Proposition~\ref{pro:contained_curve}, we consider local holomorphic coordinates 
$(x,y)$ near $p$ in which $p= (0,0)$ and $f$ is of the form 
 \begin{equation}\tag{\ref{eq:saddle_normal_form}}
 f(x,y) =\left( \lambda^u x(1+xy g_1(x,y)), \lambda^s y(1+xy g_2(x,y)) \right).
 \end{equation}
In these coordinates, the axes coincide   with the local unstable and stable manifolds. Renormalizing if needed, we assume that these coordinates are well defined over the unit bidisk~$\disk^2$. 

From Step~1, there is a point 
$\tau_0 = (x_0, 0)\in W^u_\loc(p)$, with $x_0\neq 0$, 
a neighborhood $U$ of $x_0$ in $\C$, and an integer $k\geq 1$ such that the piece of unstable manifold $f^k( {U}\times\set{0})$
\begin{itemize}
\item   intersects $W^s_{\loc}(p) = \set{x=0}$ transversally at $f^k(\tau_0) = (0, y_0)\in \disk^2$, with $y_0\neq 0$, and
\item  is a graph (relative to the first coordinate) over some neighborhood of the origin.
\end{itemize} 
  If we fix a small
 neighborhood $V''$ of $\set{f(\tau_0), \ldots , f^{k-1}(\tau_0)}$ and put $V'  = \disk^2\cup V''$, the first step shows that, for every  large enough $N$, there is a unique saddle periodic orbit,   of length $n:=2N+k$,  shadowing 
\begin{equation}
f^{-N}(\tau_0), \ldots , \tau_0, \ldots, f^k(\tau_0), \ldots ,f^{k+N-1}(\tau_0).
\end{equation} 
We label this periodic orbit by $q_{-N}$, $\ldots$, $q_0$, $\ldots$,  $q_{N-1}$, $\ldots$, $q_{N+k-1}$ in such a way that $q_{-N}$, $\ldots$,   $q_{N}$ are in $\disk^2$, 
 $q_{-N}$ is close to $f^k(\tau_0)$, $q_0$ is close to the origin, and $q_{N}$ is close to $\tau_0$.
 The period of $q_0$ is equal to $2N+k$. 
 
At the end of the proof, for a given large $n$, if $n$ of the form $2N+k$ (that is $n \equiv k~\mathrm{mod.}~2$)   
we shall set, $p_n=q_0$ where $q_0$ is as above; otherwise,  
for $n$ of the form $2N+k+1$, the reader can reproduce the same proof 
with the pseudo-orbit $f^{-N+1}(\tau_0), \ldots, \tau_0, \ldots, f^{k+N-1}(\tau_0)$.
 
{\bf{Step 3.-- Estimating the multiplier.}}  In the following we   study the action of $f$ on tangent vectors in $\disk^2$ which are close to the horizontal direction. 
 These vectors will be  written as $v= \nu (1,m)$, where $\nu\in \C^*$ 
and $m\in \C$ is  the complex slope. In particular, 
\begin{equation}
df^k_{\tau_0}(1,0) = \nu_0(1, m)
\end{equation}
for some $\nu_0\in \C^*$, where $(1,m)$ is tangent to 
$f^k( {U}\times\set{0})$ at $f^k(\tau_0)$. In other words, if we write $f^k=((f^k)_1, (f^k)_2)$, $\nu_0$ is given by the partial derivative\footnote{This quantity  is intrinsic in the sense that it takes the same value in any system of  coordinates $(x,y)$ in which 
  $f$ is of the form~\eqref{eq:saddle_normal_form}. This can be shown by a direct calculation, but it also follows a posteriori from Lemma~\ref{lem:homoclinic_multipliers_precised}.}
$$\nu_0 = \frac{\partial (f^k)_1}{\partial x} (x_0, 0).$$
 We will complete the proof of the theorem by establishing the following
  more precise lemma. The estimate for $\lambda^s(q_0)$ is obtained in the same way.

\begin{lem}\label{lem:homoclinic_multipliers_precised}
The unstable multiplier of $q_0$ satisfies $$\lambda^u(q_0) = \nu_0 (\lambda^u(p))^{2N} (1+O(\theta^N))$$ for some $\theta<1$.  
\end{lem}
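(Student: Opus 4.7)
The plan is to split the $(2N+k)$-fold derivative at $q_0$ according to the three geometric phases of the shadow, writing
\begin{equation*}
Df^{2N+k}_{q_0} \;=\; Df^N_{q_{-N}} \cdot Df^k_{q_N} \cdot Df^N_{q_0}.
\end{equation*}
The rightmost factor governs the forward iterates from the near-origin point $q_0$ out along $W^u_{\loc}(p)$ to the near-$\tau_0$ point $q_N$; the middle factor is the homoclinic transition of length $k$; and the left factor is the return along $W^s_{\loc}(p)$ from the near-$f^k(\tau_0)$ point $q_{-N}$ back to $q_0$. The middle factor is the easy one: by the shadowing estimate $q_N = \tau_0 + O(\theta^N)$ for some $\theta<1$, continuity of $Df^k$ gives $Df^k_{q_N} = Df^k_{\tau_0} + O(\theta^N)$, and by assumption $Df^k_{\tau_0}(1,0)=\nu_0(1,m)$.

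First, I would pin down the orbit precisely using the normal form~\eqref{eq:saddle_normal_form} together with the shadowing lemma. A straightforward induction gives $q_j = \lrpar{(\lambda^u(p))^{j-N}x_0,\, 0} + O(\theta^N)$ for $0\leq j\leq N$, and symmetrically $q_j = \lrpar{0,\, (\lambda^s(p))^{N+j}y_0} + O(\theta^N)$ for $-N\leq j\leq 0$. From~\eqref{eq:saddle_normal_form} one computes
\begin{equation*}
Df_{(x,y)} \;=\; \begin{pmatrix} \lambda^u(1+O(xy)) & O(x^2) \\ O(y^2) & \lambda^s(1+O(xy)) \end{pmatrix},
\end{equation*}
and a telescoping-product estimate along each of the two saddle-shadowing segments should yield
\begin{equation*}
Df^N_{q_0} \;=\; \mathrm{diag}\big((\lambda^u(p))^N,(\lambda^s(p))^N\big)\cdot(I + O(\theta^N)),
\end{equation*}
and an analogous statement for $Df^N_{q_{-N}}$. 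The diagonal $O(xy)$-corrections sum geometrically in $\theta^N$; the off-diagonal contributions are the delicate point and are discussed below.

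Assembling the three factors and applying the result to $(1,0)$, I would obtain
\begin{equation*}
Df^{2N+k}_{q_0}(1,0) \;=\; \nu_0\,(\lambda^u(p))^{2N}\lrpar{1,\ m(\lambda^s/\lambda^u)^N} + O\big(\theta^N |\lambda^u(p)|^{2N}\big),
\end{equation*}
exhibiting $(1,0)$ as an approximate eigenvector with approximate eigenvalue $\nu_0(\lambda^u(p))^{2N}$. Since $\det Df^{2N+k}_{q_0}=\det Df^k_{\tau_0}\cdot(\lambda^u(p)\lambda^s(p))^{2N}(1+O(\theta^N))$, the other eigenvalue has modulus of order $|\lambda^s(p)|^{2N}$, which is exponentially smaller; a routine perturbation argument then identifies the approximate eigenvalue with the true unstable multiplier, giving $\lambda^u(q_0)=\nu_0(\lambda^u(p))^{2N}(1+O(\theta^N))$. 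The companion asymptotic for $\lambda^s(q_0)$ follows from this together with the determinant identity.

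The principal obstacle is the off-diagonal entry $\partial_y f_1 = O(x^2)$, which is \emph{not} pointwise small on the unstable-shadow segment (near $j=N$ one has $|x_j|\approx |x_0|$). What rescues the estimate is that along this whole segment $|y_j|$ is exponentially tiny — of order $|\lambda^s(p)|^{N+j}$, since $q_0$ itself has $y$-coordinate of order $|\lambda^s(p)|^N$ — so that the off-diagonal contributions enter the matrix product weighted by geometrically decaying factors. A clean way to bookkeep this is to work in a Lyapunov-adapted basis on each segment, or equivalently with weighted norms separating the stable and unstable subspaces uniformly; the argument then reduces to summing a pair of geometric series in $\theta^N$, where $\theta$ can be chosen to be any fixed number larger than $\max\big(|\lambda^s(p)|,\ |\lambda^s(p)/\lambda^u(p)|\big)$ and less than $1$.
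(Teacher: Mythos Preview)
Your strategy—decompose $Df^{2N+k}_{q_0}$ into three blocks, approximate the two saddle passages by diagonal matrices, and then extract the large eigenvalue via a trace--determinant perturbation—is different from the paper's and can be made to work, but one intermediate claim fails as written.

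The paper does not work with matrix blocks in the standard frame. Instead it follows the \emph{actual} unstable direction $e^u(q_j)=(1,m_j)$ along the whole orbit: writing $df_{q_j}e^u(q_j)=\mu_j\,e^u(q_{j+1})$ for $-N\le j\le N-1$ and $df^k_{q_N}e^u(q_N)=\nu\,e^u(q_{-N})$ gives the \emph{exact} identity $\lambda^u(q_0)=\nu\prod_j\mu_j$, so no eigenvalue-perturbation step is needed. From the normal form one reads $\mu_j=\lambda^u(p)\big(1+x_jy_jh_1(q_j)+x_j^2m_jh_2(q_j)\big)$; the term $x_jy_j$ is uniformly $O(\theta^{2N})$, and the only delicate contribution is $x_j^2m_j$ on the segment $j\ge 0$ where $x_j$ is not small. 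The paper controls this by invoking H\"older continuity of the unstable distribution on the ambient horseshoe, which yields $\vert m_j\vert=O(\vert y_j\vert^\alpha)$. Your route has the merit of avoiding this appeal to an external regularity theorem.

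The step that does not hold as stated is $Df^N_{q_0}=\mathrm{diag}\big((\lambda^u)^N,(\lambda^s)^N\big)\,(I+O(\theta^N))$. Writing $E=D^{-1}Df^N_{q_0}-I$, one has $E_{21}=(\lambda^s)^{-N}(Df^N_{q_0})_{21}$; summing the couplings $\partial_xf_2(q_j)=O(y_j^2)$ against the expanding first component $\sim(\lambda^u)^j$ gives, when $\vert\lambda^s\lambda^u\vert>1$, an entry of order $(\vert\lambda^s\vert^3\vert\lambda^u\vert)^N$, which diverges for instance when $\vert\lambda^s\vert=0.9$ and $\vert\lambda^u\vert=2$. (Putting the error on the left does not help either: the analogous computation for $Df^N_{q_{-N}}$ produces a divergent $E'_{12}$ for other eigenvalue ranges.) The four entries of these blocks sit on genuinely different scales and cannot be packaged into a single one-sided $(I+O(\theta^N))$. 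What your argument actually requires is much less: that the full product $M=Df^{2N+k}_{q_0}$ satisfies $M_{11}=\nu_0(\lambda^u)^{2N}(1+O(\theta^N))$ and $\vert M_{22}/M_{11}\vert=O(\theta^N)$, so that $\mathrm{tr}\,M=\nu_0(\lambda^u)^{2N}(1+O(\theta^N))$ and the quadratic formula together with the exactly known $\det M$ deliver the eigenvalue. These targeted entrywise bounds \emph{do} hold by the kind of direct bookkeeping you sketch (and your explanation via the smallness of $y_j$ is the right mechanism for keeping $M_{21}$ under control). So the proof closes along your lines once the blanket diagonal-plus-small form is replaced by these sharper estimates—or, as you suggest at the end, by carrying out a genuine Lyapunov change of frame.
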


Note that if instead of the pair 
 $(\tau_0 ,f^{k}(\tau_0))$ we choose $(f^{\ell}(\tau_0), f^{k+\ell'}(\tau_0))$, then, 
 as soon as these points remain in $\disk^2$,  the same result holds with $\nu_0$   replaced by 
$\nu_0 (\lambda^u(p))^{\ell' - \ell}$ and $(\lambda^u(p))^{2N}$ replaced by $(\lambda^u(p))^{2N+\ell - \ell'}$. 

\begin{proof}  
For $-N\leq j\leq N$, we  single out  the unit vector $e^u(q_j) = (1, m_j)$ as a complex tangent vector to the unstable direction of $q_j$, and write 
\begin{equation}
df_{q_j} (e^u(q_j)) = \mu_j e^u(q_{j+1}) \;\, {\text{  and }} \;\, df^k(e^u(q_{N})) = \nu e^u(q_{-N}).
\end{equation} These quantities depend on $N$ but we don't mark  this dependence for notational simplicity.  
With these conventions, the unstable multiplier at $q_0$ is 
\begin{equation}
\lambda^u(q_0)=  \left(\prod_{j=-N}^{N-1} \mu_j \right)  \times \nu.
\end{equation} 

In terms of partial derivatives, we have 
\begin{equation}
\mu_j  = \frac{\partial f_1}{\partial x} (q_j)+ m_j \frac{\partial f_1}{\partial y} (q_j).
\end{equation}
 From Equation~\eqref{eq:saddle_normal_form} we get 
\begin{equation}
\frac{\partial f_1}{\partial x} (x,y)  = \lambda^u(p)\lrpar{1+xy h_1(x,y)} \;  \text{ and  } \; 
\frac{\partial f_1}{\partial y} (x,y) = \lambda^u(p) x^2 h_2(x,y),
\end{equation}
where $h_1$ and $h_2$ are holomorphic. Thus, 
\begin{equation}
\mu_j  = \lambda^u(p)\lrpar{1+ x_jy_j h_1(x_j, y_j) + x_j^2 m_j h_2(x_j, y_j)}, \text{ where } q_j= (x_j, y_j).
\end{equation}
Since $q_0$ admits $N-1$ forward iterates and $N$ backward iterates 
in $\disk^2$, we obtain the estimates 
\begin{equation}
\abs{x_j} = O(\theta^{N-j})\; {\text{ and }} \;  
\abs{y_j} = O(\theta^{N+j})
\end{equation}
for $-N\leq j\leq N $ and some $\theta<1$ (any $\theta>\max (\abs{\lambda^s(p)} , \abs{(\lambda^u(p))\inv})$ will do); in particular $\abs{x_jy_j} = O(\theta^{2N})$. When $j\leq 0$ we also have 
$\abs{x_j^2 m_j} =  O(\theta^{2N})$.

It is known that on a compact hyperbolic set, the stable and unstable distributions are Hölder  continuous (see~\cite[Thm 19.1.6]{KH}).
 Applying this result to the horseshoe $\Lambda$, we deduce that 
  when $j\geq 0$ the slope $m_j$ of the unstable direction at $q_j$ satisfies $\abs{m_j}=O(\abs{y_j}^\alpha) $ for some $0< \alpha \leq 1$ 
and from this we deduce that  
$\abs{x_j^2 m_j} =  O(\theta^{\alpha N})$. Finally, we obtain 
\begin{equation}
\frac{ \prod_{j=-N}^{N-1} \mu_j}{(\lambda^u(p))^{2N}}  = \prod_{j=-N}^{N-1}(1+   O(\theta^{\alpha N}))  = 1+O(N\theta^{\alpha N}).
\end{equation}

We conclude the proof by showing that $\nu$ is exponentially close to $\nu_0$. Indeed, the precise form of the Anosov Closing Lemma given in~\cite[Cor. 6.4.17]{KH} shows that $q_N$ is $O(\theta^N)$ close to $\tau_0$. Since   the slope $m_N$ satisfies $\abs{m_N} = O(\theta^{\alpha N})$, 
we get 
\begin{equation}
\norm{df^k_{\tau_0}(1, 0 )  - df^k_{q_N}(1, m_N)} = O(\theta^{\alpha N}),
\end{equation} 
hence  
$\abs{\nu - \nu_0} = O (\theta^{\alpha N})$, and, replacing $\theta$ by some $\theta'>\theta^\alpha$
completes the proof. 
\end{proof}

{\bf{Step 4.-- Density.}} As said above, we   choose $p_n=q_0$ as above for each $n=2N+k$ (resp.\ $2N+k-1$). 
It remains to show that the points $p_n$ can be chosen to be Zariski dense for the complex analytic topology. 

Indeed, replacing $p_n$ (i.e.\ $q_0$ above) by an appropriately chosen point in its orbit (i.e.\ another $q_m$), we can arrange that the (Euclidean) 
closure of $(p_n)$ contains the whole negative orbit of $\tau_0$. 
Now assume for the sake of contradiction 
that  $(p_n)$ is  contained in a complex 
 analytic subvariety, which is decomposed into irreducible components as $V_1\cup \cdots \cup V_k$. One of the $V_i$ accumulates infinitely many points in $\set{f^{-j}(\tau), \ j\geq 0}$, hence it contains them; therefore it must locally
coincide with $W^u_{\loc}(p)$. 
But since the $p_n$ are periodic, 
they cannot be contained in an unstable manifold, which yields the desired  contradiction.
\end{proof}

\begin{rem}
Since  in our situation, the stable and unstable  distributions of the horseshoe
 are complex 1-dimensional, the bunching condition of~\cite[Thm 19.1.8]{KH} holds, so they are actually Lipschitz, and we could have chosen $\alpha=1$ in the above estimates. This shows that in Lemma~\ref{lem:homoclinic_multipliers_precised},
 any $\theta>\max(\abs{\lambda^s(p)}, \abs{(\lambda^u(p))\inv})$ is convenient. 
\end{rem}

 \begin{rem}\label{rem:zariski} 
If the complex analytic closure of $P:=\set{p_n\; ;\; n\geq 1}$ were a curve, it would be an 
$f$-invariant, holomorphic curve.
But a holomorphic diffeomorphism of a curve of infinite order has at most finitely many periodic points, so only finitely many of the $p_n$ could be on that curve, a contradiction.
This gives a simple alternate proof that $P$ is dense for the complex analytic topology. Step 4, above, could also be proven along similar lines. 

 \end{rem}



The following result will be used in the proof of Theorem~\ref{mthm:conjugate_real}.

\begin{pro}\label{pro:dense_set_of_saddle_point}
Let $f$ be a loxodromic automorphism of $\C^2$. If $f$ has   a saddle periodic point $p$ 
with non-real stable and  unstable multipliers, then there is a set of such periodic points which is dense for the real analytic topology. 
\end{pro}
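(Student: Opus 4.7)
The plan is to apply Theorem~\ref{thm:homoclinic_multipliers} at $p$ and then use a local tangent cone analysis to show that the $\R$-Zariski closure $V := \overline{S}^{\R\text{-Zar}}$ (where $S$ is the set of saddle periodic points of $f$ with both stable and unstable multipliers non-real) equals $\C^2$. After replacing $f$ by an iterate so that $p$ is fixed, Theorem~\ref{thm:homoclinic_multipliers} yields a complex-analytically Zariski dense sequence $(p_n)$ of saddle periodic points of period $n$, with $\lambda^u(p_n)\sim c(\lambda^u(p))^n$ and $\lambda^s(p_n)\sim c'(\lambda^s(p))^n$. Because $\arg\lambda^u(p)$ and $\arg\lambda^s(p)$ lie outside $\pi\Z$, the set $G\subseteq\N$ of $n$ for which both $c(\lambda^u(p))^n$ and $c'(\lambda^s(p))^n$ have argument bounded away from $\pi\Z$ is the complement of at most two arithmetic residue classes, hence infinite and of positive density; for such $n$ (large enough) one has $p_n\in S\subseteq V$.

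The crucial structural input is that $df_p$ acts on each of $E^s(p)$ and $E^u(p)$ as a genuine rotation composed with a scaling. Consequently its only real $df_p$-invariant subspaces of $T_p\C^2\cong\R^4$ are $\{0\}$, $E^s(p)$, $E^u(p)$, and $T_p\C^2$, and its only $df_p$-invariant reduced real algebraic subvarieties through the origin are these together with $E^s(p)\cup E^u(p)$. Since $S$ is $f$-invariant, so is $V$; hence the tangent cone $C_pV$ is $df_p$-invariant and must be one of these five cones. Assume for contradiction $V\neq\C^2$: the case $C_pV=T_p\C^2$ forces $V=\C^2$, and the case $C_pV=\{0\}$ is excluded because $p_n\to p$ with $p_n\in V$. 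In the remaining three cases, by pigeonhole select an irreducible component $V_1$ of $V$ (among the finitely many through $p$) containing infinitely many $p_n$'s; if $\dim_\R V_1 = 2$ at $p$, then each smooth branch of $V_1$ at $p$ is an $f$-invariant real-analytic submanifold tangent to $E^s(p)$ or $E^u(p)$, hence equal to $W^s_\loc(p)$ or $W^u_\loc(p)$ by the uniqueness of the local stable/unstable manifold; propagation by the $f$-invariance of $V_1$ then yields $V_1\supseteq W^s(p)$ or $V_1\supseteq W^u(p)$, and by Euclidean density $V_1\supseteq J^+$ or $V_1\supseteq J^-$, contradicting Theorem~\ref{thm:dim3}~(1).

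The only case left is that $V_1$ has real dimension $3$ at $p$ with tangent cone $C_pV_1=E^s(p)\cup E^u(p)$, that is, $p$ is a cuspidal singular point of $V_1$. I would rule this out by passing to the singular locus $W_1\subset V_1$, which is $f$-invariant and of real dimension $\le 2$: the same invariant-subspace analysis applied at each $p_n$ shows that $V_1$ admits no $df^n_{p_n}$-invariant real $3$-plane, so every $p_n\in V_1$ is automatically singular in $V_1$, and therefore $(p_n)_{n\in G}\cap V_1\subseteq W_1$; in particular $p$ is not isolated in $W_1$. A second run of the tangent cone analysis at $p$ now applied to $W_1$ (of dimension $\le 2$) enumerates the same five $df_p$-invariant cones: $T_p\C^2$ is impossible by dimension, $\{0\}$ is excluded by the non-isolation of $p$, and each of $E^s(p)$, $E^u(p)$ and $E^s(p)\cup E^u(p)$ forces $W_1$ to contain $W^s_\loc(p)$ and/or $W^u_\loc(p)$ locally, hence $W^s(p)$ and/or $W^u(p)$ globally by $f$-invariance, hence $J^+$ or $J^-$ by Euclidean density, contradicting Theorem~\ref{thm:dim3}~(1). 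The main obstacle is precisely this cuspidal-singularity case: it is essential that the non-reality of the multipliers at every $p_n$ pushes every $p_n\in V_1$ into the singular locus of $V_1$, and that the $p_n$'s accumulate $p$ in the Euclidean topology, so that the tangent cone analysis can be re-applied to the lower-dimensional variety $W_1$ and the argument bootstraps to a contradiction.
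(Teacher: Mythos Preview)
Your classification of $df_p$-invariant real algebraic cones in $T_p\C^2\cong\R^4$ is incomplete, and this is where the argument breaks. You correctly identify the invariant \emph{linear subspaces} as $\{0\}$, $E^s(p)$, $E^u(p)$, $T_p\C^2$ (the four eigenvalues $\lambda^u,\bar\lambda^u,\lambda^s,\bar\lambda^s$ being distinct and non-real). But invariant \emph{algebraic cones} form a much larger class. For instance, if $\arg\lambda^u(p)\equiv\arg\lambda^s(p)\pmod\pi$ (which is perfectly compatible with both multipliers being non-real), the real quadric $\{(z,w)\in\C^2:\Im(z\bar w)=0\}$ is an irreducible $3$-dimensional $df_p$-invariant cone, singular only at the origin. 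More generally, any relation $a\arg\lambda^u(p)+b\arg\lambda^s(p)\in\pi\Z$ with $(a,b)\in\Z^2\setminus\{0\}$ produces an invariant real hypersurface $\{\Im(z^aw^b)=0\}$; and when $\arg\lambda^u(p)\in\pi\Q$ there are $1$-dimensional invariant cones inside $E^u(p)$ (finite unions of real lines permuted by the rotation). None of these appears in your five-case list, so the case analysis for $C_pV$ and later for $C_pW_1$ does not exhaust the possibilities. A secondary technical point: the component $V_1$ is only $f^m$-invariant for some fixed $m\ge 1$, so to deduce that $p_n\in V_1$ is singular you need $(\lambda^u(p_n))^m$ and $(\lambda^s(p_n))^m$ to remain non-real, which your choice of $G$ does not guarantee.

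The paper's route avoids tangent cones entirely. Since $\lambda^u(p)\notin\R$, the map $f$ is not unstably real, hence $J^+_{W^u(p)}$ is not contained in any $C^1$ curve (and symmetrically for $J^-_{W^s(p)}$). Letting the homoclinic point in Theorem~\ref{thm:homoclinic_multipliers} range over a dense set of transverse homoclinic intersections, the resulting saddle points with non-real multipliers accumulate on subsets of both $W^u_{\loc}(p)$ and $W^s_{\loc}(p)$ that are not contained in $C^1$ curves. This forces any proper irreducible real-analytic $V$ containing them to contain $W^u_{\loc}(p)\cup W^s_{\loc}(p)$, hence to be $3$-dimensional and singular at $p$. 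Rerunning the construction at each $q_n$ in place of $p$ then makes the relevant component singular along a family of $q_n$'s that is itself not contained in any analytic set of dimension $\le 2$, contradicting $\dim\Sing(V_1)\le 2$. The key geometric input is thus the ``thickness'' of the homoclinic set in $W^{u/s}_{\loc}(p)$, not the linear algebra of $df_p$.
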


\begin{proof} Without loss of generality, assume that $p$ is fixed.
By~\cite{bls}, every saddle point admits homoclinic intersections; furthermore,  
$J^+_{W^u(p)}$ is   the closure of homoclinic intersections (see~\cite[Lem. 5.1]{tangencies}).
Since $\lambda^u(p)\notin\R$, $f$ is not unstably real, so $J^+_{W^u(p)}$ is not contained in a $C^1$ curve. Likewise $J^-_{W^s(p)}$ is not contained in a $C^1$ curve. 
For any transverse homoclinic intersection point $\tau\in W^u(p)\cap W^s(p)$, by Theorem~\ref{thm:homoclinic_multipliers}
we can construct a sequence 
  of saddle periodic points $(p_n)$ such that 
   the closure of $(p_n)$ contains   $\tau$ and  $\lambda^u(p_n)\sim c (\lambda^u(p))^n$ and $\lambda^s(p_n)\sim c' (\lambda^s(p))^n$. Thus,  we have 
\begin{enumerate}[-]
\item[-] if  $\arg(\lambda^u(p))$ or $\arg(\lambda^s(p))$ differs from $\pm \frac{\pi}{2}$, then  both $\lambda^u(p_n)$ and $\lambda^s(p_n)$ are non-real for infinitely many $n$;
\item[-] otherwise, $\arg(\lambda^u(p))=\e \frac{\pi}{2}$ and $\arg(\lambda^s(p))=\e' \frac{\pi}{2}$ with $\e$ and $\e'$ in $\set{\pm 1}$; in particular, $\jac(f)\in \R$. Then it might a priori occur that  $\lambda^u(p_n)$ and $\lambda^s(p_n)$ are alternatively real, depending on the parity of $n$, so that exactly one of them is always real; but such a case would imply $\jac(f)\notin \R$, a contradiction.  \end{enumerate}
Altogether, we infer that 
 $\lambda^u(p_n)\notin\R$ and  $\lambda^s(p_n)\notin \R$ for all but at most a proper periodic subsequence of indices $n$.
Repeating this construction  by varying $\tau$  in  $W^u_{\loc}(p)$ and $W^s_{\loc}(p)$, we obtain a set of periodic points $p_n$ with non-real multipliers  that accumulates $W^u_{\loc}(p)$ and $W^s_{\loc}(p)$ on subsets which are not contained in $C^1$ curves. 

To prove $\R$-Zariski density, we apply this construction to a dense subset of homoclinic intersections $\tau_j$ in $J^+_{W^u(p)}$, getting for each $j$ a sequence $(p_n^j)$, 
 and we denote by $(q_n)$ a sequence of saddle periodic points obtained from the sequences 
 $(p_n^j)$   by a diagonal process, which accumulates all the $\tau_j$.  
 
Assume first that $V$ is a proper  irreducible real-analytic subset 
containing the $q_n$. Then $V$
contains  a set of homoclinic points $\tau\in W^u_{\loc}(p)$ (resp.  $W^s_{\loc}(p)$) 
which is not contained in a $C^1$ curve, because $J^+\cap W^u_{\loc}(p)$ (resp. 
$J^-\cap W^s_{\loc}(p)$)  is not contained in such a curve. 
As in Remark~\ref{rem:zariski}, since none of the $q_n$ belongs to 
$W^u_{\loc}(p)\cup W^s_{\loc}(p)$  we see that   $V$ properly contains $W^u_{\loc}(p)\cup W^s_{\loc}(p)$. In particular, $\dim(V)= 3$
and since  $T_pW^s(p)\oplus T_pW^u(p) = T_p\C^2$, $V$ must be  singular at $p$.

To conclude, we can apply the above construction at each of the $q_n$ instead of $p$, getting  periodic points $q_{n,m}$ with non-real multipliers accumulating $W^u_{\loc}(q_n)$ and $W^s_{\loc}(q_n)$ along subsets which are not contained in $C^1$ curves. 
Let $V = V_1\cup \cdots \cup V_k$ be a proper real-analytic subset containing these points. 
Then  for one of the irreducible components, say $V_1$,  there exists an 
infinite family of points $q_n$, which is not contained in a $C^1$ curve in both 
$W^u_{\loc}(p)$  and $W^s_{\loc}(p)$, and such that at each $q_n$, $V_1$ contains 
a set of homoclinic points in $W^u_{\loc}(q_n) \cup W^s_{\loc}(q_n)$ which is not contained in a 
$C^1$ curve in both $W^u_{\loc}(q_n) $ and $ W^s_{\loc}(q_n)$. Therefore $V_1$ has dimension 3 and is singular at each such $q_n$. 
But the $q_n$ are not contained in an analytic subset of 
dimension $\leq 2$, so we get a contradiction. 
\end{proof}

Using   Theorem~\ref{thm:dim3} 
 we get a version of Proposition~\ref{pro:dense_set_of_saddle_point} involving 
  unstable multipliers only. 

\begin{pro}\label{pro:dense_set_of_saddle_point_2}
Let $f$ be a loxodromic automorphism of $\C^2$.
 If $f$ has   a saddle periodic point $p$ 
with non-real  unstable multiplier, then there is a $\R$-Zariski dense 
set of  periodic points with the same property.   
\end{pro}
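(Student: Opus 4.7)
The plan is to adapt the strategy of Proposition~\ref{pro:dense_set_of_saddle_point}, but using Theorem~\ref{thm:dim3}(1) to compensate for the fact that we only control unstable multipliers. The starting observation is that since $\lambda^u(p)\notin\R$, Proposition~\ref{pro:contained_curve} guarantees that $f$ is not unstably real. From this I shall first deduce the local fact that, for every saddle point $q$ of $f$, the slice $J^+_{W^u(q)}$ is $\R$-Zariski dense in $W^u(q)$: if $J^+_{W^u(q)}$ were contained in a proper real-analytic subvariety of $W^u(q)$, then by local finiteness of real-analytic subvarieties, near a suitable point $x\in J^+_{W^u(q)}$ the intersection with a small disk in $W^u(q)$ would lie on a single real-analytic arc. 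Taking this small disk as a transversal to $J^+$, Proposition~\ref{pro:contained_curve} would then force $f$ to be unstably real, a contradiction.

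Next, let $E\subset\jstar$ denote the set of saddle periodic points with non-real unstable multiplier, and apply Theorem~\ref{thm:homoclinic_multipliers} to any $q\in E$: for every transverse homoclinic intersection $\tau\in W^u(q)\cap W^s(q)$ one obtains a sequence of saddle points $(q_n)$ with $q_n\to\tau$ and $\lambda^u(q_n)\sim c\,(\lambda^u(q))^n$. Because $\lambda^u(q)\notin\R$, its powers are non-real for all but an arithmetic progression of indices, so $q_n\in E$ for infinitely many~$n$. Applying this to $q=p$ shows that $E$ is nonempty; more importantly, for every $q\in E$ the set $E$ accumulates all transverse homoclinic intersections of~$q$, which by~\cite[Lem.~5.1]{tangencies} are dense in $J^+_{W^u(q)}$.

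Now suppose, for contradiction, that $E$ is contained in a proper real-analytic subvariety $V\subsetneq\C^2$. Since $V$ is Euclidean-closed, it contains every accumulation point of $E$; in particular, for each $q\in E$, $V$ contains all transverse homoclinic intersections of $q$, hence all of $J^+_{W^u(q)}$ by density. The local $\R$-Zariski density established above then gives $V\cap W^u(q)=W^u(q)$, and analytic continuation along the connected curve $W^u(q)$ yields $W^u(q)\subset V$. Passing to closures and using that $\overline{W^u(q)}=J^-$ produces $J^-\subset V$, which contradicts Theorem~\ref{thm:dim3}(1). The most delicate point is the passage from the qualitative statement ``$f$ is not unstably real'' to the quantitative fact that $J^+_{W^u(q)}$ is $\R$-Zariski dense in the real surface $W^u(q)$, which rests on the local finiteness of real-analytic subvarieties and a careful application of Proposition~\ref{pro:contained_curve} to small transversals inside $W^u(q)$.
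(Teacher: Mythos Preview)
Your proof is correct and takes a cleaner route than the paper's. The paper first argues (by adapting the two-step $q_{n,m}$ construction from Proposition~\ref{pro:dense_set_of_saddle_point}) that the $\R$-Zariski closure $V$ of $E$ has dimension at least~$3$, and then rules out $\dim_\R V=3$ by an $f$-invariance argument: $V$ is periodic, so one picks $k$ with $f^k(V)=V$; by Theorem~\ref{thm:dim3} there is a saddle $q\in\jstar\setminus V$; any $t\in W^s(q)\cap W^u(p)\subset V$ then satisfies $f^{kn}(t)\to q$, forcing $q\in V$, a contradiction.

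You bypass both the dimension count and the periodicity step. Your explicit justification that $J^+_{W^u(p)}$ is $\R$-Zariski dense in $W^u(p)$ (reducing to Proposition~\ref{pro:contained_curve} near a smooth point of a putative real-analytic container) makes transparent the inclusion $W^u(p)\subset V$ that the paper records only as ``by construction''. From there you simply take the Euclidean closure to get $V\supset J^-$ and invoke Theorem~\ref{thm:dim3}(1) directly. So you trade the $q_{n,m}$ machinery and part~(2) of Theorem~\ref{thm:dim3} for a one-line appeal to part~(1); this is a genuine simplification. Two minor remarks: you only need to run the homoclinic argument at the single point $p\in E$, not at every $q\in E$; and the phrase ``analytic continuation along $W^u(q)$'' is a slight abuse---what you are really using is that $(\psi^u_q)^{-1}(V)$ is a real-analytic subvariety of $\C$ containing the $\R$-Zariski dense set $(\psi^u_q)^{-1}(J^+_{W^u(q)})$, hence equals~$\C$.
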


\begin{proof}  
Arguing   as in the previous proposition (and without taking stable 
eigenvalues into account)
 we can construct a 
sequence of saddle points (analogous to the points $q_{n,m}$ from the previous proof), whose 
unstable multipliers are non-real and whose $\R$-Zariski closure  has dimension at least 3. 
Let $V$ be the $\R$-Zariski closure of the set of periodic points whose unstable multiplier is non-real, and assume that $\dim_{\R}(V)= 3$. Note that by construction $V$ contains $W^u(p)$. 
The subvariety $V$ is periodic, thus we can find an integer $k\geq 1$ such that  $f^k(V)=V$ and $f^k(p)=p$. By Theorem~\ref{thm:dim3}, $\jstar$ is not contained in $V$ so there is a saddle periodic point $q\in \jstar\setminus V$. Fix $t\in W^s(q)\cap W^u(p)$. Since $t\in W^u(p)$, $t$ is in $V$, and since $t\in W^s(q)$, $f^{kn}(t)$ converges toward $q$ as $n$ goes to $+\infty$, hence $q\in V$ (because $f^k(V)=V$), a contradiction.  
\end{proof}

%
%

\section{Real analytic conjugacies}\label{sec:conjugate_real}
 
 In this section, we prove Theorem~\ref{mthm:conjugate_real} in  the following  slightly more precise  form. 

\begin{thm}\label{thm:conjugate_real}
Suppose $\varphi\colon \C^2\to \C^2$ is a real analytic conjugacy between two loxodromic automorphisms $f$, $g\in \Aut(\C^2)$. Assume that there exists a saddle periodic point of $f$ at which both stable and unstable multipliers are not real. 
Then $\varphi$ is a polynomial  automorphism of $\C^2$, or the composition of an automorphism with the complex conjugation $(x,y)\mapsto (\overline{x}, \overline{y})$.
\end{thm}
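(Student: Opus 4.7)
The plan is to show that $\varphi$ must be holomorphic or anti-holomorphic; the theorem will then follow from the rigidity result of \cite{conjugate}. I will argue by contradiction, assuming $\varphi$ is neither, and produce an $f$-invariant real-analytic foliation of $\C^2$ by complex leaves to which $J^+$ is subordinate, contradicting Theorem~\ref{thm:no_real_global_foliation}.

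The central object is the ``exotic'' complex structure $J_1 := \varphi^* J_0$ on $\C^2$ (where $J_0$ is the standard structure), which is real-analytic and $f$-invariant since $\varphi$ conjugates $f$ to the holomorphic map $g$. At a fixed saddle $p$ of $f$ with non-real eigenvalues (replace $f$ by an iterate), the pairs $\{\lambda^s,\bar\lambda^s\}$ and $\{\lambda^u,\bar\lambda^u\}$ of real-linearization eigenvalues are disjoint, so any complex structure commuting with $df_p$ preserves $E^s(p) = T_pW^s(p)$ and $E^u(p) = T_pW^u(p)$ and, by a centralizer argument, is $J_0$-complex-linear on each plane; it must therefore coincide with $\pm J_0$ there. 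This associates a ``type'' $(\epsilon,\eta)\in\{+,-\}^2$ to each such saddle, recording whether $d\varphi_p$ is $\C$-linear or $\C$-antilinear on $E^s(p)$, respectively $E^u(p)$; a direct power-series computation in the linearizations of $f$ and $g$ along $W^{s}(p)$ and $W^u(p)$ shows that the same sign records whether $\varphi|_{W^{s,u}(p)}$ is $\C$-linear or $\C$-antilinear. Denote by $S_{\epsilon\eta}$ the set of saddles of type $(\epsilon,\eta)$. By Proposition~\ref{pro:dense_set_of_saddle_point}, the union $\bigcup_{\epsilon\eta}S_{\epsilon\eta}$ is $\R$-Zariski dense in $\C^2$. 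Since a proper real-analytic subset of $\C^2$ has real dimension at most~$3$, a finite union of such cannot cover $\C^2$, and so at least one $S_{\epsilon\eta}$ is itself $\R$-Zariski dense. If $S_{++}$ (resp. $S_{--}$) is dense, then by real-analytic continuation $d\varphi$ is $\C$-linear (resp. $\C$-antilinear) on all of $\C^2$, and we are done.

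The main case is therefore when a ``mixed'' type is dense. After possibly replacing $(f,g)$ by $(f^{-1},g^{-1})$, which interchanges the roles of $E^s$ and $E^u$, we may assume that $S_{+-}$ is $\R$-Zariski dense. Decompose $d\varphi=\partial\varphi+\bar\partial\varphi$ into its $\C$-linear and $\C$-antilinear parts. At each $p\in S_{+-}$, $\bar\partial\varphi(p)$ has complex rank~$1$ with real kernel $E^s(p)$, and $\partial\varphi(p)$ has complex rank~$1$ with kernel $E^u(p)$. The condition ``complex rank $\leq 1$'' is a closed real-analytic condition, so the $\R$-Zariski density of $S_{+-}$ forces both $\partial\varphi$ and $\bar\partial\varphi$ to have complex rank $\leq 1$ everywhere; since $d\varphi$ is invertible at every point, each must in fact have rank exactly~$1$ throughout. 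Consequently $\mathcal{L} := \ker\bar\partial\varphi$ is a nowhere-singular real-analytic $J_0$-complex line distribution on all of $\C^2$. Moreover, the $\C$-linearity of $\varphi|_{W^s(p)}$ for every $p\in S_{+-}$ yields $\bar\partial\varphi|_{TW^s(p)}=0$, so $\mathcal{L}=TW^s(p)$ along each such stable manifold. The Frobenius integrability condition for $\mathcal{L}$ is a closed real-analytic condition that holds along the $\R$-Zariski-dense union $\bigcup_{p\in S_{+-}}W^s(p)$; it therefore holds everywhere, and $\mathcal{L}$ integrates to a globally defined, non-singular, real-analytic foliation $\mathcal{F}$ of $\C^2$ by $J_0$-holomorphic curves.

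To conclude, the leaves of $\mathcal{F}$ through points of $\bigcup_{p\in S_{+-}}W^s(p)$ are precisely those stable manifolds, which are dense in $J^+$; continuity of $\mathcal{F}$ and closedness of $J^+$ then imply that every leaf passing through $J^+$ lies entirely in $J^+$, i.e., $J^+$ is subordinate to $\mathcal{F}$. This contradicts Theorem~\ref{thm:no_real_global_foliation}, and therefore $\varphi$ must have been holomorphic or anti-holomorphic. The step I expect to require the most care is the rank analysis: Theorem~\ref{thm:no_real_global_foliation} rules out only real-analytic foliations with locally finite singular set, so it is essential that $\mathcal{L}$ be defined, without any singularities, on the entirety of $\C^2$; this hinges on the combination of the $\R$-Zariski density provided by Proposition~\ref{pro:dense_set_of_saddle_point} with the invertibility of $d\varphi$ at every point.
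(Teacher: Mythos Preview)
Your proof is correct and follows essentially the same route as the paper's: analyze $\varphi$ along stable and unstable manifolds at saddles with non-real multipliers, use Proposition~\ref{pro:dense_set_of_saddle_point} to make one of the four ``types'' $\R$-Zariski dense, dispose of the pure types by analytic continuation, and in the mixed case build a global real-analytic complex line field that integrates to a foliation contradicting Theorem~\ref{thm:no_real_global_foliation}.

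The one genuine difference is in how the line field is produced in the mixed case. The paper works with the pulled-back complex structure $\cs'=\varphi^*\cs$, analyzes the endomorphism $\cs'\cs$, shows by Zariski density that its minimal polynomial is $(X-1)(X+1)$ everywhere, and takes the eigenspace decomposition $T_z\C^2=P_-(z)\oplus P_+(z)$. You instead decompose $d\varphi=\partial\varphi+\bar\partial\varphi$ and take $\mathcal L=\ker\bar\partial\varphi$, using the rank constraint plus invertibility of $d\varphi$ to see that $\mathcal L$ is everywhere a complex line. These are the same object: $\ker\bar\partial\varphi$ is precisely $P_-=\ker(\cs'\cs+\id)$, the locus where $d\varphi$ is $\C$-linear. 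Your formulation is arguably more direct, since it bypasses the characteristic/minimal polynomial discussion and the separate verification that $P_\pm$ are $\cs$-invariant; on the other hand, the paper's construction immediately yields \emph{both} distributions $P_\pm$ and the splitting $T\C^2=P_-\oplus P_+$, which you do not need but which makes the picture more symmetric. The integrability argument (Frobenius condition propagates from an $\R$-Zariski dense set carrying integral surfaces) and the subordination of $J^+$ are handled identically in both.
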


 
Before starting the proof, let us make a  few  preliminary observations, which do not depend 
on the assumption that there is a saddle point with non-real eigenvalues.
If $p$ is a saddle fixed point of $f$ then $q=\varphi(p)$ is a saddle fixed point of $g$. Let $\psi_f\colon \C\to \C^2$ be a
parametrization of the stable manifold of $f$ at $p$, and let $\psi_g\colon \C\to \C^2$ be a parametrization of the stable manifold 
of $g$ at $q$. Then $\varphi\circ \psi_f=\psi_g\circ \widetilde \varphi$ for 
some real analytic diffeomorphism $\widetilde \varphi\colon \C\to \C$, $\zeta\mapsto \widetilde\varphi(\zeta)$, with $\widetilde \varphi(0) = 0$. 
Note that 
$\widetilde \varphi$ is completely determined by its restriction to a small neighborhood of the origin. 
Denote by $\lambda^s_p$ (resp.\  $\lambda^s_q$) 
the eigenvalue of $df_p$ (resp.\  $dg_q$) along the stable direction. 

\begin{lem} 
Suppose $\varphi\colon \C^2\to \C^2$ is a real analytic conjugacy between two loxodromic automorphisms $f$, $g\in \Aut(\C^2)$.
With the above notation, we have that:
\begin{enumerate}[(1)]
\item $\widetilde \varphi$ is $\R$-linear;
\item either $\lambda^s_q=\lambda^s_p$ or $\lambda^s_q=\overline{\lambda^s_p}$, and  in the latter case,   $\widetilde \varphi$ reverses the orientation of $\C$;
\item if $\lambda^s_p$ (or equivalently $\lambda^s_q$) is not real, then either $\widetilde \varphi(\zeta)=\alpha \zeta$ and $\lambda^s_q=\lambda^s_p$ or 
$\widetilde \varphi(\zeta) =\alpha \overline{\zeta}$ and $\lambda^s_q=\overline{\lambda^s_p}$, where $\alpha$ is some nonzero complex number. 
\end{enumerate}
\end{lem}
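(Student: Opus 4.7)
The plan is to reduce all three assertions to a single functional equation for $\widetilde\varphi$ near the origin, and then exploit the fact that $|\lambda^s_p|<1$. Starting from the conjugacy $\varphi\circ f=g\circ\varphi$ together with the semi-conjugations $f\circ\psi_f(\zeta)=\psi_f(\lambda^s_p\zeta)$ and $g\circ\psi_g(w)=\psi_g(\lambda^s_q w)$, I would first compute
\begin{equation*}
\psi_g\circ\widetilde\varphi(\lambda^s_p\zeta)=\varphi\circ f\circ\psi_f(\zeta)=g\circ\psi_g\circ\widetilde\varphi(\zeta)=\psi_g(\lambda^s_q\,\widetilde\varphi(\zeta)).
\end{equation*}
Since $\psi_g$ is an injective entire curve (the stable manifold of the saddle $q$ is biholomorphic to $\C$), this yields the key identity $\widetilde\varphi(\lambda^s_p\zeta)=\lambda^s_q\,\widetilde\varphi(\zeta)$.

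Next, I would expand the real analytic map $\widetilde\varphi$ at $0$ as $\widetilde\varphi(\zeta)=\sum_{j+k\ge 1}a_{j,k}\zeta^j\overline{\zeta}^k$ and identify coefficients in the functional equation to obtain
\begin{equation*}
a_{j,k}\neq 0\quad\Longrightarrow\quad(\lambda^s_p)^j(\overline{\lambda^s_p})^k=\lambda^s_q.
\end{equation*}
Taking moduli and using $|\lambda^s_p|<1$, the integer $j+k$ is forced to take one and the same value $n$ for all nonzero $a_{j,k}$. Since $\widetilde\varphi$ is a local diffeomorphism at $0$, its $\R$-linear differential $h\mapsto a_{1,0}h+a_{0,1}\overline{h}$ is invertible, so at least one of $a_{1,0}$, $a_{0,1}$ is nonzero; hence $n=1$, and therefore $\widetilde\varphi(\zeta)=a_{1,0}\zeta+a_{0,1}\overline{\zeta}$ is $\R$-linear. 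This proves (1).

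The remaining assertions then follow by inspecting which of the two linear coefficients is permitted to survive. The coefficient constraint above forces $a_{1,0}\neq 0\Rightarrow \lambda^s_q=\lambda^s_p$ and $a_{0,1}\neq 0\Rightarrow \lambda^s_q=\overline{\lambda^s_p}$; since at least one of them is nonzero, this yields the dichotomy of (2). Whenever $\lambda^s_q=\overline{\lambda^s_p}$ with $\lambda^s_p\notin\R$, one has $a_{1,0}=0$, so $\widetilde\varphi(\zeta)=a_{0,1}\overline{\zeta}$ reverses orientation (in the real case both coefficients may survive, but then the two alternatives coincide), completing (2). Finally, if $\lambda^s_p\notin\R$, the values $\lambda^s_p$ and $\overline{\lambda^s_p}$ are distinct, so at most one of $a_{1,0}$, $a_{0,1}$ can be nonzero, which yields exactly one of the two normal forms $\widetilde\varphi(\zeta)=\alpha\zeta$ or $\widetilde\varphi(\zeta)=\alpha\overline{\zeta}$ required in (3).

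No serious obstacle is expected in this plan: once the functional equation is extracted, the entire argument is an elementary Taylor series manipulation powered by the contraction $|\lambda^s_p|<1$. The only minor point demanding care is to verify that $\widetilde\varphi$ is well-defined and real analytic near $0$, which follows from the global injectivity of $\psi_g$ combined with the real analyticity of $\varphi$ and holomorphicity of $\psi_f,\psi_g$.
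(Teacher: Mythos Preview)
Your proposal is correct and follows essentially the same approach as the paper: derive the functional equation $\widetilde\varphi(\lambda^s_p\zeta)=\lambda^s_q\,\widetilde\varphi(\zeta)$, expand $\widetilde\varphi$ in a real-analytic Taylor series in $\zeta,\overline\zeta$, and match coefficients using $|\lambda^s_p|<1$. Your moduli argument (all nonzero $a_{j,k}$ share a common $j+k$, which must equal $1$ by the diffeomorphism condition) is a slightly cleaner packaging of the same step the paper does, and your treatment of the orientation claim in (2) is actually more careful than the paper's, since you explicitly note that in the real case both coefficients may survive and the two alternatives collapse.
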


\begin{proof}
To see this, it suffices to write the Taylor expansion of $\widetilde \varphi(\zeta)$ near the origin as $\sum_{k,\ell} a_{k,\ell}\zeta^k{\overline{\zeta}}^\ell$ and write that 
$\widetilde \varphi$ conjugates $\zeta\mapsto \lambda^s_p \zeta$ to $\zeta\mapsto \lambda^s_q \zeta$, which gives 
a relation of the form
\begin{equation}
\sum_{k,\ell = 1}^\infty a_{k,\ell} (\lambda^s_p)^k \lrpar{\overline{\lambda^s_p}}^\ell \zeta^k{\overline{\zeta}}^\ell = 
\lambda^s_q \sum_{k,\ell = 1}^\infty a_{k,\ell}  \zeta^k{\overline{\zeta}}^\ell.
\end{equation}
Equating the coefficients we first infer that 
\begin{equation}
a_{1, 0}\lrpar{ \lambda^s_p  - \lambda^s_q} = a_{0, 1} \lrpar{\overline{ \lambda^s_p}  - \lambda^s_q} =0;
\end{equation}
 hence, since $\widetilde \varphi$ is a diffeormorphism, either 
$\lambda^s_p  = \lambda^s_q$ or $\overline{ \lambda^s_p}  = \lambda^s_q$ (or both). And using that $\abs{\lambda^s_p} = \abs{\lambda^s_q}<1$, we obtain $a_{k,\ell} = 0$ for $k+\ell>1$.

Finally, if $\lambda^s_p$ is not real, then the $\R$-linear map $\varphi$ either commutes to $\zeta\mapsto \lambda^s_p \zeta$
and it is then $\C$-linear or it conjugates $\zeta\mapsto \lambda^s_p \zeta$ to $\zeta\mapsto {\overline{\lambda^s_p}} \zeta$ and it is then  the composition of a $\C$-linear map with $\zeta\mapsto \overline \zeta$. This proves Assertion~(3).
In other words, $\varphi$ is holomorphic of antiholomorphic along the stable manifold $W^s_p$. 
\end{proof}

\begin{proof}[Proof of Theorem~\ref{thm:conjugate_real}]
Since  $f$ has a saddle periodic point $p$ with non-real stable and unstable multipliers, Proposition~\ref{pro:dense_set_of_saddle_point}  provides an $\R$-Zariski dense set of such points. Therefore, from the previous lemma, we deduce that:
either (a) $\varphi$ is holomorphic, or (b) $\varphi$ is anti-holomorphic (since in these cases the differential will be respectively $\C$-linear or $\C$-antilinear at on a $\R$-Zariski dense set) , or (c) there is a dense set of saddle periodic points $p_j$ for the real analytic topology such that $\varphi$ is holomorphic along  $W^u_{p_i}$ and antiholomorphic along $W^s_{p_i}$   or vice versa. 

In case (a), we conclude with the Main Theorem of~\cite{conjugate}.
In case (b), composing $\varphi$ with the complex conjugation, we obtain a holomorphic 
conjugacy $\overline{\varphi}$ between $f$ 
and $\overline{g}$. Again, the Main Theorem of~\cite{conjugate}   implies that 
$\overline\varphi$ is an automorphism, as desired. 

To complete the proof it  remains to show that Case (c) leads to a contradiction. Denote by $\cs$ the complex structure of 
 $\C^2$ and by  $\cs'$ the pull back of $\cs$ under $\varphi$. 
Then, exchanging the roles of the stable and unstable manifolds if necessary, $\cs'$ coincides with $\cs$ along $W^u_{p_i}$ and with $-\cs$ along $W^s_{p_i}$, for a set of saddle periodic points $p_i$ which is dense in $\C^2$ for the real analytic topology. In particular for every such $p_i$ and every $z\in W^{s/u}(p_i)$, $T_zW^{s/u}(p_i)$ is $\cs$ and $\cs'$ invariant.

On the real tangent spaces $T_{p_i}\C^2\simeq \R^4$, there is a plane on which $\cs' v=\cs v$ and another one on which $\cs' v=-\cs v$. We claim that by density, this property holds everywhere. 
To see this, we study the properties of $\cs'\cs \in \mathrm{End}(T_z\C^2)$. Observe that 
for $\e =\pm 1$ and $v\in T_z\C^2$, 
$$\cs'\cs v=\e v  \Leftrightarrow  \cs v =-\e \cs' v \Leftrightarrow  \cs' v =-\e \cs v 
\Leftrightarrow \cs \cs' v=\e v,  $$ and 
in addition this property is invariant under $\cs$ and $\cs'$, i.e.\ $v$ satisfies it if and only if $\cs v$ or $\cs' v$ does. It follows that on an $\R$-Zariski dense set of points $z\in \C^2$, the characteristic polynomial 
$\chi_z(X) $ of
$\cs'\cs$ is $(X-1)^2(X+1)^2$; since $\chi_z(X) $
depends analytically on $z$, this property holds everywhere. Likewise, the minimal 
polynomial  of $\cs'\cs$ at each $p_i$ is $(X-1)(X+1)$ so again this property holds everywhere.
In this way, we obtain two distributions of planes $P_\pm(z)   = \ker (\cs'\cs  - \pm\id)$
such that 
\begin{enumerate}[\rm(a)]
\item $T_z\C^2=P_{-}(z)\oplus P_+(z)$ at each point $z$ of $\C^2$, 
\item  $P_-(z)$ and 
$P_+(z)$ are invariant under $\cs$ and $\cs'$.
\end{enumerate}
Furthermore,  we claim that 
$P_{-}(z) = T_zW^u_{p_i}$ and $P_{+}(z) = T_zW^s_{p_i}$ as soon as $z$ is a point on $W^u_{p_i}$ or $W^s_{p_i}$, respectively. Indeed,  linear algebra shows that 
$P_-(z)$ and 
$P_+(z)$ are the only $\cs\cs'$- and $\cs$-invariant  planes at $z$, so for 
$z\in W^u(p_i)$, $T_z W^u(p_i)$ must be one of $P_-(z)$ or 
$P_+(z)$ (and likewise for $W^s(p_i)$),
and the property follows by continuity.

These distributions of planes define two real analytic foliations, the leaves of which are holomorphic. 
Indeed, if a   $k$-dimensional distribution is not integrable, then there is an open set $U$
in which there is   a pair of vector fields  tangent to the distribution whose Lie bracket is everywhere transverse to the distribution. Conversely, this  property 
cannot be satisfied if there is a dense set of points at which one can find a local submanifold of dimension $k$ tangent to the distribution. 
In the real-analytic case, to guarantee integrability, 
it is enough to find such a set that  is dense for the real-analytic topology\footnote{Let $z\mapsto P(z)\subset T_z\C^2$ be a real analytic distribution of real planes. Given a real analytic vector field $z\mapsto v(z)$, its orthogonal projection  $v_P(z)$ on $P(z)$ for the standard euclidean metric is also real analytic.  In this way, we construct many real analytic vector fields which are everywhere tangent to $P$. The integrability of  $P$ means that the Lie bracket of any pair of such vector fields is again tangent to $P$. Thus, the integrability property propagates from any $\R$-Zariski dense subset to $\C^2$. }. 
Since the $W^u_{\loc}(p_i)$ and $W^s_{\loc}(p_i)$ provide such local submanifolds, we conclude that $P_-$ and $P_+$ define two foliations $\mathcal{F}_-$ and $\mathcal{F}_+$.  Since, by definition, $P_-$ and $P_+$ are $\cs$-invariant, the leaves are holomorphic. 

This argument shows  that  $f$   preserves two global 
real analytic foliations by holomorphic curves,  
so that Theorem~\ref{mthm:no_real_global_foliation} completes the proof. 
 \end{proof}

\section{Multipliers in number fields}\label{sec:rational_multipliers} 

\subsection{Two criteria}
Recall that the Lyapunov exponent of  a saddle periodic point $p$ of period $n$, 
is by definition
\begin{equation}
\chi^u(p) = \unsur{n} \log \abs{\lambda^u(p)}.
\end{equation}  
We also  denote by 
$\chi^u(\mu_f)$ the positive Lyapunov exponent of the unique measure of maximal entropy. 

 Huguin's theorem~\cite{huguin} asserts that a rational map in $\P^1(\C)$ whose multipliers lie in some 
 fixed  number field must be exceptional. Here we obtain two partial generalizations of this result, under 
 some --presumably superfluous-- additional assumptions. 

\begin{thm}\label{thm:multipliers_huguin1}
Let $f\in \Aut(\C^2)$ be a loxodromic automorphism. Assume that:
\begin{enumerate}[\rm (1)]
\item $f$ is uniformly hyperbolic;
\item $f$ admits two saddle periodic points with distinct Lyapunov exponents. 
\end{enumerate}
Then the unstable (resp. stable) multipliers of $f$ cannot lie in a fixed number field. 
\end{thm}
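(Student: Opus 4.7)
The strategy is to adapt Huguin's one-dimensional argument~\cite{huguin} to the present two-dimensional setting, combining Theorem~\ref{thm:homoclinic_multipliers} on periodic orbits shadowing homoclinic intersections with the strengthened equidistribution theorem of Appendix~\ref{app:equidist}. As announced in the introduction, I would first treat the case where $f$ is defined over a number field, and then use specialization arguments (in the spirit of~\cite{DF}) to reduce the general case to that setting. Uniform hyperbolicity is $C^1$-open, and the multipliers and Lyapunov exponents of saddle orbits vary holomorphically under hyperbolic perturbation of $f$, so the hypotheses of the theorem persist on an open neighborhood of $f$ in parameter space; a standard countability argument then lets us assume $f \in \Aut(\C^2)$ is defined over a number field $K_0$.

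So suppose $f$ is defined over $K_0$, and assume for contradiction that all unstable multipliers of $f$ lie in a fixed number field $K \supset K_0$. Let $p, p'$ be the two saddle periodic points with $\chi^u(p) \neq \chi^u(p')$. Uniform hyperbolicity ensures that both admit transverse homoclinic intersections, so Theorem~\ref{thm:homoclinic_multipliers} yields sequences $(p_n),(p'_n)$ of saddle periodic points of period $n$ with $\lambda^u(p_n) \sim a\,(\lambda^u(p))^n$ and $\lambda^u(p'_n) \sim a'\,(\lambda^u(p'))^n$ for some $a, a' \in \C^*$. In particular,
\begin{equation*}
\tfrac{1}{n}\log\abs{\lambda^u(p_n)} \to \chi^u(p), \qquad \tfrac{1}{n}\log\abs{\lambda^u(p'_n)} \to \chi^u(p'),
\end{equation*}
and at the Archimedean place of $K$ coming from the fixed embedding $K \subset \C$, these two sequences of multipliers in $K$ have distinct exponential growth rates.

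The arithmetic contradiction then follows the Huguin scheme. For each place $v$ of $K$, the Galois-conjugate system $f^{\sigma_v}$ is again a loxodromic automorphism of degree $d$, whose unstable multipliers are the $\sigma_v$-images of those of $f$. The product formula applied to $\lambda^u(p_n) \in K$ couples the Archimedean and non-Archimedean growths of the multipliers. Using the equidistribution of saddle orbits of Appendix~\ref{app:equidist} at each place $v$, one converts averages of $\log\abs{\lambda^u(p_n)}_v$ into the Lyapunov exponent $\chi^u_v(\mu_{f^{\sigma_v}})$ of the maximal entropy measure at $v$, yielding an adelic identity. Running the argument separately at $p$ and $p'$ and subtracting, the Archimedean difference is the nonzero quantity $\chi^u(p) - \chi^u(p')$, while the non-Archimedean difference is controlled and ultimately ruled out by uniform hyperbolicity. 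The main obstacle is precisely this adelic step: matching Archimedean and non-Archimedean contributions to Lyapunov exponents in two dimensions demands both the sharpened equidistribution of the appendix and a careful non-Archimedean analysis of saddle orbits that has no direct one-dimensional analogue. The assumption of distinct Lyapunov exponents at $p$ and $p'$ is what rules out the arithmetically exceptional behavior (analogous to monomial and Chebyshev maps in dimension one), whose existence in dimension two is the delicate issue emphasized in the introduction.
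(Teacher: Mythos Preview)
Your proposal has a genuine gap in the arithmetic step, and the overall scheme is not the one that actually works here.

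The paper's argument in the number field case does not use the product formula or any non-Archimedean dynamics. It uses Yuan's arithmetic equidistribution theorem for Galois orbits of small points, applied entirely at the complex place. Concretely: fix one saddle point $p$, take the Zariski-dense sequence $(q_n)$ from Theorem~\ref{thm:homoclinic_multipliers}, and pass to a subsequence converging to the generic point over $\overline{\Q}$. Yuan's theorem gives weak convergence of the Galois orbit measures
\[
\mu_n=\frac{1}{\#\mathrm{Gal}(q_n)}\sum_{r\in \mathrm{Gal}(q_n)}\delta_r \longrightarrow \mu_f.
\]
Uniform hyperbolicity enters only here: it makes $z\mapsto \log\norm{df^u_z}$ continuous on $J$, so $\chi^u(\mu_n)\to\chi^u(\mu_f)$. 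Now comes the key observation you are missing: the Galois conjugates $r\in \mathrm{Gal}(q_n)$ are themselves periodic, with multipliers Galois-conjugate to $\lambda^u(q_n)$; since all unstable multipliers are assumed to lie in $L$, these conjugates coincide with $\lambda^u(q_n)$ itself. Hence $\chi^u(\mu_n)=\chi^u(q_n)\to\chi^u(p)$, forcing $\chi^u(p)=\chi^u(\mu_f)$ for every saddle $p$ and contradicting assumption~(2). No adelic identity, no $p$-adic Lyapunov exponents, and Appendix~\ref{app:equidist} is not used in this theorem at all (it is used for Theorem~\ref{thm:multipliers_huguin2}).

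Your attempt to run an adelic argument is problematic for several reasons: you conflate places of $K$ with Galois embeddings (writing $f^{\sigma_v}$ for a place $v$); the equidistribution of Appendix~\ref{app:equidist} is purely complex and says nothing at non-Archimedean places; and the claim that the non-Archimedean contributions are ``controlled and ultimately ruled out by uniform hyperbolicity'' has no content, since uniform hyperbolicity is an Archimedean notion.

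Finally, your specialization step is too brief. Saying ``a standard countability argument then lets us assume $f$ is defined over a number field'' skips the actual work. The paper views $f$ as a family $(f_s)_{s\in S}$ over a $\overline{\Q}$-variety with $f=f_b$ at a generic point $b$, proves a specific lemma (Lemma~\ref{lem:constant_multiplier}) that each multiplier, being algebraic at $b$, is in fact constant on a neighborhood $\mathcal N$ of $b$, and then uses openness of hyperbolicity to pick $s\in\mathcal N\cap S(\overline{\Q})$ to which Step~1 applies with the \emph{same} number field $L$. This constancy lemma is what preserves both hypotheses~(1) and~(2), and the membership of all multipliers in $L$, under specialization.
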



See below Proposition~\ref{pro:examples_huguin1} for a sufficient condition for a hyperbolic automorphism to satisfy Assumption~(2).  

\begin{proof} Without loss of generality we work with unstable multipliers. 

{\bf{Step 1.--}} We first prove the result under the  additional assumption:
\begin{enumerate}\setcounter{enumi}{2}
\item \textit{$f$ is defined over a number field.}
\end{enumerate}
By this we mean that   $f$ is conjugated to a  composition of Hénon maps with algebraic coefficients. 

Following \cite{huguin}, we argue by contradiction. 
Let $L$ be a number field containing the coefficients of $f$ as well as all 
unstable multipliers. 
Let $p$ be any saddle periodic point. We will show that $\chi^u(\mu_f) = \chi^u(p)$, thereby contradicting 
Assumption~(2). 

 For this, let $(q_n)$ be the sequence provided by Theorem~\ref{thm:homoclinic_multipliers}, 
associated to some homoclinic intersection, which we  assume moreover  to be Zariski dense by Remark~\ref{rem:zariski}.  By a diagonal process (see e.g.~\cite[p. 3455]{DF}) we may 
extract a subsequence $(q_{n_j})$ that 
converges to the generic point for the $\overline \Q$-Zariski topology.  Without loss of generality rename $(q_{n_j})$ into $(q_n)$. 
Let  $\mathrm{Gal}(q_n)$ be the orbit of $q_n$ under the action of the absolute Galois group $\mathrm{Gal}(\overline \Q /L)$. Yuan's arithmetic 
 equidistribution theorem (see~\cite{Yuan:Inventiones}, and~\cite{Lee:equidistribution} for this application) implies that the sequence of probability measures
\begin{equation}\label{eq:yuan}
\mu_n:=\unsur{\mathrm{Gal}(q_n)} \sum_{r\in \mathrm{Gal}(q_n)} \delta_r  
\end{equation}
converges to the equilibrium measure $\mu_f$ in the weak-$\star$ topology.

Since $f$ is uniformly hyperbolic, the unstable line field is continuous along $J$. For $z\in J$, 
denote by $\norm{df_z^u}$ the euclidean norm of $df_z(e^u(z))$ for any
unit vector $e^u(z)$ tangent to the unstable direction at $z$; this does not depend on the choice of $e^u(z)$.
Then the map 
$z\mapsto \log \norm{df_z^u}$ is continuous. For any invariant measure $\nu$, 
the average positive  Lyapunov exponent is $\chi^u(\nu) = \int \log \norm{df_z^u} d\nu(z)$. 
In particular, Yuan's equidistribution theorem implies that 
$\chi^u(\mu_n)\to \chi^u(\mu_f)$ as $n\to\infty$. 

Now, if we decompose $\mu_n$ as a sum of ergodic invariant measures along periodic orbits, we see 
that all these periodic orbits are $\mathrm{Gal}(\overline \Q /L)$-conjugate to that of $q_n$, so their multipliers are Galois conjugate as well. But by assumption these multipliers lie in 
$L$ so they are all equal and $\chi^u(\mu_n) = \chi^u(q_n)$. 

On the other hand, since  $\lambda^u(q_n) \sim c \lambda^u(p)^n$, it follows 
  that $\chi^u(q_n)\to \chi^u(p)$ as $n\to\infty$. Altogether, we conclude that 
$\chi^u(p)   = \chi^u(\mu_f)$, as asserted.

\medskip

{\bf{Step 2.--}} We treat the general case by using a specialization argument in the style of~\cite[\S 5]{DF}. By \cite{friedland-milnor}, we may assume that $f$ is a composition of complex Hénon maps. By assumption at least one coefficient of $f$ is transcendental. Let 
$R\subset \C$ be the     $\overline \Q$-algebra  generated by  the coefficients of $f$ 
and $f\inv$. Then, there is an algebraic variety $V$ defined 
over $\overline \Q$ and a Zariski open subset $S$ of $V$ such that  $K = \mathrm{Frac}(R)$ is the function field of $V$ and   the elements of $R$ 
correspond to regular functions on $S$ (we may assume $S = \mathrm{Spec}(R)$). In this way, we may 
view $f$ as a family over $S$, that is, for every $s\in S$, by evaluating the coefficients of $f$ at $s$, we 
obtain a polynomial map $f_s$ on $\mathbb{A}^2_s$. By \cite[Lem. 5.1]{DF}, by replacing $S$ by some  
 Zariski open subset (still denoted by $S$), we may assume that every $f_s$ is a
 polynomial automorphism of $\A^2$. 
 
 We actually view $S(\C)$ as a complex analytic variety endowed with its Euclidean topology, so that $f_s$ is viewed as a polynomial automorphism of $\C^2$. Then, there is a parameter $b\in S(\C)$ (referred to as the base point) 
 corresponding to our initial automorphism $f$. By construction, 
 $\set{b}$ is $\overline \Q$-Zariski dense in $S$, i.e. it corresponds to the generic point of $S$ over $\overline \Q$. 
Let us suppose by way of contradiction that  there is a  number field $L$ 
 containing all the unstable multipliers of $f_b$. 

\begin{lem}\label{lem:constant_multiplier}  
Let $(f_s)_{s\in S}$ be such a family of automorphisms of $\C^2$, with $b\in S$ being generic. Let $q_b$ be a saddle periodic point of $f_b$, 
of period $m$. 
Let $\mathcal N\subset S$ be a connected neighborhood of $b$  in  which the saddle periodic point $q_b$ persists, i.e.\ $q_b$ extends as a holomorphic map $s\mapsto q_s\in \C^2$ such that $q_s$ is a saddle periodic point of $f_s$ of period $m$. 
If the unstable multiplier $\lambda^u(q_b)$ is algebraic, then the function 
$s\in \mathcal N \mapsto \lambda^u(q_s)\in \C$ is constant. 
\end{lem}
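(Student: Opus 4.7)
The plan is to exploit the fact that $s\mapsto \lambda^u(q_s)$ is an algebraic function of $s$ defined over $\overline\Q$, so its value at the $\overline\Q$-generic point $b$ being algebraic should force the function to be constant on the component of the periodic-point variety through $(b,q_b)$. I will deduce this via a Zariski density argument rather than a direct function-field manipulation.

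The first step is to set up the relevant $\overline\Q$-algebraic objects. Consider the $\overline\Q$-algebraic subvariety
\begin{equation*}
\Sigma_m = \{(s, z) \in S \times \A^2 \,:\, f_s^m(z) = z\}
\end{equation*}
of $m$-periodic points in the family. Since $q_b$ is a hyperbolic saddle, $df_b^m(q_b) - I$ is invertible, so the algebraic implicit function theorem shows that $\Sigma_m$ is smooth at $(b, q_b)$ and that the first projection $\pi\colon \Sigma_m \to S$ is étale there. In particular $(b, q_b)$ lies on a unique $\overline\Q$-irreducible component $\Sigma' \subseteq \Sigma_m$, the restricted projection $\pi\colon \Sigma'\to S$ is dominant, and the analytic map $s\mapsto (s, q_s)$ on $\mathcal N$ is a local inverse of $\pi$.

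The main step is to verify that $(b,q_b)$ is $\overline\Q$-Zariski dense in $\Sigma'$. If $Z \subsetneq \Sigma'$ were a proper $\overline\Q$-algebraic subset containing $(b, q_b)$, then $\dim Z < \dim \Sigma' = \dim S$, so the $\overline\Q$-Zariski closure $\overline{\pi(Z)}$ would have dimension strictly less than $\dim S$, hence be a proper $\overline\Q$-algebraic subset of $S$ containing $b$, contradicting the genericity of $b$. This density transfer is the only point of the argument that requires care; it is the clean analogue, in our algebraic setting, of passing from a generic parameter to a generic lift.

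Once density is established, the conclusion is immediate. Setting $c = \lambda^u(q_b) \in \overline\Q$, the subset
\begin{equation*}
\Sigma^c = \{(s, z) \in \Sigma_m \,:\, \det(c\, I - df_s^m(z)) = 0\}
\end{equation*}
is $\overline\Q$-algebraic and contains $(b, q_b)$, so by the density above it contains all of $\Sigma'$. Consequently, for every $s \in \mathcal N$ the number $c$ is one of the two eigenvalues of $df_s^m(q_s)$. Since $q_s$ persists as a saddle on $\mathcal N$, these eigenvalues are $\lambda^s(q_s)$ of modulus $<1$ and $\lambda^u(q_s)$ of modulus $>1$; as $|c|=|\lambda^u(q_b)|>1$, we conclude $\lambda^u(q_s) = c$ throughout $\mathcal N$, as required. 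As an alternative formulation bypassing the density step, one may work with the minimal polynomial $P(s,\lambda)\in \overline\Q(S)[\lambda]$ of $\lambda^u(q_s)$: vanishing of $P(b,c)$ and genericity of $b$ force $P(\cdot,c)\equiv 0$ on $S$, so $(\lambda-c) \mid P$, and irreducibility of $P$ yields $\deg_\lambda P = 1$ and hence $\lambda^u(q_s)\equiv c$.
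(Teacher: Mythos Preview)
Your proof is correct. Your main argument proceeds geometrically---building the $\overline\Q$-variety $\Sigma_m$ of periodic points, lifting the genericity of $b$ in $S$ to genericity of $(b,q_b)$ in the relevant component $\Sigma'$, and then cutting by the $\overline\Q$-algebraic eigenvalue locus $\Sigma^c$---whereas the paper argues directly with the minimal polynomial $P\in K[X]$ of $\lambda^u(q_s)$ over the function field $K=\operatorname{Frac}(R)$: the relation $P_b(\alpha)=0$ with $\alpha=\lambda^u(q_b)\in\overline\Q$ is a $\overline\Q$-algebraic equation on $S$ satisfied at the generic point $b$, hence identically, so $(X-\alpha)\mid P$ and irreducibility gives $P=a_1(X-\alpha)$. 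Your closing ``alternative formulation'' is exactly this argument, so you have in fact given both proofs. The geometric version has the advantage of making transparent why the lift $(b,q_b)$ is again generic (your dimension argument via $\pi$), while the paper's version is shorter and avoids setting up $\Sigma_m$; they are two packagings of the same idea.
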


Note that this constancy propagates in a sense  to the whole family $f_s$ (see the proof of Proposition~\ref{pro:unstable_algebraic} below), but since the stable multiplier 
of $q$ may be transcendental, it may happen that $q_s$ bifurcates and ceases to be a saddle. 

Assuming the lemma, we conclude the proof of  the theorem as follows. Since uniform hyperbolicity is an open property, there is a neighborhood $\mathcal N$ of $b$ such that 
\begin{itemize}
\item for $s\in \mathcal N$, 
  $f_s$ remains hyperbolic, and
  \item  the saddle points of $f_b$ can be followed holomorphically over $\mathcal N$ as saddle points of~$f_s$. 
  \end{itemize}
For  $s\in\mathcal N$, $f_s$ satisfies the assumptions~(1) and~(2)  and all its unstable multipliers belong to~$L$. Choosing $s\in\mathcal N\cap S(\overline \Q)$ and applying the first step of the proof to $f_s$ yields  the desired contradiction. 
\end{proof}

\begin{proof}[Proof of Lemma~\ref{lem:constant_multiplier}] 
To determine $\lambda^u(q_s)$, we first solve  the equation  
$f_s^m(q_s) = q_s$ and then   do  an extension of degree at most $2$ to find the eigenvalues of 
$Df_s^m(q_s)$. Thus, $\lambda^u(q_s)$ is algebraic over $K$ (i.e. it is a branch of a multivalued algebraic function on $S$). Let $P(X)=\sum_k a_k X^k$ be a polynomial equation 
with coefficients in $R$, of minimal degree, such that $P_s(\lambda^u(q_s))=0$ for all $s$ (where we write
 $P_s=\sum_k a_k(s)X^k$). 
It  is an irreducible polynomial over $K$.
By assumption, $\lambda^u(q_b)$ is an algebraic number $\alpha$. The equation $P_s(\alpha)=0$ is an algebraic equation on $S$  with coefficients in $\overline \Q$, namely $\sum_k \alpha^k a_k(s)=0$, and it is satisfied at the generic point $b$, hence it is satisfied everywhere. Thus, $P(\alpha)=0$, hence $(X-\alpha)$ is a factor of $P$,  and  $P=a_1(X-\alpha)$ since $P$ is irreducible. This implies that $\lambda^u(q_s)=\alpha$ for all $s\in \mathcal N$.
\end{proof}


\begin{thm}\label{thm:multipliers_huguin2}
Let $f\in \Aut(\C^2)$ be a loxodromic automorphism. Assume  :
\begin{enumerate}[\rm (1)]
\item $f$ admits a saddle periodic point $p$ such that $\chi^u(p)> \chi^u(\mu_f)$. 
\end{enumerate}
Then the unstable (resp. stable) multipliers of $f$ cannot lie in a fixed number field. 
\end{thm}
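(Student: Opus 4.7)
The plan is to adapt the strategy of Theorem~\ref{thm:multipliers_huguin1}, replacing the continuity of the unstable log-derivative $\log\norm{Df^u}$ (which was available thanks to uniform hyperbolicity) by upper semi-continuity of the top Lyapunov exponent, which holds for any $C^1$ diffeomorphism. Assume first that $f$ is defined over a number field $L$ containing all its unstable multipliers. Theorem~\ref{thm:homoclinic_multipliers} applied to $p$ furnishes a sequence of saddle periodic points $(q_n)$ of period $n$ with $\chi^u(q_n)\to \chi^u(p)$, and Zariski dense for the complex analytic topology; by a diagonal extraction we may further assume that the sequence is generic for the $\overline{\Q}$-Zariski topology. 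Yuan's arithmetic equidistribution theorem, combined with the dynamical equidistribution statement proved in Appendix~\ref{app:equidist}, then shows that the averages
\[
\mu_n = \frac{1}{\abs{\mathrm{Gal}(q_n)}}\sum_{r\in \mathrm{Gal}(q_n)}\delta_r
\]
converge weakly to the measure of maximal entropy~$\mu_f$.

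Since $\lambda^u(q_n)$ lies in $L$, which is fixed pointwise by $\mathrm{Gal}(\overline{\Q}/L)$, every orbit in the support of $\mu_n$ has the same unstable multiplier $\lambda^u(q_n)$; consequently $\chi^u(\mu_n)=\chi^u(q_n)\to \chi^u(p)$. On the other hand, the top Lyapunov exponent of an invariant probability measure supported on $\jstar$ can be written as
\[
\chi^u(\mu)=\inf_{n\geq 1}\frac{1}{n}\int \log\norm{Df^n(x)}\,d\mu(x),
\]
which, by subadditivity of the cocycle $x\mapsto \log\norm{Df^n(x)}$ and continuity of each integrand on the compact set $\jstar$, is the infimum of a sequence of functionals that are continuous in the weak topology on invariant measures. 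Hence $\chi^u$ is upper semi-continuous, and we obtain $\chi^u(p)=\limsup_n\chi^u(\mu_n)\leq \chi^u(\mu_f)$, in direct contradiction with the hypothesis $\chi^u(p)>\chi^u(\mu_f)$.

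To deduce the general statement, I would reproduce the specialization argument from Step~2 of the proof of Theorem~\ref{thm:multipliers_huguin1}: the saddle $p$ and the horseshoe-shadowing sequence $(q_n)$ both persist holomorphically under perturbation of the coefficients of $f$, and Lemma~\ref{lem:constant_multiplier} guarantees that the multipliers $\lambda^u(q_n)$ remain constant, and therefore in $L$, on a neighborhood of the base point. The strict inequality $\chi^u(p_s)>\chi^u(\mu_{f_s})$ is preserved at nearby parameters because $s\mapsto \chi^u(p_s)$ is continuous while $s\mapsto \chi^u(\mu_{f_s})$ is upper semi-continuous in holomorphic families (a classical plurisubharmonicity property of Lyapunov exponents). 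The principal obstacle is precisely this specialization step: without uniform hyperbolicity, one cannot invoke the persistence of the full periodic orbit structure, so the Galois argument has to be applied only to the specific sequence $(q_n)$ attached to $p$, whose persistence and whose multipliers are controlled respectively by the robustness of the horseshoe built in the proof of Theorem~\ref{thm:homoclinic_multipliers} and by Lemma~\ref{lem:constant_multiplier}.
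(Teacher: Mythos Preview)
Your Step~1 is essentially the paper's: Yuan's equidistribution for the Galois orbits of the $(q_n)$, combined with upper semi-continuity of the top exponent (via the subadditive formula $\chi^u(\nu)=\inf_n\frac{1}{n}\int\log\norm{Df^n}\,d\nu$), yields $\chi^u(p)\le\chi^u(\mu_f)$, contradicting the hypothesis. Your reference to Appendix~\ref{app:equidist} here is unnecessary---Yuan's theorem alone suffices; the reinforced equidistribution of the Appendix enters only in the paper's Step~2.

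In Step~2 you diverge from the paper. The paper does \emph{not} simply follow the specific horseshoe: it proves a global statement (Lemma~\ref{lem:stability}) that \emph{all} saddle periodic points of $f_b$ persist in a uniform neighbourhood $\mathcal N$. This requires the reinforced equidistribution of Appendix~\ref{app:equidist} to select a dense set $\mathrm{SPer}^+$ of saddles with $\chi^u\ge\chi^u(\mu_{f_b})-\varepsilon$, uses Lemma~\ref{lem:constant_multiplier} to freeze their unstable multipliers, and then invokes the weak stability theory of~\cite{tangencies} (with a careful discussion of the ``substantial family'' hypothesis in the conservative case). Once all saddle multipliers are constant, the paper shows $\chi^u(\mu_{f_s})$ is actually \emph{constant} on $\mathcal N$ via the averaging formula~\eqref{eq:lyapunov_exponent} from~\cite{bls2}, and then applies Step~1 verbatim to $f_s$ over $\overline\Q$.

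Your route is lighter: you keep only the single horseshoe attached to the homoclinic orbit of $p$, which is uniformly hyperbolic and hence persists in a uniform neighbourhood; Lemma~\ref{lem:constant_multiplier} freezes $\lambda^u(p_s)$ and each $\lambda^u(q_{n,s})$; and upper semi-continuity of $s\mapsto\chi^u(\mu_{f_s})$ (which follows from the subadditive formula together with the weak continuity of $s\mapsto\mu_{f_s}$---your appeal to ``plurisubharmonicity'' is imprecise but the USC statement is correct and elementary) preserves the strict inequality. Then at an algebraic parameter $s$ you rerun Step~1 using only the continued sequence $(q_{n,s})$, observing that since $\lambda^u(q_{n,s})\in L$ and $\jac(f_s)\in L'$, Galois conjugates over $L'\supseteq L$ share both eigenvalues, hence $\chi^u(\mu_n)=\chi^u(q_{n,s})$. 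You also need the persisted $(q_{n,s})$ to be $\overline\Q$-Zariski generic after extraction; this follows from Remark~\ref{rem:zariski} applied at $s$.

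Both approaches work. Yours is more economical and bypasses Lemma~\ref{lem:stability} and Appendix~\ref{app:equidist} entirely; the paper's route, while heavier, establishes weak stability of the family as a by-product, which is of independent interest (cf.\ Proposition~\ref{pro:unstable_algebraic}).
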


\begin{rem}\label{rem:zdunik}
If $f:\P^1(\C)\to \P^1(\C)$ is a non-exceptional rational map,
 a result of Zdunik~\cite{zdunik:exponents} guarantees that 
  $f$ admits repelling 
  periodic points with exponents greater than $\chi^u(\mu_f)$. The contradiction in~\cite{huguin} is 
  based on this property. 
\end{rem}

\begin{proof}
Since $\chi^u(p) + \chi^s(p)  = \log\abs{\jac(f)} = \chi^u(\mu_f) + \chi^s(\mu_f)$, Assumption~(1) implies
\begin{equation}
\chi^s(p)< \chi^s(\mu_f)<0< \chi^u(\mu_f)<\chi^u(p).
\end{equation}
So changing $f$ into $f^{-1}$ replaces Assumption~(1) by the equivalent inequality $\chi^s(p)< \chi^s(\mu_f)$; this shows that  the statements on the stable and unstable multipliers are equivalent.  Without loss of generality we show that the unstable multipliers cannot lie in a fixed number field.

{\bf{Step 1.--}}  As for Theorem~\ref{thm:multipliers_huguin1}, we first prove the theorem under the additional assumption: 
\begin{enumerate}[\rm (1)] \setcounter{enumi}{1}
\item \textit{$f$ is defined over a number field.}
\end{enumerate}

The proof follows closely that of Theorem~\ref{thm:multipliers_huguin1}, the only difference being that 
due to the lack of hyperbolicity, the unstable directions are a priori not continuous, thus the function $\log\norm{df_z^u}$  migth be discontinuous, and we cannot assert that $\chi^u(\mu_n) \to \chi^u(\mu_f)$ as $n\to\infty$. On the other hand, the upper semi-continuity of Lyapunov exponents 
(\footnote{Recall the argument for upper-semicontinuity: for any   invariant measure $\nu$, the average 
upper Lyapunov exponent 
$\chi^u(\nu)$ is the limit of $\unsur{k}\int \log\norm{Df^k_x}d\nu(x)$ as $k\to \infty$. 
Choosing a submultiplicative norm and taking the limit along the subsequence $k=2^q$ 
realizes   $\nu\mapsto \chi^u(\nu)$ as the limit of a decreasing sequence of continuous functions for the weak-*
topology, hence it is upper semi-continuous.})
shows that 
$\limsup_{n\to\infty} \chi^u(\mu_n)\leq \chi^u(\mu_f)$. 
But since $\chi^u(\mu_n)  = \chi^u(p)$, 
by choosing $p$ from  the hypothesis~(1), we get the desired contradiction. 

\medskip

{\bf{Step 2.--}}  
We use the same specialization formalism as in the proof of Theorem~\ref{thm:multipliers_huguin1}, so we resume the notation from that proof.  Again arguing by contradiction, we assume that there is a number field $L$  containing the unstable multipliers of $f_b$. 
 
\begin{lem}\label{lem:stability}
There exists a 
 neighborhood $\mathcal N$ of $b$ in the parameter space   $S$ such that   any saddle periodic point 
 $q_b$ of $f_b$  persists as a saddle point $q_s$ of $f_s$   for all  $s\in \mathcal N$. 
\end{lem}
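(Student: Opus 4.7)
My plan is to combine pointwise persistence of saddles (via the implicit function theorem) with the constancy of unstable multipliers given by Lemma~\ref{lem:constant_multiplier}, and then to upgrade this pointwise statement to a uniform one via a dynamical stability argument at $b$.

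For each saddle periodic point $q_b$ of $f_b$ of period $n$, the implicit function theorem applied to $f_s^n(q)=q$ at $(b,q_b)$ provides a connected neighborhood $\sN_{q_b}\subset S$ of $b$ on which $q_b$ extends holomorphically to a periodic point $s\mapsto q_s$ of the same period. Since $\lambda^u(q_b)\in L\subset\overline{\Q}$, Lemma~\ref{lem:constant_multiplier} gives $\lambda^u(q_s)\equiv \lambda^u(q_b)$ on $\sN_{q_b}$, so $|\lambda^u(q_s)|>1$ throughout. The Jacobian identity $\lambda^u(q_s)\lambda^s(q_s)=\jac(f_s)^n$ then yields $|\lambda^s(q_s)|=|\jac(f_s)|^n/|\lambda^u(q_b)|$, a continuous function of $s$ taking the value $|\lambda^s(q_b)|<1$ at $s=b$; it therefore remains $<1$ on a smaller neighborhood of $b$, on which $q_s$ is still a saddle.

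The main difficulty is uniformity: both $\sN_{q_b}$ and the neighborhood on which $|\lambda^s(q_s)|<1$ holds depend on $q_b$ and may shrink uncontrollably as the period $n$ grows. To extract a single $\sN$ valid for every saddle simultaneously, I would aim to show that $b$ is a \emph{dynamically stable} parameter in the sense of Dujardin--Lyubich's stability theory for loxodromic automorphisms of $\C^2$, equivalently that $\jstar$ admits an equivariant holomorphic motion over some neighborhood of $b$; this is known to be equivalent to uniform saddle persistence. The freezing of all unstable multipliers provided by Lemma~\ref{lem:constant_multiplier} precludes any bifurcation of unstable type; the harder point is to rule out stable-type bifurcations, which I would try to handle by combining the arithmetic genericity of $b$ with the fact that, for each fixed period $n$, the bifurcation locus is a $\overline{\Q}$-algebraic subvariety of $S$, so that stability propagates from nearby Zariski-dense $\overline{\Q}$-points back to a full neighborhood of $b$. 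The heart of the matter, and the main obstacle, is precisely this passage from the pointwise constancy given by Lemma~\ref{lem:constant_multiplier} to a genuine uniform dynamical stability statement at $b$; the remaining ingredients are standard once that step is in place.
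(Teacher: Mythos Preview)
Your setup is exactly right, and so is your identification of the difficulty. The pointwise argument via the implicit function theorem, Lemma~\ref{lem:constant_multiplier}, and the Jacobian identity is the same as the paper's, and you are also correct that the endgame must go through weak stability in the sense of~\cite{tangencies}. The gap is in the mechanism you propose for uniformity.

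Your suggestion that ``for each fixed period $n$, the bifurcation locus is a $\overline{\Q}$-algebraic subvariety of $S$'' does not work: the condition that a multiplier lies on the unit circle is real-analytic, not complex-algebraic, so the $\overline{\Q}$-genericity of $b$ gives no reason to avoid it. Even if one could salvage some algebraicity, you would have to control a countable union of such loci as $n\to\infty$, and Zariski-genericity of a single point says nothing about that.

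The paper's resolution is more direct and is essentially hidden in your own Jacobian computation. Rewritten in terms of exponents, the constancy of $\chi^u(q_s)=\chi^u(q_b)$ gives
\[
\chi^s(q_s)=\log\abs{\jac(f_s)}-\chi^u(q_b),
\]
so $q_s$ stays a saddle on $\{s:\log\abs{\jac(f_s)}<\chi^u(q_b)\}$, a set that depends only on the \emph{number} $\chi^u(q_b)$ and not on the period. What is missing is a uniform lower bound on $\chi^u(q_b)$ over a dense set of saddles. The paper supplies this via the reinforced equidistribution theorem of the appendix: there is a subset $\mathrm{SPer}^+$ of saddle orbits, dense in $\jstar_b$, with $\chi^u(q_b)\geq\chi^u(\mu_{f_b})-\e$. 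Since $\chi^s(\mu_{f_b})<0$ forces $\chi^u(\mu_{f_b})>\log\abs{\jac(f_b)}$, choosing $2\e<\chi^u(\mu_{f_b})-\log\abs{\jac(f_b)}$ makes all of $\mathrm{SPer}^+$ persist on the uniform neighborhood $\mathcal N=\{s:\log\abs{\jac(f_s)}<\log\abs{\jac(f_b)}+\e\}$. Density then feeds into~\cite[Cor.~4.15 and Thm~4.2]{tangencies} to give weak stability on $\mathcal N$, hence persistence of \emph{all} saddles. One further wrinkle you did not anticipate: the results of~\cite{tangencies} require the family to be \emph{substantial}, which is not automatic when $\abs{\jac(f_b)}=1$; the paper checks that the constancy of unstable multipliers renders this hypothesis unnecessary at the one place it is used.
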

 
Assuming this lemma for the moment, we complete the proof. By Lemma~\ref{lem:constant_multiplier}, all unstable multipliers of saddle periodic points are constant in $\mathcal N$, so they remain in $L$. 
We claim that the unstable Lyapunov exponent of the maximal entropy measure also remains constant in $\mathcal N$, so that assumption~(1) stays satisfied. Indeed, by~\cite[Thm 2]{bls2}, for any loxodromic automorphism $g$  of dynamical degree $d$, 
if $\mathrm{SPer}_n$ is any set of saddle periodic points of period $n$ for $g$
such that $\#\mathrm{SPer}_n\sim d^n$, then
\begin{equation}\label{eq:lyapunov_exponent}
\unsur{d^n}\sum_{q\in \mathrm{SPer}_n} \chi^u(q) \underset{n\to\infty}\longrightarrow \chi^u\lrpar{\mu_{g}}. 
\end{equation}
So if we fix such a set for $f_b$, it persists in $\mathcal N$,
 and~\eqref{eq:lyapunov_exponent} says that 
$\chi^u(\mu_{f_s})$ is constant in $\mathcal N$, as claimed. To conclude, 
we pick any parameter $s\in\mathcal N\cap S(\overline \Q)$ and apply the first step  to get the desired contradiction. 
\end{proof}

The proof uses some basic facts from the stability theory of polynomial automorphisms~\cite{tangencies}. In particular we say that a 
holomorphic family $(f_\lambda)_{\lambda\in \Lambda}$ 
of loxodromic automorphisms is \emph{weakly stable} if its periodic points do not bifurcate; equivalently, saddle points remain of saddle type and can thus be followed holomorphically  in the family.  

\begin{proof}[Proof of Lemma~\ref{lem:stability}]
To ease notation set $J_t = \abs{\jac(f_t)}$. 
At the parameter $b$ we have $\chi^u(\mu_{f_b}) + \chi^s(\mu_{f_b})   = \log (J_b)$, hence $\chi^u(\mu_{f_b}) > \log (J_b)$. 
Fix some $\e\in \R$ such that 
\begin{equation}\label{eq:choice_of_epsilon}
0<2 \e < \chi^u(\mu_{f_b}) - \log (J_b).
\end{equation} 
   By Theorem~\ref{thm:equidistribution_reinforced} (which is just a mild extension of~\cite{bls2}) 
   we can construct a family 
   $\mathrm{SPer}^+=\mathrm{SPer}^+_b$ of saddle periodic orbits  
  which is dense in $\jstar_b$ and for every $q_b \in \mathrm{SPer}^+_b$, $\chi^u(q_b)\geq \chi^u(\mu_{f_b})  -\e$.  Every  $q = q_b\in \mathrm{SPer}^+_b$ 
persists as a saddle point $q_t$ in some neighborhood $\mathcal N(q)$
of $b$, and by Lemma~\ref{lem:constant_multiplier}, $\chi^u(q_t)$ is constant in $\mathcal N(q)$, so 
$\chi^u(q_t) = \chi^u(q_b)\geq \chi^u(\mu_{f_b})  -\e$. 
On the other hand $\chi^s(q_t)+\chi^u(q_t) = \log (J_t)$, thus from Equation~\eqref{eq:choice_of_epsilon} we get 
\begin{equation}
\chi^s(q_t) \leq \log (J_t) - \chi^u(\mu_{f_b})  +\e \leq \log (J_t) - \log (J_b) -\e.
\end{equation}
From this we see that $\chi^s(q_t)$ remains negative on the open set  
$\{t\; ; \;  \log (J_t) < \log (J_b)+\e\}$. 
This property  holds in some neighborhood $\mathcal N$ of $b$, independent of $q$,  and we conclude  that in  $\mathcal N$ all 
periodic points from  $\mathrm{SPer}^+$ persist. Since $\mathrm{SPer}^+_b$ is dense in $\jstar_b$, by~\cite[Cor. 4.15]{tangencies}, 
the family $(f_t)_{t\in \mathcal N}$ is weakly  stable and it follows from~\cite[Thm 4.2]{tangencies} that all saddle periodic points of 
$f_b$ persist in $\mathcal N$, as asserted.  
 
There is actually a delicate point here: the  results of~\cite{tangencies} hold under the assumption that the family $(f_t)$ is \emph{substantial}. This means that either
\begin{itemize}
\item all the members of the family are dissipative (i.e. $J_t< 1$ for $t\in S$), 
\end{itemize}
or
\begin{itemize}
\item for any periodic point $q$, with eigenvalues $\alpha_1$, $\alpha_2$, there is no persistent relation of the form 
$\alpha_1^a\alpha_2^b   = c$ on the parameter space $S$ ($a$, $b$, $c$ are any complex numbers with $\abs{c}=1$). 
\end{itemize}
If $\abs{\jac(f_b)}< 1$, by reducing $\mathcal N$ if necessary 
this property holds for every $f_t$, $t\in \mathcal N$  and we are done. 
If $\abs{\jac(f_b)}> 1$, we can simply apply the stability results of~\cite{tangencies} to the family  $(f_t\inv)$ and we are done as well.  
On the other hand if  $\abs{\jac(f_b)}= 1$, we cannot a priori guarantee that $(f_t)_{t\in \mathcal N}$ is substantial. 
This assumption is used in~\cite{tangencies} at exactly one point (see Lemma 4.11 there): assume 
that $(p_n(t))_{n\geq 0}$ is a sequence of 
 holomorphically moving saddle periodic  points, and 
 $q(t)$ is a holomorphically moving periodic point 
  which is a saddle at some parameter $t_0$ and such that $p_n(t) \to q(t)$ as $n\to \infty$. 
  We have to show that $q(t)$ remains a saddle at all parameters. 
The substantiality assumption is used in \cite[Lem. 4.11]{tangencies} to exclude the possibility that 
the multipliers of $q_t$ both cross the unit circle at a parameter $t$ for which
 $\abs{\jac(f_t)}=1$. But in our situation, if there happens to be 
  a persistent relation of the form 
 $\alpha_1^a(t)\alpha_2^b(t)   = c$ between the multipliers of  $q(t)$, 
 since near $t_0$ the unstable multiplier ($\alpha_1$ say) 
 is constant by Lemma~\ref{lem:constant_multiplier},  so $\alpha_2$ is constant as well, and $q(t)$ remains a saddle for all parameters. Thus in our case the substantiality assumption is useless, and we are done.  
\end{proof}

The proofs of Theorems~\ref{thm:multipliers_huguin1} and~\ref{thm:multipliers_huguin2} 
would be more natural if we were directly able to show that the algebraicity of unstable mutipliers implies that $f$ is 
defined over $\overline \Q$ (up to conjugacy).  In~\cite{huguin} this corresponds to the first step of the proof of the main theorem, which relies on McMullen's theorem saying that the unstable multipliers determine the conjugacy class of $f$ up to finitely many choices (see~\cite{mcm_algorithms}).  We obtain  a partial result in this direction.  

\begin{pro}\label{pro:unstable_algebraic}
Let $f\in \Aut(\C^2)$ be a loxodromic automorphism such that all unstable multipliers of $f$ are algebraic. 
Suppose that  there is no non-trivial  weakly stable algebraic family through $f$. Then $f$ is defined over a number field. 
\end{pro}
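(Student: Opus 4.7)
The plan is to prove the contrapositive: assuming $f$ is not defined over any number field, I will construct a non-trivial weakly stable algebraic family through $f$, contradicting the hypothesis.

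First, apply the specialization formalism used in the second step of the proof of Theorem~\ref{thm:multipliers_huguin2}. Let $R\subset\C$ be the $\overline{\Q}$-algebra generated by the coefficients of $f$ and $f^{-1}$, and realize $f$ as the fibre $f_b$ of an algebraic family $(f_s)_{s\in S}$ of loxodromic automorphisms parametrized by a quasi-projective variety $S$ defined over $\overline{\Q}$ with function field $K=\mathrm{Frac}(R)$, the base point $b\in S(\C)$ being $\overline{\Q}$-Zariski dense in $S$. The hypothesis that $f$ is not defined over a number field translates into $\dim S\geq 1$.

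Next, apply Lemma~\ref{lem:stability}: its proof only uses the algebraicity of the unstable multipliers of $f_b$ (through Lemma~\ref{lem:constant_multiplier}) together with the unconditional inequality $\chi^u(\mu_{f_b})>\log|\jac(f_b)|$, which follows from the Pesin--Ruelle inequality applied to $f_b$ and $f_b^{-1}$. This yields a Euclidean neighborhood $\mathcal N$ of $b$ in $S(\C)$ on which every saddle periodic point of $f_b$ persists as a saddle of $f_s$; in other words, $(f_s)_{s\in\mathcal N}$ is weakly stable. The substantiality issue that may arise when $|\jac(f_b)|=1$ is handled exactly as in the proof of Lemma~\ref{lem:stability}, via the constancy of unstable multipliers.

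Finally, promote $\mathcal N$ to a Zariski-open subvariety: the bifurcation locus $\mathrm{Bif}(S)$ of an algebraic family of loxodromic automorphisms is Zariski-closed in the parameter variety (it is the support of a natural closed positive current on $S$; see~\cite{tangencies}), so $S^\circ:=S\setminus\mathrm{Bif}(S)$ is a non-empty Zariski-open subvariety containing $b$ on which the family is weakly stable. Since $\dim S^\circ=\dim S\geq 1$, the algebraic family $(f_s)_{s\in S^\circ}$ is a non-trivial weakly stable algebraic family through $f$, contradicting the hypothesis and forcing $\dim S=0$.

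The main point requiring care is the Zariski-closedness of the bifurcation locus in the quasi-projective parameter variety $S$, which is what allows one to pass from the Euclidean neighborhood $\mathcal N$ of $b$ to an honest algebraic subfamily. Although this is a standard consequence of the general stability theory of~\cite{tangencies}, some verification is needed because $S$ is not a priori projective: one has to check that passing to a suitable compactification of $S$ does not introduce spurious bifurcations at the base point.
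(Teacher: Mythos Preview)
Your argument breaks at the step where you ``promote $\mathcal N$ to a Zariski-open subvariety'' by asserting that the bifurcation locus $\mathrm{Bif}(S)$ is Zariski-closed. This is false in general: the bifurcation locus of an algebraic family of loxodromic automorphisms is the support of a closed positive $(1,1)$-current, hence closed in the \emph{Euclidean} topology, but it is typically a fractal set with no algebraic structure (think of the boundary of the Mandelbrot set in the quadratic family). Being the support of a closed positive current says nothing about Zariski-closedness. So $S\setminus\mathrm{Bif}(S)$ is merely Euclidean-open, and you have not produced an algebraic family. The acknowledgment at the end that this point ``requires care'' is well placed, but the claim that it is ``a standard consequence of~\cite{tangencies}'' is simply wrong.

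The paper's proof avoids this obstacle by showing something stronger: the \emph{entire} family $(f_t)_{t\in S}$ is weakly stable. The argument is global rather than local-to-Zariski. One takes the set $\mathrm{SPer}^+$ of saddle points with unstable exponent close to $\chi^u(\mu_{f_b})$ (Theorem~\ref{thm:equidistribution_reinforced}) and, for each such $q_b$, follows it along a path in $S$ avoiding the proper analytic hypersurface where a multiplier equals $1$ (using~\cite{BHI}); by Lemma~\ref{lem:constant_multiplier}, the unstable eigenvalue $\lambda^u(q_b)$ remains an eigenvalue of the continuation $q_t$. The only way $q_t$ could fail to be a saddle is for the other eigenvalue to cross the unit circle, which forces $\log\lvert\jac(f_t)\rvert$ to exceed $\chi^u(\mu_{f_b})-\e$. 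If $\jac$ were non-constant on $S$, one could reach a parameter with $\log\lvert\jac(f_t)\rvert>\chi^u(\mu_{f_b})+\e$, making \emph{all} of $\mathrm{SPer}^+_n$ repelling there---impossible, since by~\cite{bls2} most periodic points of $f_t$ are saddles. Hence $\jac$ is constant on $S$, the $\mathrm{SPer}^+$ points persist as saddles throughout, and $S$ is weakly stable. This global constancy-of-Jacobian argument is the missing idea; it cannot be shortcut by a Zariski-closure claim.
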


Note that the non-existence of stable algebraic families is a key step in~\cite{mcm_algorithms}. It is expected to hold in $\Aut(\C^2)$ as well. Since here we only need the non-existence of such a family passing through $f$, the approach is different from~\cite{huguin}. 

\begin{proof}[Proof (sketch)]
Normalize $f$ as a product of Hénon mappings, and assume by  
contradiction that $f$ is not defined over $\overline \Q$.  
As in Theorems~\ref{thm:multipliers_huguin1} 
and~\ref{thm:multipliers_huguin2} we construct a non-trivial algebraic family $(f_t)_{t\in S}$ in which $f=f_b$ corresponds to a generic parameter. We claim that this family is weakly stable, which yields the desired contradiction. 

Indeed, for $f_b$ 
we consider the family of saddle periodic points $\mathrm{SPer}^+$ given by Theorem~\ref{thm:equidistribution_reinforced}
  By Lemma~\ref{lem:constant_multiplier}, the unstable multiplier of the continuation of 
 any $q_b\in   \mathrm{SPer}^+_n$ is locally constant. In addition, 
  exactly as in Lemma~\ref{lem:stability}, for large $n_0$
  we can construct a uniform neighborhood in 
  which all points in   $\mathrm{SPer}^+_n$, $n\geq n_0$ 
   persist as saddle points, depending only on the 
  Jacobian.  If   $t\in S$ is an arbitrary parameter, we can find a path joining $b$ to $t$ avoiding the 
  locus where the second multiplier hits the value 1 (see~\cite{BHI}), so $q_b$ can be followed along that 
  path, and we see that $\lambda^u(q_b)$ is still an eigenvalue of $q_t$. However, it might be that $q_t$ 
  is now a repelling or semi-repelling point.  
  We claim that $\jac(f_t)$ is constant along the family and that this phenomenon does not happen. Indeed if $S\ni t\mapsto \jac(f_t)$ were not constant, then for some parameter $t$ we would have 
  $ \log \abs{\jac(f_t)}> \chi(\mu_{f_b}) +\e$, and at such a parameter all the points of 
  $\mathrm{SPer}^+_n$ would have become repelling. But this is impossible since by~\cite{bls2} the majority of periodic points of $f_t$ are saddles. So the Jacobian is constant, 
  and the points of $\mathrm{SPer}^+_n$, $n\geq n_0$
   persist as saddles throughout the family,  which must then 
  be weakly stable, as asserted. 
\end{proof}

\subsection{Examples}

The assumptions of Theorems~\ref{thm:multipliers_huguin1} 
and~\ref{thm:multipliers_huguin2}  can be checked on certain perturbative examples. We  illustrate 
this on the Hénon family  
but   more general close-to-degenerate examples could be considered. 
For $d\geq 2$, let us parameterize 
  the space $\mathcal {H}_d$ of Hénon maps of degree $d$ by 
 $(a, c) \in \C^\times \times \C^{d-1} $, where 
 $f_{a,c}(z,w) = (aw+ p_c(z), z)$, with  $p_c(z) = z^d+\sum_{j=0}^{d-2} c_j z^j$. Note that $p_c$ is 
 \emph{exceptional} (or \emph{integrable}) when it is conjugated to $\pm T_d$, where 
 $T_d$ is a Chebychev polynomial, or when   $c=0$, in which case $p$ is monomial. 
We  obtain the  extended parameter space $\overline{\mathcal {H}_d}$ 
by adjoining the hypersurface $\set{a=0}$ 
of \emph{degenerate maps}:  $f_{0, c}$  maps $\C^2$ to the curve $\set{z=p_c(w)}$ and the dynamics on that curve is conjugated to that of $p_c$.   
 In this respect, a Hénon map $f_{a,c}$ 
with small Jacobian $a$ may be seen as a perturbation of $p_c$.

\begin{thm}\label{thm:multipliers_henon}
There exists a neighborhood
$\mathcal N$ of $\set{0}\times \C^{d-1} $ 
in the  extended Hénon parameter space $\overline{\mathcal {H}_d}$  
such that if 
$f\in \mathcal N$
then the unstable (resp. stable) multipliers 
of $f$ are not contained in a fixed number field. 
\end{thm}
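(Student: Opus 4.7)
The plan is to apply Theorem~\ref{mthm:multipliers_huguin} to $f=f_{a,c}$, selecting which of its two hypotheses to verify according to whether the one-variable limit $p_c$ is exceptional, i.e.\ affinely conjugate to $z^d$ or $\pm T_d$.

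\emph{Non-exceptional case.} If $p_c$ is not exceptional, Zdunik's theorem (see Remark~\ref{rem:zdunik}) provides a repelling periodic orbit $\zeta$ of $p_c$ with $\chi_{p_c}(\zeta)>\chi(\mu_{p_c})$. This orbit is hyperbolic, so for $|a|$ small (depending on $c$) it persists to a saddle periodic orbit $\zeta_{a,c}$ of $f_{a,c}$ whose multiplier depends analytically on $a$, and in particular $\chi^u(\zeta_{a,c})\to \chi_{p_c}(\zeta)$ as $a\to 0$. Simultaneously, $\chi^u(\mu_{f_{a,c}})\to \chi(\mu_{p_c})$ as $a\to 0$; this should follow from the convergence of the slices $T^+_{a,c}\wedge[W^u]$ of the dynamical currents to the one-dimensional harmonic measure of $p_c$, together with the integral representation of the Lyapunov exponent. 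Hence for $|a|$ small and nonzero, $\chi^u(\zeta_{a,c})>\chi^u(\mu_{f_{a,c}})$, and Theorem~\ref{thm:multipliers_huguin2} delivers the conclusion.

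\emph{Exceptional case.} If $p_c$ is exceptional, it is uniformly expanding on its Julia set, so $f_{a,c}$ is uniformly hyperbolic for $|a|$ small and the first hypothesis of Theorem~\ref{thm:multipliers_huguin1} holds; it remains to produce two saddle periodic points with distinct Lyapunov exponents. I do this by an explicit first-order perturbation: since $Df_{a,c}(z,w)=\bigl(\begin{smallmatrix} p_c'(z) & a \\ 1 & 0\end{smallmatrix}\bigr)$ has $\det=-a$, the unstable eigenvalue of a periodic orbit $p_n(a)$ of period $n$ persisting from a repelling cycle of $p_c$ expands as $\lambda^u(p_n(a))=\epsilon_n d^n + a\,B_n(p_n)+O(a^2)$, with $\epsilon_n\in\{\pm 1\}$ and $B_n(p_n)$ a trace-type quantity depending on the orbit. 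This yields $\chi^u(p_n(a))=\log d+\mathrm{Re}\bigl(a\,B_n(p_n)/(n\epsilon_n d^n)\bigr)+O(|a|^2)$. Comparing a fixed point $p_1$ to a suitably chosen higher-period orbit (for instance a period-$3$ orbit: already for $d=2$, $c=0$, a direct computation gives $B_1/2=-3/4$ while $\mathrm{Re}(B_3)/(3\cdot 8)=-1/24$), the linear coefficients in $a$ turn out to differ, so for $|a|$ small and nonzero the two Lyapunov exponents are distinct, and Theorem~\ref{thm:multipliers_huguin1} concludes.

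The main obstacle is the exceptional case: the one-variable multiplier rigidity \emph{does} hold for $z^d$ and $\pm T_d$, so no one-dimensional input is available and the two-dimensional mechanism must come entirely from the off-diagonal $a$ in $Df_{a,c}$ coupling distinct periodic orbits in an orbit-dependent way. The delicate point is verifying the non-degeneracy of the linear correction $B_n(p_n)/(n\epsilon_n d^n)$ uniformly on the exceptional locus and for all degrees $d\geq 2$; this amounts to showing that the symmetries of $p_c$ (rotation by $(d-1)$-th roots of unity for $z^d$, and a $\Z/2$-symmetry plus semi-conjugacy to $z^d$ for $\pm T_d$) do not collapse the trace functionals at fixed and higher-period orbits to the same value. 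This is an elementary but essential check, carried out on the transfer-matrix product expressing $Df^n_{a,c}$ along a periodic orbit.
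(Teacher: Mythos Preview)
Your non-exceptional case is correct and matches the paper's argument (the paper cites \cite{continuity} for the continuity of $\chi^u(\mu_{f_{a,c}})$ and remarks that only the upper semicontinuity $\limsup \chi^u(\mu_{f_{a,c}})\leq \chi(\mu_{p_{c_0}})$ is actually needed, which is easier).

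There is a genuine gap in the Chebychev case. Your claim that ``if $p_c$ is exceptional, it is uniformly expanding on its Julia set'' is false for $\pm T_d$: the Julia set is $[-2,2]$, the $d-1$ critical points lie in $(-2,2)\subset J$, and the derivative vanishes there. Hence $\pm T_d$ is \emph{not} hyperbolic, and there is no reason for the H\'enon perturbation $f_{a,c}$ to be uniformly hyperbolic for small $a$; Theorem~\ref{thm:multipliers_huguin1} is simply unavailable. The paper instead treats Chebychev exactly like the non-exceptional case, via Theorem~\ref{thm:multipliers_huguin2}: the post-critical fixed point of $\pm T_d$ has multiplier $d^2$, hence Lyapunov exponent $2\log d>\log d=\chi(\mu_{\pm T_d})$, and this repelling point persists as a saddle with $\chi^u(p)>\chi^u(\mu_{f_{a,c}})$ for small $a$. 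So the correct dichotomy is monomial versus non-monomial, not exceptional versus non-exceptional.

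For the monomial case your route genuinely diverges from the paper's. You propose a first-order perturbative computation of the unstable multiplier in $a$, comparing a fixed point to a higher-period orbit; this is plausible (your $d=2$ numerics are correct for $n=1$) but remains a sketch: the non-degeneracy of $B_n/(n\epsilon_n d^n)$ must be checked for all $d\geq 2$, and you must also handle the passage from ``distinct at first order in $a$ at $c=0$'' to ``distinct on a full $(a,c)$-neighborhood of $(0,0)$''. The paper avoids this computation entirely: it proves (Lemma~\ref{lem:hyperbolic_mulitpliers}) that \emph{any} map in the hyperbolic component of $(0,0)$ has two saddle points with distinct exponents, by contradiction. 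If all unstable exponents were equal, then (using \cite{bs6}) they would all equal $\log d$; the Livschitz theorem then forces the unstable dimension $\delta^u=1$, and thermodynamic formalism gives $0<\mathcal H^1(J\cap W^u_{\loc}(x))<\infty$. Since $J\cap W^u_{\loc}(x)$ is a curve (solenoid structure), it would be rectifiable, contradicting Theorem~\ref{thm:rectifiable}. This argument is non-perturbative, degree-independent, and covers the entire hyperbolic component rather than a small disk in $a$; the price is that it relies on the rectifiability machinery of Section~\ref{sec:rectifiable}.
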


It is interesting that, as opposed to \cite{huguin},  Chebychev and monomial polynomials do not play an exceptional role in this result. 

\begin{proof} 
Assume first that $p_{c_0}$ is neither monomial  nor conjugate to a Chebychev polynomial. 
Then by the above-mentioned theorem of Zdunik~\cite{zdunik:exponents}, $p_{c_0}$ 
admits a repelling point $r_0$ with 
$\chi(r_0)> \chi(\mu_{p_{c_0}})$. For $(a,c)$ close to $(0,c_0)$, $a\neq0$,
$r_0$ persists as a saddle point $r = r_{a,c}$ with 
unstable Lyapunov exponent close to $\chi(r_0)$. Furthermore $\chi^u(\mu_{f_{a,c}})$ is close to 
$\chi(\mu_{p_{c_0}})$ (see~\cite[\S 3]{continuity}; actually we only need the ``easy''  inequality 
$\limsup_{(a,c)\to(0, c_0)} \chi^u(\mu_{f_{a,c}}) \leq \chi(\mu_{p_{c_0}})$).  So 
Theorem~\ref{thm:multipliers_huguin2} applies and we are done. 

If $p_{c_0}$ is conjugated to  $\pm T_d$, we observe that all Lyapunov exponents of periodic points are equal to $\log d$, except for the   post-critical fixed points whose exponent is $2\log d$ (see \cite[Cor. 3.9]{milnor:lattes}).
Since $\pm T_d$ has a connected Julia set, $\chi(\mu_{\pm T_d}) = \log d$, so again the assumptions of 
Theorem~\ref{thm:multipliers_huguin2} are satisfied (\footnote{The difference between the one- and the two-dimensional situations is that in the one-dimensional case, the post-critical fixed points do not have transverse homoclinic intersections}).  

The most delicate case is that of perturbations of $z^d$, which is dealt with in the following lemma. 
\end{proof}

If $(a, c)$ is close to $(0,0)$, then  $f_{(a,c)}$ is uniformly hyperbolic and its Julia set is a solenoid. In particular for any $x\in J$, 
$J_{W^u(x)}$ is an unbounded simple  curve (see~\cite{fornaess-sibony:duke,HOV2}). 
This property propagates by stability to the whole hyperbolic  component containing these parameters.

\begin{lem}\label{lem:hyperbolic_mulitpliers}
Any automorphism belonging to the 
 hyperbolic component of $(0,0)$  satisfies the assumptions of Theorem~\ref{thm:multipliers_huguin1}, that is, 
it admits two saddle periodic points with distinct Lyapunov exponents. 
\end{lem}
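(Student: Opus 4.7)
We argue by contradiction: suppose that every saddle periodic point of $f$ has the same unstable Lyapunov exponent $\chi$, and derive a contradiction from Corollary~\ref{cor:dimH}. First, let us set up the hypotheses of that corollary. Since $f$ lies in the hyperbolic component containing parameters $(a, 0)$ with $|a|$ small, structural stability propagates the dynamical features of nearby models throughout the component: $J$ remains homeomorphic to a Smale-type solenoid (in particular, connected), and $|\jac(f)| < 1$ (dissipativeness, which also follows from connectedness of $J$ together with Theorems~7.3 and~Corollary~7.4 of~\cite{bs6} cited above, since a volume expanding map is never unstably connected).

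Since the hyperbolic splitting on $J$ is H\"older continuous, so is the function $\phi(z) := \log\norm{Df_z|_{E^u(z)}}$. The assumption that all saddle periodic points of $f$ have the same unstable Lyapunov exponent $\chi$ translates into
$\sum_{k=0}^{n-1}\phi(f^k(q)) = n\chi$ for every periodic $q$ of period $n$, and Livsic's theorem (applied to the transitive hyperbolic set $J$) then yields a H\"older function $u\colon J \to \R$ with $\phi = \chi + u\circ f - u$; in particular, $\int \phi\, d\nu = \chi$ for every $f$-invariant probability measure $\nu$ supported on $J$. Evaluating this on the maximal entropy measure gives $\chi^u(\mu_f) = \chi$, and by Ruelle's inequality $\log d = h_{\mu_f}(f) \leq \chi^u(\mu_f) = \chi$.

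Now we invoke Bowen's formula for the Hausdorff dimension of unstable slices. Since $f$ is uniformly hyperbolic and the unstable direction is complex one-dimensional (hence $f$ acts conformally on unstable slices), a Markov partition of $J$ presents $J\cap W^u_{\loc}(x)$ as a conformal repeller, and $\delta^u := \dim_H(J\cap W^u_{\loc}(x))$ is the unique root $t$ of the Bowen equation $P(-t\phi) = 0$, where $P$ denotes the topological pressure for $f|_J$. Because $\phi$ is cohomologous to the constant $\chi$, we have $P(-t\phi) = P(-t\chi) = h_{\mathrm{top}}(f|_J) - t\chi = \log d - t\chi$, hence
\begin{equation}
\delta^u  = \frac{\log d}{\chi} \leq 1.
\end{equation}
This contradicts Corollary~\ref{cor:dimH}, which asserts $\delta^u > 1$ under our hypotheses. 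We conclude that two saddle periodic points of $f$ must have distinct unstable Lyapunov exponents.

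The only delicate point is the appeal to Bowen's formula. If one prefers, it can be replaced by the argument in the proof of Corollary~\ref{cor:dimH}: cohomology of $\phi$ to the constant $\chi$ implies that the equilibrium state $\kappa^u$ of the geometric potential $-t\phi$ coincides with $\mu_f$ for every $t$, and Young's formula gives $\dim^u(\mu_f) = h_{\mu_f}(f)/\chi^u(\mu_f) = \log d/\chi$; combined with Theorem~22.1 of~\cite{pesin_book} (as used in the proof of Corollary~\ref{cor:dimH}), this identifies $\delta^u$ with $\log d/\chi$, and one concludes exactly as above.
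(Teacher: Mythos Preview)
Your proof is correct and follows essentially the same route as the paper's. Both argue by contradiction, use Livschitz to make $\log\norm{Df|_{E^u}}$ cohomologous to a constant, and then appeal to thermodynamic formalism to bound $\delta^u\leq 1$. The only differences are cosmetic: the paper first pins down $\chi=\log d$ exactly via \cite{bs6} (connected Julia set implies $\chi^u(\mu_f)=\log d$) rather than using Ruelle's inequality for $\chi\geq\log d$, and it unpacks the contradiction by hand (showing $\delta^u=1$, then invoking Pesin's Theorem~22.1 and the rectifiability result Theorem~\ref{thm:rectifiable}) instead of quoting Corollary~\ref{cor:dimH} directly---indeed, the paper explicitly remarks that the lemma ``is closely related to Corollary~\ref{cor:dimH} but for convenience we give a direct argument.'' Your packaging via Corollary~\ref{cor:dimH} is arguably cleaner.
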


This result is closely related 
to Corollary~\ref{cor:dimH} but for  convenience we give a direct argument.

\begin{proof}
Let $f = f_{a,c}$ be such an automorphism.   In the hyperbolic component, the modulus of the 
jacobian determinant is  must remain smaller than 1, because there is a sink that does not bifurcate. Now, assume by contradiction that all its unstable multipliers are equal. By Theorems 5.1 and~7.3 in \cite{bs6} (see page 733 for the detailed version of Theorem 7.3),
since $J$ is connected,  
$\chi^u(\mu_f) = \log d$, so by the equidistribution of saddle points and their
 exponents~\cite[Thm 2]{bls2}, all 
unstable Lyapunov exponents are equal to $\log d$. 

We claim that the unstable dimension of $f$, that is, the Hausdorff dimension of $J\cap W^u_\loc(x) = J^+\cap W^u_\loc(x)$ for any $x\in J$, is equal to 1. 
Indeed, the unstable dimension  $\delta^u$ is given by the formula 
  \begin{align} 
\delta^u & =  \dim_H\lrpar{ J\cap W^u_{\loc}(x)}  
 = \frac{h_{\kappa^u}(f)}{  \int \log \norm{Df\rest{E^u(x)}} d \kappa^u(x) }  \label{eq:thermo}
 \end{align}  
  where $\kappa^u$ is the unique equilibrium state associated to $\delta^u\log \norm{df\rest{E^u}}$ and
  ${h_{\kappa^u}(f)}$ is its measure theoretical entropy  (see the proof of Corollary~\ref{cor:dimH} and Pesin's book~\cite[Thm 22.1]{pesin_book}; this goes back to the work of Manning and McCluskey \cite{mccluskey-manning}).
   Since all unstable multipliers are equal to $\log d$, the Livschitz theorem entails that 
    $x\mapsto  \log \abs{Df\rest{E^u(x)}}$ is cohomologous to $\log d$, that is, there exists a measurable  function  $\phi$(\footnote{Actually $\phi$ is Hölder continuous, since   by the regularity of the unstable lamination, 
        $x\mapsto  \log \norm{Df\rest{E^u(x)}}$ is Hölder continuous.})      such   that 
    \begin{equation}
    \log \norm{Df\rest{E^u(\cdot)}} - \log d = \phi\circ f - \phi.
    \end{equation} 
It follows that $\int \log \norm{Df\rest{E^u(x)}} d \kappa^u(x) = \log d$.   
On the other hand, $ {h_{\kappa^u}(f)}$ is bounded by the topological entropy of $f$, that is by $\log d$. This shows that 
    $\delta^u\leq 1$. But since $f$ is unstably connected $\delta^u\geq  1$, and we are done. 
 
 In addition, $J\cap W^u_\loc(x)$ has positive and finite 1-dimensional Hausdorff measure (see~\cite[Thm 22.1]{pesin_book}). Being a curve, it is then rectifiable (see~\cite[\S 3.2]{falconer}), and Theorem~\ref{thm:rectifiable} provides the desired contradiction.
\end{proof}

 Pushing the argument of Lemma~\ref{lem:hyperbolic_mulitpliers} further, we obtain 
a non-perturbative sufficient condition for   Theorem~\ref{thm:multipliers_huguin1}. 

\begin{pro}\label{pro:examples_huguin1}
Let $f\in \Aut(\C^2)$ be uniformly  hyperbolic. If its Julia set $J$ is not totally disconnected, there exist two saddle 
periodic points with distinct Lyapunov exponents.
\end{pro}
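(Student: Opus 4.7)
The strategy is to relax the solenoid hypothesis of Lemma~\ref{lem:hyperbolic_mulitpliers} and run essentially the same argument, with the rectifiability obstruction of Theorem~\ref{thm:rectifiable_measures} as the final contradiction. Assume by way of contradiction that all saddle periodic points of $f$ have a common unstable Lyapunov exponent~$\chi$ (the identity $\chi^u(p)+\chi^s(p)=\log\abs{\jac f}$ then forces a common stable exponent as well, so the sought conclusion is symmetric under $f\mapsto f^{-1}$). The first step is to locate a non-degenerate continuum inside an unstable slice of~$J$. Since $f$ is uniformly hyperbolic, $J=\jstar$ has a local product structure: around every $x\in J$ there are canonical coordinates identifying $J$ locally with $(J\cap W^u_\loc(x))\times(J\cap W^s_\loc(x))$. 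Hence $J$ not totally disconnected means one of these two factors is not totally disconnected, and after possibly replacing $f$ by $f^{-1}$ (which preserves the statement) we may assume that $J\cap W^u_\loc(x)$ contains a non-trivial connected component $C$ for some $x$. As $C$ is a non-degenerate continuum in the Riemann surface $W^u_\loc(x)$, we have $\mathcal H^1(C)\geq \mathrm{diam}(C)>0$, so the unstable dimension $\delta^u:=\dim_H(J\cap W^u_\loc(x))\geq 1$ (and this value is independent of $x$ by hyperbolicity).

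The second step exploits the multiplier assumption. By the equidistribution theorem of~\cite{bls2} we have $\chi=\chi^u(\mu_f)$. The function $\log\norm{Df\rest{E^u(\cdot)}}$ is H\"older continuous on $J$ (since the hyperbolic splitting is H\"older) and, by hypothesis, its Birkhoff average over every periodic orbit equals $\chi$; Livschitz's theorem then shows it is cohomologous to the constant $\chi$ on~$J$. Bowen's equation for the unstable dimension, together with this cohomology relation, reduces to the computation
\[
0 = P\lrpar{-\delta^u \log\norm{Df\rest{E^u}}} = h_{\rm top}(f\rest{J}) - \delta^u\chi = \log d - \delta^u \chi,
\]
so $\delta^u=\log d/\chi$. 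Combined with Ruelle's inequality $\log d = h_{\mu_f}(f) \leq \chi$, this yields $\delta^u\leq 1$. Hence $\delta^u=1$ and $\chi=\log d$, and moreover the equilibrium state of $-\delta^u\log\norm{Df\rest{E^u}}$ coincides with the maximal entropy measure~$\mu_f$.

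The final step is identical to the end of the proof of Corollary~\ref{cor:dimH}: applying~\cite[Thm 22.1]{pesin_book} with $\delta^u=1$ shows that the unstable conditional $\mu_f(\cdot\vert W^u_\loc(x))$ has a positive, locally integrable density with respect to $\mathcal H^1\rest{J\cap W^u_\loc(x)}$. Thus this conditional is a $1$-rectifiable measure, which contradicts Theorem~\ref{thm:rectifiable_measures}. The assumption that all unstable multipliers give the same Lyapunov exponent must fail, and the proposition follows.

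The main obstacle is the first step: translating the hypothesis "$J$ not totally disconnected" into the geometric statement that some slice $J\cap W^u_\loc(x)$ (or, after exchanging $f$ with $f^{-1}$, $J\cap W^s_\loc(x)$) contains a non-trivial continuum. Once this is in place, all remaining ingredients---Livschitz, Bowen's equation, Ruelle's inequality, the thermodynamic formalism identification, and Theorem~\ref{thm:rectifiable_measures}---slot in as in the proof of Lemma~\ref{lem:hyperbolic_mulitpliers}.
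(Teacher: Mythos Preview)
Your strategy is correct and close in spirit to the paper's, but two steps need repair.

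In Step~2, the bound $\log d \leq \chi$ is not Ruelle's inequality. In real dimension~4 the complex unstable line contributes two equal real exponents, so Ruelle only gives $h_{\mu_f}\leq 2\chi^u$, i.e.\ $\chi\geq \tfrac12\log d$. The sharp bound $\chi^u(\mu_f)\geq \log d$, with equality exactly when $f$ is unstably connected, is a theorem of Bedford and Smillie~\cite[Thm~7.3]{bs6}; you must invoke this instead.

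In Step~3, the implication ``density with respect to $\mathcal H^1\rest{E}$ $\Rightarrow$ rectifiable measure'' requires the set $E=J\cap W^u_\loc(x)$ itself to be rectifiable. Pesin's theorem gives $0<\mathcal H^1(E)<\infty$, and your Step~1 produces one continuum $C\subset E$, but $E$ could still carry a purely unrectifiable piece. In Corollary~\ref{cor:dimH} this does not arise because the hypothesis ``$J$ connected'' forces, via~\cite[Thm~3.5]{bs7}, finitely many non-degenerate components, each rectifiable by~\cite[Thm~3.14]{falconer}. Two fixes: (a) once $\chi^u(\mu_f)=\log d$, \cite[Thm~7.3]{bs6} shows $f$ is unstably connected, hence $J$ is connected by~\cite[Thm~0.2]{bs6}, and Corollary~\ref{cor:dimH} applies directly; or (b) observe that the set $\{x\in J:\ x\text{ lies in the rectifiable part of }J\cap W^u_\loc(x)\}$ is $f$-invariant and has positive $\mu_f$-measure (thanks to $C$ and the positive density), hence full measure by ergodicity, so the unrectifiable part of each slice is $\mathcal H^1$-null.

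For comparison, the paper argues by cases. If $J$ is connected it quotes Corollary~\ref{cor:dimH}. If $J$ is disconnected but not totally disconnected, \cite[Thms~5.1 and~7.3]{bs6} give $\chi^u(\mu_f)>\log d$, hence $\delta^u=(\log d)/\chi<1$ and likewise $\delta^s<1$; both slices are then totally disconnected, and the local product structure forces $J$ to be totally disconnected, a contradiction. Your single-pass route through the product structure is a clean alternative that avoids~\cite{bs7}, at the price of the extra care above.
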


\begin{proof}
We may assume that $\abs{\jac(f)}\leq 1$; equivalently  $\chi^u(\mu_f)+\chi^s(\mu_f)\leq 0$. Assume  that the   
 Lyapunov exponents of periodic points are all
 equal to some $\chi$.

If $J$ is connected, then by Theorem 0.2 of~\cite{bs6}, $f$ is unstably connected, and then by Theorem 7.3 of~\cite{bs6} (page 733),
$\chi^u(\mu_f) = \log d$. 
Arguing as in Lemma~\ref{lem:hyperbolic_mulitpliers}, the Hausdorff dimension $\delta^u$ of 
$ J\cap W^u_{\loc}(x)$ is almost surely equal to $1$, which contradicts Corollary~\ref{cor:dimH}. 

If $J$ is disconnected,   then by Theorems~5.1 and~7.3 of~\cite{bs6} we have $\chi^u(\mu_f) > \log d$. Since $\chi^u(\mu_f)+\chi^s(\mu_f)\leq 0$, we obtain 
 $\abs{\chi^s(\mu_f)} > \log d$ as well. 
 Thus  the Lyapunov exponent $\chi$ of   periodic points 
   is larger than $\log d$ and we infer  that for  $x\in J$,
 \begin{equation}
 \dim_H\lrpar{ J\cap W^u_{\loc}(x)} = \frac{h_{\kappa^u}(f)}{   \chi^u(\kappa^u) } \leq \frac{\log d}{ \chi^u(\kappa^u)}<1     
 \end{equation} 
 and likewise in the stable direction. Thus 
 $J\cap W^u_{\loc}(x)$ and  $J\cap W^s_{\loc}(x)$ are totally disconnected. Since the uniform hyperbolicity shows that $J$ has a local  product structure (of $J^+$ by $J^-$), $J$ is totally disconnected. This contradiction concludes the proof. \end{proof}

\begin{rem} 
As a consequence of this proposition and  \cite{wolf:TAMS} (see Theorem 4.2 there, and the remark following it), a dissipative and hyperbolic automorphism whose Julia set is not totally disconnected does not admit a measure of full dimension, that is, such that $\dim(\nu) = \dim_H(J)$. 
\end{rem}

\appendix

\section{Equidistribution of periodic orbits and Lyapunov exponents} \label{app:equidist}

Let $f\in \Aut(\C^2)$ be a loxodromic automorphism of dynamical degree $d$.
Let $\mathrm{SPer}_n$ denote the set of saddle periodic orbits of $f$ of exact period $n$.
The equidistribution  theorems for periodic orbits  of Bedford, Lyubich and Smillie~\cite{bls2} 
asserts first, that  almost every periodic point is a saddle point, i.e.
\begin{align} \label{eq:counting_saddle_points}
\lim_{n\to \infty}\frac{1}{d^n} \#\mathrm{SPer}_n = \lim_{n\to \infty}\frac{1}{d^n}\#\mathrm{Fix}_n &=1 ,
\end{align}
and that these points equidistribute towards $\mu_f$. More precisely,
\begin{align}\label{eq:equidistribution_BLS}
\lim_{n\to\infty} \unsur{d^n}\sum_{p\in\mathrm{SPer}_n}  \delta_p  & = \mu, \\
 \label{eq:equidistribution_eponent_BLS}\lim_{n\to\infty} \unsur{d^n}\sum_{p\in\mathrm{SPer}_n}  \chi^u(p) & = \chi^u(\mu).
\end{align}

Also, it is straightforward to adapt Theorem S.5.5 in~\cite{KH} to the 2-dimensional complex setting to show that given any $\rho>0$, and any
 finite set of continuous functions $(\varphi_i)_{i=1}^j$, there exists a saddle periodic point $p$ of period $n$ such that 
\begin{equation}\label{eq:equidistribution_KH}
  \abs{\unsur{n}\sum_{k=0}^{n-1}  \varphi_i(f^k(p))- \int \varphi_i\, \d\mu}<\rho
\quad \text{ and } \quad
 \abs{   \chi^u(p) - \chi^u(\mu)}<\rho
\end{equation}
for every $1\leq i \leq j$.

Here we reinforce these    statements as follows: we obtain a version of~\eqref{eq:equidistribution_BLS} and~\eqref{eq:equidistribution_eponent_BLS} in which the average is not on all saddle periodic points but on a {\emph{typical periodic orbit of period $n$}} instead; and we show that the estimate~\eqref{eq:equidistribution_KH} is typical among periodic points of period $n$. 

\begin{thm}\label{thm:equidistribution_reinforced}
Let $f\in \Aut(\C^2)$ be a loxodromic automorphism of dynamical degree $d$. There exists a set $\mathrm{SPer}^+_n$ of saddle periodic orbits of period $n$ such that 
\begin{equation}
\lim_{n\to \infty}\frac{1}{d^n}\#\mathrm{SPer}^+_n =1
\end{equation}
 and  if $(p_n)$ is an arbitrary sequence with 
$p_n \in \mathrm{SPer}^+_n$, then
\begin{equation}\label{eq:equidistribution_reinforced}
\lim_{n\to\infty} \unsur{n}\sum_{k=0}^{n-1}  \delta_{f^k(p_n)}  = \mu
\quad \text{ and } \quad 
\lim_{n\to\infty}   \chi^u(p_n) = \chi^u(\mu). 
\end{equation}
\end{thm}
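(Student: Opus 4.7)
The plan is to lift BLS equidistribution to the level of $\mathcal{M}_f$, the compact Choquet simplex of $f$-invariant Borel probability measures on a fixed compact neighborhood of $\jstar$ (equipped with the weak-$\ast$ topology). Writing $\nu_p := \frac{1}{n}\sum_{k=0}^{n-1}\delta_{f^k p}$ for $p\in \mathrm{SPer}_n$, the claim is that
\begin{equation}
\mathcal{L}_n := \frac{1}{\#\mathrm{SPer}_n}\sum_{p\in\mathrm{SPer}_n}\delta_{\nu_p} \;\longrightarrow\; \delta_\mu \quad \text{weakly in }\; \mathcal{P}(\mathcal{M}_f).
\end{equation}
By Markov's inequality applied to any metric compatible with the weak-$\ast$ topology on $\mathcal{M}_f$, this implies that outside an $o(d^n)$ subset of $\mathrm{SPer}_n$, $\nu_p$ is arbitrarily close to $\mu$ in weak-$\ast$, which is the first half of~\eqref{eq:equidistribution_reinforced}.

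A short bijectivity argument ($p\mapsto f^k(p)$ permutes $\mathrm{SPer}_n$) gives $\sum_{p}\nu_p=\sum_{p}\delta_p$, so the barycenter of $\mathcal{L}_n$ in $\mathcal{P}(X)$ equals the normalized counting measure $\tfrac{1}{\#\mathrm{SPer}_n}\sum_p\delta_p$, which converges weakly to $\mu$ by~\eqref{eq:counting_saddle_points} and~\eqref{eq:equidistribution_BLS}. By compactness, any subsequential weak-$\ast$ limit $\Lambda\in\mathcal{P}(\mathcal{M}_f)$ of $(\mathcal{L}_n)$ has barycenter $\mu$. Since $(f,\mu)$ is mixing, $\mu$ is ergodic and hence an \emph{extreme point} of the Choquet simplex $\mathcal{M}_f$. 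By the Choquet--Meyer uniqueness theorem, the only probability measure on the extreme boundary (the ergodic invariant measures) representing $\mu$ is $\delta_\mu$. Composing $\Lambda$ with the ergodic decomposition $\nu\mapsto \pi_\nu$ yields such a representing measure $\int \pi_\nu\,d\Lambda(\nu)=\delta_\mu$, which forces $\pi_\nu=\delta_\mu$ for $\Lambda$-a.e.\ $\nu$, i.e.\ $\nu=\mu$. Therefore $\Lambda=\delta_\mu$ and $\mathcal{L}_n\to\delta_\mu$.

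For the Lyapunov exponent, $\chi^u$ is upper semi-continuous on $\mathcal{M}_f$ (as a decreasing limit of the continuous maps $\nu\mapsto \tfrac{1}{n}\int\log\|Df^n\|\,d\nu$, by Kingman's subadditive theorem). Combined with the convergence $\mathcal{L}_n\to\delta_\mu$ above, this produces a sequence $\rho_n\downarrow 0$ such that $\chi^u(p)<\chi^u(\mu)+\rho_n$ on a subset of $\mathrm{SPer}_n$ of asymptotic density $1$. Since $\chi^u$ is uniformly bounded on $\mathrm{SPer}_n$ by a constant $C$ depending only on $f$, the convergence of averages~\eqref{eq:equidistribution_eponent_BLS} then forces, via a one-line pigeonhole argument, that $\chi^u(p)>\chi^u(\mu)-\rho_n$ outside an asymptotically negligible subset. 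A standard diagonal extraction over a countable dense family in $\mathcal{C}(X)$ then produces the desired set $\mathrm{SPer}_n^+$.

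The main subtlety this plan sidesteps is the naive second-moment approach: showing $\tfrac{1}{d^n}\sum_p\bigl(\tfrac{1}{n}\sum_k\varphi(f^k p)-\int\varphi\,d\mu\bigr)^2\to 0$ reduces to interchanging the limits $\lim_{n\to\infty}$ and $\tfrac{1}{n}\sum_{m=0}^{n-1}$ in $R_m^n(\varphi):=\tfrac{1}{d^n}\sum_q\varphi(q)\varphi(f^m q)$. Although $R_m^n\to R_m(\varphi):=\int \varphi\cdot\varphi\circ f^m\,d\mu$ pointwise in $m$ (by BLS) and $\tfrac{1}{n}\sum_m R_m(\varphi)\to (\int\varphi\,d\mu)^2$ (by mixing), the interchange is non-trivial without a quantitative rate of equidistribution. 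Passing to empirical measures on the Choquet simplex $\mathcal{M}_f$ and exploiting extremality of $\mu$ avoids this difficulty entirely.
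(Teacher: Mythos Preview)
Your approach is correct and genuinely different from the paper's. The paper reopens the BLS proof: it covers a set of measure $1-\varepsilon$ by Pesin boxes, and in the Closing Lemma step it restricts to points $x\in P\cap f^{-n}(P)$ whose Birkhoff averages for a growing finite family of test functions are already $\rho_n$-close to $\int\varphi_i\,d\mu$; the shadowing estimate then transfers these averages to the nearby periodic point, and H\"older continuity of the unstable distribution along the shadowing orbit handles $\varphi_0=\log\|Df|_{E^u}\|$. Your argument instead treats BLS as a black box and works upstairs in $\mathcal P(\mathcal M_f)$: the barycenter identity plus extremality of $\mu$ forces any limit of the empirical measures $\mathcal L_n$ to be $\delta_\mu$, and for $\chi^u$ you combine upper semicontinuity with the BLS average~\eqref{eq:equidistribution_eponent_BLS} via a pigeonhole bound. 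This is shorter, avoids Pesin theory entirely, and would transplant unchanged to any setting where (i) periodic orbits equidistribute to an ergodic measure and (ii) their average top exponent converges.

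Two minor remarks. First, your detour through the ergodic decomposition and Choquet--Meyer is unnecessary: once $\Lambda\in\mathcal P(\mathcal M_f)$ has barycenter the extreme point $\mu$, one has $\Lambda=\delta_\mu$ directly (Bauer's theorem, via Milman's partial converse to Krein--Milman). Second, what you call ``Markov's inequality'' is really just the portmanteau lemma: $\mathcal L_n\to\delta_\mu$ gives $\limsup_n\mathcal L_n(\mathcal M_f\setminus U)\le\delta_\mu(\mathcal M_f\setminus U)=0$ for any open $U\ni\mu$, which is exactly the density statement you need before the final diagonal extraction. The paper's approach, by contrast, is more constructive and yields (via Remark~\ref{rem:decay}) explicit rates of decay for the exceptional set when exponential mixing is available; your soft argument gives no rate.
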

 
\begin{rem}
The method of~\cite{bls2} was implemented in other settings such as automorphisms 
of complex surfaces~\cite{cantat:Acta} or birational maps~\cite{birat}, and this reinforcement holds in those cases too, with exactly the same proof. 
\end{rem}

To prove the theorem, we revisit the proof of~\eqref{eq:equidistribution_BLS} in~\cite{bls2} by showing how to input 
the necessary  additional arguments. Let us 
quickly review that proof, by keeping the same notation. It requires some  Pesin theory formalism, such as Pesin boxes, which are closed subsets with product structure and uniform estimates on angles and expansion/contraction. An important idea is that if a Pesin box $P$
has small enough diameter, then 
one can construct a ``common Lyapunov chart'', that is, a topological bidisk $B$ 
containing $P$ that is nicely contracted/expanded by $f^n$ for every $x\in P$ and $n\in \Z$. This being said, the proof goes as follows.
\begin{enumerate}
\item[Step 1.] Fix some small $\e>0$ and cover a set of measure $1-\e$ by a finite set of disjoint Pesin boxes of diameter $<\e$.   This constant $\e$ will also imply uniformity constants for  angles, expansion, etc. 
\item[Step 2.] By the Closing Lemma, 
for every Pesin box $P$, any $n\geq 0$, and any $x\in P\cap f^{-n}P$, there exists a saddle point $\alpha$ shadowing the orbit segment $\set{x, \ldots , f^n(x) =y}$, which is 
$Ce^{-n\theta}$-close to $P$, for some $C = C(\e) $ and $\theta = \theta(\e)>0$. 
More precisely, it is  $Ce^{-n\theta}$-close to $[x,y]$, where
as usual   $[x,y] = W^s_\loc(x)\cap W^u_\loc(y)$. 
\end{enumerate}
Let $A_n = A_n(P)$ be the set of all periodic points $\alpha$ constructed in this way from $P\cap f^{-n}(P)$. 
We want to estimate $\#A_n(P)$. 
To do this, we shall estimate the measure of the set of points $x\in P\cap f^{-n}(P)$ that lead to the same periodic point $\alpha\in A_n(P)$. 
\begin{enumerate}
\item[Step 3.] Denote by $L(z)$ the Lyapunov chart around a point $z$. Fix $x$ and $\alpha$ as in Step 2. Set 
$$ B^s_{n, \alpha} = \bigcap_{k\leq n} f^{-k} (L(f^k(x)))$$
 (this is the component of $B\cap f^{-n}(B)$ containing $x$). Then, 
$$
T(\alpha)=P\cap f^{-n}(P) \cap B^s_{n, \alpha}
$$
is the set of points $x'$ in $P$ such that Step 2 applied to $x'$ and $n$ provides the same periodic point $\alpha$ as $x$.
Then, as shown in Lemma 5 of~\cite{bls2}, the product structure of $\mu$ and the invariance relation for the unstable conditionals imply
that 
$$
\mu(T(\alpha))\leq \mu(P)d^{-n}.
$$ 
\item[Step 4.] Let $A_n = A_n(P)$ 
be the set of all periodic points associated to $P\cap f^{-n}(P)$ as above, 
 we get that $\mu(P)d^{-n} \# A_n \geq \mu(P\cap f^{-n}(P))$, hence 
mixing shows that 
$$\liminf_{n\to\infty} d^{-n}  \#A_n \geq \mu(P).
$$ 
Thus if $\nu$ is any cluster value 
of $\unsur{d^n}\sum_{p\in\mathrm{SPer}_n}  \delta_p$,  we get that  $\nu(P)\geq \mu(P)$. 
\item[Step 5.] Since $\nu$ does not depend on  $\e$ nor  on the choice of Pesin boxes, we conclude that 
$\nu\geq \mu$, hence $\nu= \mu$. This gives the convergnece~\eqref{eq:equidistribution_BLS}. In particular, we obtain $\#\mathrm{SPer}_n\sim \#\mathrm{Fix}_n \sim d^n$, as stated in Equation~\eqref{eq:counting_saddle_points}.
\item[Step 6.] For Equation~\eqref{eq:equidistribution_eponent_BLS}, note that the unstable directions are almost parallel in each Pesin box. Hence,
$\log \abs{Df_\alpha(e^u(\alpha))}$ is close to  $\log \abs{Df_x(e^u(x))}$, so that taking the average on $x$ or on $\alpha$ gives approximately the same value. Since the average is over 
 all saddle periodic points and  does not depend on $\e$ we conclude that any cluster value of 
$\unsur{d^n}\sum_{p\in\mathrm{SPer}_n}  \chi^u(p)$ must be equal to  $\chi^u(\mu)$. 
\end{enumerate}

\begin{proof}[Proof of Theorem~\ref{thm:equidistribution_reinforced}]
If $\varphi$ is a function and $n$ a positive integer, we denote by $S_n\varphi$ the Birkhoff sum $\unsur{n}\sum_{k=0}^{n-1} \varphi\circ f^k$. 

{\bf{Preliminaries.--}} For $\mu_f$-almost every point $x$, the unstable direction $E^u(x)$ is well defined by the Oseledets Theorem. We let $e^u(x)$ be a unit vector in $E^u(x)$ and we define $\varphi_0(x) = \log \norm{Df(e^u(x))}$. Thus, $\varphi_0$ is defined $\mu_f$-almost everywhere. We also define $\varphi_0$ at every saddle periodic point by the same formula. For $\mu$-almost every $x$, 
$S_n\varphi_0(x)\to \chi^u(\mu)$, and if $\alpha$ is a saddle periodic point of period $n$, 
$S_n\varphi_0(\alpha) = \chi^u(\alpha)$.


Let $(\varphi_i)_{i\geq 1}$ be a countable family  of $C^1$ functions that is dense in 
$C^0(\supp(\mu))$. The following construction can be applied to $(\varphi_i)_{i\geq 1}$ or $(\varphi_i)_{i\geq 0}$.

By ergodicity, for every integer  $j$ and every $\rho>0$, 
there exists a sequence    $\delta (j, \rho, n)$ converging to 0 as $n$ goes to $+\infty$
such that 
\begin{equation}\label{eq:delta1}
\mu\lrpar{\set{ x: \ \exists i \in [1, j], \  \abs{S_n\varphi_i (x) - \int \varphi_i \, \d\mu}>\rho }}\leq \delta (j, \rho, n).
\end{equation}
Using the fact that $\delta(j, \cdot, n)$ is non-increasing, it follows that there exists $\rho_n\to 0$ and 
  $\delta' = \delta' (j,n)$ such that 
\begin{equation}\label{eq:delta2}
\mu\lrpar{\set{ x: \ \exists  i \in [1, j], \  \abs{S_n\varphi_i (x) - \int \varphi_i \, \d\mu}>\rho_n }}\leq \delta' (j, n),
\end{equation}
and finally by a diagonal argument we get 
a sequence $j_n$ (slowly) increasing to infinity and a sequence 
 $(\delta''(n))$ tending to zero such that $\mu(E_n)\leq \delta''(n)$, where 
\begin{equation}\label{eq:delta3}
E_n :=\set{ x: \ \exists i \in [1, j_n], \  \abs{S_n\varphi_i (x) - \int \varphi_i \, \d\mu}>\rho_n }
\end{equation}
(see Remark~\ref{rem:decay} below for a more explicit version). 


{\bf{Saddle periodic points.--}} Now, in Step 2 of the proof of~\eqref{eq:equidistribution_BLS}, instead of an arbitrary $x\in P\cap f^{-n}(P)$, we take $x\in P\cap f^{-n}(P)\cap E^\complement_n$, 
which provides as in Step 4 a set of periodic points $A_n^+ = A_n^+(P)$ with 
\begin{align} 
\mu(P)d^{-n} \# A_n^+ &\geq \mu(P\cap f^{-n}(P)\cap E^\complement_n) \\
&\geq \mu(P\cap f^{-n}(P)) - \delta''(n)
\end{align}
 and 
as before $\liminf_{n\to\infty} d^{-n}  \#A_n^+\geq \mu(P)$.

If $\alpha = \alpha(x)$ is the saddle point obtained from $x$ 
by the Closing Lemma, we also have an estimate of the form 
\begin{align}\label{eq:shadowing}
  \ \dist(f^k(x), f^k(\alpha))&\leq C \max (\dist(x, \alpha), \dist (f^n(x), \alpha)) e^{-\theta \min(k, n-k)} \\&\notag\leq C \diam(P) e^{-\theta \min(k, n-k)}.
\end{align}
for all $0\leq k\leq n$ (see~\cite[Thm S.4.13]{KH}). This guarantees that for every 
$C^1$ function $\varphi$, 
\begin{equation}
\abs{S_n\varphi(x)  - S_n\varphi(\alpha)}\leq \frac{C}{n}\diam(P) \norm{\varphi}_{C^1} . 
\end{equation}
Therefore, replacing $(j_n)$ by 
$\min(j_n, j_n')$, where $j_n'$ satisfies 
\begin{equation}
\frac{C}{n}\diam(P) \max_{1\leq i\leq j_n'}\norm{\varphi_i}_{C^1}  <\rho_n,
\end{equation} 
 we deduce that, for every $\alpha \in A_n^+$ and every  $i\in [1,  j_n]$,
\begin{equation}\label{eq:average}
  \abs{S_n\varphi_i (\alpha )  - \int\varphi_i\, \d\mu} < 2\rho_n.
\end{equation}

For the function $\varphi_0$, we can argue similarly by using the Hölder continuity property of the unstable distribution. It holds not only on the Pesin box $P$ and its iterates, but it also extends to the approximating periodic orbit, as follows e.g. from  \cite[Thm 5.20]{barreira-pesin:book}. Together with~\eqref{eq:shadowing}, this shows that 
$\abs{S_n \varphi_0 (x)  - S_n\varphi_0(\alpha)}\leq \frac{C}{n} \diam(P)^\theta$ for some $\theta>0$. 
By incorporating this estimate, we can then ensure that~\eqref{eq:average} holds for $i=0$ as well.

At this stage we have constructed a set of good periodic orbits $A_n^+(P)$ associated to a given Pesin box. Since the Pesin boxes are disjoint, when $n$ is sufficiently large, the  $A_n^+(P)$ 
are disjoint as well, and taking the union we obtain a set 
$A_n^+ = A_n^+(\e)$ of saddle periodic points with $\liminf d^{-n} \#A_n^+\geq 1-\e$ satisfying~\eqref{eq:average} for $i\in [0, j_n]$. 

It only remains to make one last diagonal extraction to construct a set
$\mathrm{SPer}^+_n$  of saddle points independent of $\e$. We proceed as follows. 
By assumption for every $\e>0$  we have  a 
set $A_n^+(\e)$ satisfying~\eqref{eq:average} for $i\in [0, j_n]$ and  $d^{-n} \#A_n^+\geq 1-2\e$ for $n\geq N(\e)$. 
We simply put $\e_n = 1/n$ and 
define $\mathrm{SPer}^+_n = A_n^+(\e_n)$ for every $n \in [N(\e_n), N(\e_{n+1})- 1]$, which fulfills our requirements. Indeed  by construction 
$d^{-n}\#\mathrm{SPer}^+_n \to 1$ and 
if $(p_n)$  is a sequence of periodic points such that $p_n\in \mathrm{SPer}^+_n$, 
then for every fixed $i$, 
for large enough $n$ we have that $i\leq j_n$, so $S_n\varphi_i(p_n) \to \int \varphi_i\, d\mu$, which precisely means that the conclusions of the theorem hold.
\end{proof}

\begin{rem}\label{rem:decay}
The exponential decay of correlations for smooth observables 
makes it possible to render the decay 
functions in \eqref{eq:delta1}-\eqref{eq:delta3} more explicit for the family of smooth functions $(\varphi_i)_{i\geq 1}$. Indeed it was shown  in~\cite{dinh:decay} that 
if $\varphi$ is a $C^1$ function with zero mean, then 
\begin{equation}
\abs{\int (\varphi\circ f^n)\cdot  \varphi \, \d\mu }\leq C\norm{\varphi}^2 d^{-n/8}. 
\end{equation}
where $\norm{\varphi} = \norm{\varphi}_{C^1}$. From this, it is easy to deduce by developing the square (\footnote{We thank Sébastien Gouëzel for indicating this argument to us.}) that  
\begin{equation}
\int (S_n\varphi)^2\d\mu\leq \frac{C}{n}  \norm{\varphi}^2  ,
\end{equation}
 hence 
 \begin{equation}
 \mu\lrpar{\set{x, \abs{S_n\varphi} >\e} }\leq \frac{C}{\e^2 n} \norm{\varphi}^2 .
 \end{equation}
 It follows that  $\delta(j, \e, n) = C \max_{1\leq i\leq j} \norm{\tilde \varphi_i}^2 (\e^2 n)\inv$, where 
 $\tilde \varphi_i  = \varphi_i - \int\varphi_i \d\mu$, hence for $\e_n = n^{-1/4}$ one can choose $\delta'(j, n) = C \max_{1\leq i\leq j} \norm{\tilde \varphi_i}^2 n^{-1/4}$, and finally we can choose $(j_n)$ such that $\max_{1\leq i\leq j_n} \norm{\tilde \varphi_i}^2\leq C n^{1/8}$ and we reach 
 $\delta''(n)  = C n^{-1/8}$. \qed
 \end{rem}

 \bibliographystyle{plain}
\bibliography{bib-rigidity}

\end{document}